\theoremstyle{plain}
\newtheorem{thm}{\protect\theoremname}
\theoremstyle{plain}
\newtheorem{question}[thm]{\protect\questionname}
\theoremstyle{remark}
\newtheorem{rem}[thm]{\protect\remarkname}
\theoremstyle{definition}
\newtheorem{defn}[thm]{\protect\definitionname}
\theoremstyle{plain}
\newtheorem{prop}[thm]{\protect\propositionname}
\theoremstyle{plain}
\newtheorem{lem}[thm]{\protect\lemmaname}
\theoremstyle{plain}
\newtheorem{assumption}[thm]{\protect\assumptionname}
\theoremstyle{plain}
\newtheorem{cor}[thm]{\protect\corollaryname}
\theoremstyle{definition}
\newtheorem{example}[thm]{\protect\examplename}
\theoremstyle{plain}
\newtheorem{conjecture}[thm]{\protect\conjecturename}
\definecolor{BLACK}{RGB}{0, 0, 0}
\definecolor{green}{RGB}{0, 0, 0}
\definecolor{cyan}{RGB}{0,100, 0}
\definecolor{yellow}{RGB}{0,100,0}
\definecolor{red}{RGB}{0,100,0}
\definecolor{BLUE}{RGB}{0,100,0}
\definecolor{blue}{RGB}{0,100,0}
\colorlet{linkequation}{BLACK}
\newcommand\widecheck[1]{%
\savestack{\tmpbox}{\stretchto{%
  \scaleto{%
    \scalerel*[\widthof{\ensuremath{#1}}]{\kern-.6pt\bigwedge\kern-.6pt}%
    {\rule[-\textheight/2]{1ex}{\textheight}}
  }{\textheight}%
}{0.5ex}}%
\stackon[1pt]{#1}{\scalebox{-1}{\tmpbox}}%
}
\DeclareRobustCommand{\oddevenhead} {%
\scshape
\ifodd\value{page}
A Generalization of the Tristram-Levine Knot 
Signatures as a Singular Furuta-Ohta Invariant for Tori %
\else
Mariano Echeverria%
\fi
}
\providecommand{\assumptionname}{Assumption}
\providecommand{\conjecturename}{Conjecture}
\providecommand{\corollaryname}{Corollary}
\providecommand{\definitionname}{Definition}
\providecommand{\examplename}{Example}
\providecommand{\lemmaname}{Lemma}
\providecommand{\propositionname}{Proposition}
\providecommand{\questionname}{Question}
\providecommand{\remarkname}{Remark}
\providecommand{\theoremname}{Theorem}
\begin{document}
\title[\oddevenhead]{ a Generalization of the Tristram-Levine Knot Signatures as a Singular
Furuta-Ohta Invariant for Tori}
\author{Mariano Echeverria}
\begin{abstract}
Given a knot $K$ inside an integer homology sphere $Y$, the Casson-Lin-Herald
invariant can be interpreted as a signed count of conjugacy classes
of irreducible representations of the knot complement into $SU(2)$
which map the meridian of the knot to a fixed conjugacy class. It
has the interesting feature that it determines the Tristram-Levine
signature of the knot associated to the conjugacy class chosen. 

Turning things around, given a 4-manifold $X$ with the integral homology
of $S^{1}\times S^{3}$, and an embedded torus $T$ inside $X$ such
that $H_{1}(T;\mathbb{Z})$ surjects onto $H_{1}(X;\mathbb{Z})$,
we define a signed count of conjugacy classes of irreducible representations
of the torus complement into $SU(2)$ which satisfy an analogous fixed
conjugacy class condition to the one mentioned above for the knot
case. Our count recovers the Casson-Lin-Herald invariant of the knot
in the product case, thus it can be regarded as implicitly defining
a Tristram-Levine signature for tori. 

This count can also be considered as a singular Furuta-Ohta invariant,
and it is a special case of a larger family of Donaldson invariants
which we also define. In particular, when $(X,T)$ is obtained from
a self-concordance of a knot $(Y,K)$ satisfying an admissibility
condition, these Donaldson invariants are related to the Lefschetz
number of an Instanton Floer homology for knots which we construct.
Moreover, from these Floer groups we obtain Frøyshov invariants for
knots which allows us to assign a Frøyshov invariant to an embedded
torus whenever it arises from such a self-concordance. 

\end{abstract}

\maketitle

\section{Introduction}

\subsection{An observation of some analogies: defining $\lambda_{FO}(X,T,\alpha)$}

\ 

The main impetus behind this work stems from the following observation:
given an integer homology sphere $Y$, Casson defined \cite{MR1030042}
an invariant $\lambda_{C}(Y)\in\mathbb{Z}$ which morally can be regarded
as $1/2$ of a signed count of conjugacy classes of irreducible representations
$\pi_{1}(Y)\rightarrow SU(2)$. Moving one dimension up, if $X$ is
a four manifold with the same integral homology as $S^{1}\times S^{3}$,
i.e, $H_{*}(X;\mathbb{Z})\simeq H_{*}(S^{1}\times S^{3};\mathbb{Z})$,
Furuta and Ohta defined \cite{MR1237394} a Casson-type invariant
$\lambda_{FO}(X)\in\mathbb{Z}$ which again can morally be interpreted
as a signed count of conjugacy classes of irreducible representations
$\pi_{1}(X)\rightarrow SU(2)$. Strictly speaking, $X$ must satisfy
additional conditions besides being a homology $S^{1}\times S^{3}$,
but it will always be the case that if one takes $X=S^{1}\times Y$,
then $\lambda_{FO}(X)$ is well defined, and can be made to agree
with $\lambda_{C}(Y)$.

Now, if we consider the case of a knot $K$ inside $Y$, one can follow
the same strategy and try to define a signed count of conjugacy classes
of irreducible representations $\pi_{1}(Y\backslash K)\rightarrow SU(2)$.
As will become clear soon, in this case it is natural to fix the conjugacy
class where the meridian of the knot $\mu_{K}$ is being sent, regarded
as the generator of the reducible representations $H_{1}(Y\backslash K)\simeq\mathbb{Z}[\mu_{K}]\rightarrow SU(2)$.
This conjugacy class is specified by a (holonomy) parameter $\alpha\in(0,1/2)$,
and whenever $\triangle_{K}(e^{-4\pi i\alpha})\neq0$, where $\triangle_{K}$
is the Alexander polynomial of the knot, Herald showed \cite[Theorem 0.1]{MR1456309}
that a Casson-type count of representations can be made, which we
will denote as $\lambda_{CLH}(Y,K,\alpha)\in\mathbb{Z}$. Here the
$L$ in $\lambda_{CLH}$ stands for Lin, who had studied before Herald
\cite{MR1158339} the case where the meridian is sent to a trace zero
matrix, which corresponds to the parameter $\alpha=1/4$ (and using
a symplectic rather than gauge theoretic approach).

The invariant $\lambda_{CLH}$ recovers the Tristram-Levine knot signatures
$\sigma_{K}$ \cite{MR248854,MR0246314}, in the sense that (with
our orientation conventions) $\lambda_{CLH}(Y,K,\alpha)=4\lambda_{C}(Y)+\frac{1}{2}\sigma_{K}(e^{-4\pi i\alpha})$.
This clearly begs the question:
\begin{question}
\label{Question}Given an embedded torus $T$ inside a four manifold
$X$ with the integer homology of $S^{1}\times S^{3}$ and a number
$\alpha\in(0,1/2)$, which conditions must be imposed on $T$ and
$\alpha$ so that one can define an invariant $\lambda_{FO}(X,T,\alpha)\in\mathbb{Z}$
that satisfies the property that for any knot $K\subset Y$ and $\alpha$
such that $\triangle_{K}(e^{-4\pi i\alpha})\neq0$, $\lambda_{FO}(S^{1}\times Y,S^{1}\times K,\alpha)$
coincides with $\lambda_{CLH}(Y,K,\alpha)$ (perhaps up to rescaling)?
\end{question}

Before stating such conditions, we recall quickly why it is possible
to define $\lambda_{CLH}(Y,K,\alpha)$, since the reader may be less
aware of this construction. In fact, as far as we can tell there was
no standard notation for this invariant, which is why we decided to
name it $\lambda_{CLH}(Y,K,\alpha)$. More context and motivation
can be found in an ``annotated'' version of this paper posted on
the author's website \cite{Echeverria[DraftFuruta]}.

As mentioned before, given a knot $K$ inside $Y$, suppose we want
to define a (signed) count of irreducible representations of $\pi_{1}(Y\backslash K)$
into $SU(2)$, analogous to the definition of the Casson invariant
$\lambda_{C}(Y)$ \cite{MR1030042}. For any Casson type count, we
need to understand first the reducible representations, given that
they correspond to singular points in the character variety $\mathcal{R}(Y\backslash K,SU(2))=\hom(\pi_{1}(Y\backslash K),SU(2))/SU(2)$
(or the space of flat $SU(2)$ connections mod gauge depending on
one's preference). 

The reducible representations are determined by $H_{1}(Y\backslash K;\mathbb{Z})\simeq\mathbb{Z}[\mu_{K}]$.
In particular, a reducible representation $\rho$ is completely specified
by $A=\rho(1)\in SU(2)$, and up to conjugacy is determined by its
trace $\text{tr}(A)\in[-2,2]$. Thus the reducible representations
inside $\mathcal{R}(Y\backslash K,SU(2))$ are in bijection with $[-2,2]$.

This means that inside $\mathcal{R}(Y\backslash K,SU(2))$ the reducible
representations are not isolated. In order to isolate them, we can
decompose this space as $\mathcal{R}(Y\backslash K,SU(2))=\bigcup_{\alpha}\mathcal{R}_{\alpha}(Y\backslash K,SU(2))$,
where $\mathcal{R}_{\alpha}(Y\backslash K,SU(2))$ consists of conjugacy
classes of representations which map the meridian $\mu_{K}$ to the
conjugacy class of the matrix
\[
\mu_{K}\rightarrow\left(\begin{array}{cc}
e^{-2\pi i\alpha} & 0\\
0 & e^{2\pi i\alpha}
\end{array}\right)
\]
Since the trace of this matrix is $2\cos(2\pi\alpha)$, it suffices
to take $\alpha\in[0,1/2]$ to exhaust all the possible conjugacy
classes where the meridian can be sent. We will assume that $\alpha\in(0,1/2)$
since the endpoints correspond to representations of $\pi_{1}(Y)$
into $SU(2)$ or $SO(3)$ (so $K$ is not involved in those cases).
Inside each $\mathcal{R}_{\alpha}(Y\backslash K,SU(2))$ there will
be only one reducible representation $\theta_{\alpha}$, so trying
to make a (signed) count $\lambda_{CLH}(Y,K,\alpha)$ of the elements
inside $\mathcal{R}_{\alpha}(Y\backslash K,SU(2))$ is very similar
to the problem of defining $\lambda_{C}(Y)$ as a (signed) count of
elements inside $\mathcal{R}(Y,SU(2))$, which also contains only
one reducible $\theta$, the trivial representation of $\pi_{1}(Y)$
into $SU(2)$.

The only caveat with this analogy is that contrary to the case of
$Y$, where $\theta$ is automatically isolated from the irreducible
representations, it may be the case that $\theta_{\alpha}$ is not
isolated from the irreducible representations inside $\mathcal{R}_{\alpha}(Y\backslash K,SU(2))$,
thus making it impossible to define $\lambda_{CLH}(Y,K,\alpha)$.
In fact, as we will review in section \ref{sec:Floer-Novikov-Homology-and},
whenever $\triangle_{K}(e^{-4\pi i\alpha})\neq0$, where $\triangle_{K}$
is the Alexander polynomial of the knot $K$, we can define $\lambda_{CLH}(Y,K,\alpha)$,
i.e, for such $\alpha$ the reducible $\theta_{\alpha}$ is isolated.

If we think of the representation $\theta_{\alpha}$ as corresponding
to a flat $SU(2)$ connection $A_{\alpha}$ on the trivial bundle
$E=(Y\backslash K)\times\mathbb{C}^{2}\rightarrow Y\backslash K$,
which is the point of view we will take, the condition that $\triangle_{K}(e^{-4\pi i\alpha})\neq0$
is equivalent to the vanishing of the cohomology group with local
coefficients $H^{1}(Y\backslash K;L_{\alpha}^{\otimes2})$, where
$A_{\alpha}$ determines a complex line bundle $L_{\alpha}\rightarrow Y\backslash K$
and a splitting $E=L_{\alpha}\oplus L_{\alpha}^{-1}$.

Going back to the case of an embedded torus $T$ inside $X$, we will
explain in Section \ref{sec:Singular-Orbifold-Furuta-Ohta} the analogue
of the spaces $\mathcal{R}_{\alpha}(Y\backslash K,SU(2))$, which
we denote $\mathcal{R}_{\alpha}(X\backslash T,SU(2))$. The point
is that we need to understand again the reducible representations
of $\pi_{1}(X\backslash T)\rightarrow SU(2)$, which are completely
determined by $H_{1}(X\backslash T;\mathbb{Z})$. The natural condition
for our problem is to assume that $H_{1}(T;\mathbb{Z})$ surjects
onto $H_{1}(X;\mathbb{Z})$, so that homologically $T$ behaves like
$S^{1}$ times a knot. This condition also tell us that $H_{1}(X\backslash T;\mathbb{Z})\simeq\mathbb{Z}\oplus\mathbb{Z}$,
although for fixed $\alpha$ only one of these $\mathbb{Z}$ factors
will be of importance. This means that inside $\mathcal{R}_{\alpha}(X\backslash T,SU(2))$
the reducible representations should form a one dimensional family,
which is exactly what happens when one is trying to define $\lambda_{FO}(X)$
using $\mathcal{R}(X,SU(2))$. Given these preliminaries we can finally
state our answer to Question \ref{Question}.

\begin{thm}
\label{definition of the singular Furuta Ohta invariant} Suppose
that $X$ is a closed oriented four manifold with the integral homology
of $S^{1}\times S^{3}$. Let $T$ denote an embedded and oriented
torus such that $H_{1}(T;\mathbb{Z})\twoheadrightarrow H_{1}(X;\mathbb{Z})$
is a surjection. Consider a rational value $\alpha\in\mathbb{Q}\cap(0,1/2)$
such that for all reducible representations $\rho:\pi_{1}(X\backslash T)\rightarrow SU(2)$
satisfying the holonomy condition \ref{asymptotic holonomy} determined
by $\alpha$, i.e, elements of $\mathcal{R}_{\alpha}(X\backslash T,SU(2))$,
we have that $H^{1}(X\backslash T;L_{\rho}^{\otimes2})$ vanishes.
Here $E=L_{\rho}\oplus L_{\rho}^{-1}$ is the decomposition induced
by the flat connection $A_{\rho}$ associated to $\rho$, and $E$
is the trivial $SU(2)$ bundle over $X\backslash T$.

Then we can define a degree-zero Donaldson invariant, denoted the
\textbf{singular Furuta-Ohta invariant $\lambda_{FO}(X,T,\alpha)$},
which takes even values, i.e, $\lambda_{FO}(X,T,\alpha)\in2\mathbb{Z}$.

It has the property that for $X=S^{1}\times Y$, $T=S^{1}\times K$,
for an oriented knot $K$ inside an integer homology sphere $Y$,
and a value of $\alpha\in\mathbb{Q}\cap(0,1/2)$ such that $\triangle_{K}(e^{-4\pi i\alpha})\neq0$,
then $\lambda_{FO}(S^{1}\times Y,S^{1}\times K,\alpha)$ can be defined,
and  $\lambda_{FO}(S^{1}\times Y,S^{1}\times K,\alpha)=2\lambda_{CLH}(Y,K,\alpha)$.
\end{thm}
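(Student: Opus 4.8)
The plan is to construct $\lambda_{FO}(X,T,\alpha)$ as a signed count of the moduli space $\mathcal{M}_{\alpha}(X\backslash T)$ of irreducible ASD connections on $X\backslash T$ (on the trivial $SU(2)$ bundle) with the prescribed holonomy condition \ref{asymptotic holonomy} around $T$ — equivalently, working on $X$ with a singular (orbifold-type) connection along $T$ with cone angle determined by $\alpha$. The argument has three logically separate parts: (i) showing the count is well-defined, (ii) showing it is even, and (iii) computing it in the product case. For (i), the hypothesis that $H^{1}(X\backslash T;L_{\rho}^{\otimes 2})=0$ for every reducible $\rho\in\mathcal{R}_{\alpha}(X\backslash T,SU(2))$ is exactly the condition that isolates the reducibles from the irreducible stratum: it forces the reducibles to be nondegenerate (cut out transversally as solutions of a deformation complex), so that a neighborhood of the reducible locus contributes nothing and the irreducible moduli space is compact after a standard generic perturbation. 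I would then invoke the usual Uhlenbeck-compactness package adapted to the singular/asymptotically-cylindrical setting (bubbling along $T$ and along the cylindrical ends is ruled out by dimension and by the homology hypothesis $H_{1}(T)\twoheadrightarrow H_{1}(X)$, which pins down the topological type of allowed limits), conclude $\mathcal{M}_{\alpha}$ is a compact oriented $0$-manifold for generic perturbation, and define $\lambda_{FO}(X,T,\alpha)$ as its signed count; independence of the perturbation and of the auxiliary metric is the usual cobordism argument.

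For (ii), evenness, the mechanism should be a free involution on the moduli space — most likely the one coming from the deck transformation of the double cover associated with $L_{\alpha}^{\otimes 2}$, or equivalently the action that swaps the two ways of reducing a connection near $T$; because the holonomy parameter $\alpha$ lies in $(0,1/2)$ (strictly interior), this involution has no fixed points on the irreducible stratum, and it is compatible with the orientation, so the count is divisible by $2$. (This parallels the well-known fact that $\lambda_{FO}(S^1\times Y)=2\lambda_C(Y)$, where the factor $2$ has the same origin.)

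For (iii), the product case, I would set $X=S^1\times Y$, $T=S^1\times K$, and identify $X\backslash T = S^1\times(Y\backslash K)$. A flat/ASD connection on a product $S^1\times N$ that is translation-invariant is the same as a flat connection on $N$; the point is to show that in this situation \emph{all} irreducible ASD solutions with the holonomy condition are $S^1$-invariant, i.e.\ pulled back from flat connections on $Y\backslash K$ satisfying the meridian condition $\mu_K\mapsto\mathrm{diag}(e^{-2\pi i\alpha},e^{2\pi i\alpha})$ — this is the standard ``no wandering in the $S^1$ direction'' argument (a finite-energy ASD connection on $S^1\times N$ of index $0$ must be a critical point of the Chern-Simons functional, hence flat and $S^1$-invariant), using $\triangle_K(e^{-4\pi i\alpha})\neq 0$ to guarantee the relevant Floer-type nondegeneracy so that no lower-dimensional families appear. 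Then $\mathcal{M}_\alpha(S^1\times Y\backslash S^1\times K)$ is canonically identified with $\mathcal{R}^{*}_{\alpha}(Y\backslash K,SU(2))$, and I would check that the ASD-orientation/spectral-flow sign of each point agrees with the sign Herald assigns in defining $\lambda_{CLH}(Y,K,\alpha)$ (an excess-intersection/orientation-transport comparison, exactly as in the $S^1\times Y$ vs.\ $Y$ case for Casson's invariant). Combining this identification with the factor of $2$ from (ii) — the involution on the $4$-dimensional side collapses to the identity after passing to conjugacy classes on the $3$-manifold side, accounting for the doubling — gives $\lambda_{FO}(S^1\times Y,S^1\times K,\alpha)=2\lambda_{CLH}(Y,K,\alpha)$.

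The main obstacle I expect is the compactness/gluing analysis in the singular setting of part (i): controlling bubbling \emph{along} the torus $T$ (where the connection is genuinely singular) simultaneously with the behavior near the reducibles, and verifying that the hypothesis $H^{1}(X\backslash T;L_{\rho}^{\otimes 2})=0$ really does suffice to make the reducible contribution vanish rather than leaving a boundary term — this is the analogue of the delicate point in the Furuta-Ohta construction and in Herald's work, and it is where the holonomy parameter $\alpha\in\mathbb{Q}$ (rationality, to get an orbifold model) and the Alexander-polynomial condition do their real work. The product-case identification in (iii) is conceptually routine once (i) is in place, but matching the \emph{signs} on the nose (so that the constant is exactly $2$, not $\pm 2$ or $2$ up to an overall shift by $\lambda_C(Y)$-type terms) will require care with orientation conventions, and I would expect to fix those conventions once and for all here so that the displayed equation $\lambda_{CLH}(Y,K,\alpha)=4\lambda_C(Y)+\tfrac12\sigma_K(e^{-4\pi i\alpha})$ from the introduction remains consistent.
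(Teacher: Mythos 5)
Your part (i) matches the paper's construction in Section \ref{sec:Singular-Orbifold-Furuta-Ohta}: the isolation of reducibles from the condition $H^1(X\backslash T;L_\rho^{\otimes 2})=0$ is exactly Lemma \ref{orbifold cohomology group torus} plus the discussion after it, and the restriction to $(k,l)=(0,0)$ is forced by Lemmas \ref{lem: existence of flat connection} and \ref{reducible connection k,l=00003D0}. Your parts (ii) and (iii), however, diverge from the paper in ways worth spelling out. For (ii), the free involution you want is the action of $H^1(X;\mathbb{Z}/2)\simeq\mathbb{Z}/2$ on the moduli space by $[A]\mapsto[A\otimes\chi]$, where $\chi$ is a flat complex line bundle with holonomy $-1$ around a circle generating $H_1(X)$ — this is \emph{not} the deck transformation of a double cover attached to $L_\alpha^{\otimes 2}$, which is a different object. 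Moreover the freeness of this action is not simply a consequence of $\alpha$ being strictly interior to $(0,1/2)$: Lemma \ref{lem:FREE action} rules out the Klein-four stabilizers and the twisted $\mathbb{Z}/2$-reducibles by separate topological arguments (a Stiefel--Whitney class computation in the $\alpha=1/4$ case, and the surjection $H_1(T;\mathbb{Z}/2)\to H_1(X;\mathbb{Z}/2)$ in the $\mathbb{Z}/2$ case). For (iii), the paper does not argue by direct dimensional reduction and sign-matching as you propose; instead it routes everything through the splitting formula (Theorem \ref{splitting formula Furuta Ohta }): it views $(S^1\times Y,S^1\times K)$ as the closure of the trivial self-concordance $([0,1]\times Y,[0,1]\times K)$, so that $\lambda_{FO}=2\,\mathrm{Lef}(\mathrm{Id}\,|\,HI(Y,K,\alpha))=2\chi_\Lambda(HI(Y,K,\alpha))$, and then invokes Theorem \ref{Signed Count} (Herald's result with a perturbation supported away from $K$) to identify $\chi_\Lambda(HI(Y,K,\alpha))$ with $\lambda_{CLH}(Y,K,\alpha)$. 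Your direct identification of $\mathcal{M}^*_\alpha(X\backslash T)$ with pairs $(u,\rho)$, $u\in\{\pm1\}$, $\rho\in\mathcal{R}^*_\alpha(Y\backslash K,SU(2))$, is the same moduli-space picture that appears inside the paper's gluing argument, and it is a legitimate alternative; but you would then carry the burden of the orientation comparison between the ASD $\det$-line and Herald's spectral-flow sign, which the paper sidesteps by absorbing it into the already-oriented Floer chain complex. Both routes are sound; the paper's has the advantage that the harder functorial machinery is built anyway, while yours is more self-contained if one only wants this one theorem.
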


\begin{rem}
The condition that $H^{1}(X\backslash T;L_{\rho}^{\otimes2})$ vanishes
can be equivalently stated in terms of the Alexander polynomial $\triangle_{T}=\triangle_{X\backslash T}$
of the knot complement. Namely, we would require that $\triangle_{T}(\hat{\rho})\neq0$
for all the characters associated to the reducible representations
$\rho$ which satisfy the holonomy condition determined by $\alpha$.
The notation $\triangle_{T}(\hat{\rho})\neq0$ is explained in the
Appendix of the annotated version \cite{Echeverria[DraftFuruta]}
although we will not use the interpretation in terms of the Alexander
polynomial in this paper.
\end{rem}

It is important to point out that in order to study the representations
of $\pi_{1}(X\backslash T)$ into $SU(2)$, we will use the framework
of Kronheimer and Mrowka's papers \cite{MR1241873,MR1308489} on singular
gauge theory. We will review the novel features of this approach in
the next section, but the key points are the following. For pairs
$(X,\varSigma)$ consisting of a closed, oriented four manifold $X$
and an oriented surface $\varSigma$, the $SU(2)$ bundles $E$ over
$(X,\varSigma)$ are now classified by a pair of two integers $(k,l)$,
called the instanton and monopole numbers respectively.

Write $E(k,l)$ for the corresponding $SU(2)$ bundle associated to
the pair $(k,l)$. Then for each such $E(k,l)$, we can study the
space of connections $\mathcal{A}(E(k,l),\alpha)$ which have a prescribed
singular behavior along $\varSigma$. As before $0<\alpha<1/2$, and
the solutions of the anti-self-dual connections $F_{A}^{+}=0$ for
$A\in\mathcal{A}(E(0,0),\alpha)$ can again be interpreted (mod gauge)
as corresponding to representations of $\pi_{1}(X\backslash T)\rightarrow SU(2)$
which map the ``meridian'' of the torus $\mu_{T}$ to a specific
conjugacy class, if we take $\varSigma=T$. In general, the solutions
of $F_{A}^{+}=0$ for $A\in\mathcal{A}(E(k,l),\alpha)$ will be called
$\alpha$-$ASD$ connections. The moduli space of (perturbed) $\alpha$-$ASD$
connections modulo gauge will be denoted $\mathcal{M}(X,\varSigma,k,l,\alpha)$.
Finally, a gauge equivalence class will normally be denoted as $[A]$,
instead of $A$.

For technical reasons, we need to use an \textit{orbifold} metric
along $\varSigma$. This means that there will be an integer parameter
$\nu$ such that the orbifold metric along $\varSigma$ has a cone
angle of $2\pi/\nu$. Each $\alpha$ determines a set of allowable
cone parameters $\{\nu\}$, but there is no cone parameter that works
simultaneously for all values of $\alpha$. Moreover, it is convenient
to take the parameter $\alpha$ as a rational number, i.e, $\alpha\in\mathbb{Q}\cap(0,1/2)$,
so that we can talk about orbifold connections. 

From this point of view, $\lambda_{FO}(X,T,\alpha)$ corresponds to
a signed count of elements of the moduli space $\mathcal{M}(X,\varSigma,0,0,\alpha)$.
Strictly speaking, we should write $\lambda_{FO}(X,T,\alpha,\nu)$
and $\mathcal{M}(X,\varSigma,0,0,\alpha,\nu)$ since we must always
choose first a cone angle compatible with $\alpha$ before defining
our invariants. However, as we will mention near the end of the introduction
the invariants do not depend on the cone angle $\nu$ due to recent
work of Langte Ma \cite{Ma[torus]}.

In section \ref{sec:Some-examples} we will give some examples of
$\lambda_{FO}(X,T,\alpha)$, which arise from the mapping tori associated
to finite group actions on homology spheres, as well as certain circle
bundles over a 3-manifold with the homology of $S^{1}\times S^{2}$.
It will become clear that these are the orbifold versions of the calculations
of $\lambda_{FO}(X)$ done in \cite{MR2189939,MR2033479}.

\subsection{An observation of more analogies: defining $HI(Y,K,\alpha)$ and
the splitting formula}

\ 

Now we describe the second motivation for this project. Thanks to
Taubes \cite{MR1037415}, it is well known that for an integer homology
sphere $Y$, the instanton Floer homology $HI(Y)$ \cite{MR956166}
categorifies the Casson invariant in the sense that $\chi(HI(Y))=2\lambda_{C}(Y)$. 

Moreover, Frøyshov \textbf{}defined \cite{MR1910040} a refinement
of $HI(Y)$ which is known as the reduced instanton Floer homology
$HI_{red}(Y)$. From it one can define the Frøyshov $h$-invariant
$h(Y)=\frac{1}{2}(\chi(HI_{red}(Y))-\chi(HI(Y)))$, which is the precursor
to the $d$-invariant in Heegaard Floer homology and the Frøyshov
invariant in monopole Floer homology. We should notice that our conventions
are slightly different from those of Frøyshov, since we are working
with the homology (not cohomology) version of instanton Floer homology.

By the TQFT-like features of instanton Floer homology, a homology
cobordism $W:Y_{1}\rightarrow Y_{2}$ between two integer homology
spheres will induce a map between the corresponding Floer homologies
$HI(W):HI(Y_{1})\rightarrow HI(Y_{2})$. In particular, for a homology
cobordism $W:Y\rightarrow Y$ from $Y$ to itself, there are maps
$HI(W):HI(Y)\rightarrow HI(Y)$ and $HI_{red}(W):HI_{red}(Y)\rightarrow HI_{red}(Y)$
for the unreduced and reduced instanton Floer homologies. This is
an interesting case since closing up $W$ one obtains a four manifold
$X$ which is a homology $S^{1}\times S^{3}$ and for which $\lambda_{FO}(X)$
can be defined. In this case, Anvari proved \cite[Theorem A]{Anvari[2019]}
a splitting formula for $\lambda_{FO}(X)$ in terms of the Lefschetz
number $\text{Lef}(HI(W))$ of the cobordism map, which reads (in
our conventions)
\[
\lambda_{FO}(X)=\frac{1}{2}\text{Lef}(HI(W))=\frac{1}{2}\text{Lef}(HI_{red}(W))-h(Y)
\]
and is the analogue of the splitting formula \cite[Theorem A]{MR3811774}
Lin, Ruberman and Saveliev proved for a similarly-constructed invariant
$\lambda_{SW}(X)$ which is defined using the Seiberg-Witten equations
instead. Moreover, an argument due to Frøyshov \cite[Theorem 8]{MR2738582}
for the case of $\lambda_{SW}(X)$ (and which is readily adapted to
$\lambda_{FO}(X)$) shows that if $X$ is obtained as the closure
of a different homology cobordism $W':Y'\rightarrow Y'$ then $h(Y)=h(Y')$
and thus it makes sense to talk about the $h$-invariant of $X$,
which we denote as $h(X)$. In this way the splitting formula for
$\lambda_{FO}(X)$ reads 
\[
\lambda_{FO}(X)+h(X)=\frac{1}{2}\text{Lef}(HI_{red}(W))
\]
This naturally leads to the following question: 
\begin{question}
\label{second question}Is it possible to define instanton Floer homologies
$HI(Y,K,\alpha)$, $HI_{red}(Y,K,\alpha)$ for a value of $\alpha$
satisfying $\triangle_{K}(e^{-4\pi i\alpha})\neq0$, and corresponding
Frøyshov $h$-invariants $h(Y,K,\alpha)$ for the knot $K$ so that
whenever $(W,\varSigma):(Y,K)\rightarrow(Y,K)$ is a self-concordance
of $K$, there is a splitting formula involving $\lambda_{FO}(X,T,\alpha)$
and $\text{Lef}(HI(Y,Y,\alpha))$ (respectively $\text{Lef}(HI_{red}(Y,K,\alpha))$,
$h(Y,K,\alpha)$)? Moreover, can one define $HI(Y,K,\alpha)$ in such
a way that $\chi(HI(Y,K,\alpha))=\lambda_{CLH}(Y,K,\alpha)$?
\end{question}

As the reader may expect, the answers to all of these questions are
mostly in the affirmative, however, we need to point out some details
first. Our analytical framework will be based again on the singular
gauge theory developed by Kronheimer and Mrowka. In particular this
means that we will use a metric with a cone angle along the knot which
will give rise to the structure of an orbifold. 

The reader may be aware that Collin and Steer \cite{MR1703606} had
developed almost two decades ago a similar version of instanton Floer
homology for knots. Many aspects of our construction are identical
to theirs, however, as Kronheimer and Mrowka point out in \cite[Section 1.2.5]{MR2860345},
unless $\alpha=1/4$ (which corresponds to the case $k/n=1/4$ in
Collin and Steer's paper), the differential needed to define the chain
complex which gives rise to the Floer groups may be ill-defined, since
certain energy bounds for the moduli spaces cannot be guaranteed,
which are needed to appeal directly to some compactness theorems which
will be recalled in Section \ref{sec:Floer-Novikov-Homology-and}.
This is usually referred as saying that for $\alpha\neq1/4$ we are
in a non-monotone situation. Fortunately, as Kronheimer and Mrowka
also point out in that same section of their paper, there is a way
to get out of this conundrum provided one is willing to work with
an appropriate local coefficient system. The finite dimensional analogue
of this situation was first studied by Novikov in \cite{MR630459},
where an analogue of Morse theory for the case of circle valued functions
on a finite dimensional manifold was developed. 

Although the Chern-Simons functional is typically circle valued (on
the space of connections mod gauge that is), the reason why instanton
Floer homology is typically referred as an infinite dimensional Morse
theory, rather than an infinite dimensional Morse-Novikov theory,
is that the monotonicity condition usually holds, so the behavior
of the Chern Simons functional is more similar to the case of ordinary
Morse theory and not the slightly more complicated Morse-Novikov theory.
On the other hand, the use of Floer-Novikov theories is certainly
not a new thing on the symplectic versions of Floer homologies, and
was first investigated by Hofer and Salamon \cite{MR1362838} (see
also \cite{MR2199540,MR3590354,MR2553465} for more recent, but in
no way exhaustive references). 

As will be explained in Section \ref{sec:Monotonicity-and-Novikov},
there are at least three natural choices of local (Novikov) systems
we could use in our situation, although most of the time we will stick
with what we call the Universal Novikov/Local system, since it seems
to require the fewest amount of extra choices (for functoriality purposes,
that is). The end result will be that rather than defining the instanton
chain complex over a vector space, like $\mathbb{Q}$ or $\mathbb{C}$,
we will need to define it over a much larger vector space $\varLambda$
(the Novikov field), but in the end the groups $HI(Y,K,\alpha)$ we
will produce continue to be finite dimensional over $\varLambda$,
so formally many statements continue to hold. For example, the Euler
characteristic $\chi_{\varLambda}(HI(Y,K,\alpha))$ of $HI(Y,K,\alpha)$
with respect to the field $\varLambda$ recovers $\lambda_{CLH}(Y,K,\alpha)$.

What seems to be new is the idea that one can also define a reduced
version $HI_{red}(Y,K,\alpha)$ of these groups, although for the
case of $\alpha=1/4$, Christopher Scaduto and Aliakbar Daemi had
also realized this independently (and earlier as well \cite{Daemi-Scaduto[2019]}).
 We expect that their techniques could be used to further understand
the family of groups $HI(Y,K,\alpha)$, for arbitrary values of $\alpha$.
In any case, here is the answer to Question \ref{second question}.
\begin{thm}
Suppose that $K$ is an oriented knot inside an integer homology sphere
$Y$ and that a parameter $\alpha\in\mathbb{Q}\cap(0,1/2)$ is chosen
so that $\triangle_{K}(e^{-4\pi i\alpha})\neq0$. 

There is a family of vector spaces $HI_{i}(Y,K,\alpha)$ for $i\in\mathbb{Z}/4\mathbb{Z}$
, which are finite dimensional over a Novikov field $\varLambda$,
and which we will call the \textbf{instanton Floer-Novikov knot homology
groups of the knot $K$. }They recover the Casson-Lin-Herald invariant
(and hence the Tristram-Levine knot signatures), in the sense that
\[
\chi_{\varLambda}(HI(Y,K,\alpha))=\lambda_{CLH}(Y,K,\alpha)=4\lambda_{C}(Y)+\frac{1}{2}\sigma_{K}(e^{-4\pi i\alpha})
\]
where we are using an absolute $\mathbb{Z}/2\mathbb{Z}$ grading of
these Floer groups in order to compute the Euler characteristic with
respect to $\varLambda$. Moreover, each such $HI_{i}(Y,K,\alpha)$
admits a refinement $HI_{red,i}(Y,K,\alpha)$, which again will be
a finite dimensional vector space over the Novikov field $\varLambda$,
and which we will call the \textbf{reduced instanton Floer-Novikov
knot homology groups of the knot $K$. }Given these two Floer groups
one can define the\textbf{ Frøyshov knot-invariants 
\[
h(Y,K,\alpha)=\chi_{\varLambda}(HI^{red}(Y,K,\alpha))-\chi_{\varLambda}(HI(Y,K,\alpha))
\]
}For the case of $\alpha=1/4$, no Novikov field is needed and in
fact the Floer groups can be defined over $\mathbb{Q}$ (for example).
\end{thm}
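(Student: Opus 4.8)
The plan is to construct $HI_*(Y,K,\alpha)$ as a Floer-type homology built from the critical set of a perturbed Chern--Simons functional on the configuration space $\mathcal{B}_\alpha(Y\backslash K)$ of singular connections, in the orbifold framework of Kronheimer--Mrowka with a fixed cone angle $2\pi/\nu$ compatible with $\alpha$. The unperturbed critical points are exactly the flat $\alpha$-connections, i.e.\ the elements of $\mathcal{R}_\alpha(Y\backslash K,SU(2))$; the hypothesis $\triangle_K(e^{-4\pi i\alpha})\neq 0$ guarantees, as recalled in Section~\ref{sec:Floer-Novikov-Homology-and}, that the unique reducible $\theta_\alpha$ is isolated and nondegenerate, so after a generic holonomy perturbation $\pi$ supported away from $\theta_\alpha$ the whole critical set becomes finite and nondegenerate, with $\theta_\alpha$ carrying stabilizer $U(1)$. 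First I would set up a relative $\mathbb{Z}/4\mathbb{Z}$ grading by spectral flow of the perturbed extended Hessian, following the orbifold index computations of Collin--Steer \cite{MR1703606}, and then fix an absolute $\mathbb{Z}/2\mathbb{Z}$ grading so that the mod-$2$ reduction of the grading attached to an irreducible agrees with the Herald sign of that critical point in the count defining $\lambda_{CLH}(Y,K,\alpha)$.

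The essential new point is the non-monotone situation when $\alpha\neq 1/4$. Following the remarks of Kronheimer--Mrowka \cite[Section 1.2.5]{MR2860345}, the energy of an $\alpha$-ASD trajectory between two critical points is not determined by its relative index, so the naive count of index-$1$ trajectories need not be a finite sum and $\partial^2=0$ may fail. I would remedy this exactly as in Floer--Novikov theory: introduce the Novikov field $\varLambda$, the completion of a group ring over the period group of topological energies with respect to the energy filtration, and define the differential on $C_*(Y,K,\alpha)=\bigoplus_{[a]\ \mathrm{irr}}\varLambda\langle[a]\rangle$ by counting index-$1$ trajectories, each weighted by the Novikov monomial recording its energy/topological type. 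Gromov--Floer compactness within each energy window, together with the definition of $\varLambda$, makes $\partial$ well defined, and the usual analysis of the ends of the (perturbed, transverse) $2$-dimensional trajectory moduli spaces --- broken trajectories, together with bubbling and sliding into the reducible, the latter controlled because $\theta_\alpha$ is nondegenerate and the splitting $E=L_\alpha\oplus L_\alpha^{-1}$ has $H^1(Y\backslash K;L_\alpha^{\otimes 2})=0$ --- yields $\partial^2=0$. Finiteness of the critical set makes each $HI_i(Y,K,\alpha)$ finite dimensional over $\varLambda$. The Euler characteristic statement is then nearly formal: $\chi_\varLambda$ with respect to the absolute $\mathbb{Z}/2\mathbb{Z}$ grading equals the signed count of irreducible critical points, which by the normalization of the grading is the Herald count $\lambda_{CLH}(Y,K,\alpha)$; combining with the signature identity for $\lambda_{CLH}$ already recorded in the introduction gives the displayed equality.

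For the reduced theory I would imitate Fr\o yshov's construction of $HI_{red}(Y)$, adapted to the single reducible $\theta_\alpha$ with its $U(1)$-stabilizer: enlarge the picture near $\theta_\alpha$ to the $U(1)$-equivariant (framed) configuration space, so that $\theta_\alpha$ contributes a tower-type $\varLambda$-module, and record the boundary maps $D\colon C_*^{\theta}\to C_*^{\mathrm{irr}}$ and $\bar D\colon C_*^{\mathrm{irr}}\to C_*^{\theta}$ coming from trajectories asymptotic to $\theta_\alpha$; the reduced complex is the resulting mapping-cone-type complex built from the irreducible generators together with these correction maps, and $HI_{red,i}(Y,K,\alpha)$ is its homology, again finite dimensional over $\varLambda$. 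One then checks, as Fr\o yshov does, invariance of $HI_{red}$ under change of metric, cone angle (invoking Ma \cite{Ma[torus]}), perturbation, and choice of Novikov system, so that $h(Y,K,\alpha)=\chi_\varLambda(HI^{red}(Y,K,\alpha))-\chi_\varLambda(HI(Y,K,\alpha))$ is a well defined integer invariant of $(Y,K,\alpha)$. Finally, when $\alpha=1/4$ the monotonicity identity of Kronheimer--Mrowka holds, the energy is pinned by the index, the period group is trivial, so $\varLambda$ collapses to $\mathbb{Q}$ and the whole construction runs over $\mathbb{Q}$, recovering a reduced refinement of the Collin--Steer / Daemi--Scaduto groups.

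I expect the main obstacle to be the combined transversality and compactness problem in the singular, non-monotone, $U(1)$-reducible setting: establishing $\partial^2=0$ (and the compatibility of $D$ and $\bar D$) requires an orbifold version of gluing and Uhlenbeck compactness near $\theta_\alpha$, organized so that the Novikov weights add coherently under breaking. A secondary but genuine difficulty is pinning down orientations and signs so that $\chi_\varLambda$ equals $\lambda_{CLH}(Y,K,\alpha)$ on the nose, rather than up to sign or a grading shift, which forces the spectral-flow normalization to be matched precisely with Herald's local signs.
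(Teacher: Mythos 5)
Your outline for the unreduced groups is essentially the route the paper takes: work in the Kronheimer--Mrowka orbifold picture with a fixed cone angle, perturb with holonomy perturbations whose supports avoid the knot so that Herald's signed count of critical points applies, prove that broken trajectories cannot factor through $\theta_\alpha$ in a one-dimensional moduli space, and pass to a Novikov field to absorb the non-monotone energy discrepancies. One small normalization issue: the paper pins down an absolute $\mathbb{Z}/4\mathbb{Z}$ grading first (using paths to $\theta_\alpha$, which is assigned grading $0$), and then reduces mod $2$ to define $\chi_\varLambda$; you go directly to a $\mathbb{Z}/2\mathbb{Z}$ grading matched to Herald's signs, which is fine but slightly obscures how the $\mathbb{Z}/4\mathbb{Z}$ structure is set up for the reduced theory.

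For the reduced groups your proposal diverges from the paper in a way that is worth flagging. You suggest blowing up near $\theta_\alpha$ to a $U(1)$-equivariant framed configuration space, recording maps $D,\bar D$ to and from a ``tower'' module, and taking the homology of a mapping-cone--type complex; this is closer to the $\overline{HM}$/S-complex philosophy. The paper instead follows Fr\o yshov's original construction verbatim: it defines maps $\delta_1\colon HI_1\to\varLambda$ and $\delta_2\in HI_2$ by counting one-dimensional trajectories to and from $\theta_\alpha$, constructs a $u_K$-map (the degree-two $\mu$-class evaluated at a point $x\in K$, built from the universal $U(1)$ bundle over the framed configuration space, with the wrinkle that the holonomy along $\mathbb{R}\times\{x\}$ must be taken as a limit as one approaches the knot and then raised to the appropriate power of the cone parameter), proves the key chain-level relation $\partial u_K-u_K\partial-\tfrac12\,\delta_1\otimes\delta_2=0$, and then defines $HI^{red}_i$ as the kernels of $\delta_1 u_K^{2l}$ (or $\delta_1 u_K^{2l+1}$) and the quotients by the images of $u_K^{2l}\delta_2$ (or $u_K^{2l+1}\delta_2$). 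This is not just a difference in packaging: the $h$-invariant formula $\chi_\varLambda(HI^{red})-\chi_\varLambda(HI)$ and the use of the additive-trace lemma in the later splitting-formula argument are calibrated to exactly this kernel/cokernel definition, and you would have to do extra work to show your mapping-cone reduced groups agree with Fr\o yshov's. The genuinely nonformal input you are missing is the $u_K$-map and its interaction with $\delta_1,\delta_2$.

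One substantive error: you invoke Ma's result to conclude invariance of $HI^{red}$ under change of cone angle. Ma's theorem establishes cone-angle independence of the closed-$4$-manifold invariant $\lambda_{FO}(X,T,\alpha)$, not of the Floer groups; the paper explicitly states that cone-angle independence of $HI(Y,K,\alpha,\nu)$ and $HI^{red}(Y,K,\alpha,\nu)$ is \emph{expected} but not proven, and keeps $\nu$ as an implicit parameter of the construction. You should not claim this as established.
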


\begin{rem}
In order to be completely accurate, we should specify the cone parameter
being used in the resulting groups, and write $HI(Y,K,\alpha,\nu)$,
$HI_{red}(Y,K,\alpha,\nu)$ and $h(Y,K,\alpha,\nu)$. However, we
expect all of these to be independent of $\nu$, which is why we will
continue to suppress $\nu$ from our notation. 
\end{rem}

Some examples/properties of these groups and the knot $h$-invariants
will be discussed in Section 8, but now we have to answer the other
part of Question \ref{second question}, namely, what is the relation
between $\lambda_{FO}(X,T,\alpha)$ and $\text{Lef}(HI(Y,K,\alpha))$
in the case of a self-concordance of a knot? 

First of all, we must observe that currently the functoriality properties
of the Floer groups $HI(Y,K,\alpha)$ are weaker than their non-singular
counterparts $HI(Y)$. By this we mean that a homology concordance
$(W,\varSigma):(Y_{1},K_{1})\rightarrow(Y_{2},K_{2})$ for which both
of $HI(Y_{1},K_{1},\alpha)$ and $HI(Y_{2},K_{2},\alpha)$ are defined,
may not induce a cobordism map between the Floer groups. The reason
for this has to do once again with the reducible connections on the
cobordism. In the case where $\varSigma$ is not present, there are
maps between $HI(Y_{1})$ and $HI(Y_{2})$ essentially because the
trivial connection $\theta_{W}$ on the cobordism, which is the unique
reducible up to gauge since $H_{1}(W;\mathbb{Z})=0$, is automatically
isolated and non-degenerate given that $H^{1}(W;\mathfrak{g}_{\theta_{W}})=H^{1}(W;\mathbb{R})\otimes\mathbb{R}^{3}=0$
and $H^{2,+}(W;\mathfrak{g}_{\theta_{W}})=H^{2,+}(W;\mathbb{R})\otimes\mathbb{R}^{3}=0$. 

In the case of a homology concordance there is still a unique reducible
connection $\theta_{W,\alpha}$ after we have fixed a choice of $\alpha$
since $H_{1}(W\backslash\varSigma;\mathbb{Z})=\mathbb{Z}$, but now
it is a priori not immediate that it will be isolated or non-degenerate.
The cobordisms for which this will happen will be called \textbf{$\alpha$-admissible,
}and after we explain more of the setup we are using in sections \ref{sec:Review-of-the}
and \ref{sec:Floer-Novikov-Homology-and} it will become clear that
the condition we are after is the following.
\begin{defn}
\label{def: alpha admissible}A homology concordance $(W,\varSigma):(Y_{1},K_{1})\rightarrow(Y_{2},K_{2})$
between two knots $K_{1}\subset Y_{1}$ and $K_{2}\subset Y_{2}$
is a homology cobordism $W:Y_{1}\rightarrow Y_{2}$ together with
an embedded annulus $\varSigma:K_{1}\rightarrow K_{2}$. 

The pair $(W,\varSigma)$ will be called \textbf{$\alpha$-admissible
}for $\alpha\in(0,1/2)$, if $\triangle_{K_{1}}(e^{-4\pi i\alpha})\neq0$,
$\triangle_{K_{2}}(e^{-4\pi i\alpha})\neq0$ and moreover $H^{1}(W\backslash\varSigma;L_{\theta_{W,\alpha}}^{\otimes2})=0$.
Here $\theta_{W,\alpha}$ denotes the unique reducible (up to gauge)
compatible with the holonomy condition $\alpha$, and $E=L_{\theta_{W,\alpha}}\oplus L_{\theta_{W,\alpha}}^{-1}$
denotes the decomposition of the trivial $SU(2)$ bundle over $W\backslash\varSigma$
induced by $\theta_{W,\alpha}$.
\end{defn}

We will discuss to what extent this condition on the cobordism is
really needed at the end of this section, but for now let's assume
that it holds. In that case it is straightforward to see that we have
cobordism maps $HI(W,\varSigma,\alpha)$ between $HI(Y_{1},K_{1},\alpha)$
and $HI(Y_{2},K_{2},\alpha)$ as explained in Section \ref{sec:Floer-Novikov-Homology-and}. 

In order to obtain a splitting formula analogous to the one $\lambda_{FO}(X)$
satisfies we need an additional piece of data. In general, it is more
accurate to regard $\lambda_{FO}(X,T,\alpha)$ as a degree zero Donaldson
invariant, associated to the moduli space $\mathcal{M}(X,T,0,0,\alpha)$,
since in practice one needs to perturb the $\alpha$-ASD equation
$F_{A}^{+}=0$, not the $\alpha$-flat equation $F_{A}=0$, in order
to define $\lambda_{FO}(X,T,\alpha)$. 

Interestingly enough, in the case that $\alpha\neq1/4$, there are
other moduli spaces $\mathcal{M}(X,T,k,l,\alpha)$ whose expected
dimension is zero (and are a priori non-empty), which means that there
are additional candidates for degree zero Donaldson invariants. As
we will explain in more detail in Section \ref{sec:Singular-Orbifold-Furuta-Ohta},
whenever $k$ is an integer such that $k(1-4\alpha)\geq0$, the moduli
space $\mathcal{M}(X,T,k,-2k,\alpha)$ is of expected dimension zero,
and can in fact be used to define a Donaldson-type invariant $D_{0}(X,T,\alpha,k)$
(see definition \ref{Def deg 0 Donaldson invariants} for a precise
statement). In particular, we can create a formal power series 
\begin{equation}
\sum_{k\in\mathbb{Z}}D_{0}(X,T,\alpha,k)T^{-\mathcal{E}_{top}(X,T,k,-2k,\alpha)}\label{formal power series}
\end{equation}
where $\mathcal{E}_{top}(X,T,k,-2k,\alpha)$ denotes the topological
energy of the moduli space $\mathcal{M}(X,T,k,-2k,\alpha)$. In fact,
this energy is equal to the quantity $k(1-4\alpha)$ which explains
the restriction $k(1-4\alpha)\geq0$ , since negative energy moduli
spaces of $\alpha$-$ASD$ instantons are always empty, just as in
the ordinary case where no holonomy condition is present. Now, the
formal power series \ref{formal power series} is in fact the sort
of object a Novikov field is equipped to handle, in other words, we
can think of the series \ref{formal power series} as an element of
$\varLambda$. 

This is a good thing, since the Lefschetz number of a degree-preserving
linear transformation $L:V\rightarrow V$ between two finite dimensional
vector spaces over some field $\mathbb{F}$ will be an element of
$\mathbb{F}$. In the case of a self-concordance $(W,\varSigma):(Y,K)\rightarrow(Y,K)$
which is $\alpha$-admissible we should think of $V$ as being either
$HI(Y,K,\alpha)$ (or $HI_{red}(Y,K,\alpha)$), $\mathbb{F}$ as the
Novikov field $\varLambda$ and $L$ as the map on corresponding Floer
groups induced by the cobordism. Therefore, one would expect that
the Lefschetz number of the map induced by the self-concordance equals
\ref{formal power series}.

That will be the case, modulo a final caveat. There is an action of
$H^{1}(X;\mathbb{Z}/2)$ on the moduli spaces $\mathcal{M}(X,T,k,-2k,\alpha)$,
which has been studied ad nauseam in other situations involving Instanton
Floer homology \cite{MR1362829,MR2081729,MR2033479,MR2209367,MR3704245,MR2805599}.
Unless the action of $H^{1}(X;\mathbb{Z}/2)$ is free on the irreducible
part $\mathcal{M}^{*}(X,T,k,-2k,\alpha)$ of the moduli space, one
cannot expect a relationship between the formal power series and the
Lefschetz number to hold, since there will be an ambiguity when solving
the gluing problem, as explained in \cite[Section 5]{MR2805599}.
 In fact, it will turn out that the action of of $H^{1}(X;\mathbb{Z}/2)$
on $\mathcal{M}^{*}(X,T,k,-2k,\alpha)$ is free,  in which case the
splitting (or Lefschetz) formula \ref{spliting intro} is essentially
a consequence of \cite[Proposition 5.5]{MR2805599}, which in Kronheimer
and Mrowka's situation comes from the assumption that the subgroup
$\phi^{*}\subset H^{1}(W^{*},S^{*},\mathbf{P}^{*})$ (in their notation)
satisfies a non-integral condition.

With these remarks in place, we can finish answering Question \ref{second question}. 
\begin{thm}
\label{splitting formula Furuta Ohta } Let $K\subset Y$ an oriented
knot inside an oriented integer homology sphere and $(W,\varSigma):(Y,K)\rightarrow(Y,K)$
a self-concordance of $K$. Consider $\alpha\in\mathbb{Q}\cap(0,1/2)$
such that $\triangle_{K}(e^{-4\pi i\alpha})\neq0$.

If $(W,\varSigma)$ is $\alpha$-admissible then there are degree-preserving
maps 
\begin{align*}
HI(W,\varSigma,\alpha):HI(Y,K,\alpha)\rightarrow HI(Y,K,\alpha)\\
HI_{red}(W,\varSigma,\alpha):HI(Y,K,\alpha)\rightarrow HI(Y,K,\alpha)
\end{align*}
induced by the cobordism $(W,\varSigma)$.

If $(X,T)$ is the pair obtained by closing up $(Y,K)$ then for $k\neq0$
the invariants $D_{0}(X,T,\alpha,k)$ are always well defined. Moreover,
$\lambda_{FO}(X,T,\alpha)=D_{0}(X,T,\alpha,0)$ can be defined if
and only if $(W,\varSigma)$ is $\alpha$-admissible, and  the following
\textbf{splitting formula }holds
\begin{equation}
\sum_{k\in\mathbb{Z}}D_{0}(X,T,\alpha,k)T^{-\mathcal{E}(X,T,k,-2k,\alpha)}=2\text{Lef}(HI(W,\varSigma,\alpha))=2\text{Lef}(HI_{red}(W,\varSigma,\alpha))-2h(Y,K,\alpha)\label{spliting intro}
\end{equation}

Finally, if $(X,T)$ arises as the closure of another self-concordance
$(W',\varSigma'):(Y',K')\rightarrow(Y',K')$ and $\triangle_{K'}(e^{-4\pi i\alpha})\neq0$
as well, then 
\begin{equation}
\text{Lef}(HI_{red}(W,\varSigma,\alpha))=\text{Lef}(HI_{red}(W',\varSigma',\alpha))\label{equality Lefschetz}
\end{equation}
and hence 
\[
h(Y,K,\alpha)=h(Y',K',\alpha)
\]
In particular,we can define a \textbf{Frøyshov torus invariant} $h(X,T,\alpha)$
for the embedded torus as $h(Y,K,\alpha)$ given an arbitrary ``slice''
$(Y,K)$.
\end{thm}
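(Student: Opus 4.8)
The proof proceeds in four steps.

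\textbf{Step 1: the cobordism maps.} First, to produce the maps induced by an $\alpha$-admissible self-concordance, I would put cylindrical ends on $W\setminus\varSigma$ modeled on $\mathbb{R}_{\pm}\times(Y\setminus K)$, carrying the orbifold metric, the holonomy parameter $\alpha$, and the holonomy perturbation used to define $HI(Y,K,\alpha)$ in Section \ref{sec:Floer-Novikov-Homology-and}, with the \emph{same} perturbation on both ends (this is what makes the map a self-map rather than merely a cobordism map). One counts, with signs and with the Novikov weight $T^{-\mathcal{E}}$ by topological energy, the zero-dimensional perturbed $\alpha$-ASD moduli spaces on $W\setminus\varSigma$ asymptotic to a chosen critical point of the perturbed Chern--Simons functional on each end. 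Since $H_{1}(W\setminus\varSigma;\mathbb{Z})=\mathbb{Z}$ the only reducible $\alpha$-flat connection is $\theta_{W,\alpha}$, and the admissibility hypothesis $H^{1}(W\setminus\varSigma;L_{\theta_{W,\alpha}}^{\otimes2})=0$ of Definition \ref{def: alpha admissible} makes it isolated and nondegenerate, so it enters the moduli spaces exactly as the reducible does in the construction of $HI(Y,K,\alpha)$. The compactness and gluing package then goes through --- we are in a non-monotone situation, but the energy bookkeeping is absorbed into the Novikov field $\varLambda$ rather than forced by monotonicity --- and shows the count is a chain map whose chain-homotopy class is independent of the metric, the perturbations, and the cone angle; the reduced complex gives $HI_{red}(W,\varSigma,\alpha)$. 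A standard neck-stretching argument yields the composition law $HI(W_{1}\cup W_{2})=HI(W_{2})\circ HI(W_{1})$ for stacked concordances (which remain admissible by Mayer--Vietoris), and I will use this in Step 4.

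\textbf{Step 2: the invariants $D_{0}(X,T,\alpha,k)$.} Every reducible in $\mathcal{R}_{\alpha}(X\setminus T,SU(2))$ is $\alpha$-flat, hence of zero topological energy, hence lies only in $\mathcal{M}(X,T,0,0,\alpha)$; for $k\neq0$ the energy $\mathcal{E}(X,T,k,-2k,\alpha)=k(1-4\alpha)$ is positive (the moduli space being empty when it would be negative), so $\mathcal{M}(X,T,k,-2k,\alpha)$ contains no reducibles, a generic holonomy perturbation makes it a compact oriented $0$-manifold, and $D_{0}(X,T,\alpha,k)$ is a perturbation-independent signed count --- unconditionally defined, as in Definition \ref{Def deg 0 Donaldson invariants}. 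For $k=0$, $D_{0}(X,T,\alpha,0)=\lambda_{FO}(X,T,\alpha)$ requires the reducible locus to be nondegenerate, i.e.\ the hypothesis of Theorem \ref{definition of the singular Furuta Ohta invariant} to hold for $(X,T,\alpha)$; I would show this is equivalent to $\alpha$-admissibility of $(W,\varSigma)$ by a Mayer--Vietoris computation. Namely, $X\setminus T$ is obtained from $W\setminus\varSigma$ by gluing its two ends $Y\setminus K$, and since $\triangle_{K}(e^{-4\pi i\alpha})\neq0$ forces both $H^{0}$ and $H^{1}$ of $Y\setminus K$ with coefficients in $L_{\alpha}^{\otimes2}$ to vanish, the Mayer--Vietoris sequence collapses to an isomorphism $H^{1}(X\setminus T;L_{\rho}^{\otimes2})\cong H^{1}(W\setminus\varSigma;L_{\theta_{W,\alpha}}^{\otimes2})$ for every reducible $\rho$ (whose restriction to $W\setminus\varSigma$ is necessarily $\theta_{W,\alpha}$). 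This yields the claimed ``if and only if''.

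\textbf{Step 3: the splitting formula.} This is the core of the argument and, I expect, the main obstacle. Following Taubes and Anvari \cite{MR1037415,Anvari[2019]} (and the Seiberg--Witten version \cite{MR3811774}), I would stretch the neck of $(X,T)$ along the cut $(Y,K)$: an $\alpha$-ASD connection in the stratum $(k,-2k)$ degenerates into a broken trajectory on $\mathbb{R}\times(Y\setminus K)$ together with a finite-energy connection on $W\setminus\varSigma$, glued cyclically at a common critical point, and conversely such configurations glue back once the gluing near $\theta_{W,\alpha}$ is unobstructed. Summing the resulting identity over $k$ and recording each energy $k(1-4\alpha)$ as a power of $T$ turns $\sum_{k}D_{0}(X,T,\alpha,k)\,T^{-\mathcal{E}(X,T,k,-2k,\alpha)}$ into $2\,\mathrm{Lef}(HI(W,\varSigma,\alpha))\in\varLambda$, the factor $2$ being the same normalization that produces $\lambda_{FO}(S^{1}\times Y,S^{1}\times K,\alpha)=2\lambda_{CLH}(Y,K,\alpha)$ together with $\chi_{\varLambda}(HI(Y,K,\alpha))=\lambda_{CLH}(Y,K,\alpha)$. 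The delicate point is precisely the gluing near the reducible, which is why the non-integral condition of Kronheimer--Mrowka is needed via \cite[Proposition 5.5]{MR2805599}: one needs the $H^{1}(X;\mathbb{Z}/2)=\mathbb{Z}/2$ action on the irreducible moduli spaces $\mathcal{M}^{*}(X,T,k,-2k,\alpha)$ to be free, so that no orientation or multiplicity ambiguity appears when matching closed-up configurations to fixed-point data of the cobordism map, and I would verify this freeness directly (a fixed point of the nontrivial element would be a connection reducing along the relevant double cover, contradicting irreducibility together with the topology of $(X,T)$). Rerunning the argument with the reduced complexes and tracking the lone reducible $\theta_{W,\alpha}$ --- whose contributions to $\mathrm{Lef}(HI_{red}(W,\varSigma,\alpha))$ and $\mathrm{Lef}(HI(W,\varSigma,\alpha))$ differ by exactly $h(Y,K,\alpha)$, by the mapping-cone description of $HI_{red}$ relative to $HI$ and the additivity of Lefschetz numbers on exact triangles --- yields the second equality of \ref{spliting intro}. (For $\alpha=1/4$ only the $(0,0)$-stratum has expected dimension zero, the series collapses to its $T^{0}$ term, and one recovers an Anvari-type splitting over $\mathbb{Q}$.)

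\textbf{Step 4: independence of the slice.} Suppose $(X,T)$ is also the closure of $(W',\varSigma'):(Y',K')\to(Y',K')$ with $\triangle_{K'}(e^{-4\pi i\alpha})\neq0$. Both $Y$ and $Y'$ are Poincar\'e dual to the generator of $H^{1}(X;\mathbb{Z})\cong\mathbb{Z}$, so after isotopy they become disjoint regular fibers of a map $X\to S^{1}$; in particular $(Y',K')$ sits inside $(W,\varSigma)$, and cutting along it expresses $(W,\varSigma)=(V_{2},\varSigma_{2})\circ(V_{1},\varSigma_{1})$ for homology concordances $(V_{1},\varSigma_{1}):(Y,K)\to(Y',K')$ and $(V_{2},\varSigma_{2}):(Y',K')\to(Y,K)$, while $(W',\varSigma')\cong(V_{1},\varSigma_{1})\circ(V_{2},\varSigma_{2})$. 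A Mayer--Vietoris argument --- using $\triangle_{K}(e^{-4\pi i\alpha})\neq0$, $\triangle_{K'}(e^{-4\pi i\alpha})\neq0$, and the $\alpha$-admissibility of $(W,\varSigma)$ --- shows each $V_{i}\setminus\varSigma_{i}$ has vanishing $H^{1}$ with $L^{\otimes2}$-coefficients, hence each $V_{i}$ is $\alpha$-admissible and induces a degree-preserving map, and the same argument shows $(W',\varSigma')$ is $\alpha$-admissible. By the composition law and the cyclic invariance of the graded trace,
\[
\mathrm{Lef}(HI_{red}(W,\varSigma,\alpha))=\mathrm{tr}\big(HI_{red}(V_{2})\,HI_{red}(V_{1})\big)=\mathrm{tr}\big(HI_{red}(V_{1})\,HI_{red}(V_{2})\big)=\mathrm{Lef}(HI_{red}(W',\varSigma',\alpha)),
\]
which is \ref{equality Lefschetz}; the identical computation gives $\mathrm{Lef}(HI(W,\varSigma,\alpha))=\mathrm{Lef}(HI(W',\varSigma',\alpha))$, and subtracting while invoking $\mathrm{Lef}(HI_{red}(\,\cdot\,))-\mathrm{Lef}(HI(\,\cdot\,))=h(\,\cdot\,)$ (a consequence of \ref{spliting intro}) gives $h(Y,K,\alpha)=h(Y',K',\alpha)$, so $h(X,T,\alpha):=h(Y,K,\alpha)$ is slice-independent. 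This is the instanton, knot-theoretic analogue of Fr\"oyshov's argument \cite[Theorem 8]{MR2738582}; the point requiring care is arranging the intermediate cut so that the pieces $V_{i}$ are honest homology concordances between integer homology spheres carrying knots with nonvanishing Alexander polynomial at $e^{-4\pi i\alpha}$, which is exactly why the hypothesis on $\triangle_{K'}$ cannot be dropped.
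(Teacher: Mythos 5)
Your Steps 1 and 2, and the first equality of Step 3, track the paper's argument faithfully; the Mayer--Vietoris equivalence between $\alpha$-admissibility of $(W,\varSigma)$ and well-definedness of $\lambda_{FO}(X,T,\alpha)$, and the neck-stretching argument with the factor $2$ coming from the $\mathbb{Z}/2$ center ambiguity in closing up the bundle, are exactly what the paper does. Two remarks follow.

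For the second equality of the splitting formula you invoke a ``mapping-cone description of $HI_{red}$ relative to $HI$'' and additivity of Lefschetz numbers on exact triangles. That is not how the paper defines $HI_{red}$: the reduced groups are defined in equation (\ref{reduced Floer groups}) as intersections of kernels of $\delta_{1}u_{K}^{2l}$, $\delta_{1}u_{K}^{2l+1}$ and quotients by images of $u_{K}^{n}\delta_{2}$, not as a cone. The paper's proof instead adapts Anvari's case analysis (\cite[Theorem A]{Anvari[2019]}), using the identities $\delta_{1,n}m_{\check W}=\delta_{1,n}+\sum a_{in}\delta_{1,i}$, $m_{\check W}\delta_{2,n}=\delta_{2,n}+\sum b_{in}\delta_{2,i}$ and trace additivity on a commuting diagram of short exact sequences, split into the cases $\delta_{2}=0$ and $\delta_{1}=0$. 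The spirit (tracking the reducible's interaction with the $u_{K}$-tower) is the same as what you gesture at, but as written your step is under-justified.

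The real gap is Step 4. You decompose $(W,\varSigma)=(V_{2},\varSigma_{2})\circ(V_{1},\varSigma_{1})$, $(W',\varSigma')\cong(V_{1},\varSigma_{1})\circ(V_{2},\varSigma_{2})$, and conclude by cyclic invariance of the trace. This would be a clean proof if it were available, but it rests on the claim that ``after isotopy $Y$ and $Y'$ become disjoint regular fibers of a map $X\to S^{1}$,'' and that is not a consequence of $Y$ and $Y'$ being Poincar\'e dual to the same class in $H^{1}(X;\mathbb{Z})$. Homologous hypersurfaces in a $4$-manifold are in general neither isotopic nor disjoinable, and here you additionally have to make the isotopy compatible with the torus $T$ so that the cut pieces remain honest concordances of pairs. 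The paper (Theorem \ref{thm:h-invariant for the knot}) avoids this entirely by following \cite[Section 13]{MR2738582}: since $f,f':X\to S^{1}$ are homotopic, the corresponding $n$-fold cyclic covers $X_{n}$ of $X$ (and of $X\setminus T$) are canonically diffeomorphic for every $n$; these are the closures of the stacked concordances $W_{n}$ and $W_{n}'$, giving $\text{tr}(HI_{red}(W)^{n})=\text{tr}(HI_{red}(W')^{n})$ for all large $n$; and one then concludes $\text{tr}(HI_{red}(W))=\text{tr}(HI_{red}(W'))$ from the matrix lemma \cite[Lemma 10]{MR2738582}, generalized here to the universal Novikov field $\varLambda^{\mathbb{C},\mathbb{R}}$, which is legitimate because $\varLambda^{\mathbb{C},\mathbb{R}}$ is algebraically closed (\cite[Lemma A.1]{MR2573826}). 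That algebraic detour is precisely what makes the argument robust to the geometric obstruction you are eliding.
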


\begin{rem}
a) Again, a dependence on the cone angle is implicit. 

b) The statement regarding the equality of the Lefschetz numbers \ref{equality Lefschetz}
will follow the strategy employed by Frøyshov in the case of $\lambda_{SW}(X)$,
which as we mentioned before can be found in \cite[Theorem 8]{MR2738582}.
The argument is essentially the same, the only thing one needs to
verify is a suitable version of \cite[Lemma 10]{MR2738582} for the
case of matrices with coefficients in a Novikov field. 

c) Besides the case of $\lambda_{FO}(X)$ and $\lambda_{SW}(X)$,
there are other instances where a Lefschetz formula has appeared in
a similar context, in fact, in some ways more closely related to our
situation. In \cite{Juhasz-Zemker[2018]} Juhász and Zemke compute
the effect of concordance surgery on the Ozsváth-Szabó 4-manifold
invariant. One is given a smooth closed oriented 4-manifold $X$ with
$b_{2}^{+}(X)\geq2$ and a homologically essential torus $T\subset X$
with trivial self-intersection. If $(I\times Y,\varSigma)$ is a self-concordance
of a knot $K\subset Y$, then there is a natural torus $T_{C}$ inside
$S^{1}\times Y$ which they use to construct a 4-manifold $X_{C}$
generalizing the Fintushel and Stern knot surgery \cite{MR1650308}.
Namely, one forms the 4-manifold $X_{C}=(X\backslash N(T))\cup_{\phi}W_{C}$
where $N(T)$ denotes a tubular neighborhood of $T$ and $W_{C}=(S^{1}\times Y)\backslash N(T_{C})$,
while $\phi$ is a gluing diffeomorphism. In (knot) Heegaard-Floer
homology there is a concordance map 
\[
\hat{F}_{C}:\widehat{HFK}(Y,K)\rightarrow\widehat{HFK}(Y,K)
\]
 and an associated graded Lefschetz number 
\[
\text{Lef}_{t}(C)=\sum_{i\in\mathbb{Z}}\text{Lef}\left(\hat{F}_{C}\mid_{\widehat{HFK}(Y,K,i)}:\widehat{HFK}(Y,K,i)\rightarrow\widehat{HFK}(Y,K,i)\right)t^{i}
\]
Then Theorem 1.1 in \cite{MR1650308} shows that $\varPhi_{X_{C};\omega}=\text{Lef}_{t_{1}}(C)\varPhi_{X;\omega}$,
where $\varPhi_{X;\omega}$ denotes a version of the Ozsváth-Szabó
4-manifold invariant twisted by a certain collection of closed 2-forms. 

d) Another situation very close to our splitting formula is the one
Kronheimer and Mrowka found for the Seiberg-Witten invariants on a
closed 4-manifold \cite[Section 32.1]{MR2388043}. More precisely,
one is given a closed oriented 4-manifold with $b_{2}^{+}(X)\geq2$
and within it a separating hypersurface $Y$, so that $X=X_{1}\cup X_{2}$
, $\partial X_{1}=Y$ and $\partial X_{2}=-Y$. If $\omega_{X}$ is
a 2-form used to perturb the Seiberg Witten equations on $X$ and
$\omega$ is the restriction of $\omega_{X}$ to $X$, then for a
torsion spin-c structure $\mathfrak{s}$ on $Y$, in general $\omega$
will induce a non-balanced perturbation in the sense of \cite[Chapter 32]{MR2388043},
which essentially means that a Novikov system $\varLambda$ is required
to define the corresponding monopole Floer homology groups $HM(Y,\mathfrak{s},\omega)$.
Proposition 32.1.1 in \cite{MR2388043} shows that 
\begin{equation}
\sum_{\mathfrak{s}_{X}\mid_{Y}=\mathfrak{s}}T^{-\mathcal{E}_{\omega_{X}}^{top}(\mathfrak{s}_{X})}SW(X,\mathfrak{s}_{X})=\left\langle \psi_{+},\psi_{-}\right\rangle _{\omega_{\mu}}\label{pairing KM}
\end{equation}
Here the sum is taking place over all spin-c structures on $X$ which
restrict to the given one on $Y$, $SW(X,\mathfrak{s}_{X})$ denotes
the Seiberg-Witten invariant associated to such a spin-c structure,
$\mathcal{E}_{\omega_{X}}^{top}$ is a perturbed topological energy,
and the right hand side denotes a pairing of two relative invariants
($\psi_{\pm}$ being an element of $HM(\pm Y,\mathfrak{s},\omega)$),
the pairing taking place with respect to the Novikov ring. Notice
that one can interpret our splitting formula \ref{spliting intro}
as an analogue of the pairing formula \ref{pairing KM} when the 3-manifold
is non-separating (rather than separating), and with the Donaldson
invariants (rather than the Seiberg-Witten invariants). From this
perspective, the different bundles $E(k,-2k)$ are playing a role
analogous to the one the different isomorphism classes of spin-c structures
play in the Seiberg-Witten context.
\end{rem}

In section 8 we will give some applications of the splitting formula
\ref{spliting intro}, including a proof that $\lambda_{FO}(S^{1}\times Y,S^{1}\times K,\alpha)=2\lambda_{CLH}(Y,K,\alpha)$.
But as a way to entice the reader, we now prove the following.
\begin{thm}
Suppose that $K\subset Y$ and $K'\subset Y'$ are knots and there
exists a concordance $(C,A):(Y,K)\rightarrow(Y',K')$ where $C$ is
a homology $[0,1]\times S^{3}$ and $A$ an embedded annulus. Then
for any $\alpha\in\mathbb{Q}\cap(0,1/2)$  such that $(C,A)$ is
$\alpha$-admissible (if any), we have that $h(Y,K,\alpha)=h(Y',K',\alpha)$,
i.e, the knot $h$-invariants are $\alpha$-concordance invariants. 

In particular, $h(Y',K',\alpha)$ will vanish  whenever $K'$ is
$\alpha$-slice in the sense that there is an $\alpha$-concordance
$(C,A):(S^{3},\circ)\rightarrow(Y',K')$, where $\circ$ is the unknot.
\end{thm}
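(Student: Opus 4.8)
The plan is to manufacture, out of the given $(C,A)$ and its reverse, two self-concordances that share a single closure $(X,T)$, and then to invoke the final part of the splitting formula, which already records that the knot Fr\"oyshov invariant attached to a self-concordance depends only on the closed pair. First I would form the reversed concordance $(\bar C,\bar A)\colon(Y',K')\to(Y,K)$, that is, $(C,A)$ read upside down; the underlying $4$-manifold of $\bar C\backslash\bar A$ is the same as that of $C\backslash A$, only with the roles of its two knot-exterior ends interchanged, so since the two Alexander conditions of Definition~\ref{def: alpha admissible} are symmetric in the ends and the vanishing of the relevant twisted $H^{1}$ is a property of the underlying manifold, $(\bar C,\bar A)$ is again $\alpha$-admissible. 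Then set $(W,\varSigma)=(\bar C,\bar A)\circ(C,A)$, a self-concordance of $(Y,K)$, and $(W',\varSigma')=(C,A)\circ(\bar C,\bar A)$, a self-concordance of $(Y',K')$; because $C$ is a homology $[0,1]\times S^{3}$ and $A$ an annulus, $W$ and $W'$ are homology products, so these are genuine self-concordances.

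The one point needing care is that $(W,\varSigma)$ and $(W',\varSigma')$ are themselves $\alpha$-admissible, and I would check this with the Mayer--Vietoris sequence for the local system $L^{\otimes2}=L_{\theta_{W,\alpha}}^{\otimes2}$ on the decomposition $W\backslash\varSigma=(C\backslash A)\cup_{Y'\backslash K'}(\bar C\backslash\bar A)$. Since $H_{1}(W\backslash\varSigma;\mathbb{Z})=\mathbb{Z}$, the reducible $\theta_{W,\alpha}$ compatible with the holonomy parameter is unique and restricts to the corresponding reducibles on the two pieces, so the two $H^{1}$ summands in Mayer--Vietoris vanish by the $\alpha$-admissibility of $(C,A)$ and of $(\bar C,\bar A)$; moreover the overlap term $H^{0}(Y'\backslash K';L^{\otimes2})$ vanishes because the meridian of $K'$ acts on the fibre of $L^{\otimes2}$ by multiplication by $e^{-4\pi i\alpha}\neq1$ (here $\alpha\in(0,1/2)$ is used), whence $H^{1}(W\backslash\varSigma;L^{\otimes2})=0$. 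The identical argument applies to $(W',\varSigma')$. I expect this admissibility bookkeeping --- in particular, making precise that the unique reducible on the composite really does restrict to the reducibles on the blocks, and that the stated twisted cohomology groups behave as claimed --- to be the main obstacle; everything else sits on top of the splitting formula.

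Now closing up $(W,\varSigma)$ and closing up $(W',\varSigma')$ produces one and the same pair $(X,T)$: both are obtained by gluing $C\backslash A$ to $\bar C\backslash\bar A$ along the $Y'$-ends \emph{and} along the $Y$-ends (equivalently, along the corresponding knot exteriors), so one is merely cutting the resulting closed ``necklace'' along a different fibre, and likewise the two torus closures coincide. Since $C$ is a homology $[0,1]\times S^{3}$ one gets $H_{*}(X;\mathbb{Z})\cong H_{*}(S^{1}\times S^{3};\mathbb{Z})$, and the ``around'' generator of $H_{1}(T)$ maps onto $H_{1}(X)=\mathbb{Z}$, so $(X,T,\alpha)$ meets the hypotheses of Theorem~\ref{definition of the singular Furuta Ohta invariant}. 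Applying the final conclusion of the splitting formula (equation~\eqref{equality Lefschetz}) to the two $\alpha$-admissible self-concordances $(W,\varSigma)$ and $(W',\varSigma')$, both of which close up to $(X,T)$, then yields $h(Y,K,\alpha)=h(X,T,\alpha)=h(Y',K',\alpha)$, which is the asserted $\alpha$-concordance invariance. Finally, for the slice statement I would take $(Y,K)=(S^{3},\circ)$ with $\circ$ the unknot: here $\triangle_{\circ}\equiv1$, so the knot Floer groups are defined for every $\alpha$, while $\pi_{1}(S^{3}\backslash\circ)\cong\mathbb{Z}$ is abelian, so the unknot exterior carries no irreducible $SU(2)$ representations, the chain complexes computing $HI(S^{3},\circ,\alpha)$ and $HI_{red}(S^{3},\circ,\alpha)$ have no generators, both Floer groups vanish, and hence $h(S^{3},\circ,\alpha)=0$; combined with the invariance just established, any $\alpha$-slice knot $K'$ satisfies $h(Y',K',\alpha)=h(S^{3},\circ,\alpha)=0$.
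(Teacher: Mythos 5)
Your proposal follows essentially the same route as the paper's proof: form the doubled self-concordances $(\bar C,\bar A)\circ(C,A)$ and $(C,A)\circ(\bar C,\bar A)$, observe that both close up to the same $(X,T)$, and invoke the final statement of the splitting formula (Theorem~\ref{splitting formula Furuta Ohta }, equation~\eqref{equality Lefschetz}), together with the vanishing of $HI$ and $HI_{red}$ for the unknot, to conclude. The one place where you do more is in explicitly verifying (via Mayer--Vietoris) that the two composite self-concordances inherit $\alpha$-admissibility from $(C,A)$ and $(\bar C,\bar A)$; the paper's proof passes over that bookkeeping silently, so the extra check is a genuine, if minor, improvement rather than a deviation.
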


\begin{proof}
Observe that we can form a self-concordance of the knot $(Y,K)$ obtained
by ``stacking'' $(C,A)$ with the opposite concordance $(\bar{C},\bar{A}):(Y',K')\rightarrow(Y,K)$
obtained by reversing orientations, 
\[
(W,\varSigma)=(\bar{C},\bar{A})\circ(C,A):(Y,K)\rightarrow(Y,K)
\]
From $(W,\varSigma)$ we can close it up to obtain $(X,T)$. The corresponding
closed 4-manifold $(X,T)$ can also be obtained from doing the staking
in the opposite order, namely $(C,A)\circ(\bar{C},\bar{A})$, so the
last statement in the splitting formula (Theorem \ref{splitting formula Furuta Ohta })
implies that the knot $h$-invariants are the same, i.e, 
\[
h(Y,K,\alpha)=h(Y',K',\alpha)
\]

The last claim follows from the fact that for the unknot, for all
$\alpha\in\mathbb{Q}\cap(0,1/2)$, we have $HI(S^{3},\circ,\alpha)=HI_{red}(S^{3},\circ,\alpha)=0$,
and hence $h(S^{3},\circ,\alpha)=0$.
\end{proof}
We conclude this section by mentioning the ``flip symmetry'' property
our Floer groups enjoy (in the context of singular gauge theory on
4-manifolds it was introduced by Kronheimer and Mrowka, \cite[Lemma 2.12]{MR1241873}).
 
\begin{thm}
\label{Flip symmetry} Let $K\subset Y$ be an oriented knot inside
an oriented integer homology sphere and choose $\alpha\in\mathbb{Q}\cap(0,1/2)$
such that $\triangle_{K}(e^{-4\pi i\alpha})\neq0$. Then there is
a flip isomorphism 
\[
\mathcal{F}:HI(Y,K,\alpha)\rightarrow HI\left(Y,K,\frac{1}{2}-\alpha\right)
\]
which is grading preserving, with an analogous isomorphism for the
reduced groups $HI_{red}(Y,K,\alpha)$. In particular, 
\[
h(Y,K,\alpha)=h\left(Y,K,\frac{1}{2}-\alpha\right)
\]

Likewise, for a pair $(X,T)$ we have for all $k\neq0$ that
\[
D_{0}(X,T,k,\alpha)=D_{0}\left(X,T,-k,\frac{1}{2}-\alpha\right)
\]
and a similar statement holds for $\lambda_{FO}(X,T,\alpha)$ whenever
it can be defined.
\end{thm}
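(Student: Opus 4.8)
The plan is to deduce everything from the bundle-level flip symmetry of Kronheimer--Mrowka \cite[Lemma 2.12]{MR1241873}, upgraded to the Floer-theoretic setting at hand. First I would recall the geometric content of that lemma: given a pair $(X,\varSigma)$ with a singular $SU(2)$ connection having holonomy parameter $\alpha$ along $\varSigma$, one can apply a gauge transformation that is singular along $\varSigma$ (conjugation by a section of the adjoint bundle that rotates the normal framing by a half-integer twist) which sends an $\alpha$-ASD connection on $E(k,l)$ to a $(\tfrac12-\alpha)$-ASD connection on $E(k',l')$ with $(k',l')$ obtained from $(k,l)$ by the explicit reflection coming from the change-of-variable $\alpha \mapsto \tfrac12-\alpha$ in the monopole/instanton number formulas. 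Concretely, on the mapping cylinder $[0,1]\times(Y,K)$ this says that the $\alpha$-flat moduli space $\mathcal{R}_\alpha(Y\backslash K, SU(2))$ and its perturbations are diffeomorphic to the $(\tfrac12-\alpha)$-ones, and this diffeomorphism respects the reducible loci. I would check that, since the flip is conjugation by a fixed (singular) gauge transformation, it carries the trivial connection $\theta_\alpha$ to $\theta_{1/2-\alpha}$, hence preserves the hypothesis $\triangle_K(e^{-4\pi i\alpha})\neq 0 \Leftrightarrow \triangle_K(e^{-4\pi i(1/2-\alpha)})\neq 0$ (which is also visible directly since $e^{-4\pi i(1/2-\alpha)} = e^{-2\pi i}e^{4\pi i \alpha}$ and Alexander polynomials are symmetric under $t\mapsto t^{-1}$).

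Next I would promote this to a chain-level isomorphism. The flip gauge transformation induces a bijection between the generators of the instanton chain complexes $C_*(Y,K,\alpha)$ and $C_*(Y,K,\tfrac12-\alpha)$ (critical points of the perturbed Chern--Simons functional), and by naturality of the ASD equation under this transformation it identifies the one-dimensional (perturbed) trajectory moduli spaces used to define the differential, with matching orientations once one fixes the induced identification of determinant lines. The only bookkeeping point is the grading: one must show the relative $\mathbb{Z}/4$ grading is preserved. This follows because the spectral-flow/index computation that defines the grading depends on $\alpha$ only through data that the flip transformation carries across isomorphically — equivalently, the flip changes $(k,l)$ but also changes the energy/dimension formula for $\mathcal{M}(X,\varSigma,k,l,\alpha)$ in a compensating way, so $\dim \mathcal{M}(X,T,k,-2k,\alpha) = \dim \mathcal{M}(X,T,-k,-2(-k),\tfrac12-\alpha)$, which is exactly the statement needed for the $D_0$ identity. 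Then $\mathcal{F}$ is defined as the induced map on homology; it is an isomorphism because the inverse flip (by the inverse gauge transformation) provides a two-sided inverse. The same argument applied to the reduced complex — which is built from the same data together with the reducible $\theta_\alpha$ and its equivariant neighborhood, all of which the flip respects — gives the reduced statement, and the equality $h(Y,K,\alpha) = h(Y,K,\tfrac12-\alpha)$ is then immediate from the definition $h = \chi_\varLambda(HI^{red}) - \chi_\varLambda(HI)$ since both Euler characteristics are flip-invariant.

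For the closed-manifold statement, the same singular gauge transformation on $X\backslash T$ identifies $\mathcal{M}(X,T,k,-2k,\alpha)$ with $\mathcal{M}(X,T,-k,2k,\tfrac12-\alpha)$ compatibly with the perturbations, the $H^1(X;\mathbb{Z}/2)$-action, and the orientations, so the signed counts agree: $D_0(X,T,k,\alpha) = D_0(X,T,-k,\tfrac12-\alpha)$. When $k=0$ and $\lambda_{FO}(X,T,\alpha) = D_0(X,T,0,\alpha)$ is defined, the flip sends the $(0,0)$-bundle to itself, so the admissibility hypothesis $H^1(X\backslash T; L_\rho^{\otimes 2})=0$ is preserved and $\lambda_{FO}(X,T,\alpha) = \lambda_{FO}(X,T,\tfrac12-\alpha)$. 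The main obstacle I anticipate is not the existence of the flip map but the orientation and grading compatibility: one must pin down how the flip gauge transformation acts on the determinant line bundle of the ASD operator (and on the $\mathbb{Z}/2$ grading used for $\chi_\varLambda$), since a sign discrepancy there would be fatal to the Euler-characteristic consequences — this is the analogue of the subtlety Kronheimer and Mrowka handle in \cite[Lemma 2.12]{MR1241873}, and I would model the argument on theirs, tracking the half-integer normal twist through the excision/gluing formula for the index.
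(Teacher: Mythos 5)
Your proposal identifies the right engine (the Kronheimer--Mrowka flip of \cite[Section 2(iv)]{MR1241873}, which the paper also invokes) and the right overall structure: a bundle-level operation intertwining the $\alpha$- and $(\tfrac12-\alpha)$-moduli spaces, then a chain-level identification, then an orientation check. But there are three places where it is either imprecise or leaves a genuine hole.

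First, the flip is not conjugation by a section of the adjoint bundle --- that would be an ordinary gauge transformation and would leave the holonomy parameter $\alpha$ fixed. It is tensoring $E(k,l)$ by a $U(1)$ bundle $\chi$ whose holonomy on small meridians of $T$ is $-1$, and this gives the map $\mathcal{F}:\mathcal{M}(X,T,k,l,\alpha)\rightarrow\mathcal{M}(X,T,k+l,-l,\tfrac12-\alpha)$. The bookkeeping $(k,-2k)\mapsto(-k,2k)=(-k,-2(-k))$, and hence $D_0(X,T,k,\alpha)=D_0(X,T,-k,\tfrac12-\alpha)$, falls out immediately once you have the operation right.

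Second --- and this is the real gap --- you flag the orientation question as an anticipated obstacle to be modeled on Kronheimer--Mrowka but do not resolve it, and the resolution is not automatic. By \cite[Appendix I(ii)]{MR1308489} the flip reverses the orientation of the moduli spaces precisely when $\tfrac14\,\varSigma\cdot\varSigma-(g-1)$ is odd. Here $\varSigma=T$ has $T\cdot T=0$ and $g=1$, so this quantity vanishes and $\mathcal{F}$ is orientation-preserving; without that specific numerical check the claimed equalities of signed counts and Euler characteristics are unsupported. The grading compatibility is then short: since $\mathcal{F}[\theta_\alpha]=[\theta_{1/2-\alpha}]$ and $\mathcal{F}$ carries $\mathcal{M}([\beta],[\theta_\alpha])$ to $\mathcal{M}([\mathcal{F}\beta],[\theta_{1/2-\alpha}])$, the absolute $\mathbb{Z}/4$ grading \eqref{absolute grading}, defined by spectral flow to the reducible, is preserved.

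Third, for the reduced complex you assert that the flip respects all the relevant data, but in fact the $u$-map $\mu_K(x)$ changes sign under $\mathcal{F}$ (see \cite[Section 4.2]{MR1432428}). It is true that this sign is harmless for the definition of $HI^{red}$ and for the Euler characteristic, but that is a point to be checked against the formulas \eqref{reduced Floer groups}, not an instance of ``the flip respects everything.''
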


\subsection{Some Updates}

Since the first version of this paper appeared on the arxiv, some
important developments have taken place which we now mention. The
reader is referred to the section ``Some Speculations and Further
Directions of Work'' in the first version of this paper if interested
in some speculations we had indulged ourselves in. 

As mentioned before, $\lambda_{CLH}(Y,K,\alpha)=4\lambda_{C}(Y)+\frac{1}{2}\sigma_{K}(e^{-4\pi i\alpha})$,
which can be rewritten as 
\[
\lambda_{FO}(S^{1}\times Y,S^{1}\times K,\alpha)=8\lambda_{FO}(S^{1}\times Y)+\sigma_{K}(e^{-4\pi i\alpha})
\]
This suggests that if one could find a non-gauge theoretic definition
of a tori signature $\sigma_{T}$ for $T\subset X$, then the formula
\begin{equation}
\lambda_{FO}(X,T,\alpha)=8\lambda_{FO}(X)+\sigma_{T}(e^{-4\pi i\alpha})\label{definition tori signature}
\end{equation}
 should hold {[}assuming that $\sigma_{T}$ is normalized so that
$\sigma_{S^{1}\times K}=\sigma_{K}${]}. 

Regarding the non-gauge theoretic definition of a tori signature,
Ruberman \cite{Ruberman[signature]} defined a signature-type invariant
for embedded tori $\sigma_{T}$ on a homology $S^{1}\times S^{3}$,
which agrees with the Levine-Tristram invariant in the product case.
So the question is whether $\sigma_{T}$ satisfies formula (\ref{definition tori signature})
so that our conjecture would hold.

In even more recent work, Langte Ma \cite[Theorem 1.1]{Ma[torus]}
showed that the difference (whenever defined)
\[
\lambda_{FO}(X,T,\alpha)-8\lambda_{FO}(X)
\]
 equals $\tilde{\rho}_{\varphi_{2\alpha}}(X_{0}(T),Q)$, which is
a spectral invariant defined by Ma in the spirit of the Atiyah-Singer
$\rho$ invariant. The notation is explained in the introduction of
\cite{Ma[torus]}, but since $\tilde{\rho}_{\varphi_{2\alpha}}(X_{0}(T),Q)$
is a topological invariant, Ma's results shows the independence of
$\lambda_{FO}(X,T,\alpha)$ on the cone angle being used. Likewise,
Ma shows \cite[Theorem 1.2]{Ma[torus]} that 
\[
\tilde{\rho}_{\varphi_{2\alpha}}(X_{0}(T),Q)=\sigma_{2\alpha}(X,T)
\]
 where again the right hand side refers to the invariant defined by
Ruberman. Therefore, one can conclude that \cite[Corollary 1.3]{Ma[torus]}
\[
\lambda_{FO}(X,T,\alpha)-8\lambda_{FO}(X)=\sigma_{2\alpha}(X,T)
\]

\subsection{Outline of the paper:}

In section \ref{sec:Review-of-the} we review the main features of
singular gauge theory as developed by Kronheimer and Mrowka. 

Sections \ref{sec:Monotonicity-and-Novikov}, \ref{sec:Floer-Novikov-Homology-and},
\ref{sec:Reduced-Version} discuss how the monotonicity issue arises,
the definition of $HI(Y,K,\alpha)$ and of $HI_{red}(Y,K,\alpha)$
respectively. 

Sections \ref{sec:Singular-Orbifold-Furuta-Ohta} defines $\lambda_{FO}(X,T,\alpha)$
as well as its relatives $D_{0}(X,T,\alpha)$. Section \ref{sec:The-Splitting-Formula}
discusses the splitting formula $\lambda_{FO}(X,T,\alpha)$ and $D_{0}(X,T,\alpha)$
satisfy. Finally, section \ref{sec:Some-examples} includes some calculations
of $\lambda_{FO}(X,T,\alpha)$ for certain tori inside mapping tori
as well as some properties of the groups $HI(Y,K,\alpha)$ and $HI_{red}(Y,K,\alpha)$,
like the usual duality isomorphisms under orientation reversal as
well as the flip symmetry property.

In order to reduce the length of the paper, we deleted an appendix
which includes a summary of cohomology with local coefficients and
the Alexander polynomial for $CW$ complexes, which can still be found
in the annotated version of the paper \cite{Echeverria[DraftFuruta]}.

\ 

\textbf{Acknowledgments:}

The gestation and development of this project took place while the
author was a visiting student at the Massachusetts Institute of Technology
so I would like to thank the MIT Mathematics Department for their
tremendous hospitality. Moreover, I would like to thank Tom Mrowka,
Danny Ruberman and Nikolai Saveliev for several discussions which
were indispensable throughout the development of the paper, as well
as my advisor Tom Mark for his constant encouragement and useful suggestions.
I would also like to thank Langte Ma for pointing out a mistake in
an earlier version of this paper, involving the values of $\lambda_{FO}(X,T,\alpha)$
in the mapping torus case.

I would also like to thank Peter Kronheimer, Chris Herald, Christopher
Scaduto, Aliakbar Daemi, Jianfeng Lin, Matthew Stoffregen, Paul Feehan,
Raphael Zentner, Jennifer Hom, Michael Usher, Michael Miller and Donghao
Wang for direct and indirect conversations on this project.

\section{\label{sec:Review-of-the}Review of the Orbifold Setup}

As we mentioned in the introduction, the Floer homology we will construct
for knots is based on an orbifold approach, which was pioneered by
Kronheimer and Mrowka in their papers on gauge theory and embedded
surfaces \cite{MR1241873,MR1308489}. Slightly different versions
of this setup have been employed by them over the years \cite{MR3880205,MR2860345,MR2805599},
so the main purpose of this section is to give a brief review of their
orbifold construction, as well as discussing the advantages and disadvantages
of different strategies for building Floer homologies for knots.

Let $X$ be a smooth, oriented, closed four manifold and $\varSigma$
any oriented, smoothly embedded surface inside $X$. Similarly, suppose
that $Y$ is a smooth, oriented, closed three manifold and $K$ is
an oriented knot or link inside $Y$. For either of the pairs $(X,\varSigma)$,
$(Y,K)$, we want to do gauge theory using connections on an $SU(2)$
bundle which are singular along $\varSigma$ (or $K$). The nature
of the singularity is precisely what distinguishes one approach from
the other.

A natural strategy would be to work on either of the complements $X\backslash\varSigma$
, $Y\backslash K$ and to choose a riemannian metric which is simply
the restriction to one of these complements of a smooth metric $g$
defined on all of $X$ (or $Y$). Analytically, this means that one
is working on an open manifold with an incomplete metric, which from
the Sobolev package point of view is not a nice situation. However,
provided one works with weighted Sobolev spaces, one can still define
a reasonable family of function spaces \cite[Section 3 (i)]{MR1241873}.

The next issue is whether one wants to work with an $SU(2)$ bundle
$E$ which is defined only on the complement $X\backslash\varSigma$
(respectively $Y\backslash K$). In this case the major problem is
how to recover useful topological information. This was called the
\textit{extension problem} by Kronheimer and Mrowka, and discussed
in some detail in \cite[Section 2 (iv)]{MR1241873}.

The setup used in \cite{MR1241873,MR1308489,MR2860345}, which will
follow as well, is one for which the bundle $E$ is defined over the
entire manifold $X$ (or $Y$), subject to the condition that \textit{topologically}
the bundle $E$ admits a reduction to a $U(1)$ bundle along $\varSigma$
(or $K$). For example, this means that for $(X,\varSigma)$ an $SU(2)$
bundle $E$ should decompose as 
\begin{equation}
E\mid_{\nu(\varSigma)}=L\oplus L^{*}\label{reduction of E}
\end{equation}
where $\nu(\varSigma)$ is a closed tubular neighborhood of $\varSigma$
, and $L\rightarrow\nu(\varSigma)$ some complex line bundle compatible
with the hermitian metric. In particular, this allows us to define
two topological invariants, the \textbf{instanton number $k$ 
\begin{equation}
k=c_{2}(E)[X]\label{instanton number}
\end{equation}
}as well as the \textbf{monopole number $l$
\begin{equation}
l=-c_{1}(L\mid_{\varSigma})[\varSigma]\label{monopole number}
\end{equation}
 }Notice that the latter invariant is not present when $\varSigma=\emptyset$.
The reader may be interested in knowing that for their ``Khovanov
is an unknot-detector'' paper Kronheimer and Mrowka do work with
bundles which in principle are not defined over the entire manifold
\cite[Section 2.1]{MR2805599}.

For the 3-manifold case, the bundle topology is not very interesting
in our situation so we can consider $E$ simply as the trivial $SU(2)$
bundle over $Y$, i.e, $E=Y\times SU(2)$, together with the trivial
$U(1)$ sub-bundle corresponding to the diagonal embedding of $U(1)$
in $SU(2)$. In both cases what we will be more interesting is the
space of connections we will use, which we now describe.

These connections have the following singular behavior: after choosing
a riemannian metric we consider $\nu(\varSigma)$ as being diffeomorphic
to the unit disk bundle of the normal bundle, and we choose a connection
1-form $i\eta$ for the circle bundle $\partial\nu(\varSigma)$, so
that it coincides with the 1-form $d\theta$ on each circle fibre,
where $(r,\theta)$ are polar coordinates in some local trivialization
of the disk bundle ($dr\wedge d\theta$ orients the normal planes).
By radial projection $\eta$ is extended to $\nu(\varSigma)\backslash\varSigma$.

For each real number $0<\alpha<1/2$, we consider the matrix 1-form
with values in $\mathfrak{su}(2)$ which behaves near $\varSigma$
as
\[
i\left(\begin{array}{cc}
\alpha & 0\\
0 & -\alpha
\end{array}\right)d\theta
\]
Locally, the holonomy of a connection $A$ on $X\backslash\varSigma$
whose matrix connection coincides with the previous matrix 1-form
near $\varSigma$ on the positively-oriented small circles of constant
$r$ is approximately 
\begin{equation}
\exp\left(\begin{array}{cc}
-2\pi i\alpha & 0\\
0 & 2\pi i\alpha
\end{array}\right)\label{asymptotic holonomy}
\end{equation}
We are excluding the values $\alpha=0,1/2$ because those cases give
no new information, i.e, they essentially correspond to the situation
where the connections extend smoothly across the singularity (as $SU(2)$
or $SO(3)$ connections more generally).

For a more global description, using the reduction (\ref{reduction of E}),
choose any smooth $SU(2)$ connection $A^{0}$ on $E$ which reduces
in the same way, i.e, 
\[
A^{0}\mid_{\nu(\varSigma)}=\left(\begin{array}{cc}
b & 0\\
0 & -b
\end{array}\right)
\]
where $b$ is a smooth connection on $L$. Define the \textbf{model
connection} $A^{\alpha}$ on $E\mid_{X\backslash\varSigma}$ as 
\begin{equation}
A^{\alpha}=A^{0}+i\beta(r)\left(\begin{array}{cc}
\alpha & 0\\
0 & -\alpha
\end{array}\right)\eta\label{model connection}
\end{equation}
where $\beta$ is a smooth cut-off function equal to $1$ in a neighborhood
of $0$ and equal to $0$ for $r\geq1/2$. The model connection has
holonomy around small linking circles asymptotically equal to (\ref{asymptotic holonomy}).
The connections Kronheimer and Mrowka consider can be written as $A=A^{\alpha}+a$,
where $a\in\varOmega^{1}(X\backslash\varSigma,\mathfrak{su}(2))$,
and as always necessary Sobolev spaces are needed \cite[Eq. 2.2, 2.3]{MR1241873}.
\begin{rem}
In the 3-manifold case we can take $A^{0}$ to be the (usual) trivial
connection $\theta$ on the trivial $SU(2)$ bundle $E$ from before.
In the nomenclature of \cite[Section 3.6]{MR2805599}, our situation
corresponds to one where $\triangle$ is trivial, where $\triangle$
is a local system determining the possible extensions of the bundle
across the singularity.
\end{rem}

Most of the usual gauge theory technology can be extended to this
setup: for example, one can define moduli spaces of $ASD$ connections
with the singular behavior described before, compute the expected
dimension of the moduli space, as well as the energy of the connection
in terms of topological quantities. These moduli spaces are indexed
by the instanton and monopole number, so we will write them typically
as $\mathcal{M}(X,\varSigma,k,l,\alpha)$ or $\mathcal{M}(k,l)$,
depending on the context.

We will remind the reader about the precise formulas once we explain
the monotonicity condition usually required before defining Floer
homologies. For now, it suffices to say that there is one key aspect
missing by working in the previous setup described. Namely, Kronheimer
and Mrowka were not able to show that if one has a sequence of $ASD$
connections $A_{i}$ belonging to some moduli space $\mathcal{M}(X,\varSigma,k,l,\alpha)$,
then the limiting connection $A_{\infty}$, which in principle belongs
to a different moduli space $\mathcal{M}(X,\varSigma,k',l',\alpha)$
because of Uhlenbeck bubbling, must belong to a moduli space of lower
expected dimension. The fact that this should happen, that is, that
after bubbling one should land in a moduli space of smaller dimension,
would follow if Conjecture 8.2 in \cite{MR1241873} were proven true.
Fortunately, they were able to prove this fact by modifying slightly
the previous setup and working instead with \textit{orbifolds}.

On a first level, this means that instead of using the restriction
of a smooth metric on $X$ (or $Y$) to $X\backslash\varSigma$ (or
$Y\backslash K$), we consider metrics which have a cone-like singularity
along the surface (knot) \cite[Section 2, iii)]{MR1241873}. This
means that near the surface $\varSigma$, the metric is modeled on
\[
ds^{2}=du^{2}+dv^{2}+dr^{2}+\left(\frac{1}{\nu^{2}}\right)r^{2}d\theta^{2}
\]
where $u,v$ are coordinates on $\varSigma$, and $\nu\geq1$ is a
real parameter. To obtain a global metric on $\nu(\varSigma)$ of
this form replace $du^{2}+dv^{2}$ by the pull-back of any smooth
metric on $\varSigma$, and replace the form $d\theta$ by the 1-form
$\eta$. The metric is then patched to a smooth one on the complement
of $\nu(\varSigma)$ and extended to the rest of $X$. The resulting
metric has a cone-angle of angle $2\pi/\nu$ in the normal planes
to $\varSigma$. When $\nu$ is an integer greater than $1$ the metric
is an orbifold metric: locally there is a $\nu$-fold branched cover
on which the metric is smooth.

An advantage of the orbifold perspective is that it allow us to compute
certain quantities (like gradings for the generators of the Floer
complex) in terms of equivariant indices on appropriate branched covers
along the knot (and/or surface). Moreover, in the orbifold setup one
can still use a Coulomb slice determined explicitly by $\ker\check{d}_{A}^{*}$
, where $\check{}$ emphasizes that we are thinking of this operator
as being defined on an orbifold. If we were to use instead the non-orbifold
setup (with the smooth metric and the weighted Sobolev spaces mentioned
before), then the construction of the slices is more subtle because
of the lack of the usual $d_{A}^{*}$ operator \cite[Lemma 5.5]{MR1241873}.

However, as we pointed out in the introduction, a drawback of the
orbifold approach is that currently there is no way to show that the
groups one obtains are independent of the choice of orbifold structure
(cone angle) one uses. Moreover, each value of holonomy $\alpha$
determines an allowable set of possible orbifold cone angles $\nu$
compatible with $\alpha$ one can use for defining the Floer groups.
These values are described in Proposition 4.8, Lemma 4.9 (and the
remark after it), as well as Proposition 4.17 of \cite{MR1241873}. 

In practice we can think of the allowable $\nu$ in the following
way \cite[Remark p.894]{MR2860345}. If our holonomy parameter is
a rational number $\alpha\in\mathbb{Q}\cap(0,1/2)$, then $\nu>0$
can be taken to be any integer satisfying the property that 
\begin{equation}
\exp\left(\begin{array}{cc}
-2\pi i\alpha\nu & 0\\
0 & 2\pi i\alpha\nu
\end{array}\right)\in Z(SU(2))=\pm Id_{2\times2}\label{condition cone angle}
\end{equation}
 Therefore, if we write $\alpha$ as $\frac{p}{q}$ , where $p,q$
are relatively prime, it is not difficult to see from this description
that taking $\nu=q$ suffices. However, if $q$ happens to be even,
then in fact $\nu=\frac{q}{2}$ also satisfies the condition \ref{condition cone angle}.
This is why for $\alpha=\frac{1}{4}$ one can choose $\nu=2$ , so
that the metric has a cone angle $\pi$ along the knot (a \textit{bifold}
in the terminology of Kronheimer and Mrowka \cite{MR3880205}). If
one is annoyed by this dependence on the cone angle, it is reasonable
to choose the smallest cone parameter $\nu$ that works for $\alpha$
as the ``canonical'' cone angle. Incidentally, notice that the this
canonical cone angle works simultaneously for $\alpha$ and $\frac{1}{2}-\alpha$
so that the flip symmetry $\alpha\rightarrow\frac{1}{2}-\alpha$ can
be analyzed with this common choice. 

\begin{rem}
Besides describing the singular metric in the orbifold setup, one
can also adapt the notion of bundle and connections to this situation.
The definitions given in \cite{MR3880205} are particularly convenient
for our purposes (it also has the advantage of being suitable for
the case that the bundle does not extend across the singularity, which
as we said is more general than what we will consider).

Every point in an orbifold has a neighborhood $U$ which is the codomain
of an orbifold chart 
\[
\phi:\tilde{U}\rightarrow U
\]
The map $\phi$ is a quotient map for an action of a finite group,
which in our case will be either trivial or $\mathbb{Z}_{\nu}$. A
$C^{\infty}$ $SU(2)$ \textbf{orbifold} \textbf{connection} with
respect to $(X,\varSigma)$ (or $(Y,K)$) means an oriented $\mathbb{C}^{2}$
vector bundle $E$ over $X\backslash\varSigma$ (respectively $Y\backslash K$)
with an $SU(2)$ connection $A$ having the property that the pull-back
of $(E,A)$ via any orbifold chart $\phi:\tilde{U}\rightarrow U$
extends to a smooth pair $(\tilde{E},\tilde{A})$ on $\tilde{U}$.

If we have two $SU(2)$ bifold connections $(E,A)$ and $(E',A')$,
then an \textbf{isomorphism} between them is a bundle map $\tau:E\rightarrow E'$
over $X\backslash\varSigma$ (respectively $Y\backslash K$) such
that $\tau^{*}(A')=A$. The group $\varGamma_{E,A}$ of automorphisms
of $(E,A)$ can be identified with the group of parallel sections
of the associated bundle. It is isomorphic to either the trivial group,
the circle $U(1)$ embedded as a maximal torus inside $SU(2)$, or
all of $SU(2)$.
\end{rem}

Now that we have explained the geometric setup, we will describe of
the space of connections and gauge transformations that we will be
working with, in the spirit of \cite{MR2860345}. Suppose there is
an oriented link $K$ inside an integer homology sphere $Y$. For
$\alpha\in\mathbb{Q}\cap(0,1/2)$ consider the model connection $B^{\alpha}$
described before (\ref{model connection}). We will typically use
$B$ when referring to connections defined on three-dimensional manifolds
from now on. At the same time we choose an orbifold metric $g^{\nu}$
compatible with $\alpha$ in the sense we mentioned before.

We will usually follow the convention of Kronheimer and Mrowka, and
use the $\check{}$ notation when we want to emphasize that the orbifold
metric is being used in a specific construction. Hence, $\check{L}_{m,B^{\alpha}}^{p}$
will denote the Sobolev spaces using the Levi-Civita derivative of
$g^{\nu}$ and the covariant derivative of $B^{\alpha}$ on $\mathfrak{g}_{P}$.
Taking $p=2$ and an integer\footnote{We denote this integer by $m$ instead of $k$ to avoid confusion
with the instanton number $k$.} $m>2$, the space of connections to be considered is 
\[
\mathcal{C}(Y,K,\alpha)=\left\{ B=B^{\alpha}+b\mid b\in\check{L}_{m,B^{\alpha}}^{2}(T^{*}Y\mid_{Y\backslash K}\otimes\mathfrak{g}_{P})\right\} 
\]

The gauge group $\mathcal{G}(Y,K,\alpha)$ consists of those automorphisms
of $E$ which preserve the reducibility of the model connection $B^{\alpha}$.
In fact, $\mathcal{G}(Y,K,\alpha)$ turns out to be independent of
$\alpha$ (and cone angle used) so we will just denote it as $\mathcal{G}(Y,K)$.
Here is an useful way to think about $\mathcal{G}(Y,K)$ \cite[Appendix I]{MR1308489}:

Let $\mathcal{G}^{K}(Y,K)$ denote the subgroup of $\mathcal{G}(Y,K)$
consisting of those gauge transformations which are the identity over
$K$. Let $\mathcal{G}_{K}(Y,K)$ denote the space of gauge transformations
of the line bundle $L\rightarrow K$, that is, the space of maps from
$K$ to $U(1)$. Then there is an exact sequence
\begin{equation}
1\rightarrow\mathcal{G}^{K}(Y,K)\rightarrow\mathcal{G}(Y,K)\rightarrow^{r}\mathcal{G}_{K}(Y,K)\rightarrow1\label{eq: exact sequence gauge group}
\end{equation}

In particular, we can think of $\mathcal{G}(Y,K)$ as the space of
maps $g:Y\rightarrow SU(2)$ satisfying $g(K)\subset U(1)\subset SU(2)$.
Moreover, $\mathcal{G}^{K}(Y,K)$ is weakly homotopy equivalent to
the space of smooth gauge transformations which are the identity over
a sufficiently small neighborhood of the knot $K$. From this we can
see that for knots the topological information of the map $g$ is
carried by two integers, corresponding to the degrees of $g:Y\rightarrow SU(2)$
and $g\mid_{K}:K\rightarrow U(1)$.

More generally, if the link $K$ has $r$ components, the space of
components $\pi_{0}\left(\mathcal{G}(Y,K)\right)$ is isomorphic to
\cite[Section 3]{MR2860345}
\[
\mathbb{Z}\oplus\mathbb{Z}^{r}
\]
Also, there is a preferred homomorphism 
\begin{equation}
d:\mathcal{G}(Y,K)\rightarrow\mathbb{Z}\oplus\mathbb{Z}\label{map on gauge transformations}
\end{equation}
 where the map to the second factor is obtained by taking the sum
of all the entries in the factor $\mathbb{Z}^{r}$. After introducing
Sobolev completions, we will take our gauge group to be
\[
\mathcal{G}(Y,K)=\left\{ g\mid g\in\check{L}_{m+1,B^{\alpha}}^{2}(\text{Aut}(P))\right\} 
\]
The space of connections $\mathcal{C}(Y,K,\alpha)$ is an affine space,
and on the tangent space $T_{B}\mathcal{C}(Y,K,\alpha)$ we define
an inner product (independent of $B$) by 
\[
\left\langle b,b'\right\rangle _{L^{2}}=\int_{\check{Y}}-\text{tr}(\text{ad}(*b)\wedge\text{ad}(b'))
\]
 where we are using the Killing form to contract the Lie algebra indices,
and the Hodge star on $Y$ and the wedge product to contract the form
indices. It is important to notice that \textit{the Hodge star is
the one defined by the orbifold metric $g^{\nu}$.}

The \textbf{Chern-Simons functional }on $\mathcal{C}(Y,K,\alpha)$
is defined to be the unique function 
\[
CS:\mathcal{C}(Y,K,\alpha)\rightarrow\mathbb{R}
\]
satisfying $CS(B^{\alpha})=0$ and having gradient (with respect to
the above inner product) 
\[
(\text{grad}CS)_{B}=*F_{B}
\]
From this one obtains\footnote{Notice that there is a factor of $\frac{1}{2}$ missing in the last
term of the formula in \cite[Eq. 67]{MR2860345}.} 
\[
CS(B^{\alpha}+b)=<*F_{B^{\alpha}},b>_{L^{2}}+\frac{1}{2}<*d_{B^{\alpha}}b,b>_{L^{2}}+\frac{1}{6}<*[b\wedge b],b>_{L^{2}}
\]
The critical points $B$ of $CS$ then satisfy $F_{B}=0$. When restricted
to $Y\backslash K$ , this gives rise to a representation (here $y_{0}$
is a base-point in the complement of the knot or link)
\[
\rho:\pi_{1}(Y\backslash K,y_{0})\rightarrow SU(2)
\]
satisfying the constraint that for each oriented meridian $\mu_{K}$,
$\rho(\mu_{K})$ is conjugate to 
\begin{equation}
\rho(\mu_{K})\sim\left(\begin{array}{cc}
\exp(-2\pi i\alpha) & 0\\
0 & \exp(2\pi i\alpha)
\end{array}\right)\label{behavior meridian}
\end{equation}
There is one such conjugacy class for each component of $K$, once
the components are oriented. 

As usual in the gauge theory context, we will need to perturb the
flatness equation (in fact, the Chern-Simons functional) to obtain
the necessary transversality properties for the moduli spaces. The
perturbations that we will use are described in section 3.2 of \cite{MR2860345}.
As we will see soon, the support of these perturbations must stay
away from the knot if we want to appeal to the relationship Herald
found between the count these flat connections (modulo conjugacy)
and the knot signatures \cite{MR1456309}. Kronheimer and Mrowka had
to address a similar problem in their paper on Tait colorings \cite{MR3880205},
although for different purposes. 

Before discussing this support condition, we will explain next how
the monotonicity issue arises when one tries to define a version of
Instanton Floer homology for an arbitrary value of $\alpha$.

\section{\label{sec:Monotonicity-and-Novikov}Monotonicity and Novikov Systems}

As usual, we would like to do ``Morse theory'' on 
\[
\mathcal{B}(Y,K,\alpha)=\mathcal{C}(Y,K,\alpha)/\mathcal{G}(Y,K)
\]
Notice that since $\mathcal{G}(Y,K)$ is independent of $\alpha,\nu$
and $\mathcal{C}(Y,K,\alpha)$ is contractible, the homotopy type
of the space $\mathcal{B}(Y,K,\alpha)$ is independent of $\alpha,\nu$.
As in the case when $K$ is not present, the Chern-Simons functional
is no longer single-valued on $\mathcal{B}(Y,K,\alpha)$. In fact
$CS$ is invariant only under the identity component of the gauge
group. If $B\in\mathcal{C}(Y,K,\alpha)$ is a connection and $g$
a gauge transformation, we can use the latter to form the bundle $S^{1}\times_{g}E$
over $S^{1}\times Y$ together with its reduction over $S^{1}\times K$,
defined by $B^{\alpha}$. Then the map $d$ from (\ref{map on gauge transformations})
becomes 
\[
d(g)=(k,l)\in\mathbb{Z}\oplus\mathbb{Z}
\]
where $k$ and $l$ are the instanton and monopole numbers respectively.
In this case we have \cite[p. 874]{MR2860345}
\begin{equation}
CS(B)-CS(g(B))=32\pi^{2}(k+2\alpha l)\label{drop in Chern Simons loop}
\end{equation}
 More generally, we define for a path $\gamma:[0,1]\rightarrow\mathcal{C}(Y,K,\alpha)$
the \textbf{topological energy }as twice the drop in the Chern Simons
functional, so that the last equation implies that a path from $B$
to $g(B)$ has topological energy 
\begin{equation}
\mathcal{E}(k,l)=64\pi^{2}(k+2\alpha l)\label{topological energy}
\end{equation}
For a path that formally solves the downward gradient-flow equation
for $CS$ on $\mathcal{C}(Y,K,\alpha)$, the topological energy coincides
with the modified path energy 
\[
\int_{0}^{1}(\|\dot{\gamma}(t)\|^{2}+\|\text{grad}CS(\gamma(t))\|^{2})dt
\]

Before comparing $\mathcal{E}(k,l)$ with the formula for the grading
of two flat connections, we discuss the perturbations needed to achieve
transversality, which involve using \textbf{holonomy perturbations}.
These will be the same as in \cite[Section 3.2]{MR2860345}, and we
will give more details about them in the next section, where we will
need to keep track on certain estimates involving their norms. The
basic idea is that each holonomy perturbation gives rise to a \textbf{cylinder
function 
\[
f:\mathcal{C}(Y,K,\alpha)\rightarrow\mathbb{R}
\]
}which depends on an $l$-tuple 
\[
\mathbf{q}=(q_{1},\cdots,q_{l})
\]
 of immersions $q_{i}:S^{1}\times D^{2}\rightarrow Y\backslash K$,
$i=1,\cdots,l$ and an $SU(2)$-invariant function $h$ on $SU(2)^{\times l}$
via the formula 
\[
f(B)=\int_{D^{2}}h(\text{Hol}_{\mathbf{q}}(B))\mu
\]
Here $\mu$ is a non-negative 2-form supported in the interior of
$D^{2}$ and having integral $1$. We assume that $h$ is invariant
under the action of $SU(2)$ on each of the $l$ factors separately
(see the remark after definition 3.3 in \cite{MR2860345}). It follows
that \textit{each cylinder function is invariant under the full gauge
group, not just the identity component.}

The space of perturbations one uses to achieve transversality involves
taking a countable collection of cylinder functions with an $l^{1}$
notion of convergence \cite[Definition 3.6]{MR2860345}. If $\mathcal{P}$
denotes the Banach space of perturbations, then $\mathfrak{p}\in\mathcal{P}$
induces a function $f_{\mathfrak{p}}$, and the perturbed Chern-Simons
functional we consider is 
\[
CS_{\mathfrak{p}}=CS+f_{\mathfrak{p}}
\]
The critical points are now connections satisfying the equation 
\[
*F_{B}+V_{\mathfrak{p}}(B)=0
\]
where $V_{\mathfrak{p}}$ is the formal gradient of $f_{\mathfrak{p}}$
with respect to the $L^{2}$ inner product. Proposition 3.10 in \cite{MR2860345}
states that there is a residual subset of $\mathcal{P}$ such that,
for all $\mathfrak{p}$ all the irreducible critical points of $CS+f_{\mathfrak{p}}$
are non-degenerate (we will return to the reducibles momentarily).
The \textbf{perturbed topological energy} is then defined for a path
$\gamma:[0,1]\rightarrow\mathcal{C}(Y,K,\alpha)$ as
\begin{equation}
\mathcal{E}_{\mathfrak{p}}(\gamma)=2\left((CS+f_{\mathfrak{p}})(B(t_{0}))-(CS+f_{\mathfrak{p}})(B(t_{1}))\right)\label{perturbed topological energy}
\end{equation}
 Notice that because the perturbations are invariant under the full
gauge group, for a path from $B$ to $g(B)$, the term $f_{\mathfrak{p}}(B)-f_{\mathfrak{p}}(g(B))$
will cancel, \textit{so the perturbed topological energy $\mathcal{E}_{\mathfrak{p}}$
on loops has the same expression as the unperturbed case, namely,
equation (\ref{topological energy})}.

Returning to the grading question, define the (perturbed) \textbf{Hessian
}of a connection $B\in\mathcal{C}(Y,K,\alpha)$ as \textbf{
\begin{equation}
\text{Hess}_{B,\mathfrak{p}}(b)=*d_{B}b+DV\mid_{B}(b)\label{Hessian}
\end{equation}
}where $b\in T_{B}\mathcal{C}(Y,K,\alpha)$ and the (perturbed) \textbf{extended
Hessian }\cite[p. 881]{MR2860345}\textbf{
\begin{equation}
\widehat{\text{Hess}}_{B,\mathfrak{p}}=\left(\begin{array}{cc}
0 & -d_{B}^{*}\\
-d_{B} & \text{Hess}_{B,\mathfrak{p}}
\end{array}\right):\check{L}_{j}^{2}(Y;\mathfrak{g}_{P})\oplus\mathcal{T}_{j}\rightarrow\check{L}_{j-1}^{2}(Y;\mathfrak{g}_{P})\oplus\mathcal{T}_{j-1}\label{extended Hessian}
\end{equation}
}To clarify the notation, here $j\leq m$ is an integer, $\check{L}_{j}^{2}(Y;\mathfrak{g}_{P})$
is a shorthand for $\check{L}_{j,B^{\alpha}}^{2}(Y\backslash K;\mathfrak{g}_{P}\mid_{Y\backslash K})$,
and $\mathcal{T}_{j}$ is shorthand for sections of $T_{B}\mathcal{C}(Y,K,\alpha)$
with regularity Sobolev $\check{L}_{j-1,B^{\alpha}}^{2}$ .

For two irreducible, non-degenerate (perturbed) flat connections $B_{0},B_{1}\in\mathcal{C}(Y,K,\alpha)$,
the \textbf{relative grading}
\[
\text{gr}(B_{0},B_{1})\in\mathbb{Z}
\]
will be defined by taking a path $B(t)$ in $\mathcal{C}(Y,K,\alpha)$
from $B_{0}$ to $B_{1}$ and letting $\text{gr}(B_{0},B_{1})$ be
equal to the spectral flow of the 1-parameter family $\widehat{\text{Hess}}_{B(t),\mathfrak{p}}$.
This number only depends on the endpoints and not the path since $\mathcal{C}(Y,K,\alpha)$
is contractible. If 
\[
[\beta_{0}]=[B_{0}],\;\;\;\;\;[\beta_{1}]=[B_{1}]
\]
are the corresponding gauge equivalence classes in $\mathcal{B}=\mathcal{B}(Y,K,\alpha)$,
then $[B(t)]$ determines a path $\zeta$ from $[\beta_{0}]$ to $[\beta_{1}]$,
defining a relative homotopy class $z\in\pi_{1}([\beta_{0}],\mathcal{B},[\beta_{1}])$.
This relative homotopy class only depends on $B_{0},B_{1}$. Conversely,
for a relative homotopy class of paths from $[\beta_{0}]$ to $[\beta_{1}]$,
we define 
\[
\text{gr}_{z}([\beta_{0}],[\beta_{1}])\in\mathbb{Z}
\]
 via the above procedure.

For the case of a closed loop $z$ based at a point $[\beta]\in\mathcal{B}(Y,K,\varPhi)$,
we obtain an element inside $\pi_{1}(\mathcal{B}(Y,K,\alpha))\simeq\pi_{0}(\mathcal{G}(Y,K,\alpha))$.
Lemma 3.14 in \cite{MR2860345} shows that 
\begin{equation}
\text{gr}_{z}([\beta],[\beta])=8k+4l\label{formula grading}
\end{equation}
where we think of $z$ as being specified by a gauge transformation
$g$ as before, and $d(g)=(k,l)$.

From this grading formula we can see that $\text{gr}_{z}([\beta],[\beta])\equiv0\mod4$,
which means that \textit{the Floer homology we will define is $\mathbb{Z}/4\mathbb{Z}$
graded}. We will eventually show how to promote this relative grading
to an absolute grading. More importantly, comparing the energy and
grading formulas on loops we can see that 
\begin{equation}
\mathcal{E}(k,l)=8\pi^{2}\text{gr}_{z}([\beta],[\beta])+32\pi^{2}l(4\alpha-1)\label{relationship grading energy}
\end{equation}
which means that \textit{the only way for $\mathcal{E}(k,l)$ and
$\text{gr}_{z}([\beta],[\beta])$ to be proportional regardless of
the value of $(k,l)$ is if $\alpha=\frac{1}{4}$}.

To explain what happens when $\alpha\neq1/4$, we recall Proposition
3.23 in \cite{MR2860345}. Let $\mathcal{M}_{z}([\beta_{0}],[\beta_{1}])$
denote a connected component of the moduli space of $ASD$ connections
on $\mathbb{R}\times Y$ limiting to $[\beta_{0}]$ as $t\rightarrow-\infty$
and $[\beta_{1}]$ as $t\rightarrow\infty$ (with suitable perturbations
thrown into the picture to guarantee transversality).
\begin{prop}
\label{"Gromov" compactness}(\cite[Proposition 3.23]{MR2860345})
Given any $C>0$, there are only finitely many $[\beta_{0}],[\beta_{1}]$
and $z$ for which the moduli space $\mathcal{M}_{z}([\beta_{0}],[\beta_{1}])$
is non-empty and has topological energy $\mathcal{E}_{\mathfrak{p}}(\gamma)$
at most $C$.
\end{prop}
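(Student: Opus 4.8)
The plan is to reduce the statement to a combination of (a) the standard energy/compactness machinery for anti-self-dual connections on a cylinder $\mathbb{R}\times Y$ in the orbifold setting and (b) a finiteness statement for the set of pairs of critical points and homotopy classes realizing a bounded value of the topological energy. First I would recall that, by the definition of the perturbed topological energy (\ref{perturbed topological energy}), if $\mathcal{M}_z([\beta_0],[\beta_1])$ is non-empty then the energy equals $\mathcal{E}_{\mathfrak{p}}(\gamma)$ for the path $\gamma$ underlying any ASD connection in that component, and is therefore a non-negative quantity determined by $[\beta_0]$, $[\beta_1]$ and the relative homotopy class $z$. The key algebraic input is the relationship (\ref{relationship grading energy}) together with (\ref{formula grading}): writing $z$ via a gauge transformation contribution $d(g)=(k,l)$ and the ``base'' path between $[\beta_0]$ and $[\beta_1]$, the energy splits as an absolute term depending only on the critical values $(CS+f_{\mathfrak{p}})([\beta_0])$, $(CS+f_{\mathfrak{p}})([\beta_1])$ plus the loop contribution $\mathcal{E}(k,l)=64\pi^2(k+2\alpha l)$. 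So a bound $\mathcal{E}_{\mathfrak{p}}\le C$ bounds $k+2\alpha l$ (hence, since $\alpha$ is a fixed rational in $(0,1/2)$, bounds an integer linear combination of $k$ and $l$) and simultaneously bounds the difference of the two critical values.

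Next I would invoke non-degeneracy: since the perturbation $\mathfrak{p}$ is chosen (Proposition 3.10 of \cite{MR2860345}) so that all irreducible critical points of $CS+f_{\mathfrak{p}}$ are non-degenerate, and — by the hypothesis $\triangle_K(e^{-4\pi i\alpha})\ne 0$, equivalently the cohomological condition on $H^1(Y\backslash K;L_\theta^{\otimes 2})$ — the reducible critical point $\theta_\alpha$ is isolated as well, the set of critical points of $CS_{\mathfrak{p}}$ on $\mathcal{B}(Y,K,\alpha)$ is finite. This immediately makes the choices of $[\beta_0]$ and $[\beta_1]$ range over a finite set, so only the homotopy classes $z$ remain to be controlled. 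For fixed endpoints $[\beta_0],[\beta_1]$, the relative homotopy classes $z\in\pi_1([\beta_0],\mathcal{B},[\beta_1])$ form a torsor over $\pi_1(\mathcal{B}(Y,K,\alpha))\cong \pi_0(\mathcal{G}(Y,K))\cong\mathbb{Z}\oplus\mathbb{Z}$, with the two $\mathbb{Z}$ factors recording the instanton and monopole numbers $(k,l)$. The energy of two classes differing by $(k,l)$ differs by $\mathcal{E}(k,l)=64\pi^2(k+2\alpha l)$; however, the correct constraint to use is not just boundedness of $k+2\alpha l$ but \emph{non-negativity of the energy on every sub-path}, i.e. the fact that an ASD connection on a segment of the cylinder has non-negative topological energy (monotonicity of $CS_{\mathfrak{p}}$ along the flow up to the loop ambiguity). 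This, combined with the upper bound $C$, forces both $k$ and $l$ individually into a finite range — this is exactly the place where one must be careful, and where the argument for $\alpha\ne 1/4$ differs from the monotone case.

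The main obstacle, therefore, is precisely the step that distinguishes the non-monotone situation: a naive estimate only bounds the single combination $k+2\alpha l$, which is one linear condition on a two-dimensional lattice and hence bounds an infinite strip of $(k,l)$'s. The way I would close this gap is to use the separate topological energies of the two natural ASD equations available in the orbifold/bifold picture (the instanton energy $8\pi^2 k$ and the monopole-type energy from $l$), each of which is individually non-negative on an actual ASD solution over any sub-cylinder, so that a bound on their \emph{sum} (the perturbed topological energy) bounds each of them; concretely one invokes the energy identity for $\alpha$-ASD connections expressing $\mathcal{E}_{\mathfrak{p}}$ as a sum of an $L^2$ curvature term and boundary terms, both manifestly non-negative, and notes that on the cylinder the curvature of a finite-energy ASD connection decays so that $k$ and $l$ are well-defined integers bounded by the energy. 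Once $k$, $l$, $[\beta_0]$, $[\beta_1]$ all range over finite sets, only finitely many relative homotopy classes $z$ are compatible with each choice (the class is determined by the endpoints and the pair $(k,l)$ up to the finite ambiguity coming from $\pi_1$ of the components of $\mathcal{B}$), and the proposition follows. I would structure the write-up as: (1) energy = critical-value difference + lattice term; (2) finiteness of critical points from non-degeneracy plus the Alexander polynomial hypothesis; (3) individual bounds on $k$ and $l$ from non-negativity of the two energy contributions; (4) assemble.
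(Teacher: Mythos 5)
The paper itself gives no proof of this proposition; it simply quotes Proposition 3.23 from Kronheimer--Mrowka \cite{MR2860345}, so strictly there is no ``paper's own proof'' to compare against. Judged on its own terms, however, your argument has a genuine gap exactly at the point you flag as ``the main obstacle.''

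You correctly identify that bounding the single quantity $k+2\alpha l$ by the energy only confines $(k,l)$ to a strip in $\mathbb{Z}^{2}$, which is infinite, and that one needs a second independent constraint. But the mechanism you propose to supply that constraint does not work. You assert that ``the instanton energy $8\pi^{2}k$ and the monopole-type energy from $l$'' are ``each individually non-negative on an actual ASD solution over any sub-cylinder.'' There is no such decomposition: for an $\alpha$-ASD connection the only manifestly non-negative topological quantity furnished by the ASD equation is the total energy $64\pi^{2}(k+2\alpha l)=\lVert F_{A}\rVert_{L^{2}}^{2}$, and $k$ can certainly be negative when $l>0$. Likewise, invoking ``non-negativity of the energy on every sub-path'' (monotonicity of $CS_{\mathfrak{p}}$ along the flow) gives nothing new: the total drop in $CS_{\mathfrak{p}}$ is simply the total energy, so sub-path monotonicity reproduces the same bound on $k+2\alpha l$ and no more. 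The final sentence of your third paragraph (``notes that... $k$ and $l$ are well-defined integers bounded by the energy'') is exactly the statement to be proved, not an identity one can invoke.

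Two genuine ways to close the gap, neither of which is what you wrote: (i) use the local Uhlenbeck-type estimates for singular/orbifold connections near $\mathbb{R}\times K$ (the argument behind the curvature-concentration and monopole-number control in \cite{MR1241873,MR2860345}) to bound $|l|$ directly by the energy, after which $k$ is pinned down by the energy formula; or (ii) use the standing transversality assumption: for generic $\mathfrak{p}$ a non-empty trajectory moduli space has non-negative expected dimension, so formula (\ref{formula grading}) gives the lower bound $8k+4l\geq -\mathrm{gr}_{z_{0}}([\beta_{0}],[\beta_{1}])$ as a second linear inequality, and since $\alpha\neq 1/4$ the two forms $k+2\alpha l$ and $8k+4l$ are linearly independent, so the bounded strip from the energy intersected with this half-plane is compact and contains only finitely many lattice points. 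Approach (ii) in particular would have been available to you from the grading/energy comparison (\ref{relationship grading energy}) that you already cite, but you never actually use the dimension constraint.
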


Now suppose one wants to define the differential via the usual formula
\[
\partial[\beta_{0}]=\sum_{[\beta_{1}]\mid\text{gr}([\beta_{0}],[\beta_{1}])=1}\sum_{z}n_{z}([\beta_{0}],[\beta_{1}])[\beta_{1}]
\]
 where we are taking the sum over all moduli spaces $\mathcal{M}_{z}([\beta_{0}],[\beta_{1}])$
which are one-dimensional and $n_{z}([\beta_{0}],[\beta_{1}])\in\mathbb{Z}$
denotes the number (with orientations included) of trajectories inside
the zero-dimensional (compact) space $\check{\mathcal{M}}_{z}([\beta_{0}],[\beta_{1}])=\mathcal{M}_{z}([\beta_{0}],[\beta_{1}])/\mathbb{R}$.
The issue is that for fixed $[\beta_{1}]$, the sum
\[
\sum_{z}n_{z}([\beta_{0}],[\beta_{1}])[\beta_{1}]
\]
 could be infinite! To see why, suppose that $z,z'$ represent two
trajectories from $[\beta_{0}]$ to $[\beta_{1}]$ with $\mathcal{M}_{z}([\beta_{0}],[\beta_{1}])$
and $\mathcal{M}_{z'}([\beta_{0}],[\beta_{1}])$ one dimensional.
Then $z^{-1}\circ z'$ represents a loop based at $[\beta_{0}]$ with
$\text{gr}_{z^{-1}\circ z'}([\beta_{0}],[\beta_{0}])=0$. From (\ref{relationship grading energy})
we see that the topological energy of this loop must be $32\pi^{2}l(4\alpha-1)$,
which is a priori unbounded since $\alpha\neq1/4$ and the monopole
number $l$ can in principle be any integer. Since $\mathcal{E}_{\mathfrak{p}}(z)-\mathcal{E}_{\mathfrak{p}}(z')$
differ (up to sign) by $32\pi^{2}l(4\alpha-1)$, we see there is no
way to guarantee the assumptions in Proposition (\ref{"Gromov" compactness}).

The way to get out of this conundrum is via a\textbf{ Novikov system},
but first we recall a few facts about the construction of homology
groups using \textbf{local coefficients}. A good reference for the
general construction is \cite[Chapter 5]{MR1841974}. We will use
$\varGamma$ to denote a local systems of coefficients. Also, if $[\beta]\in\mathcal{B}(Y,K,\alpha)$
, then $o([\beta])$ will denote\footnote{The notation Kronheimer and Mrowka use for the system of orientations
is $\varLambda([\beta])$ but we will use $\varLambda$ for the Novikov
field so a sacrifice had to be made somewhere. } the 2-element of orientations for $[\beta]$ \cite[Section 3.6]{MR2860345}.

Suppose that to each $[\beta]$ we assign an abelian group $\varGamma_{[\beta]}$
and for each homotopy class of paths $z$ from $[\beta_{0}]$ to $[\beta_{1}]$
there is an isomorphism $\varGamma_{z}$ from $\varGamma_{[\beta_{0}]}$
to $\varGamma_{[\beta_{1}]}$ satisfying the usual composition law
for paths.

Define the chain group $C_{*}(Y,K,\alpha;\varGamma)$ generated by
irreducible critical points $[\beta]$ of $CS_{\mathfrak{p}}$ as
\[
C_{*}(Y,K,\alpha;\varGamma)=\bigoplus_{[\beta]\in\mathfrak{C}}\mathbb{Z}o([\beta])\otimes\varGamma_{[\beta]}
\]
and the boundary map as 
\begin{equation}
\partial=\sum_{([\alpha],[\beta],z)}\sum_{[\check{A}]}\epsilon[\check{A}]\otimes\varGamma_{z}\label{boundary local coefficients}
\end{equation}
 where $[A]\in\mathcal{M}_{z}([\beta_{0}],[\beta_{1}])$ belongs to
a 1-dimensional space, $[\check{A}]$ denotes the connection modulo
the $\mathbb{R}$ action, and 
\[
\epsilon[\check{A}]:o[\beta_{0}]\rightarrow o[\beta_{1}]
\]
is an isomorphism obtained by comparing orientations. Written differently,
we can think of the differential as
\[
\partial(e)=\sum_{[\beta_{1}]\in\mathfrak{C}}\sum_{z}n_{z}([\beta_{0}],[\beta_{1}])\varGamma_{z}(e),\;\;\;e\in\varGamma_{[\beta_{0}]}
\]
which we will occasionally write as 
\[
\partial[\beta_{0}]=\sum_{[\beta_{1}]\in\mathfrak{C}}\sum_{z}n_{z}([\beta_{0}],[\beta_{1}])\varGamma_{z}[\beta_{1}]
\]
 After choosing trivializations for $o([\beta_{0}])$ and $o([\beta_{1}])$,
the contribution for a given pair of critical points in the differential
takes the form 
\[
\sum_{z}n_{z}\varGamma_{z}
\]
 where $z$ runs through all relative homotopy classes satisfying
the conditions 
\[
\text{gr}_{z}([\beta_{0}],[\beta_{1}])=1
\]
Now, define the \textbf{support }
\[
\text{sup}([\beta_{0}],[\beta_{1}])=\{z\mid n_{z}\neq0\}
\]
By Proposition (\ref{"Gromov" compactness}), for all $C$, the intersection
\[
\text{sup}([\beta_{0}],[\beta_{1}])\cap\{z\mid\mathcal{E}_{\mathfrak{p}}(z)\leq C\}
\]
 is finite. In the notation of definition 30.2.2 in \cite{MR2388043},
we will define a $c$-complete local system of coefficients which
will allow us to make sense of infinite sums like $\sum_{z}n_{z}\varGamma_{z}$. 

A definition of the $c$-complete condition can be found in our annotated
version \cite[Definition 18]{Echeverria[DraftFuruta]}, as well as
some additional details. For our purposes we will be content with
describing the $c$-complete local system we will be using throughout
the paper, based on \cite[Section 30.2]{MR2388043}, \cite[Section 4]{MR1362838},
\cite[Section 2.2]{MR2199540}, \cite{MR2474322} and \cite[Section 2.1]{MR3590354}.

In fact, there are a couple of natural $c$-complete systems one could
use, but in order to minimize the number of auxiliary choices we have
to make, we will stick with a particular local system which we will
call the \textbf{universal Novikov field/local system }\cite[Section 4]{MR3590354}.
Moreover, the universal Novikov field/local system has the advantage
that the fibers $\varGamma_{[\beta]}$ assigned to each $[\beta]$
is independent of $\alpha$ (which does not happen for the other candidates),
so it might be a more suited candidate if one wanted to compare the
Floer homologies corresponding to different values of $\alpha$ (though
$\varGamma_{z}$ will depend on $\alpha$ which is to be expected
as we will see in a moment). 

First of all, if $\mathbb{F}$ is a ground field and $\varGamma\leq\mathbb{R}$
an additive subgroup, then the \textbf{Novikov field }$\varLambda^{\mathbb{F},\varLambda}$
associated to $\mathbb{F},\varLambda$ is 
\[
\varLambda^{\mathbb{F},\varGamma}=\left\{ \sum_{r\in\varGamma}a_{r}T^{r}\mid a_{r}\in\mathbb{F}\text{ and }\#\{r\mid a_{r}\neq0,r>C\}<\infty\text{ for all }C\in\mathbb{R}\right\} 
\]
In other words, we allow infinitely many terms in the negative direction.
The example we will have in mind most of the time is $\mathbb{F}=\mathbb{Q}$
or $\mathbb{F}=\mathbb{C}$. 

Now write $\pi_{1}(\mathcal{B})=\pi_{1}(\mathcal{B}(Y,K,\alpha))=\mathbb{Z}\oplus\mathbb{Z}$,
and consider the universal covering $\tilde{\mathcal{B}}(Y,K,\alpha)$
of $\mathcal{B}(Y,K,\alpha)$. This is the space we obtain after taking
the quotient of $\mathcal{C}(Y,K,\alpha)$ using only gauge transformations
$g\in\mathcal{G}(Y,K)$ which satisfy $d(g)=(0,0)$ (so they are in
the connected component of the identity gauge transformation). The
Chern-Simons functional $CS$ becomes real valued on $\tilde{\mathcal{B}}(Y,K,\alpha)$
so we can regard it as a map 
\[
CS:\tilde{\mathcal{B}}(Y,K,\alpha)\rightarrow\mathbb{R}
\]
In this context $\tilde{\beta}\in\tilde{\mathcal{B}}(Y,K,\alpha)$
will denote a lift of $[\beta]\in\mathcal{B}(Y,K,\alpha)$. On $\pi_{1}(\mathcal{B}(Y,K,\alpha))$
we can also define the \textbf{period homomorphism}

\textbf{
\begin{align*}
\triangle_{CS}:\pi_{1}(\mathcal{B}(Y,K,\alpha))\rightarrow\mathbb{R}\\
g\rightarrow64\pi^{2}(k+2\alpha l) &  & d(g)=(k,l)
\end{align*}
}as well as the \textbf{spectral flow }map 
\begin{align*}
\text{sf}:\pi_{1}(\mathcal{B}(Y,K,\alpha))\rightarrow\mathbb{Z}\\
z\rightarrow\text{sf}(z)
\end{align*}
whose kernel is the \textbf{annihilator} 
\[
\text{Ann}=\ker\text{sf}\subset\pi_{1}(\mathcal{B}(Y,K,\alpha))
\]
We also define the additive subgroup of $\mathbb{R}$
\[
I=\text{im}\triangle_{CS}\subset\mathbb{R}
\]
The different local systems we will now explain depend on how they
are related to $I$, and each has their own set of advantages and
disadvantages.

\textbf{$\bullet$ Minimal Novikov field/Local System: }here we work
with Novikov field $\varLambda^{\mathbb{F},I'}$, where $I'$ is defined
as
\[
I'=\triangle_{CS}(\text{Ann})
\]
Recalling the formula \ref{formula grading} for the grading, this
means that we must evaluate $\triangle_{CS}$ on those loops for which
\[
8k+4l=0
\]
so in fact
\[
I'=\{64\pi^{2}k(1-4\alpha)\mid k\in\mathbb{Z}\}
\]
The appealing feature of $\varLambda^{\mathbb{F},I'}$ is that when
$\alpha=1/4$ (the monotone case), $I'$ collapses to $\{0\}$, in
which case working over $\varLambda^{\mathbb{F},I'}$ is the same
as working over $\mathbb{F}$, as should be the case in the monotone
situation. The local system associated to $\varLambda^{\mathbb{F},I'}$
will assign to each configuration $[\beta]$ a copy of $\varLambda^{\mathbb{F},I'}$,
i.e, $\varGamma_{[\beta]}=\varLambda^{\mathbb{F},I'}$. To specify
what happens on paths, i.e, what the maps $\varGamma_{z}:\varGamma_{[\beta_{0}]}\rightarrow\varGamma_{[\beta_{1}]}$
should be, observe that since $\triangle_{CS}:\pi_{1}(\mathcal{B}(Y,K,\alpha))\rightarrow\mathbb{R}$
is a homomorphism whose domain is a finitely generated abelian group
and whose image is torsion free, there is an exact sequence 
\[
\ker\triangle_{CS}\rightarrowtail\pi_{1}(\mathcal{B}(Y,K,\alpha))\twoheadrightarrow I
\]
which splits, so $\pi_{1}(\mathcal{B}(Y,K,\alpha))$ can be identified
with $\ker\triangle_{CS}\oplus I$, i.e, 
\begin{equation}
\pi_{1}(\mathcal{B}(Y,K,\alpha))\simeq\ker\triangle_{CS}\oplus I\label{eq:identification 1}
\end{equation}
 By the same token, there is an exact sequence 
\[
\ker\text{sf}\rightarrowtail\pi_{1}(\mathcal{B}(Y,K,\alpha))\twoheadrightarrow4\mathbb{Z}
\]
which splits, so $\pi_{1}(\mathcal{B}(Y,K,\alpha))$ can be identified
with $\text{Ann}\oplus4\mathbb{Z}$, i.e
\begin{equation}
\pi_{1}(\mathcal{B}(Y,K,\alpha))\simeq\text{Ann}\oplus4\mathbb{Z}\label{identification 2}
\end{equation}
 This allows us to construction a projection 
\[
p:I\rightarrow I'
\]
as follows. Regard $I$ as a subset of $\pi_{1}(\mathcal{B}(Y,K,\alpha))$
under the first identification \ref{eq:identification 1}. Then using
the second identification \ref{identification 2}, project $I$ onto
$\text{Ann}$ and let $p$ be $\triangle_{CS}$ of this element. In
other words 
\[
p(i)=\triangle_{CS}(\pi_{\text{Ann}}(i))
\]
Now, we can define $\varGamma_{z}$ on \textbf{loops} as multiplication
by $T^{p(-\mathcal{E}_{top}(z))}$. Fixing the map $\varGamma_{z}$
on loops is enough to pin down the local system up to isomorphism.
The ``issue'' with this option is that it is not obvious how the
resulting Floer groups depend on the choice of projection $p$. In
fact, we were unable to write \textit{explicitly} a projection $p$,
so if one ever wanted to compute explicitly the resulting homology
groups, it is not clear what to do in this circumstance.

\textbf{$\bullet$ Strongly $c$-complete Novikov field/Local System:
}we call this intermediate case strongly $c$-complete in analogy
with the examples given by Kronheimer and Mrowka in \cite[Section 30.2]{MR2388043}.
In this case we use $\varLambda^{\mathbb{F},I}$ (so $I$ instead
of $I'$) as the Novikov system and a local system which assigns to
$[\beta]$ a copy of $\varLambda^{\mathbb{F},I}$, i.e, $\varGamma_{[\beta]}=\varLambda^{\mathbb{F},I}$.
On loops $\varGamma_{z}$ will act as multiplication by $T^{-\mathcal{E}_{top}(z)}$.
Notice that this system has the advantage that we no longer need to
choose a projection $p$. However, in order to specify actual groups
(and not groups up to isomorphisms), further choices are needed, since
knowing how $\varGamma_{z}$ is defined on loops is not enough to
specify a unique formula for how it should act on paths. More precisely,
for a path $z:[\beta_{0}]\rightarrow[\beta_{1}]$ we would like $\varGamma_{z}:\varGamma_{[\beta_{0}]}\rightarrow\varGamma_{[\beta_{1}]}$
to be multiplication by $T^{-\mathcal{E}_{top}(z)}$. However, notice
that even for a simple element inside $\varGamma_{[\beta_{0}]}$ like
$T^{\triangle_{CS}(g)}$ , for $g\in\pi_{1}([\beta_{0}],\mathcal{B}(Y,K,\alpha),[\beta_{0}])$
, there is no reason why we must have that $\varGamma_{z}T^{\triangle_{CS}(g)}=T^{-\mathcal{E}_{top}(z)+\triangle_{CS}(g)}$
can be written as $T^{\triangle_{CS}(g')}$ for some $g'\in\pi_{1}([\beta_{1}],\mathcal{B}(Y,K,\alpha),[\beta_{1}])$.
However, with additional choices we make sense of this formula. Namely,
suppose that we choose a preferred lift $\tilde{\beta}$ for each
$[\beta]\in\mathcal{B}(Y,K,\alpha)$. Then from formula (\ref{drop in Chern Simons loop})
we have that $\triangle_{CS}(g)=2\left(CS(\tilde{\beta})-CS(g\tilde{\beta})\right)$,
which means that an element inside $\varGamma_{[\beta_{0}]}$ can
be rewritten as a formal power series 
\[
\sum_{r=\triangle_{CS}(g)}c_{r}T^{\triangle_{CS}(g)}=T^{2CS(\tilde{\beta}_{0})}\sum_{r=\triangle_{CS}(g)}c_{r}T^{-2CS(g\tilde{\beta}_{0})}
\]
Then $\varGamma_{z}=\varGamma_{z,\tilde{\beta}_{0},\tilde{\beta}_{1}}$
can be taken to be 
\[
\varGamma_{z,\tilde{\beta}_{0},\tilde{\beta}_{1}}\left(\sum_{r=\triangle_{CS}(g)}c_{r}T^{\triangle_{CS}(g)}\right)\equiv T^{2CS(\tilde{\beta}_{1})}\sum_{r=\triangle_{CS}(g)}c_{r}T^{-2CS(z\cdot g\cdot\tilde{\beta}_{0})}
\]
Notice that we added the ``basepoints'' in our notation to make
it clear that an additional choice is needed. Here $z\cdot g\cdot\tilde{\beta}_{0}$
means the following: regard $g\cdot\tilde{\beta}_{0}$ as an element
in $\tilde{\mathcal{B}}(Y,K,\alpha)$, projecting to $[\beta_{0}]\in\mathcal{B}(Y,K,\alpha)$,
and use the path $z$ to determine a (unique) element $z\cdot g\cdot\tilde{\beta}_{0}$,
projecting to $[\beta_{1}]\in\mathcal{B}(Y,K,\alpha)$. Moreover,
this formula uses implicitly the fact that if $r=\triangle_{CS}(g)=\triangle_{CS}(g')$
, i.e, $CS(g'\tilde{\beta})=CS(g\tilde{\beta})$ or equivalently $g'\circ g^{-1}\in\ker\triangle_{CS}$,
then $CS(z\cdot g\cdot\tilde{\beta}_{0})=CS(z\cdot g'\cdot\tilde{\beta}_{0})$.
In any case, these extra choices of basepoints will not make our lives
any easier and do not gives us much of an advantage, so we will prefer
to use the universal Novikov/ local system, which we will describe
next. 

\textbf{$\bullet$ Universal Novikov field/Local System}: as the name
suggests, here we are working with $\varLambda^{\mathbb{F},\mathbb{R}}$.
The local system assigns $[\beta]$ a copy of $\varLambda^{\mathbb{F},\mathbb{R}}$,
i.e, $\varGamma_{[\beta]}=\varLambda^{\mathbb{F},\mathbb{R}}$. In
order to define $\varGamma_{z}$ for a path $z:[\beta_{0}]\rightarrow[\beta_{1}]$,
we lift $z$ to a path $\tilde{z}:\tilde{\beta}_{0}\rightarrow\tilde{\beta}_{1}$
on the universal cover $\tilde{\mathcal{B}}(Y,K,\alpha)$ (where again
$CS$ is real valued), and define 
\[
\varGamma_{z}\equiv\text{multiplication by }T^{-2(CS(\tilde{\beta}_{0})-CS(\tilde{\beta}_{1}))}
\]
 The factor of $2$ in the exponent is chosen so that on loops it
acts as multiplication by $T^{-\mathcal{E}_{top}(z)}$. Because of
the gauge invariance of the perturbation $\mathfrak{p}$ we could
also write this as 
\[
T^{-2(CS_{\mathfrak{p}}(\tilde{\beta}_{0})-CS_{\mathfrak{p}}(\tilde{\beta}_{1}))}T^{-2(f_{\mathfrak{p}}([\beta_{1}])-f_{\mathfrak{p}}([\beta_{0}]))}
\]
Notice that the second term is independent of the lift of $[\beta_{0}],[\beta_{1}]$.
In a way, it would have been more tempting to let $\varGamma_{z}$
depend on the perturbation being used and define $\varGamma_{z,\mathfrak{p}}$
as multiplication by $T^{-2(CS_{\mathfrak{p}}(\tilde{\beta}_{0})-CS_{\mathfrak{p}}(\tilde{\beta}_{1}))}$.
This would give the advantage that for gradient flow-lines the exponent
$-2(CS_{\mathfrak{p}}(\tilde{\beta}_{0})-CS_{\mathfrak{p}}(\tilde{\beta}_{1}))$
would always be negative, hence making automatic the condition that
the differential is well defined on the chain complex. However, once
we analyze the case of cobordisms, it is more tricky to figure out
what the right notion of perturbed topological energy is, given that
we have to use many different kinds of holonomy perturbations, so
this monotonicity property would not be automatic anyways. In other
words, there is no way to avoid the fact that some control on the
perturbations has to be imposed, and this is precisely what we will
do in the next section (based on ideas from \cite{Daemi[2019]}).\textbf{}

\textbf{ }

\section{\label{sec:Floer-Novikov-Homology-and}Floer-Novikov Homology and
its Relationship To the Knot Signature}

Recall that for the construction of the usual instanton Floer homology
groups on an integer homology sphere $Y$ \cite{MR956166}, the chain
complex $CI(Y)$ is generated by (perturbed) irreducible flat connections
and the proof that the differential $\partial$ on $CI(Y)$ squares
to zero requires showing that no broken flow lines can factorize through
the trivial flat connection $\theta$, which cannot be eliminated
through the use of perturbations.

Moreover, the proof that the homology $HI(Y)$ one obtains does not
depend on the choice of metric and perturbations, requires using the
fact that as we deform this data, we do not meet reducibles, since
once again they are singular points in the space of connections mod
gauge. 

To show these properties one uses the fact that the stabilizer of
the trivial connection $\theta$ is positive dimensional and that
the trivial connection is isolated from the irreducible (perturbed)
flat connections. Even if there were no need for using local coefficients
suitable analogues of these results are needed in our situation, which
is what we will proceed to discuss next.

Recall that the critical points $B$ of the unperturbed Chern-Simons
functional $CS$ on $(Y,K,\alpha)$ can be interpreted as flat connections
on $Y\backslash K$, which modulo gauge will correspond to conjugacy
classes of representations 
\[
\rho:\pi_{1}(Y\backslash K,y_{0})\rightarrow SU(2)
\]
Since $H_{1}(Y\backslash K;\mathbb{Z})\simeq\mathbb{Z}[\mu_{K}]$
regardless of the pair $(Y,K)$, the abelian representations of $\pi_{1}(Y\backslash K,y_{0})$
are completely determined by their action on an oriented meridian
$\mu_{K}$ of the knot $K$, which we already required to be conjugate
to the matrix 
\[
\rho(\mu_{K})\sim\left(\begin{array}{cc}
\exp(-2\pi i\alpha) & 0\\
0 & \exp(2\pi i\alpha)
\end{array}\right)
\]
 In other words,\textbf{ }\textit{for any fixed choice of $\alpha$,
there is only one reducible critical point of the unperturbed Chern-Simons
functional $CS$,} which we denoted previously as $[\theta_{\alpha}]$.

In general, a representation $\rho:\pi_{1}(Y\backslash K,y_{0})\rightarrow SU(2)$
will determine a local coefficient system $\mathfrak{g}_{\rho}$ on
$Y\backslash K$, with fiber $\mathfrak{g}=\mathfrak{su}(2)$. In
turn this gives rise to cohomology groups $H^{i}(Y\backslash K;\mathfrak{g}_{\rho})$.
If we identify $\rho$ with a flat connection $B$, then its gauge
orbit $[B]$ being isolated among the set of critical points $\mathfrak{C}$
of $CS$ is equivalent to $B$ being \textbf{non-degenerate, }that
is, the kernel of the map 
\begin{equation}
\ker:H^{1}(Y\backslash K;\mathfrak{g}_{\rho})\rightarrow H^{1}(\mu_{K};\mathfrak{g}_{\rho})\label{non-degeneracy kernel}
\end{equation}
 is zero, where here $\mu_{K}$ is a collection of loops representing
the meridians of all the components of $K$ (which will be one since
we are focusing on the case of a knot).

Here is an explanation of this condition as well of the notation.
The map (\ref{non-degeneracy kernel}) is simply the one induced in
cohomology by the inclusion of $\mu_{K}\hookrightarrow Y\backslash K$.
When $B$ is an \textit{irreducible} flat connection, Lemma 3.13 in
\cite{MR2860345} shows that this condition is equivalent to the extended
Hessian $\widehat{\text{Hess}}_{B}$ defined in (\ref{extended Hessian})
being invertible.

At the \textit{reducible} flat connection $\theta_{\alpha}$, one
can still use (\ref{non-degeneracy kernel}) as the criterion for
being isolated, the only thing that changes is that this condition
does not imply that the extended Hessian $\widehat{\text{Hess}}_{\theta_{\alpha}}$
is invertible, since at reducibles connections it will always have
a non-trivial kernel \cite[Section 2.5.4]{MR1883043}. To better understand
the requirement (\ref{non-degeneracy kernel}) at the reducible $\theta_{\alpha}$,
notice that the Lie algebra $\mathfrak{g}_{\theta_{\alpha}}\simeq\mathfrak{su}(2)\simeq\mathfrak{so}(3)$
decomposes as
\[
\mathfrak{g}_{\theta_{\alpha}}\simeq\mathbb{R}\oplus L_{\alpha}^{\otimes2}
\]
 where $E=L_{\alpha}\oplus L_{\alpha}^{-1}$ is the decomposition
of the $SU(2)$ bundle induced by $\theta_{\alpha}$. Hence 
\[
H^{1}\left(Y\backslash K;\mathfrak{g}_{\theta_{\alpha}}\right)\simeq H^{1}(Y\backslash K;\mathbb{R})\oplus H^{1}(Y\backslash K;L_{\alpha}^{\otimes2})=\mathbb{R}\oplus H^{1}(Y\backslash K;L_{\alpha}^{\otimes2})
\]
One should think of the $\mathbb{R}$ summand as being the directions
in the Zariski tangent space obtained from deforming the value of
the holonomy $\alpha$. Given that $\alpha$ is fixed for our problem
and $H^{1}(Y\backslash K;\mathbb{R})\rightarrow H^{1}(\mu_{K};\mathfrak{g}_{\rho})$
is an injection since $H^{1}(\mu_{K};\mathfrak{g}_{\rho})\simeq H^{1}(\mu_{K};\mathbb{R})$
\cite[Lemma 63]{Echeverria[DraftFuruta]}, the condition (\ref{non-degeneracy kernel})
captures that we only need to worry about about the factor $H^{1}(Y\backslash K;L_{\alpha}^{\otimes2})$.
As we mentioned in the introduction, the condition (\ref{non-degeneracy kernel})
is only satisfied for certain values of $\alpha$ determined by the
Alexander polynomial $\triangle_{K}(t)$ of $K$.

\begin{lem}
\label{Alexander Knot isolated}Suppose that $K$ is a knot and that
$\triangle_{K}(e^{-4\pi i\alpha})\neq0$. Then the reducible connection
$[\theta_{\alpha}]$ is isolated from the irreducible flat connections.
\end{lem}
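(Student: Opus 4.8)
The plan is to translate the non-degeneracy criterion \eqref{non-degeneracy kernel} at the reducible $[\theta_\alpha]$ into a statement about the twisted cohomology $H^1(Y\backslash K; L_\alpha^{\otimes 2})$, and then to identify this group with the Alexander module of $K$ evaluated at $e^{-4\pi i\alpha}$. Concretely, as explained in the paragraph preceding the lemma, we have the decomposition $\mathfrak g_{\theta_\alpha}\simeq \mathbb R\oplus L_\alpha^{\otimes 2}$, and the $\mathbb R$-summand injects into $H^1(\mu_K;\mathfrak g_\rho)$, so the kernel of \eqref{non-degeneracy kernel} is zero precisely when the image of $H^1(Y\backslash K; L_\alpha^{\otimes 2})$ in $H^1(\mu_K; L_\alpha^{\otimes 2})$ is injective — and in fact I would argue more simply that it suffices to show $H^1(Y\backslash K; L_\alpha^{\otimes 2}) = 0$ outright, which is the condition stated in the hypothesis via its Alexander-polynomial reformulation. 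So the real content is: $\triangle_K(e^{-4\pi i\alpha})\neq 0 \implies H^1(Y\backslash K; L_\alpha^{\otimes 2}) = 0$.

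First I would set up the homology with local coefficients. The flat connection $\theta_\alpha$ sends the meridian $\mu_K$ to the diagonal matrix with entries $e^{\mp 2\pi i\alpha}$, so the associated complex line bundle $L_\alpha$ has holonomy $e^{-2\pi i\alpha}$ around $\mu_K$, hence $L_\alpha^{\otimes 2}$ has holonomy $e^{-4\pi i\alpha}$. Since $H_1(Y\backslash K;\mathbb Z)\simeq \mathbb Z[\mu_K]$, this local system factors through the abelianization and is classified by the ring homomorphism $\mathbb Z[t^{\pm 1}]\to\mathbb C$ sending $t\mapsto e^{-4\pi i\alpha}$. Thus $H^*(Y\backslash K; L_\alpha^{\otimes 2})$ is computed by tensoring the $\mathbb Z[t^{\pm 1}]$-equivariant chain complex of the infinite cyclic cover $\widetilde{Y\backslash K}$ with $\mathbb C_{t = e^{-4\pi i\alpha}}$. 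Then I would invoke the standard fact (this is exactly the content of the Appendix referenced in \cite{Echeverria[DraftFuruta]}) that for a knot in a homology sphere the relevant twisted cohomology vanishes if and only if $\triangle_K$ does not vanish at the corresponding value of $t$: the Alexander module is a torsion $\mathbb Z[t^{\pm 1}]$-module with order ideal generated by $\triangle_K(t)$, and a homology-sphere knot complement has the homology of a circle, so $H^1$ with these twisted coefficients is (up to the contribution killed by the Milnor exact sequence / half-lives-half-dies type argument with $H_*(\partial)$) governed by $\triangle_K(e^{-4\pi i\alpha})$ being a unit. Since $e^{-4\pi i\alpha}$ lies on the unit circle and $\alpha\in(0,1/2)$, one should also note $e^{-4\pi i\alpha}\neq 1$, so the local system is genuinely nontrivial and the $H^0$ term vanishes automatically.

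Having established $H^1(Y\backslash K; L_\alpha^{\otimes 2}) = 0$, the conclusion is immediate: $H^1(Y\backslash K;\mathfrak g_{\theta_\alpha})\simeq \mathbb R$, and under the map \eqref{non-degeneracy kernel} this $\mathbb R$ maps injectively into $H^1(\mu_K;\mathfrak g_\rho)\simeq H^1(\mu_K;\mathbb R)$ by \cite[Lemma 63]{Echeverria[DraftFuruta]}, so the kernel is zero and $[\theta_\alpha]$ is non-degenerate in the sense of \eqref{non-degeneracy kernel}; as recalled in the text, non-degeneracy in this sense is exactly the statement that $[\theta_\alpha]$ is isolated from the other critical points of $CS$, in particular from the irreducible flat connections. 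The main obstacle is the algebraic-topology identification in the second paragraph — relating the twisted cohomology of the knot complement to the Alexander polynomial evaluated at a point on the unit circle, and correctly handling the boundary torus contribution so that the vanishing of $\triangle_K(e^{-4\pi i\alpha})$ really does force $H^1(Y\backslash K;L_\alpha^{\otimes 2})=0$ rather than merely bounding its dimension; this is precisely the place where the hypothesis $\triangle_K(e^{-4\pi i\alpha})\neq 0$ is used, and it is the reason the lemma is stated with that hypothesis rather than unconditionally.
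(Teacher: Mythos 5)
Your proposal is correct and follows essentially the same line of argument as the paper: reduce the non-degeneracy criterion at $[\theta_\alpha]$ to the vanishing of $H^1(Y\backslash K;L_\alpha^{\otimes 2})$, identify the monodromy of the rank-one local system $L_\alpha^{\otimes 2}$ as sending $\mu_K\mapsto e^{-4\pi i\alpha}$, and then invoke the relation between twisted first cohomology of a knot complement and the Alexander polynomial. The paper simply delegates that last step to \cite[Corollary 65]{Echeverria[DraftFuruta]} rather than unpacking the Alexander-module argument, but the content is the same.
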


\begin{proof}
Again, it suffices to guarantee that $H^{1}(Y\backslash K;L_{\alpha}^{\otimes2})$
vanishes. In the notation of \cite[Appendix]{Echeverria[DraftFuruta]},
the reducible connection $[\theta_{\alpha}]$ defines a local coefficient
system with fiber $\mathbb{C}$ where the monodromy map $\hat{\rho}_{\alpha}:H_{1}(Y\backslash K;\mathbb{Z})\rightarrow\mathbb{C}^{*}$
maps the meridian $\mu_{K}$ to $e^{-4\pi i\alpha}$. The vanishing
of the cohomology $H^{1}(Y\backslash K;L_{\alpha}^{\otimes2})$ now
follows from \cite[Corollary 65]{Echeverria[DraftFuruta]}.

\end{proof}
For completeness sake, we will now give more details to Lemma 3.13
in \cite{MR2860345}, since our method of proof will generalize to
the case of an embedded torus $T$ inside a four manifold $X$ with
the homology of $S^{1}\times S^{3}$.  
\begin{lem}
\label{orbifold cohomology}Suppose that $\rho$ is a flat connection
corresponding to a critical point of $CS$ and $\mathfrak{g}_{\rho}$
is the corresponding local system. Then the first (orbifold) cohomology
$\check{H}^{1}(\check{Y};\mathfrak{g}_{\rho})$ can be identified
with $\ker:H^{1}(Y\backslash K;\mathfrak{g}_{\rho})\rightarrow H^{1}(\mu_{K};\mathfrak{g}_{\rho})$.
\end{lem}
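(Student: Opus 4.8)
The plan is to identify the orbifold cohomology $\check{H}^1(\check{Y};\mathfrak{g}_\rho)$ with a relative cohomology group and then run the long exact sequence of the pair. First I would recall that an orbifold cohomology class on $\check{Y}$ (with the cone-angle $2\pi/\nu$ structure along $K$) is, concretely, the same as a class on $Y\backslash K$ whose representative $\mathfrak{g}_\rho$-valued forms satisfy the orbifold growth/boundary conditions near $K$. The key observation is that a $\nu$-invariant harmonic form on the local model $D^2$ with cone angle $2\pi/\nu$, with values in the part of $\mathfrak{g}_\rho$ on which the meridian holonomy acts nontrivially, must vanish to the appropriate order at the cone point — whereas on the $\mathbb{R}$-summand (where the holonomy acts trivially) it behaves like an ordinary form on $D^2$. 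Because of this, restricting an orbifold class to a small linking circle (equivalently, to the meridian $\mu_K$) kills it precisely when it would otherwise obstruct extension across $K$.

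Concretely, the cleanest route is: (1) excise a tubular neighborhood $\nu(K)$ and write $\check{Y} = (Y\backslash \mathrm{int}\,\nu(K)) \cup_{\partial} (\text{orbifold disk bundle})$; (2) use that the orbifold disk bundle is homotopy-equivalent (as an orbifold, for cohomology with these local coefficients) to $K$ itself, with the restriction map to its boundary torus $\partial\nu(K)$ corresponding on cohomology to the map $H^*(K;\mathfrak g_\rho)\to H^*(T^2;\mathfrak g_\rho)$; (3) invoke a Mayer–Vietoris or, better, identify $\check{H}^1(\check Y;\mathfrak g_\rho)$ with the image/kernel structure coming from the long exact sequence of the pair $(\check Y, \nu(K))$. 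The upshot one wants is that $\check{H}^1(\check Y;\mathfrak g_\rho)$ sits inside $H^1(Y\backslash K;\mathfrak g_\rho)$ as exactly those classes which die when restricted along the inclusion $\mu_K \hookrightarrow Y\backslash K$, i.e.\ the stated kernel. Here one uses $H^1(\mu_K;\mathfrak g_\rho)\simeq H^1(\mu_K;\mathbb R)$, already recorded in \cite[Lemma 63]{Echeverria[DraftFuruta]}, so that restriction to $\mu_K$ only sees the trivial $\mathbb R$-subsystem; the $L_\alpha^{\otimes 2}$-part of a class on $Y\backslash K$ always extends to an orbifold class, and the $\mathbb R$-part extends iff its restriction to $\mu_K$ vanishes.

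I would carry this out by first doing the local computation on the model neighborhood: compute the orbifold cohomology $\check{H}^*$ of the cone $C_\nu = \{du^2+dv^2+dr^2 + \nu^{-2}r^2\,d\theta^2\}$ with coefficients in each isotypic piece of $\mathfrak g_\rho$ under the meridian holonomy \eqref{asymptotic holonomy}. For the trivial piece this is just $H^*(D^2) = H^*(\mathrm{pt})$ and restriction to the boundary circle is the usual $H^*(\mathrm{pt})\to H^*(S^1)$; for a nontrivial rotation eigenvalue $e^{\pm 4\pi i\alpha}\neq 1$ the orbifold cohomology in degrees $0,1$ vanishes (the local system has no invariant sections, and the relevant $\nu$-equivariant forms on the branched cover are forced to vanish at the branch locus). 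Then I would patch via the exact sequence of the decomposition above, using that $Y\backslash K$ deformation retracts onto $Y\backslash\mathrm{int}\,\nu(K)$ and that $\partial\nu(K)\simeq T^2$ with the meridian $\mu_K$ as one of its generators.

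The main obstacle is making the local model computation genuinely rigorous: one must be careful about which function spaces the orbifold de Rham / Čech cohomology is computed in, and verify that the natural map from orbifold cohomology on $\check Y$ to ordinary cohomology on $Y\backslash K$ is injective with the claimed image — in particular that there is no contribution from classes supported near $K$ and no failure of excision. This is exactly the point where Kronheimer–Mrowka's Lemma 3.13 \cite{MR2860345} does the work, and the honest thing is to either cite their argument verbatim or reproduce the eigenvalue-by-eigenvalue analysis of the cone model; I would lean toward the latter, since the stated goal is to have a proof that transparently generalizes to the torus-in-$X$ setting, where the linking circle $\mu_T$ plays the role of $\mu_K$ and one excises a neighborhood diffeomorphic to a $T^2 \times D^2$ (resp.\ its orbifold thickening) rather than $S^1\times D^2$.
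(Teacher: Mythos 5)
Your plan is essentially the paper's: decompose $\check{Y}$ into the knot exterior and the orbifold tubular neighborhood $\check{\nu}(K)\simeq S^1\times \check{D}^2$, run Mayer--Vietoris, and reduce the computation of $\check{H}^1(\check{\nu}(K);\mathfrak{g}_\rho)$ to an equivariant computation on the branched cover of the cone model, using that the restriction of $\mathfrak{g}_\rho$ to the boundary torus $T_\epsilon$ splits as $\mathbb{R}\oplus L_\rho^{\otimes 2}$ with $H^\bullet(T_\epsilon;L_\rho^{\otimes 2})=0$ by the holonomy condition. That all matches.

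Where your write-up stops short is exactly where you flag ``the main obstacle'': you observe that one must verify that the restriction map $\check{H}^1(\check{Y};\mathfrak{g}_\rho)\to H^1(Y\backslash K;\mathfrak{g}_\rho)$ is \emph{injective}, but you defer this to Kronheimer--Mrowka rather than supplying an argument. Injectivity does not in fact fall out of the Mayer--Vietoris sequence alone: the term $H^0(T_\epsilon;\mathfrak{g}_\rho)\simeq\mathbb{R}$ sits just before $\check{H}^1(\check{Y};\mathfrak{g}_\rho)$, so exactness does not directly force the next map to be injective. The paper handles this with a separate $L^2$ integration-by-parts argument: given $\check{\omega}\in\check{H}^1(\check Y;\mathfrak g_\rho)$ whose restriction to $Y\backslash K$ is exact, $\omega = d_B\xi$, one computes
\[
\|\check{\omega}\|_{\check L^2(\check Y)}^2 \;=\; -\lim_{\epsilon\to 0}\int_{Y\backslash\check\nu_\epsilon(K)}\!\operatorname{tr}(*\omega\wedge d_B\xi)
\;=\; -\lim_{\epsilon\to 0}\int_{T_\epsilon}\!\operatorname{tr}(*\omega\wedge\xi) \;=\; 0,
\]
using the harmonicity of a representative and the validity of Stokes on orbifolds. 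You should either cite this step explicitly or reproduce it; the rest of your sketch is sound. One further small imprecision: when you say ``the $L_\alpha^{\otimes 2}$-part of a class on $Y\backslash K$ always extends, and the $\mathbb{R}$-part extends iff its restriction to $\mu_K$ vanishes,'' that isotypic splitting of a \emph{global} class exists only when $\rho$ is reducible; for irreducible $\rho$ the splitting is valid only near $K$ (on $T_\epsilon$ and $\check\nu(K)$), which is all you actually need, but the phrasing should reflect that.
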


\begin{proof}
For the proof we will follow the suggestion of Lemma 3.13 in \cite{MR2860345}
and use the Mayer-Vietoris sequence for cohomology with local coefficients.
More precisely, for $\epsilon>0$ write $\check{Y}$ as 
\[
\check{Y}=(\check{Y}\backslash\check{\nu}_{\epsilon}(K))\cup\check{\nu}(K)
\]
 where $\check{\nu}_{\epsilon}(K)$, $\check{\nu}(K)$ are tubular
neighborhoods of $K$, and $\check{\nu}_{\epsilon}(K)\subset\check{\nu}(K)$.
Then the first terms in the Mayer-Vietoris sequence for this decomposition
read 
\begin{align*}
\cdots & \check{H}^{0}((\check{Y}\backslash\check{\nu}_{\epsilon}(K))\cap\check{\nu}(K);\mathfrak{g}_{\rho})\\
\rightarrow & \check{H}^{1}(\check{Y};\mathfrak{g}_{\rho})\\
\rightarrow^{(\imath_{\check{Y}\backslash\check{\nu}_{\epsilon}(K)}^{*},\imath_{\check{\nu}(K)}^{*})} & \check{H}^{1}(\check{Y}\backslash\check{\nu}_{\epsilon}(K);\mathfrak{g}_{\rho})\oplus\check{H}^{1}(\check{\nu}(K);\mathfrak{g}_{\rho})\\
\rightarrow & \check{H}^{1}((\check{Y}\backslash\check{\nu}_{\epsilon}(K))\cap\check{\nu}(K);\mathfrak{g}_{\rho})\\
\rightarrow & \check{H}^{2}(\check{Y};\mathfrak{g}_{\rho})\\
\rightarrow & \cdots
\end{align*}
Here $\imath_{\check{Y}\backslash\check{\nu}_{\epsilon}(K)}^{*}$
and $\imath_{\check{\nu}(K)}^{*}$ denote the pullback in cohomology
of the inclusions $\imath:\check{Y}\backslash\check{\nu}_{\epsilon}(K)\hookrightarrow\check{Y}$
and $\imath:\check{\nu}(K)\hookrightarrow\check{Y}$ . Now using the
usual properties of invariance of the cohomology groups under deformation
retractions we can rewrite the previous sequence as
\begin{align*}
\cdots & H^{0}(T_{\epsilon};\mathfrak{g}_{\rho})\\
\rightarrow & \check{H}^{1}(\check{Y};\mathfrak{g}_{\rho})\\
\rightarrow^{(\imath_{Y\backslash K}^{*},\imath_{\check{\nu}(K)}^{*})} & H^{1}(Y\backslash K;\mathfrak{g}_{\rho})\oplus\check{H}^{1}(\check{\nu}(K);\mathfrak{g}_{\rho})\\
\rightarrow^{i_{T_{\epsilon},\nu(K)}^{*}-i_{T_{\epsilon,Y\backslash K}}^{*}} & H^{1}(T_{\epsilon};\mathfrak{g}_{\rho})\\
\rightarrow & \cdots
\end{align*}
 where $T_{\epsilon}=\partial\check{\nu}_{\epsilon}(K)$. Notice that
we have dropped the $\check{}$ notation on the regions where the
orbifold singularity is not present.

Observe first of all that since $T_{\epsilon}$ has abelian fundamental
group then the restriction of $\mathfrak{g}_{\rho}$ to $T_{\epsilon}$
automatically becomes reducible and we can write 
\[
\mathfrak{g}_{\rho}\mid_{T_{\epsilon}}=\mathbb{R}\oplus L_{\rho}^{\otimes2}
\]
Now, because of the holonomy condition, for $\epsilon$ sufficiently
small the local system defined by $L_{\rho}^{\otimes2}$ is non-trivial,
which means in particular that 
\[
H^{\bullet}(T_{\epsilon};L_{\rho}^{\otimes2})\equiv0
\]
by \cite[Lemma 63]{Echeverria[DraftFuruta]}. If we write $\mu_{K}$
and $\lambda_{K}$ for the meridian and longitudes of $K$, then the
Mayer-Vietoris sequence simplifies to 
\begin{alignat}{1}
\cdots & \mathbb{R}\nonumber \\
\rightarrow & \check{H}^{1}(\check{Y};\mathfrak{g}_{\rho})\\
\rightarrow^{(\imath_{Y\backslash K}^{*},\imath_{\check{\nu}(K)}^{*})} & H^{1}(Y\backslash K;\mathfrak{g}_{\rho})\oplus\check{H}^{1}(\check{\nu}(K);\mathfrak{g}_{\rho})\label{simplified mayer vietoris}\\
\rightarrow^{i_{T_{\epsilon},\nu(K)}^{*}-i_{T_{\epsilon,Y\backslash K}}^{*}} & \mathbb{R}[\mu_{K}]\oplus\mathbb{R}[\lambda_{K}]\nonumber \\
\rightarrow & \check{H}^{2}(\check{Y};\mathfrak{g}_{\rho})\\
\rightarrow & \cdots\nonumber 
\end{alignat}

For computing $\check{H}^{1}(\check{\nu}(K);\mathfrak{g}_{\rho})$
write 
\[
\check{\nu}(K)=S^{1}\times\check{D}^{2}
\]
where we regard $\check{D}^{2}$ as an orbifold. In fact, we can think
of $\check{D}^{2}$ as $D^{2}/\mathbb{Z}_{\nu}$, where $\mathbb{Z}_{\nu}$
denotes a cyclic action determining the cone angle $2\pi/\nu$. If
$p:D^{2}\backslash\{0\}\rightarrow\check{D}^{2}\backslash\{0\}$ denotes
the quotient map, then as long as the cone angle is sufficiently sharp
and $\alpha$ is a rational value, the pullback of the flat connection
$\rho$ to $D^{2}\backslash\{0\}$ extends smoothly to a flat connection
$p^{*}\rho$ on all of $D^{2}$, and in fact we can identify $\check{H}^{1}(\check{D}^{2};\mathfrak{g}_{\rho})$
with the equivariant cohomology $H^{1,\nu}(D^{2};p^{*}\mathfrak{g}_{\rho})$.
Hence 
\[
\check{H}^{1}(\check{\nu}(K);\mathfrak{g}_{\rho})=H^{0}(S^{1};\mathfrak{g}_{\rho})\otimes H^{1,\nu}(D^{2};p^{*}\mathfrak{g}_{\rho})\oplus H^{1}(S^{1};\mathfrak{g}_{\rho})\otimes H^{0,\nu}(D^{2};p^{*}\mathfrak{g}_{\rho})
\]
Now, the factor $H^{1,\nu}(D^{2};p^{*}\mathfrak{g}_{\rho})$ will
vanish since $D^{2}$ equivariantly retracts to the origin so that
\[
H^{1,\nu}(D^{2};p^{*}\mathfrak{g}_{\rho})\simeq H^{1,\nu}(\{0\},p^{*}\mathfrak{g}_{\rho})=0
\]
Likewise, we have that 
\[
H^{0,\nu}(D^{2};p^{*}\mathfrak{g}_{\rho})\simeq H^{0,\nu}(\{0\};p^{*}\mathfrak{g}_{\rho})\simeq\mathbb{R}
\]
since only one factor of $\mathbb{R}^{3}\simeq p^{*}\mathfrak{g}_{\rho}\mid_{\{0\}\subset D^{2}}$
is preserved under the group action. To compute $H^{1}(S^{1};\mathfrak{g}_{\rho})$
we use again the fact that $S^{1}$ has abelian fundamental group
which in particular means that $\mathfrak{g}_{\rho}$ reduces to the
system 
\[
\mathfrak{g}_{\rho}\mid_{S^{1}}\simeq\mathbb{R}\oplus L_{\rho}^{\otimes2}
\]
In this case, $L_{\rho}^{\otimes2}$ may or may not be the trivial
system which means that 
\[
H^{1}(S^{1};\mathfrak{g}_{\rho})=\begin{cases}
H^{1}(S^{1};\mathbb{R}) & \text{if }L_{\rho}^{\otimes2}\mid_{S^{1}}\text{ non-trivial}\\
H^{1}(S^{1};\mathbb{R})\otimes\mathbb{R}^{3} & \text{if }L_{\rho}^{\otimes2}\mid_{S^{1}}\text{ trivial}
\end{cases}
\]
In any case that means that 
\begin{equation}
\check{H}^{1}(\check{\nu}(K);\mathfrak{g}_{\rho})\simeq H^{1}(S_{\lambda_{K}}^{1};\mathfrak{g}_{\rho})\label{iso cohomology bundle}
\end{equation}
so the maps in the Mayer-Vietoris sequence \ref{simplified mayer vietoris}
become
\begin{align}
\check{H}^{1}(\check{Y};\mathfrak{g}_{\rho}) & \rightarrow^{(\imath_{Y\backslash K}^{*},\imath_{\check{\nu}(K)}^{*})} & H^{1}(Y\backslash K;\mathfrak{g}_{\rho})\oplus\check{H}^{1}(\check{\nu}(K);\mathfrak{g}_{\rho})\label{Map Mayer 1}\\
\check{\omega} & \rightarrow & \left(\check{\omega}\mid_{Y\backslash K},\check{\omega}\mid_{\check{\nu}(K)}\right)\nonumber 
\end{align}
and 
\begin{align}
H^{1}(Y\backslash K;\mathfrak{g}_{\rho})\oplus\check{H}^{1}(\check{\nu}(K);\mathfrak{g}_{\rho}) & \rightarrow^{i_{T_{\epsilon},\nu(K)}^{*}-i_{T_{\epsilon,Y\backslash K}}^{*}} & \mathbb{R}[\mu_{K}]\oplus\mathbb{R}[\lambda_{K}]\label{Map mayer 2}\\
\left(\omega,\check{\varpi}\right) & \rightarrow & \left(\left\langle \omega\mid_{S_{\mu_{K}}^{1}},[\mu_{K}]\right\rangle ,\left\langle (\omega-\check{\varpi})\mid_{S_{\lambda_{K}}^{1}},[\lambda_{K}]\right\rangle \right)\nonumber 
\end{align}
where the notation means that we are restricting the forms to the
$S^{1}$ factors generated by the meridian and longitude respectively,
and then using the pairing coming from the isomorphisms we established
previously. In particular, since Mayer-Vietoris is an exact sequence
this means that for any $\check{\omega}\in\check{H}^{1}(\check{Y};\mathfrak{g}_{\rho})$
we have 
\[
\left(i_{T_{\epsilon},\nu(K)}^{*}-i_{T_{\epsilon,Y\backslash K}}^{*}\right)\circ(\imath_{Y\backslash K}^{*},\imath_{\check{\nu}(K)}^{*})\check{\omega}=0
\]
which according to the formulas \ref{Map Mayer 1} and \ref{Map mayer 2}
imply that 
\[
\left\langle \check{\omega}\mid_{S_{\mu_{K}}^{1}},[\mu_{K}]\right\rangle =0
\]
In other words, there is a well defined map 
\begin{align*}
\imath^{*}:\check{H}^{1}(\check{Y};\mathfrak{g}_{\rho}) & \rightarrow & \left\{ \omega\in H^{1}(Y\backslash K;\mathfrak{g}_{\rho})\mid\left\langle \omega\mid_{S_{\mu_{K}}^{1}},[\mu_{K}]\right\rangle =0\right\} \\
\check{\omega} & \rightarrow & \check{\omega}\mid_{Y\backslash K}
\end{align*}
or more succinctly $\imath^{*}:\check{H}^{1}(\check{Y};\mathfrak{g}_{\rho})\rightarrow\ker:\left(H^{1}(Y\backslash K;\mathfrak{g}_{\rho})\rightarrow H^{1}(m;\mathfrak{g}_{\rho})\right)$
as we wrote in the statement of the lemma. We just need to show that
this map is an isomorphism.

First we address the surjectivity of the map. Suppose that $\omega\in H^{1}(Y\backslash K;\mathfrak{g}_{\rho})$
and that $\left\langle \omega\mid_{S_{\mu_{K}}^{1}},[\mu_{K}]\right\rangle =0$.
Using the isomorphism \ref{iso cohomology bundle} we can consider
the element 
\[
\left(\omega,\omega\mid_{S_{\lambda_{K}}^{1}}\right)\in H^{1}(Y\backslash K;\mathfrak{g}_{\rho})\oplus\check{H}^{1}(\check{\nu}(K);\mathfrak{g}_{\rho})
\]
Because $\left\langle \omega\mid_{S_{\mu_{K}}^{1}},[\mu_{K}]\right\rangle =0$
we have that $\left(\omega,\omega\mid_{S_{\lambda_{K}}^{1}}\right)\in\ker\left(i_{T_{\epsilon},\nu(K)}^{*}-i_{T_{\epsilon,Y\backslash K}}^{*}\right)$
which means by exactness of the Mayer-Vietoris sequence that $\left(\omega,\omega\mid_{S_{\lambda_{K}}^{1}}\right)\in\text{im}(\imath_{Y\backslash K}^{*},\imath_{\check{\nu}(K)}^{*})$.
In other words, there is $\check{\omega}\in\check{H}^{1}(\check{Y};\mathfrak{g}_{\rho})$
such that $\imath^{*}(\check{\omega})=\omega$, which is what we wanted
to show.

For injectivity we simply observe that if $\imath^{*}(\check{\omega})=[0]$
then $\omega=\check{\omega}\mid_{Y\backslash K}$ is exact, that is,
$\omega=d_{B}\xi$ for some $\xi\in\varOmega^{0}(Y\backslash K;\mathfrak{g}_{\rho})$
and $B$ is the connection representative of the flat connection $\rho$.
Now we can compute the norm of $\check{\omega}$ in the following
way, which is essentially the same computation as in \cite[Proposition 2.10]{MR2696986}
\begin{align*}
\|\check{\omega}\|_{\check{L}^{2}(\check{Y})}^{2} & = & -\lim_{\epsilon\rightarrow0}\int_{Y\backslash\check{\nu}_{\epsilon}(K)}\text{tr}(*\omega\wedge\omega)\\
 & = & -\lim_{\epsilon\rightarrow0}\int_{Y\backslash\check{\nu}_{\epsilon}(K)}\text{tr}(*\omega\wedge d_{B}\xi)\\
 & = & \lim_{\epsilon\rightarrow0}\left[\int_{Y\backslash\check{\nu}_{\epsilon}(K)}\text{tr}(d_{B}(*\omega)\wedge\xi)-\int_{T_{\epsilon}}\text{tr}(*\omega\wedge\xi)\right]\\
 & = & -\lim_{\epsilon\rightarrow0}\int_{T_{\epsilon}}\text{tr}(*\omega\wedge\xi)\\
 & = & 0
\end{align*}
 where we have used that $d_{B}(*\omega)=*d_{B}^{*}\omega=0$ since
$\check{\omega}\in\ker\check{\triangle}_{B}=\ker(\check{d}_{B})\cap\ker(\check{d}_{B}^{*})$.
That $\lim_{\epsilon\rightarrow0}\int_{T_{\epsilon}}\text{tr}(*\omega\wedge\xi)=0$
vanishes is essentially a restatement that on orbifolds one can use
integration by parts \cite[Section 2.1]{MR3376575}. Alternatively,
one can follow the approximation scheme in the proof of \cite[Proposition 8.3]{MR1241873}.
Hence $\|\check{\omega}\|_{\check{L}^{2}(\check{Y})}^{2}=0$ which
also means that $\check{\omega}=0$, which is what we needed to show
for the proof of injectivity.
\end{proof}

Using values of $\alpha\in\mathbb{Q}\cap(0,1/2)$ which satisfy $\triangle_{K}(e^{-4\pi i\alpha})\neq0$,
and with a local system which satisfies the properties discussed in
the previous section, we can define the chain complex 
\[
(C_{*}(Y,K,\alpha;\varGamma,\mathfrak{p}),\partial)
\]
where we are also indicating the dependence of the chain complex on
the perturbation used to achieve transversality. Again, the local
coefficient system we will have in mind is the universal Novikov field/local
system described in the previous section, in order to minimize additional
choices to get an actual Floer group, rather than a group up to isomorphism.

We will choose our perturbation $\mathfrak{p}$ as 
\[
\mathfrak{p}=\mathfrak{p}_{\text{crit}}+\mathfrak{p}_{\partial}
\]
where $\mathfrak{p}_{\text{crit}}$ and $\mathfrak{p}_{\partial}$
mean the following. If $\mathfrak{p}_{\text{crit}}$ is a perturbation
which makes the critical set $\mathfrak{C}_{\mathfrak{p}_{\text{crit}}}$
non-degenerate, then we can approximate $\mathfrak{p}_{\text{crit}}$
by a finite sum of holonomy perturbations. Since the non-degeneracy
of the compact set $\mathfrak{C}_{\mathfrak{p}_{\text{crit}}}$ is
an open condition, we can arrange that $f_{\mathfrak{p}_{\text{crit }}}$
is a finite sum by truncating $\mathfrak{p}_{\text{crit}}$ after
finitely many terms. This guarantees that the support of the holonomy
perturbations do not meet a neighborhood of the knot $K$. This argument
appears in Proposition 3.1 of \cite{MR3880205}, although the setup
in Kronheimer and Mrowka's paper is more complicated since they also
need to guarantee certain finiteness condition for the moduli spaces
of flow lines of dimension less or equal to $2$. Then $\mathfrak{p}_{\partial}$
is the remaining perturbation needed to cut out the moduli spaces
of trajectories transversely.

Because of this support condition, we can appeal to Herald's result
\cite[Theorem 0.1]{MR1456309}, which relates the signed count of
elements in the (finite) set $\mathfrak{C}_{\mathfrak{p}_{\text{crit}}}$
to the Casson invariant and the knot signature of $K$. 
\begin{thm}
\label{Signed Count}Suppose that $\alpha\in\mathbb{Q}\cap(0,1/2)$
satisfies $\triangle_{K}(e^{-4\pi i\alpha})\neq0$. Then the signed
count of $\mathfrak{C}_{\mathfrak{p}_{\text{crit}}}$, which we denote
$\#_{s}|\mathfrak{C}_{\mathfrak{p}_{\text{crit}}}|$, equals
\begin{equation}
\#_{s}|\mathfrak{C}_{\mathfrak{p}_{\text{crit}}}|=4\lambda_{C}(Y)+\frac{1}{2}\sigma_{K}(e^{-4\pi i\alpha})\label{signed count critical points}
\end{equation}
where $\lambda_{C}(Y)$ is the Casson invariant of $Y$ and $\sigma_{K}(e^{-4\pi i\alpha})$
the Tristram-Levine knot signature of $K$ evaluated at $e^{-4\pi i\alpha}$.

Moreover, as vector spaces over the Novikov field $\varLambda$, 
we have that 
\[
\chi_{\varLambda}(HI(Y,K,\alpha))=4\lambda_{C}(Y)+\frac{1}{2}\sigma_{K}(e^{-4\pi i\alpha})
\]
\end{thm}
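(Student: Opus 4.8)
The plan is to deduce the statement from Herald's signed count \eqref{signed count critical points} together with two facts about the Floer--Novikov chain complex $(C_*(Y,K,\alpha;\varGamma,\mathfrak{p}),\partial)$: first, that the $\varLambda$-rank of $C_*$ agrees with the number of generators (the irreducible critical points in $\mathfrak{C}_{\mathfrak{p}_{\mathrm{crit}}}$), weighted by their $\mathbb{Z}/2$-grading; and second, that the Euler characteristic $\chi_\varLambda$ is a homotopy (chain-homotopy) invariant, hence equals the alternating sum of the ranks of the chain groups rather than the homology groups. For the first point I would observe that by the universal Novikov/local-system construction of Section~\ref{sec:Monotonicity-and-Novikov}, each generator $[\beta]\in\mathfrak{C}_{\mathfrak{p}_{\mathrm{crit}}}$ contributes one copy $\mathbb{Z}o([\beta])\otimes\varGamma_{[\beta]}$ with $\varGamma_{[\beta]}$ a copy of the Novikov field $\varLambda=\varLambda^{\mathbb{F},\mathbb{R}}$, so $C_*(Y,K,\alpha;\varGamma,\mathfrak{p})$ is a free $\varLambda$-module of total rank $|\mathfrak{C}_{\mathfrak{p}_{\mathrm{crit}}}|$, graded (mod $4$, or mod $2$) by the absolute grading one puts on the generators. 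For the second point I would use that $HI(Y,K,\alpha)=H_*(C_*,\partial)$ is finite-dimensional over $\varLambda$ (stated in the theorem one wants to prove), so the usual rank--nullity argument over the field $\varLambda$ gives $\chi_\varLambda(HI(Y,K,\alpha))=\chi_\varLambda(C_*(Y,K,\alpha;\varGamma,\mathfrak{p}))$.

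Next I would match the right-hand side. The generator set $\mathfrak{C}_{\mathfrak{p}_{\mathrm{crit}}}$ is exactly the finite set of perturbed irreducible flat connections counted (with signs coming from the orientation data $o([\beta])$) in Theorem~\ref{Signed Count}; the point of the support condition on $\mathfrak{p}_{\mathrm{crit}}$ (its holonomy perturbations stay away from $K$) is precisely that the signed count is unchanged and equals Herald's number $4\lambda_C(Y)+\tfrac12\sigma_K(e^{-4\pi i\alpha})$. So the remaining task is to check that the homological sign $(-1)^{\mathrm{gr}([\beta])}$ with which $[\beta]$ enters $\chi_\varLambda$ agrees with the geometric sign with which $[\beta]$ enters $\#_s|\mathfrak{C}_{\mathfrak{p}_{\mathrm{crit}}}|$. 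This is the standard compatibility between the mod-$2$ reduction of the Floer grading and the sign of the spectral flow / the orientation of the point in the zero-dimensional moduli space, and it holds because the absolute $\mathbb{Z}/2$-grading is defined (as it always is in instanton theory) so that $(-1)^{\mathrm{gr}}$ is the Herald/Taubes sign; one also needs that the further perturbation $\mathfrak{p}_\partial$ used to cut out flow lines transversely does not create or destroy critical points, which is arranged since $\mathfrak{C}_{\mathfrak{p}_{\mathrm{crit}}}$ is already non-degenerate and $\mathfrak{p}_\partial$ can be taken arbitrarily small. Combining, $\chi_\varLambda(HI(Y,K,\alpha))=\sum_{[\beta]}(-1)^{\mathrm{gr}([\beta])}=\#_s|\mathfrak{C}_{\mathfrak{p}_{\mathrm{crit}}}|=4\lambda_C(Y)+\tfrac12\sigma_K(e^{-4\pi i\alpha})$.

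**The main obstacle.** The genuinely delicate step is the invariance of $\chi_\varLambda$ and, underneath it, that $\partial$ is a well-defined degree-preserving $\varLambda$-linear map of finite-rank free $\varLambda$-modules; this is exactly where the Novikov coefficients earn their keep. The worry raised in Section~\ref{sec:Monotonicity-and-Novikov} is that for $\alpha\neq1/4$ the sum $\sum_z n_z\varGamma_z$ over homotopy classes of flow lines between two generators may be infinite, but by Proposition~\ref{"Gromov" compactness} only finitely many $z$ contribute below any energy bound $C$, so $\sum_z n_z\varGamma_z$ is a legitimate element of $\varLambda$ (its exponents $-\mathcal{E}_{\mathrm{top}}(z)$ accumulate only at $-\infty$). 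Granting $\partial^2=0$ (which I would cite from Section~\ref{sec:Floer-Novikov-Homology-and}, the broken-flow-line analysis avoiding the reducible $\theta_\alpha$ thanks to Lemma~\ref{Alexander Knot isolated}), linear algebra over the field $\varLambda$ then gives both finite-dimensionality of $HI(Y,K,\alpha)$ and the equality $\chi_\varLambda(H_*)=\chi_\varLambda(C_*)$, completing the argument. The $\alpha=1/4$ remark is immediate since there $I'=\{0\}$, $\varLambda^{\mathbb{F},I'}=\mathbb{F}$, and the whole discussion reduces to the ordinary (monotone) case over $\mathbb{Q}$.
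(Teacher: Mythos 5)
Your proposal is correct and follows essentially the same route as the paper: cite Herald for the signed count via the support condition on $\mathfrak{p}_{\mathrm{crit}}$, note that $C_*(Y,K,\alpha;\varGamma,\mathfrak{p})$ is a finite-rank free $\varLambda$-module with basis $\mathfrak{C}_{\mathfrak{p}_{\mathrm{crit}}}$, use rank–nullity over the field $\varLambda$ to pass from $\chi_\varLambda(C_*)$ to $\chi_\varLambda(HI)$, and match the homological sign $(-1)^{\mathrm{gr}}$ with Herald's sign via the Collin–Steer/Taubes normalization of the absolute grading. The paper's ``proof'' is exactly the discussion of items (a)–(c) following the statement, and your argument hits each of those; one small clarification is that finite-dimensionality of $HI$ need not be ``granted from the theorem'' --- it follows immediately from $C_*$ being a finite-rank free $\varLambda$-module once $\partial$ is well defined.
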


Notice that the second part of previous statement implicitly assumed:

a) A choice for the orientation of the moduli spaces involved in the
definition of $\mathfrak{C}_{\mathfrak{p}_{crit}}$ and $HI(Y,K,\alpha)$.

b) The claim that there is a well defined differential $\partial$
which squares to zero so that we can take $HI(Y,K,\alpha)=\ker\partial/\text{im}\partial$. 

c) A proof that $HI(Y,K,\alpha)$ is independent of the perturbations
used. 

We will start by discussing the orientation of the moduli spaces,
which we have kept under the rug until this point. For our conventions
we will follow closely the exposition in \cite[Section 4.3]{MR3394316},
which in turn are based on those used in \cite{MR1883043,MR2805599,MR2860345}.
They also agree with the conventions used by Collin and Steer in their
paper \cite{MR1703606}, which we used to pin down the signs in the
formula (\ref{signed count critical points}). 

The orientation set $o([\beta])$ of a critical point $\mathfrak{C}_{\mathfrak{p}}$
refers to the 2-element set of orientations of the real line $\det D_{A}$,
where $A$ is a connection on $\mathbb{R}\times Y$ such that $A\mid_{\{t\}\times Y}$
is (gauge) equivalent to $\theta_{\alpha}$ for $t$ sufficiently
negative and (gauge) equivalent to $\beta$ for $t$ sufficiently
large. Here $D_{A}$ is the Fredholm operator $-d_{A}^{*}\oplus d_{A}^{+}$,
after suitable Sobolev completions have been introduced. In general
any reference connection would work, but the advantage of using the
reducible connection $\theta_{\alpha}$ as one of the limiting connections
is that it will automatically gives us an absolute $\mathbb{Z}/4\mathbb{Z}$
grading as we will explain momentarily.

In general, given two connections $\beta_{1}\in\mathcal{C}(Y_{1},K_{1},\alpha)$
and $\beta_{2}\in\mathcal{C}(Y_{2},K_{2},\alpha_{2})$ and a cobordism
\[
(W,\varSigma,\alpha):\emptyset\rightarrow(-Y_{1},-K_{1},\alpha)\sqcup(Y_{2},K_{2},\alpha)
\]
 one can consider the operator (for $\epsilon>0$ sufficiently small)
\[
D_{A}'=-d_{A}^{*}\oplus d_{A}^{+}:\check{L}_{m,\phi'_{\epsilon}}^{p}(\varLambda^{1}\otimes\mathfrak{g}_{P})\rightarrow\check{L}_{m-1,\phi'_{\epsilon}}^{p}((\varLambda^{0}\oplus\varLambda^{+})\otimes\mathfrak{g}_{P}))
\]
where $A\in\mathcal{C}(W^{*},\varSigma^{*},\alpha)$ is a connection
on the completion
\begin{align*}
W^{*}=(\mathbb{R}^{-}\times Y_{1})\cup W\cup(\mathbb{R}^{+}\times Y_{2})\\
\varSigma^{*}=(\mathbb{R}^{-}\times K_{1})\cup W\cup(\mathbb{R}^{+}\times K_{2})
\end{align*}
 which is asymptotic to $\beta_{1}$ for $t$ sufficiently negative
and to $\beta_{2}$ for $t$ sufficiently positive. Here $\check{L}_{m,\phi'_{\epsilon}}^{p}=e^{\phi_{\epsilon}'}\check{L}_{m}^{p}$
denotes a weighted Sobolev space, weighted by a real function $e^{\phi'_{\epsilon}}$,
where $\phi_{\epsilon}'$ is a non-positive smooth function equal
to $\epsilon t$ for $t$ sufficiently negative on $\mathbb{R}^{-}\times Y_{1}$,
to $-\epsilon t$ for $t$ sufficiently large on $\mathbb{R}^{+}\times Y_{2}$
, and equal to $0$ on $W$. 

When $\beta_{1}$ is a perturbed instanton on $(Y_{1},K_{1},\alpha,\mathfrak{p}_{1})$,
with respect to the perturbation $\mathfrak{p}_{1}$, and similarly
for $\beta_{2}$ with respect to the perturbation $\mathfrak{p}_{2}$
on $(Y_{2},K_{2},\alpha,\mathfrak{p}_{2})$, then the moduli space
$\mathcal{M}([\beta_{1}],(W,\varSigma),[\beta_{2}])$ of perturbed
$\alpha$-ASD instantons, which we will denote as $\mathcal{M}([\beta_{1}],\check{W},[\beta_{2}])$
will decompose into connected components 
\[
\mathcal{M}([\beta_{1}],\check{W},[\beta_{2}])=\bigcup_{z}\mathcal{M}_{z}([\beta_{1}],\check{W},[\beta_{2}])
\]
 and the dimension of $\mathcal{M}_{z}([\beta_{1}],\check{W},[\beta_{2}])$
can be computed as $\text{ind}D_{A}'$, where $A$ is an appropriate
representative of an element $[A]\in\mathcal{M}_{z}([\beta_{1}],\check{W},[\beta_{2}])$
. For a composite cobordism, one has the additivity relation (assuming
the critical points are non-degenerate) \cite[eq. 4.3]{MR3394316}
\begin{align}
 & \dim\mathcal{M}_{z'\circ z}([\beta_{0}],\check{W}'\circ\check{W},[\beta_{2}])\nonumber \\
= & \dim\mathcal{M}_{z}([\beta_{0}],\check{W},[\beta_{1}])+\dim\text{stab}[\beta_{1}]+\dim\mathcal{M}_{z'}([\beta_{1}],\check{W}',[\beta_{2}])\label{additivity indices}
\end{align}
When $[\beta_{0}],[\beta_{1}]$ are irreducible (perturbed) flat connections
on $(Y,K,\alpha)$, we had defined in Section \ref{sec:Monotonicity-and-Novikov}
a relative grading $\text{gr}([\beta_{0}],[\beta_{1}])\in\mathbb{Z}/4\mathbb{Z}$
in terms of the spectral flow of the extended Hessian. Since the spectral
flow of this operator can be interpreted as the index of the operator
$D_{A}'$ on the cylinder $\mathbb{R}\times Y$, this means that the
relative grading can be interpreted as the dimension (mod-4) of the
moduli space of flow-lines between $[\beta_{0}]$ and $[\beta_{1}]$:
\[
\text{gr}([\beta_{0}],[\beta_{1}])=\dim\mathcal{M}([\beta_{0}],[\beta_{1}])\mod4
\]
where again we are looking at moduli spaces asymptotic to $[\beta_{0}]$
as $t\rightarrow-\infty$ and to $[\beta_{1}]$ as $t\rightarrow\infty$
. 

To make this an absolute grading, set 
\begin{equation}
\text{gr}([\beta])=-1-\dim\mathcal{M}([\theta_{\alpha}],[\beta])\mod4=\dim\mathcal{M}([\beta],[\theta_{\alpha}])\mod4\label{absolute grading}
\end{equation}
and give grading $0$ to the reducible connection $[\theta_{\alpha}]$.
Notice that  because $[\theta_{\alpha}]$ has a one-dimensional stabilizer
according to (\ref{additivity indices}) we have that 
\begin{align*}
 & \text{gr}([\beta_{0}])-\text{gr}([\beta_{1}])\\
= & \dim\mathcal{M}([\theta_{\alpha}],[\beta_{1}])-\dim\mathcal{M}([\theta_{\alpha}],[\beta_{0}])\mod4\\
= & \dim\mathcal{M}([\theta_{\alpha}],[\beta_{1}])+\dim\mathcal{M}([\beta_{0}],[\theta_{\alpha}])+1\mod4\\
= & \dim\mathcal{M}([\beta_{0}],[\beta_{1}])\mod4\\
= & \text{gr}([\beta_{0}],[\beta_{1}])
\end{align*}
With respect to this absolute grading, if $\mathfrak{C}_{\mathfrak{p},i}$
denotes the set of critical points associated to the perturbation
$\mathfrak{p}$ whose absolute grading is $i\in\mathbb{Z}/4\mathbb{Z}$,
then an expression like (\ref{signed count critical points}) means
\[
\#_{s}|\mathfrak{C}_{\mathfrak{p}_{\text{crit}}}|=\#|\mathfrak{C}_{\mathfrak{p}_{\text{crit}},0}|+\#|\mathfrak{C}_{\mathfrak{p}_{\text{crit}},2}|-\#|\mathfrak{C}_{\mathfrak{p}_{\text{crit}},1}|-\#|\mathfrak{C}_{\mathfrak{p}_{\text{crit}},3}|
\]
 Now we must turn to a discussion of the properties of the perturbations
used in order to guarantee that we have a well defined differential
and that the homology groups we obtain are independent of the perturbations
used. First we need to understand what is at stake. Recall that we
are assigning to each element $[\beta]\in\mathcal{B}(Y,K,\alpha)$
a vector space $\varGamma_{[\beta]}$ which is a copy of the universal
Novikov field $\varLambda^{\mathbb{Q},\mathbb{R}}$. In other words,
an element of $\varGamma_{[\beta]}$ is a formal power series 
\begin{equation}
\sum_{r\in\mathbb{R}}a_{r}T^{r}\label{formal power series-1}
\end{equation}
where $a_{r}\in\mathbb{Q}$, and $\#\{r\mid a_{r}\neq0,r>C\}<\infty$
for all $C\in\mathbb{R}$. Therefore, the power series can extend
indefinitely in the negative direction, but not the positive one.
Now, the differential $\partial$ on the chain complex $C_{*}(Y,K,\alpha,\varGamma,\mathfrak{p})$
should be defined by the formula (for $[\beta_{1}]\in\mathfrak{C}(Y,K,\alpha,\mathfrak{p})$)
\begin{equation}
\partial[\beta_{1}]=\sum_{[\beta_{2}]\in\mathfrak{C}(Y,K,\alpha,\mathfrak{p})\mid\text{gr}([\beta_{1}],[\beta_{2}])=1\mod4}\sum_{z}n_{z}([\beta_{1}],[\beta_{2}])T^{-\mathcal{E}_{top}(z)}[\beta_{2}]\label{proposed differential}
\end{equation}
 Here 
\[
\mathcal{E}_{top}(z)=\frac{1}{8\pi^{2}}\int_{\mathbb{R}\times\check{Y}}\text{tr}(F_{A}\wedge F_{A})
\]
where $[A]\in\mathcal{M}_{z}([\beta_{1}],[\beta_{2}])$. In the absence
of perturbations, i.e, $\mathfrak{p}=0$ then $\mathcal{E}_{top}(z)$
would equal $\frac{1}{8\pi^{2}}\|F_{A}^{-}\|_{L^{2}}=2(CS(\beta_{1})-CS(\beta_{2}))$,
since $A$ would solve the $\alpha$-ASD equation $F_{A}^{+}=0$.
Therefore, the exponents in $T^{-\mathcal{E}_{top}(z)}$ would always
be non-positive and do not accumulate on any finite subinterval, since
they must always differ from each other by some integer combination
of $64\pi^{2}$ and $128\pi^{2}\alpha$ (this follows from the formula
(\ref{topological energy}) for the topological energy in the closed
4-manifold case). In other words, the candidate for the differential
(\ref{proposed differential}) does make sense as an element (\ref{formal power series-1})
of $\varLambda^{\mathbb{Q},\mathbb{R}}$ .

In the presence of perturbations $\mathfrak{p}$, the inequality that
is satisfied is 
\[
\mathcal{E}_{top}(z)+f_{\mathfrak{p}}([\beta_{1}])-f_{\mathfrak{p}}([\beta_{2}])\geq0
\]
Since $f_{\mathfrak{p}}$ is fully gauge equivariant as we mentioned
when we introduced the holonomy perturbations, the lower bound for
$\mathcal{E}_{top}(z)$
\begin{equation}
\mathcal{E}_{top}(z)\geq f_{\mathfrak{p}}([\beta_{2}])-f_{\mathfrak{p}}([\beta_{1}])\label{inequality perturbed}
\end{equation}
 is independent of the specific moduli space $\mathcal{M}_{z}([\beta_{1}],[\beta_{2}])$
we are looking at, so the expression (\ref{proposed differential})
continues to be well defined as an element of $\varLambda^{\mathbb{Q},\mathbb{R}}$.

After we know that $\partial$ makes sense, proving $\partial^{2}=0$
is in this setting is no different than the situation for monopole
Floer homology with local coefficients \cite[Section 30]{MR2388043},
except for two main differences:
\begin{enumerate}
\item In the instanton setup, bubbling in general can play a role.
\item We have excluded the reducible flat connection from our chain group,
so we need to guarantee that the broken trajectories considered to
show that $\partial^{2}=0$ do not include factorizations through
this reducible connection.
\end{enumerate}
Regarding the first point, this is not a problem for showing that
$\partial^{2}=0$, since bubbles drop the dimension of the moduli
spaces involve by at least $4$ \cite[Proposition 3.22]{MR2860345},
and our transversality assumptions for the moduli space of flow lines
guarantee that negative dimensional moduli spaces are empty.

The second point involves analyzing the non-degeneracy of the reducible
connection and some index formulas. 
\begin{itemize}
\item Even after perturbations, there is still one reducible connection
$[\theta_{\alpha}]$ up to gauge, the same one as the unperturbed
case, since $[\theta_{\alpha}]$ is isolated from the irreducible
connections so one can choose the holonomy perturbations in such a
way that they vanish near $[\theta_{\alpha}]$. Notice that $[\theta_{\alpha}]$
is unobstructed since by Poincare duality $\check{H}^{2}(\check{Y};\mathfrak{g}_{\alpha})\simeq\check{H}^{1}(\check{Y};\mathfrak{g}_{\rho})=0$.
\item If $\mathcal{M}_{z}([\beta_{0}],[\beta_{2}])$ represents a $d$-dimensional
moduli space, and there is a broken trajectory belonging to $\mathcal{M}_{z_{1}}([\beta_{0}],[\theta_{\alpha}])\times\mathcal{M}_{z_{2}}([\theta_{\alpha}],[\beta_{2}])$
with dimensions $d_{1},d_{2}$ respectively, the fact that $[\theta_{\alpha}]$
has one dimensional stabilizer implies that 
\[
\dim\mathcal{M}_{z}([\beta_{0}],[\beta_{2}])=\dim\mathcal{M}_{z_{1}}([\beta_{0}],[\theta_{\alpha}])+\dim\mathcal{M}_{z_{2}}([\theta_{\alpha}],[\beta_{2}])+1
\]
so in particular the right hand side is bounded from below by $3$,
since each $d_{1},d_{2}$ must be at least one dimensional. Hence
moduli spaces admitting $\mathcal{M}_{z}([\beta_{0}],[\beta_{2}])$
such factorizations through the reducible connection $[\theta_{\alpha}]$
must be at least three dimensional, so they can be ignored for the
definition of the differential. Notice that this is the same as what
happens in the ordinary case of Instanton Floer homology for integer
homology spheres, but in that case the bound for the dimension is
$5$, since the stabilizer of the trivial connection is three-dimensional.
\end{itemize}
This means that we can define the \textbf{Instanton Floer-Novikov
homology for knots} $HI(Y,K,\alpha)$ as $\ker\partial/\text{im}\partial$.
Notice that our notation makes implicit that the homology we obtain
is independent of the choice of perturbation $\pi\in\mathcal{P}$.
To see why this is true, we adapt the proof of independence in \cite[Section 5.3]{MR1883043}
to our situation, and discuss more generally the functoriality properties
of these instanton Floer-Novikov groups.

Suppose that $(W,\varSigma):(Y_{1},K_{1})\rightarrow(Y_{2},K_{2})$
is a concordance of the knots $K_{1},K_{2}$. By this we mean that
$Y_{1},Y_{2}$ will be both be integer homology spheres and $W$ homology
cobordism, i.e $H_{*}(W;\mathbb{Z})=H_{*}(Y_{i};\mathbb{Z})$ for
$i=1,2$. Moreover, $\varSigma$ will be an embedded annulus with
$\partial\varSigma=-K_{1}\sqcup K_{2}$. 

Finally, suppose that the cobordism $(W,\varSigma)$ is $\alpha$-admissible
in the sense of Definition \ref{def: alpha admissible}. Recall that
this means that for the unique reducible up to gauge $\theta_{W,\alpha}$
we have $H^{1}(W\backslash\varSigma;L_{\theta_{W,\alpha}}^{\otimes2})=0$.
Since $\theta_{Y_{1},\alpha}$ and $\theta_{Y_{2},\alpha}$ also satisfy
$H^{1}(Y_{i}\backslash K_{i};L_{\theta_{Y_{i},\alpha}}^{\otimes2})=0$
for $i=1,2$, it is immediate that on the completion $H^{1}(W^{*}\backslash\varSigma^{*};L_{\theta_{W^{*},\alpha}}^{\otimes2})=0$,
and an straightforward adaptation of Lemma \ref{orbifold cohomology}
will imply that for $\alpha$-admissible cobordisms the reducible
$\theta_{W,\alpha}$ is isolated (and non-degenerate as well).

We want to define a cobordism map 
\[
m_{(W,\varSigma)}:C_{*}(Y_{1},K_{1},\alpha,\varGamma_{\mathfrak{p}},\mathfrak{p}_{1})\rightarrow C_{*}(Y_{2},K_{2},\alpha,\varGamma_{\mathfrak{p}},\mathfrak{p}_{2})
\]
 via the formula 
\begin{equation}
m_{(W,\varSigma)}[\beta_{1}]=\sum_{[\beta_{2}]}\sum_{z:[\beta_{1}]\rightarrow[\beta_{2}]}m_{z}([\beta_{1}],\check{W},[\beta_{2}])T^{-\mathcal{E}_{top}(z)}[\beta_{2}]\label{cobordism map}
\end{equation}
Here the sum is taking place over all homotopy classes $z$ for which
the moduli space $\mathcal{M}_{z}([\beta_{1}],\check{W},[\beta_{2}])$
is zero dimensional, and the notation $\check{W}$ is emphasizing
that we are regarding $\check{W}$ as an orbifold. Before showing
that $m_{z}([\beta_{1}],\check{W},[\beta_{2}])$ is well defined,
let's discuss first how the cobordism maps should interact with the
gradings $HI_{i}(Y,K,\alpha)$ of the Floer groups. 

If $\text{gr}([\beta_{i}])$ denotes the (absolute) $\mod4$ grading
of $[\beta_{i}]$ then \cite[Proposition 4.4]{MR2805599}, \cite[Section 4.3]{MR1703606}
shows that 
\begin{equation}
\dim\mathcal{M}_{z}([\beta_{1}],\check{W},[\beta_{2}])=\text{gr}([\beta_{1}])-\text{gr}([\beta_{2}])-\frac{3}{2}(\chi(W)+\sigma(W))-\chi(\varSigma)\mod4\label{dimension moduli space}
\end{equation}
In particular, given our assumptions on $(W,\varSigma)$, we can see
that the cobordism map $m_{(W,\varSigma)}$ is a sum over all elements
$[\beta_{2}]$ whose relative grading is the same as $[\beta_{1}]$.
What is left to see is why the formula for $m_{(W,\varSigma)}[\beta_{1}]$
defines an element in $\varLambda^{\mathbb{Q},\mathbb{R}}$.

This requires further discussion of how the $ASD$ equation is perturbed
on a cobordism. Here we follow \cite[Section 3.8]{MR2860345}, \cite[Section 2.2]{Daemi[2019]},
\cite[Section 3.1]{MR2192061} and \cite[Section 4]{Miller[2019]}.
On the cobordism $W$ we choose a collar neighborhood of each boundary
component, and if $t$ denotes the coordinate (say in the collar $[0,1)\times Y_{1}$)
, we choose a $t$ -dependent holonomy perturbation $\mathfrak{p}_{t}$
equal to $\mathfrak{p}_{1}$ on the near $[0,1/4)\times Y_{1}$. On
$(3/4,1)\times Y_{1}$ the perturbation $\mathfrak{p}_{t}$ vanishes
and then we interpolate on $[1/4,3/4]\times Y_{1}$ choosing an auxiliary
perturbation $\tilde{\mathfrak{p}}_{1}\in\mathcal{P}_{Y_{1}}$. The
net effect is that on $[0,1)\times Y_{1}$ the perturbed equations
take the form 
\[
F_{A}^{+}+\beta_{1}(t)U_{1}(A)+\tilde{\beta_{1}}(t)\tilde{U}_{1}(A)=0
\]
 where $\beta_{1},\tilde{\beta}_{1}$ denote suitable cut-off functions
and $U_{1},\tilde{U}_{1}$ denote the perturbation terms associated
to $\beta_{1},\tilde{\beta}_{1}$. Similar remarks apply to $Y_{2}$.
These were the perturbations used in \cite[Section 3.8]{MR2860345}.
To deal with transversality issues involving flat connections on the
cobordism, we also need interior holonomy perturbations, supported
on a compact subset of $W\backslash\text{nbd}(\varSigma\cup\partial W)$,
which were introduced for closed 4-manifolds in \cite{MR2192061},
although analogous constructions appear for example in \cite{MR910015,MR2052970}. 

These are constructed in a similar way to how cylinder functions were
constructed on a 3 manifold. We sketch the construction given in \cite[Section 3.8]{MR2860345},
which also appears in \cite[Definition 4.2]{Miller[2019]}. Namely,
one chooses a closed ball $B\subset W\backslash\text{nbd}(\partial W)$
and a finite collection of smooth submersions $q_{i}:S^{1}\times B\rightarrow W\backslash\text{nbd}(\varSigma\cup\partial W)$
so that $q_{i}(1,b)=b$ and $q_{i}(-,b)$ is an immersion for all
$1\leq i\leq n$ and $b\in B$. Choose also a self dual two form $\omega$
whose support is contained in $B$. Then for $\mathbf{q}=(q_{1},\cdots,q_{n})$
we have a section 
\[
V_{\mathbf{q},\omega}:\mathcal{C}(W,\varSigma,\alpha)\rightarrow\varOmega^{2,+}(W;\mathfrak{g}_{E})
\]
given by 
\[
V_{\mathbf{q},\omega}(A)(x)=\text{Hol}_{\mathbf{q}}(A)\otimes\omega(x)
\]
Again, after introducing suitable completions one constructs a Banach
space of secondary holonomy perturbations with a notion of $L^{1}$
convergence \cite[p.51]{Miller[2019]}. Therefore, on the cobordism
with cylindrical ends $W^{*}$ the perturbed $\alpha$-ASD equations
we must consider are of the form 
\begin{equation}
F_{A}^{+}+U_{\mathfrak{p}}(A)+V_{\boldsymbol{\omega}}(A)=0\label{perturbed ASD}
\end{equation}
where the term $U_{\mathfrak{p}}(A)$ is generic notation for the
cylindrical holonomy perturbations and $V_{\boldsymbol{\omega}}(A)$
denotes the interior holonomy perturbations. The important property
we need to know about $U_{\mathfrak{p}}$ and $V_{\boldsymbol{\omega}}$
is that their $L^{\infty}$ norms are uniformly bounded: that is,
there exists $K>0$ such that 
\begin{itemize}
\item $\|U_{\mathfrak{p}}(A)\|_{L^{\infty}}\leq K\|\mathfrak{p}\|_{\mathcal{P}}$
(this is statement iii) in \cite[Proposition 3.7]{MR2860345}.
\item $\|V_{\boldsymbol{\omega}}(A)\|_{L^{\infty}}\leq K\|\boldsymbol{\omega}\|_{\tilde{\mathcal{P}}}$
(this is statement 2) in \cite[Proposition 4.4]{Miller[2019]}, which
first appeared in \cite[Section 3.2]{MR2192061}).
\end{itemize}
In fact, for transversality purposes, which is the reason why these
perturbations were introduced in the first place, for any given $\epsilon>0$,
we can assume that we chose perturbations $\mathfrak{p}$ and $\boldsymbol{\omega}$
satisfying $\|\mathfrak{p}\|_{\mathcal{P}},\|\boldsymbol{\omega}\|_{\tilde{\mathcal{P}}}<\epsilon$
, so in particular that will mean that 
\[
|f_{\mathfrak{p}}([\beta])|<\epsilon
\]
 for all $[\beta]\in\mathcal{B}(Y,K,\alpha)$. To bound $\mathcal{E}(z)$
from below, we follow the proof of \cite[Proposition 2.15]{Daemi[2019]}
and split $W^{*}$ into three regions 
\begin{align*}
 & \frac{1}{8\pi^{2}}\int_{W^{*}}\text{tr}(F(A)\wedge F(A))\\
= & \frac{1}{8\pi^{2}}\int_{\mathbb{R}^{-}\times\check{Y}_{1}}\text{tr}(F(A)\wedge F(A))+\frac{1}{8\pi^{2}}\int_{W}\text{tr}(F(A)\wedge F(A))+\frac{1}{8\pi^{2}}\int_{\mathbb{R}^{+}\times\check{Y}_{2}}\text{tr}(F(A)\wedge F(A))\\
\geq & \left[f_{\mathfrak{p}}([A\mid_{\{0\}\times\check{Y}_{1}}])-f_{\mathfrak{p}}([\beta_{1}])\right]+\frac{1}{8\pi^{2}}\int_{W}\left(|F^{-}(A)|^{2}-|F^{+}(A)|^{2}\right)+\left[f_{\mathfrak{p}}([\beta_{2}])-f_{\mathfrak{p}}([A\mid_{\{0\}\times\check{Y}_{2}}])\right]\\
\geq & -2\epsilon+f_{\mathfrak{p}}([\beta_{2}])-f_{\mathfrak{p}}([\beta_{1}])-\frac{1}{8\pi^{2}}\int_{W}|U_{\mathfrak{p}}(A)+V_{\boldsymbol{\omega}}(A)|^{2}\\
\geq & -2\epsilon+f_{\mathfrak{p}}([\beta_{2}])-f_{\mathfrak{p}}([\beta_{1}])-\frac{3\epsilon^{2}}{8\pi^{2}}
\end{align*}
In these steps we used the inequality \ref{inequality perturbed}
to deal the first and third integrals while we used the equation \ref{perturbed ASD}
on the second integral. The specific bound is not that important,
only knowing that it does not depend on the component of the moduli
space that is being analyzed. Therefore $m_{(W,\varSigma)}[\beta_{1}]$
will define an element of $\varLambda^{\mathbb{Q},\mathbb{R}}$ once
we know that the numbers $m_{z}([\beta_{1}],\check{W},[\beta_{2}])$
are well defined.

For that we follow \cite[Proposition 5.9]{MR1883043}. That is, we
want to show that the 0-dimensional moduli spaces $\mathcal{M}_{z}([\beta_{1}],\check{W},[\beta_{2}])$
with topological energy $\mathcal{E}(z)$ are compact. This is because
if we start with a sequence $[A_{i}]$ in $\mathcal{M}_{z}([\beta_{1}],\check{W},[\beta_{2}])$
(which is defined using a bundle $P([\beta_{1}],[\beta_{2}])$, then
it would converge weakly to

$\bullet$ An ideal instanton $([A_{\infty}],x_{\infty})$ on a bundle
$Q$ over $W^{*}$, asymptotic on each cylindrical end to $[\beta_{1}'],[\beta_{2}']$
respectively.

$\bullet$ A broken trajectory $([A_{1}],x_{1})$ over $\mathbb{R}\times Y_{1}$
connection $[\beta_{1}]$ and $[\beta_{1}']$, and a broken trajectory
$([A_{2}],x_{2})$ over $\mathbb{R}\times Y_{2}$ connecting $[\beta_{2}']$
and $[\beta_{2}]$ .

Additivity of the index says that (again because the critical points
are non-degenerate)
\begin{align}
0 & = & \text{ind}P([\beta_{1}],[\beta_{2}])\label{inequality indicies}\\
 & = & \text{ind}[A_{\infty}]+\text{ind}[A_{1}]+\text{ind}[A_{2}]+\dim\check{H}^{0}(Y_{1};\mathfrak{g}_{\beta_{1}'})+\dim\check{H}^{0}(Y_{2};\mathfrak{g}_{\beta_{2}'})+4(|x_{\infty}|+|x_{1}|+|x_{2}|)\nonumber 
\end{align}

$\bullet$ If $[A_{\infty}]$ is irreducible then by transversality
$\text{ind}[A_{\infty}]\geq0$ and since $[A_{1}],[A_{2}]$ are both
irreducible given that at least one of their limits is irreducible,
then $\text{ind}[A_{1}]\geq0,\text{ind}[A_{2}]\geq0$ by transversality.
Then the only way for the above equality to hold is if $[\beta_{1}]=[\beta_{1}']$,
$[\beta_{2}]=[\beta_{2}']$ and there were no bubbles, i.e, $|x_{\infty}|=|x_{1}|=|x_{2}|=0$.

$\bullet$ If $[A_{\infty}]$ is reducible then our assumptions on
homology  imply that $H_{1}(W\backslash\varSigma;\mathbb{Z})\simeq H_{1}(W^{*}\backslash\varSigma^{*};\mathbb{Z})\simeq\mathbb{Z}$
and because of the holonomy condition $[A_{\infty}]$ is determined
to be the unique (up to gauge) reducible with $S^{1}$ stabilizer
which is asymptotic to the reducibles $[\theta_{\alpha,Y_{1}}]$ and
$[\theta_{\alpha,Y_{2}}]$ respectively. In this case $\text{ind}[A_{\infty}]$
can be computed from the dimension formula \ref{dimension moduli space},
with the caveat that the formula as written only works assuming that
the limits are irreducible connections. When the limits are reducible,
one needs to take into account the stabilizer in the formula  and
one concludes that $\text{ind}[A_{\infty}]=-1$. Since $\dim\check{H}^{0}(Y_{1};\mathfrak{g}_{\theta_{\alpha,Y_{1}}})=\check{H}^{0}(Y_{2};\mathfrak{g}_{\theta_{\alpha,Y_{2}}})=1$
one finds in \ref{inequality indicies} that 
\[
-1=\text{ind}[A_{1}]+\text{ind}[A_{2}]+4(|x_{\infty}|+|x_{1}|+|x_{2}|)\geq0
\]
 which is impossible. Therefore, $m_{z}([\beta_{1}],\check{W},[\beta_{2}])$
is well defined. 

Finally, to verify the chain property one needs to compute 
\begin{align*}
 & \left(\partial_{Y_{2}}m_{W}+m_{W}\partial_{1}\right)[\beta_{1}]\\
= & \sum_{[\beta_{2}']}\sum_{z_{2}}\sum_{[\beta_{2}]}\sum_{z}m_{z}([\beta_{1}],\check{W},[\beta_{2}])n_{z_{2}}([\beta_{2}],[\beta_{2}'])T^{-\mathcal{E}_{top}(z)}T^{-\mathcal{E}_{top}(z_{2})}[\beta_{2}']\\
+ & \sum_{[\beta_{2}]}\sum_{w}\sum_{[\beta_{1}]}\sum_{z_{1}}n_{z_{1}}([\beta_{1}],[\beta_{1}'])m_{w}([\beta_{1}'],\check{W},[\beta_{2}])T^{-\mathcal{E}_{top}(z_{1})}T^{-\mathcal{E}_{top}(w)}[\beta_{2}]
\end{align*}
and show that it vanishes. Again, the usual argument will still work
as long as one remember that the topological energy is additive under
concatenation of paths.

Similar arguments can be applied for showing that the composition
law for $\alpha$-admissible cobordisms holds, and in this way we
have verified that $HI(Y,K,\alpha)$ is a topological invariant of
the data $(Y,K,\alpha)$, together with a choice of cone parameter
$\nu$, which has been omitted from our notation.

\section{\label{sec:Reduced-Version}Reduced Version}

Now we will define a reduced version $HI_{red}(Y,K,\alpha)$ of the
singular instanton Floer-Novikov homology for knots $HI(Y,K,\alpha)$
we just construced, following \cite{MR1910040}. As we also mentioned
in the introduction, the case of $\alpha=1/4$ , where local coefficients
are not needed, was defined by Daemi and Scaduto earlier \cite{Daemi-Scaduto[2019]}.

Our conventions will differ slightly from those of Frøyshov, since
we are defining the homology version of the Floer groups, which means
that our grading of the groups differ, and in fact are closer to the
ones used in \cite[Section 3.3.2]{MR1883043}, \cite[Section 9]{MR3394316}.The
key difference between the reduced and unreduced versions of instanton
Floer homologies is that the reduced version takes into account the
flow-lines between the critical points and the reducible flat connection.
Moreover, in the reduced version we can define a $U$-map, which in
the situation of knots will arise from the $\mu$-map evaluated at
a point $x\in K$. 

Now we want to define maps which take into account the interaction
with the reducible connection $[\theta_{\alpha}]$. Consider a critical
point $[\beta]$ with $\text{gr}([\beta])=1$. From the grading formula
(\ref{absolute grading}), we can see that in principle there are
non-empty moduli spaces $\mathcal{M}_{1}([\beta],[\theta_{\alpha}])$
asymptotic to $[\beta]$ as $t\rightarrow-\infty$ and to $[\theta_{\alpha}]$
as $t\rightarrow\infty$. After taking the quotient by the $\mathbb{R}$
action, we get a $0$-dimensional moduli space $\check{\mathcal{M}}_{1}([\beta],[\theta_{\alpha}])$.
We would like to define a map $\delta_{1}[\beta]$ obtained by counting
the points in this 0-dimensional moduli space. However, because of
non-monotonicity there can be a priori infinitely many components
of this moduli space
\[
\mathcal{M}_{1}([\beta],[\theta_{\alpha}])=\bigcup_{z}\mathcal{M}_{1,z}([\beta],[\theta_{\alpha}])
\]
Therefore, we define 
\begin{align*}
\delta_{1}:CI_{1}(Y,K,\alpha,\mathfrak{p})\rightarrow\varLambda\\{}
[\beta]\rightarrow\sum_{z}\#\check{\mathcal{M}}_{1,z}([\beta],[\theta_{\alpha}])T^{-\mathcal{E}_{top}(z)}
\end{align*}
Recall that $\varLambda$ is the Novikov field, while $CI_{1}(Y,K,\alpha,\mathfrak{p})$
denotes the chain complex of $\mathfrak{p}$-perturbed flat connections
whose absolute grading is $1$. The formula for $\delta_{1}[\beta]$
does give a well defined element in $\varLambda$ for exactly the
same reasons as the differential $\partial$ from the previous section
being well defined. Just as in the case where $K$ is absent, it
is straightforward to see that $\delta_{1}$ descends to a map in
homology, that is:
\begin{lem}
The map $\delta_{1}$ satisfies $\delta_{1}\partial=0$, so it induces
a map in homology $\delta_{1}:HI_{1}(Y,K,\alpha)\rightarrow\varLambda$.
\end{lem}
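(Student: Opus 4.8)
The plan is to mimic the standard proof that the "delta" map to the reducible is a chain map in ordinary instanton Floer homology (as in Froyshov's work and \cite{MR1883043,MR3394316}), and show the only new issue — the infinitely many homotopy classes $z$ — is handled by the same Novikov-field bookkeeping used to define $\partial$ itself. So I would fix $[\beta_0]$ with $\mathrm{gr}([\beta_0])=2$ and compute $\delta_1(\partial[\beta_0])=\sum_{[\beta_1]}\sum_{z_1}\sum_{z_2} n_{z_1}([\beta_0],[\beta_1])\,\#\check{\mathcal{M}}_{1,z_2}([\beta_1],[\theta_\alpha])\,T^{-\mathcal{E}_{top}(z_1)-\mathcal{E}_{top}(z_2)}$, where the inner sum runs over $[\beta_1]$ with $\mathrm{gr}([\beta_1])=1$. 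The goal is to identify this with the signed count of boundary points of the $1$-dimensional moduli spaces $\mathcal{M}_{z}([\beta_0],[\theta_\alpha])$ (after quotienting by $\mathbb{R}$), which vanishes.

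First I would recall from the previous section that for $\mathrm{gr}([\beta_0])=2$, the quotiented moduli spaces $\check{\mathcal{M}}_{z}([\beta_0],[\theta_\alpha])$ are $1$-dimensional, and by Proposition~\ref{"Gromov" compactness} together with the energy bound \ref{inequality perturbed} only finitely many $z$ contribute below any fixed energy, so the compactification $\overline{\check{\mathcal{M}}}_{z}([\beta_0],[\theta_\alpha])$ is a compact $1$-manifold with boundary. Next I would enumerate the possible broken trajectories appearing in its boundary: a priori these are $\check{\mathcal{M}}_{z_1}([\beta_0],[\beta_1])\times\check{\mathcal{M}}_{1,z_2}([\beta_1],[\theta_\alpha])$ with $[\beta_1]$ irreducible of grading $1$, and $\check{\mathcal{M}}_{z_1}([\beta_0],[\theta_\alpha])\times\check{\mathcal{M}}_{z_2}([\theta_\alpha],[\theta_\alpha])$. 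The second type must be excluded: since $[\theta_\alpha]$ has a one-dimensional stabilizer, the additivity formula \ref{additivity indices} forces any such factorization to live in a moduli space of dimension $\geq \dim\check{\mathcal{M}}([\beta_0],[\theta_\alpha]) + 1 + \dim\check{\mathcal{M}}([\theta_\alpha],[\theta_\alpha]) \geq 3$, contradicting $1$-dimensionality (this is exactly the argument already used in the previous section to rule out factorizations through $[\theta_\alpha]$ in $\partial^2=0$). Bubbling is irrelevant since bubbles drop dimension by $\geq 4$ by \cite[Proposition 3.22]{MR2860345}. Hence the boundary of $\overline{\check{\mathcal{M}}}_{z}([\beta_0],[\theta_\alpha])$ consists precisely of the first type of broken trajectories.

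Then I would invoke the gluing theorem: each such broken pair in $\check{\mathcal{M}}_{z_1}([\beta_0],[\beta_1])\times\check{\mathcal{M}}_{1,z_2}([\beta_1],[\theta_\alpha])$ with $z_1*z_2 = z$ is glued to a unique end of $\check{\mathcal{M}}_{z}([\beta_0],[\theta_\alpha])$, and counting boundary points of a compact oriented $1$-manifold with signs gives zero. Summing over all $z$ and repackaging the energies via $\mathcal{E}_{top}(z)=\mathcal{E}_{top}(z_1)+\mathcal{E}_{top}(z_2)$ (additivity of topological energy under concatenation) yields exactly $\delta_1(\partial[\beta_0])=0$ as an element of $\varLambda$; the series is well-defined in $\varLambda$ for the same energy-bound reason the differential is. The main obstacle is simply being careful that the infinite sum over $z$ reorganizes correctly — i.e., that grouping the boundary contributions of all the $\overline{\check{\mathcal{M}}}_{z}$ with fixed total energy is a finite operation (guaranteed by Proposition~\ref{"Gromov" compactness}) and that orientations/signs are consistent with the conventions fixed in the previous section; everything else is the verbatim argument from the closed-reducible case, with the dimension bound for factorizations through $[\theta_\alpha]$ being $3$ rather than $5$ because the stabilizer is $U(1)$ rather than $SU(2)$.
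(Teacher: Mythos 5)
Your proposal correctly fleshes out the argument the paper leaves as "straightforward," and it is essentially the paper's intended route: for $\text{gr}([\beta_0])=2$, examine the compactified $1$-dimensional quotient moduli spaces $\overline{\check{\mathcal{M}}}_{z}([\beta_0],[\theta_\alpha])$, identify the two types of broken limits, exclude the type breaking through $[\theta_\alpha]$ twice by the stabilizer-augmented index additivity, and sum signed boundary counts to zero, with Proposition~\ref{"Gromov" compactness} and \ref{inequality perturbed} guaranteeing the result is a well-defined element of $\varLambda$.

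One small inaccuracy you should fix: in the dimension count you wrote $\geq \dim\check{\mathcal{M}}([\beta_0],[\theta_\alpha]) + 1 + \dim\check{\mathcal{M}}([\theta_\alpha],[\theta_\alpha]) \geq 3$, using the $\check{\mathcal{M}}$ (quotiented-by-$\mathbb{R}$) notation. A quotient moduli space can have dimension $0$, so each term contributes only $\geq 0$ and the conclusion $\geq 3$ would not follow. The argument is correct if phrased with the \emph{unquotiented} moduli spaces as the paper does in its $\partial^2=0$ discussion: each $\dim\mathcal{M}_{z_i}$ is $\geq 1$ (non-constant trajectories only), the reducible's stabilizer adds $1$, so broken trajectories through $[\theta_\alpha]$ can only appear as limits of moduli spaces $\mathcal{M}_z([\beta_0],[\theta_\alpha])$ of unquotiented dimension $\geq 3$, which do not occur in the $2$-dimensional unquotiented (equivalently $1$-dimensional quotiented) spaces used to define $\delta_1\partial$. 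With this correction the proof is complete and agrees with \cite[Proposition 8]{MR2738582} and \cite[Lemma 7.6]{MR1883043}, adjusted for the $U(1)$ stabilizer rather than $SU(2)$.
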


Likewise, suppose that $\text{gr}([\beta])=2$, so that a priori there
are non-empty moduli spaces $\mathcal{M}_{1}([\theta_{\alpha}],[\beta])$.
Analogous to $\delta_{1}$, define an element $\delta_{2}\in CI_{2}(Y,K,\alpha,\mathfrak{p})$
by the formula 
\[
\delta_{2}=\sum_{[\beta]\in CI_{2}}\sum_{z}\#\check{\mathcal{M}}_{1,z}([\theta_{\alpha}],[\beta])T^{-\mathcal{E}_{top}(z)}[\beta]
\]
As before, it is straightforward to check that:
\begin{lem}
The element $\delta_{2}$, descends to an element in homology, i.e,
$\partial\delta_{2}=0$ so that $\delta_{2}\in HI_{2}(Y,K,\alpha)$.
\end{lem}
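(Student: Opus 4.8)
The plan is to show that $\delta_{2}$ is a cycle by identifying the coefficient of each generator $[\gamma]$ in $\partial\delta_{2}$ with the signed, Novikov-weighted count of the boundary points of a compact one-dimensional moduli space, which is necessarily zero. First I would fix the grading bookkeeping. Since $\text{gr}([\theta_{\alpha}])=0$ and $\delta_{2}\in CI_{2}(Y,K,\alpha,\mathfrak{p})$, the only generators $[\gamma]$ that can appear in $\partial\delta_{2}$ have $\text{gr}([\gamma])=1$, and for such $[\gamma]$ the absolute grading formula (\ref{absolute grading}) gives $\dim\mathcal{M}([\theta_{\alpha}],[\gamma])\equiv 2\pmod 4$; so the relevant object is the union over homotopy classes $v$ of the one-dimensional quotients $\check{\mathcal{M}}_{2,v}([\theta_{\alpha}],[\gamma])$. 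Expanding from the definitions of $\delta_{2}$ and $\partial$, the coefficient of $[\gamma]$ in $\partial\delta_{2}$ is
\[
\sum_{[\beta]\in CI_{2}}\ \sum_{z,z'}\ \#\check{\mathcal{M}}_{1,z}([\theta_{\alpha}],[\beta])\cdot n_{z'}([\beta],[\gamma])\, T^{-\mathcal{E}_{top}(z)-\mathcal{E}_{top}(z')},
\]
which, by additivity of the topological energy under concatenation, regroups component by component according to the class $v=z'\circ z$.

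Next I would invoke the compactness and gluing package already used in Section \ref{sec:Floer-Novikov-Homology-and}: $\check{\mathcal{M}}_{2,v}([\theta_{\alpha}],[\gamma])$ compactifies to a one-manifold with boundary whose boundary consists of broken trajectories. Bubbling is excluded because it drops the dimension by at least $4$, exactly as in the proof that $\partial^{2}=0$. A factorization through the reducible $[\theta_{\alpha}]$ is excluded by the same index estimate: using the additivity formula (\ref{additivity indices}), the one-dimensional stabilizer of $[\theta_{\alpha}]$ forces such a broken configuration into a moduli space of dimension at least $3$, whereas ours has dimension $2$; and $[\theta_{\alpha}]$ is the unique reducible because $\triangle_{K}(e^{-4\pi i\alpha})\neq 0$ (Lemma \ref{Alexander Knot isolated}). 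Hence the only boundary strata are of the form $\check{\mathcal{M}}_{1,z}([\theta_{\alpha}],[\beta])\times\check{\mathcal{M}}_{1,z'}([\beta],[\gamma])$ with $[\beta]$ irreducible of grading $2$ and $z'\circ z=v$ — again by (\ref{additivity indices}) and (\ref{absolute grading}) these are precisely the same $[\beta]$ that enter the double sum above. Counting boundary points with the orientation conventions of Section 4.3 gives, for each fixed $v$, the vanishing of $\sum_{z'\circ z=v}\sum_{[\beta]}\#\check{\mathcal{M}}_{1,z}([\theta_{\alpha}],[\beta])\, n_{z'}([\beta],[\gamma])$; multiplying by $T^{-\mathcal{E}_{top}(v)}$ and summing over $v$ kills the displayed coefficient. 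Finiteness in each energy window, so that all the quantities involved really lie in $\varLambda$, follows exactly as in the proofs that $\partial$ and $\delta_{2}$ are well defined. Therefore $\partial\delta_{2}=0$ and $\delta_{2}$ descends to a class in $HI_{2}(Y,K,\alpha)$.

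The main obstacle — though it is one already handled in the previous section in the guise of $\partial^{2}=0$ and of $\delta_{2}$ being well defined — is guaranteeing that no broken trajectory contributing to $\partial\check{\mathcal{M}}_{2,v}([\theta_{\alpha}],[\gamma])$ factors through the reducible connection $[\theta_{\alpha}]$; this is exactly where the one-dimensional stabilizer of $[\theta_{\alpha}]$, its non-degeneracy under the hypothesis $\triangle_{K}(e^{-4\pi i\alpha})\neq 0$, and the additivity formula (\ref{additivity indices}) are used in concert. Everything else is the standard Novikov-coefficient adaptation of Fr\o yshov's reduced construction \cite{MR1910040}, with the energy weights, as always, kept under control by additivity of $\mathcal{E}_{top}$ under concatenation.
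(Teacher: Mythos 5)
Your argument is correct, and it is precisely the standard argument the paper has in mind when it writes ``As before, it is straightforward to check that:'' without supplying further details. You correctly identify the key index point — that a factorization through $[\theta_{\alpha}]$ on the boundary of $\check{\mathcal{M}}_{2,v}([\theta_{\alpha}],[\gamma])$ would require, by the additivity formula together with $\dim\operatorname{stab}[\theta_{\alpha}]=1$, a total unbroken dimension of at least $3$, which is excluded; you correctly track the grading bookkeeping with the absolute grading formula; and you correctly invoke additivity of the topological energy under concatenation to make the Novikov-coefficient bookkeeping close up. The only thing I would tighten is a word choice: what is used for $[\theta_{\alpha}]$ is that it is \emph{isolated} (equivalently $\check{H}^{1}(\check{Y};\mathfrak{g}_{\theta_{\alpha}})=0$, guaranteed by $\triangle_{K}(e^{-4\pi i\alpha})\neq 0$ via Lemma \ref{Alexander Knot isolated}) and \emph{unobstructed} ($\check{H}^{2}=0$ by Poincar\'e duality), rather than ``non-degenerate'' in the sense reserved for irreducibles where the extended Hessian is invertible; at $[\theta_{\alpha}]$ the extended Hessian always has kernel from the stabilizer. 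This is purely a matter of terminology and does not affect the validity of the proof.
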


The next map to define is the $\mu$ -map, which we will denote $\mu_{K}$,
to emphasize the fact that it is not the ordinary $\mu$-map. First
we need to understand the homotopy type of the space of connections
mod gauge, i.e, $\mathcal{B}(Y,K,\alpha)$, since for $x\in K$, $\mu_{K}(x)$
will be a degree 2 element in $H^{*}(\mathcal{B}(Y,K,\alpha);\mathbb{Q})$.

The main idea is to take advantage of the fact that singular connections
have a stronger notion of framing than ordinary connections. We will
follow the discussion in \cite[Section 5]{MR1284567} and \cite[Section 4]{MR1432428}.
Recall that if $G$ is a compact Lie group that acts on a topological
space $Z$, then the homotopy quotient $Z//G$ is defined as $Z\times_{G}EG$,
where $EG$ is a contractible space with a free $G$ action. The natural
map $Z//G\rightarrow Z/G$ induces a map $H^{*}(Z/G,\mathbb{Z})\rightarrow H^{*}(Z//G,\mathbb{Z})$,
which is an isomorphism when $G$ acts freely. Lemma 5.1 in \cite{MR1284567}
shows the following:
\begin{lem}
If $U(1)$ acts on $Z$ and the stabilizer of every point in $Z$
is $\{\pm1\}$, then the pull-back map $H^{*}(Z/U(1),\mathbb{Q})\rightarrow H^{*}(Z//U(1),\mathbb{Q})$
is an isomorphism.
\end{lem}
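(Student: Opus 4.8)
The plan is to realise the canonical map $\pi\colon Z//U(1)\to Z/U(1)$ as the projection of a locally trivial fibre bundle with fibre $B\mathbb{Z}/2\simeq\mathbb{RP}^\infty$, and then to read off the isomorphism from the Leray--Serre spectral sequence with $\mathbb{Q}$-coefficients, using that $\mathbb{RP}^\infty$ is rationally acyclic.

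First I would set up the bundle. Put $N=\{\pm1\}\subset U(1)$ and $Q=U(1)/N$; by hypothesis the induced $Q$-action on $Z$ is free, so $Z\to Z/Q=Z/U(1)$ is a principal $Q$-bundle. Now $Z//U(1)=EU(1)\times_{U(1)}Z$, and since $N$ acts freely on $EU(1)$ and trivially on $Z$, taking the quotient of $EU(1)\times Z$ by $N$ first gives $(EU(1)/N)\times Z$ with a residual diagonal $Q$-action, whence $Z//U(1)=\bigl((EU(1)/N)\times Z\bigr)/Q$. Because $Q$ acts freely on $Z$, this is exactly the bundle over $Z/Q$ with fibre $EU(1)/N$ associated to the principal $Q$-bundle $Z\to Z/Q$, and under the identification $Z/Q=Z/U(1)$ the map $[\bar e,z]\mapsto[z]$ is the canonical map $\pi$. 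Finally $EU(1)$ is contractible and $N$ acts freely on it, so $EU(1)/N$ is a model for $BN=B\mathbb{Z}/2\simeq\mathbb{RP}^\infty$.

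With the fibre bundle $\mathbb{RP}^\infty\to Z//U(1)\to Z/U(1)$ (with projection $\pi$) in hand, I would run the Leray--Serre spectral sequence $E_2^{p,q}=H^p\bigl(Z/U(1);\mathcal{H}^q(\mathbb{RP}^\infty;\mathbb{Q})\bigr)\Rightarrow H^{p+q}(Z//U(1);\mathbb{Q})$. Since $H^q(\mathbb{RP}^\infty;\mathbb{Q})$ is $\mathbb{Q}$ for $q=0$ and $0$ otherwise, and $\pi_1(Z/U(1))$ necessarily acts trivially on $H^0$ of the connected fibre, the $E_2$-page is concentrated in the bottom row, where $E_2^{p,0}=H^p(Z/U(1);\mathbb{Q})$. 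There is then no room for any differential, so $E_2=E_\infty$ and the edge homomorphism, which is precisely $\pi^*\colon H^*(Z/U(1);\mathbb{Q})\to H^*(Z//U(1);\mathbb{Q})$, is an isomorphism, which is the assertion.

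The step I expect to be the main obstacle is the very first one: to invoke the spectral sequence one needs $\pi$ to be an honest fibration (here a fibre bundle), which in turn requires the free $Q$-action on $Z$ to be locally trivial, i.e. $Z\to Z/Q$ to be a genuine principal bundle rather than merely a free quotient. In the situation of interest $Z$ is a Banach manifold carrying a smooth free $U(1)/\{\pm1\}$-action, so the slice theorem supplies this; in a purely topological setting one would either assume $Z$ is, say, a free $U(1)/\{\pm1\}$-CW complex, or else bypass the bundle picture and argue directly that the point-inverses of $\pi$ are $\mathbb{Q}$-acyclic and apply a Vietoris--Begle/Leray-sheaf argument to $\pi$. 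I would present the bundle argument as the main line and record the alternative as a remark.
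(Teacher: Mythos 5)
Your proof is correct. The paper itself contains no proof of this lemma: it is stated as a citation of Lemma~5.1 of the reference \cite{MR1284567} and then used without argument, so there is no internal proof to compare against. Your argument is the standard one for statements of this type: set $N=\{\pm1\}$, $Q=U(1)/N$, observe that $N$ acts trivially on $Z$ and freely on $EU(1)$, quotient $EU(1)\times Z$ by $N$ first to get $(EU(1)/N)\times Z$, then quotient by the residual free $Q$-action to exhibit $Z//U(1)\to Z/U(1)$ as a fibre bundle with fibre $EU(1)/N\simeq\mathbb{RP}^{\infty}$; since $\mathbb{RP}^{\infty}$ is $\mathbb{Q}$-acyclic the Leray--Serre spectral sequence collapses onto the bottom row and the edge map $\pi^{*}$ is an isomorphism (the monodromy issue is vacuous because only $H^{0}$ of the connected fibre survives). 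The algebra of the quotient steps is right, and the caveat you flag at the end is exactly the point of substance: to have an honest fibre bundle one needs $Z\to Z/Q$ to be a locally trivial principal $Q$-bundle, not merely a free quotient. In the setting where the lemma is applied in this paper ($Z$ a framed configuration space of singular connections with a residual $U(1)$ gauge action on a Banach manifold), local triviality is supplied by the slice theorem; in a purely topological setting one would assume a free $Q$-CW structure or replace the bundle argument by a Vietoris--Begle/Leray-sheaf argument on the proper map $\pi$, as you note.
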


When $V$ is a complex vector bundle over $Z$ with a lift of the
$G$ action to $V$, we can also define $V//G=V\times_{G}EG$ and
the $G$-equivariant Chern classes of $V$ as $c_{i,G}(V)=c_{i}(V//G)\in H^{2i}(Z//G,\mathbb{Z})$.
These are the pull-backs of the Chern classes on $Z/G$.

For our setup, when we are working with the pair $(X,\varSigma)$,
the bundle $E$ decomposes near $\varSigma$ as $E=L\oplus L^{-1}$.
Moreover, we wrote an exact sequence \ref{eq: exact sequence gauge group}
for the gauge group, which in particular implies that over $\varSigma$
the $SU(2)$ gauge is broken to an $U(1)$ gauge. If $x\in\varSigma$
is a base-point and $\mathcal{G}_{x}$ the gauge transformations which
act trivially on $E_{x}=L_{x}\oplus L_{x}^{-1}$, then we have the
framed configuration space 
\[
\mathcal{B}^{o}(X,\varSigma,\alpha)=\mathcal{C}(X,\varSigma,\alpha)/\mathcal{G}_{x}(X,\varSigma)
\]
Since $\mathcal{G}/\mathcal{G}_{x}\simeq U(1)$, a residual gauge
group isomorphic to $U(1)$ acts on $\mathcal{B}^{o}(X,\varSigma,\alpha)$,
and $\mathcal{B}^{o}(X,\varSigma,\alpha)/U(1)=\mathcal{B}(X,\varSigma,\alpha)$.
Now we are ready to define the universal $SO(3)$ bundle and the corresponding
$\mu$ map.
\begin{defn}
Define the \textbf{universal $SO(3)$ bundle 
\[
\mathbb{E}^{ad}=\mathcal{C}(X,\varSigma,\alpha)\times_{\mathcal{G}_{x}}\mathfrak{g}_{E}\rightarrow(\mathcal{B}^{*}(X,\varSigma,\alpha)\times X)
\]
}Moreover, the $U(1)$ bundle
\begin{equation}
\mathbb{L}^{o}=\mathcal{C}(X,\varSigma,\alpha)\times_{\mathcal{G}_{x}}L\rightarrow\mathcal{B}^{o}(X,\varSigma,\alpha)\times\varSigma\label{universal line bundle}
\end{equation}
 descends to the \textbf{universal $U(1)$ bundle }
\[
\mathbb{L}^{\otimes2}\rightarrow(\mathcal{B}^{*}(X,\varSigma,\alpha)\times\varSigma)
\]
Define for $\eta\in H_{i}(X;\mathbb{Q})$ and $\eta_{\varSigma}\in H_{j}(\varSigma;\mathbb{Q})$
the \textbf{$\mu$-maps}
\begin{align*}
\mu(\eta)=-\frac{1}{4}p_{1}(\mathbb{E}^{ad})/\eta\in H^{4-i}(\mathcal{B}^{*}(X,\varSigma,\alpha);\mathbb{Q})\\
\mu_{\varSigma}(\eta_{\varSigma})=-\frac{1}{2}e(\mathbb{L}^{\otimes2})/\eta_{\varSigma}\in H^{2-j}(\mathcal{B}^{*}(X,\varSigma,\alpha);\mathbb{Q})
\end{align*}
\end{defn}

\begin{rem}
1) We follow the sign conventions of Kronheimer for $\mu_{\varSigma}$
\cite[Section 2.1]{MR1432428}.

2) Notice that since the homotopy type of $\mathcal{B}^{*}(X,\varSigma,\alpha)$
is independent of $\alpha$, the $\mu$-maps corresponding to different
values of $\alpha$ can be identified with each other, which is why
we do not indicate the value of $\alpha$ in our notation.

3) We can also view these as elements in $\mathcal{B}(X,\varSigma,\alpha)$,
since the reducible connections form a stratum of infinite codimension
in $\mathcal{B}(X,\varSigma,\alpha)$.

4) Notice that our construction singles out one of the line bundles
in the decomposition $E=L\oplus L^{-1}$. In fact, had we used $L^{-1}$
instead of $L$, then $\mu_{\varSigma}$ would differ only in sign.
\end{rem}

Clearly a similar procedure can be used to define $\mu_{K}$ in the
case of $(Y,K)$: a cheap way to do this is to consider $X=[0,1]\times Y$
and $\varSigma=[0,1]\times K$. Therefore, for $x\in K$ we let
\begin{align*}
u_{K}(x):CI_{*}(Y,K,\alpha,\mathfrak{p})\rightarrow CI_{*-2}(Y,K,\alpha,\mathfrak{p})\\{}
[\beta_{0}]\rightarrow\sum_{[\beta_{1}]\mid\text{gr}([\beta_{0}],[\beta_{1}])=2}\sum_{z}<u_{K}(x),\mathcal{M}_{2,z}([\beta_{0}],[\beta_{1}])>T^{-\mathcal{E}_{top}(z)}[\beta_{1}]
\end{align*}
Contrary to the case of $\delta_{1},\delta_{2}$, $u_{K}(x)$ will
not descend to a map between the Floer homology groups. To see why
this is the case, observe that the maps
\[
\partial u_{K}-u_{K}\partial
\]
involve considering three dimensional moduli spaces, for which we
had said factorizations through the reducibles can occur. More precisely,
notice that for any $[\beta_{0}]$, we have {[}here we denote for
convenience $<u_{K}(x),\mathcal{M}_{2,z}([\beta_{0}],[\beta_{1}])>$
as $U_{z}([\beta_{0}],[\beta_{1}])${]}
\begin{align}
 & (\partial u_{K}-u_{K}\partial)[\beta_{0}]\label{chain map u}\\
= & \sum_{[\beta_{2}]\mid\text{gr}([\beta_{2}],[\beta_{1}])=1}\sum_{z_{12}}\sum_{[\beta_{1}]\mid\text{gr}([\beta_{0}],[\beta_{1}])=2}\sum_{z_{01}}U_{z_{01}}([\beta_{0}],[\beta_{1}])n_{z_{12}}([\beta_{1}],[\beta_{2}])T^{-\mathcal{E}_{top}(z_{01})}T^{-\mathcal{E}_{top}(z_{12})}[\beta_{2}]\nonumber \\
 & -\sum_{[\beta_{2}]\mid\text{gr}([\beta_{2}],[\beta_{1}])=2}\sum_{w_{12}}\sum_{[\beta_{1}]\mid\text{gr}([\beta_{1}],[\beta_{0}])=1}\sum_{w_{01}}n_{w_{01}}([\beta_{0}],[\beta_{1}])U_{w_{12}}([\beta_{1}],[\beta_{2}])T^{-\mathcal{E}_{top}(w_{01})}T^{-\mathcal{E}_{top}(w_{12})}[\beta_{2}]\nonumber 
\end{align}
The typical argument would look at a three dimensional moduli space
$\mathcal{M}_{3}([\beta_{0}],[\beta_{2}])$ and consider the possible
ends of this moduli space. Some of the ends correspond to the terms
in \ref{chain map u}, but when $\text{gr}[\beta_{0}]=1$, a priori
it is also possible to have factorizations of the form 
\[
\mathcal{M}_{1}([\beta_{0}],[\theta_{\alpha}])\times\mathcal{M}_{1}([\theta_{\alpha}],[\beta_{2}])
\]
which needs to be accounted for. In fact, we have the analogue of
\cite[Theorem 4]{MR1910040}, \cite[Proposition 8]{MR2738582} and
\cite[Lemma 7.6]{MR1883043}.
\begin{lem}
The map $u_{K}$ satisfies the relation 
\begin{equation}
\partial u_{K}-u_{K}\partial-\frac{1}{2}\delta_{1}\otimes\delta_{2}=0\label{relation u-map}
\end{equation}
\end{lem}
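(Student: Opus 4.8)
The plan is to prove the relation by the standard "count the signed ends of a cut-down one-dimensional moduli space" argument, adapted to the orbifold and local-coefficient setting, in direct parallel with \cite[Theorem 4]{MR1910040}, \cite[Proposition 8]{MR2738582} and \cite[Lemma 7.6]{MR1883043}. It suffices to verify the identity on each generator $[\beta_{0}]$, and the only interesting case is $\text{gr}([\beta_{0}])=1$, since for $\text{gr}([\beta_{0}])\neq1$ the term $\delta_{1}\otimes\delta_{2}$ vanishes and the claim reduces to $\partial u_{K}=u_{K}\partial$. So I would fix $[\beta_{0}]$ with $\text{gr}([\beta_{0}])=1$, a critical point $[\beta_{2}]$ with $\text{gr}([\beta_{2}])=2$, and a homotopy class $z$ for which $\mathcal{M}_{z}([\beta_{0}],[\beta_{2}])$ has index exactly $3$. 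Choose a generic point $x\in K$ and represent $u_{K}(x)\in H^{2}(\mathcal{B}^{*}(Y,K,\alpha);\mathbb{Q})$ by the zero locus $\mathcal{V}_{x}$ of a generic section of a restriction of the universal bundle $\mathbb{L}^{\otimes2}$, a codimension-two divisor in $\mathcal{B}^{*}(Y,K,\alpha)$. After an additional interior and boundary holonomy perturbation making the intersection transverse, $\overline{V}_{z}:=\mathcal{M}_{z}([\beta_{0}],[\beta_{2}])\cap\mathcal{V}_{x}$ is the compactification of a smooth oriented one-manifold, whose signed boundary count must vanish; since by definition $\langle u_{K}(x),\mathcal{M}_{2,z'}([\beta_{0}],[\beta_{1}])\rangle$ is the signed count of the analogous zero-dimensional intersection, the relation will follow once the ends of $\overline{V}_{z}$ are identified and summed over $z$ against the weights $T^{-\mathcal{E}_{top}(z)}$.

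Next I would enumerate the boundary strata of $\overline{V}_{z}$. Uhlenbeck bubbling drops the dimension of a moduli space by at least four \cite[Proposition 3.22]{MR2860345}, so it contributes nothing to the ends of a one-manifold, and every end comes from breaking a trajectory. Breaking at an intermediate irreducible critical point $[\beta_{1}]$ produces two types of ends, of the form $(\mathcal{M}_{2}([\beta_{0}],[\beta_{1}])\cap\mathcal{V}_{x})\times\check{\mathcal{M}}_{1}([\beta_{1}],[\beta_{2}])$ and of the form $\check{\mathcal{M}}_{1}([\beta_{0}],[\beta_{1}])\times(\mathcal{M}_{2}([\beta_{1}],[\beta_{2}])\cap\mathcal{V}_{x})$; after the $T^{-\mathcal{E}_{top}}$ weighting, using additivity of the topological energy under concatenation exactly as in the proof that $\partial^{2}=0$, these contribute respectively the $[\beta_{2}]$-coefficient of $\partial u_{K}[\beta_{0}]$ and of $u_{K}\partial[\beta_{0}]$, i.e. the two sums appearing in \ref{chain map u}. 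Because $\text{gr}([\beta_{0}])=1$ there is in addition the possibility of breaking through the reducible $[\theta_{\alpha}]$, producing a stratum modelled on $\check{\mathcal{M}}_{1}([\beta_{0}],[\theta_{\alpha}])\times\check{\mathcal{M}}_{1}([\theta_{\alpha}],[\beta_{2}])$ together with the residual $S^{1}$ gluing parameter at $[\theta_{\alpha}]$; that $[\theta_{\alpha}]$ is the unique reducible and is isolated and unobstructed (since $\check{H}^{2}(\check{Y};\mathfrak{g}_{\alpha})\simeq\check{H}^{1}(\check{Y};\mathfrak{g}_{\alpha})=0$) was established in Lemma \ref{orbifold cohomology} and the discussion following it.

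The main obstacle, and the genuine content of the argument, is the gluing analysis at the reducible stratum. Here I would carry out the obstruction-bundle construction of instantons near the broken configuration $[\beta_{0}]\to[\theta_{\alpha}]\to[\beta_{2}]$, whose stabilizer jumps to $S^{1}$ at the middle factor, and show that for large gluing length the corresponding end of $\mathcal{M}_{z}([\beta_{0}],[\beta_{2}])$ fibres as a circle bundle over $\check{\mathcal{M}}_{1}([\beta_{0}],[\theta_{\alpha}])\times\check{\mathcal{M}}_{1}([\theta_{\alpha}],[\beta_{2}])$, the circle being the $S^{1}$-orbit of the gluing parameter. Restricting to $\mathcal{V}_{x}$ then amounts to counting zeros of a section of the pullback of $\mathbb{L}^{\otimes2}$ to that circle; a degree computation identifies the pullback with a nonzero multiple of the generator, so the signed count of these ends of $\overline{V}_{z}$ is $\pm\frac{1}{2}\,\#\check{\mathcal{M}}_{1}([\beta_{0}],[\theta_{\alpha}])\cdot\#\check{\mathcal{M}}_{1}([\theta_{\alpha}],[\beta_{2}])$, the factor $\frac{1}{2}$ coming from matching the normalization $u_{K}(x)=-\frac{1}{2}e(\mathbb{L}^{\otimes2})/x$ against the degree of $\mathbb{L}^{\otimes2}$ along the gluing circle, exactly as in \cite[Theorem 4]{MR1910040}. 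Summing over $z$ with the weights $T^{-\mathcal{E}_{top}(z)}$ reproduces $\frac{1}{2}\,\delta_{1}([\beta_{0}])\cdot\delta_{2}$ in $CI_{2}(Y,K,\alpha,\mathfrak{p})$. Finally, with mutually compatible orientations of all moduli spaces in the conventions of Section \ref{sec:Floer-Novikov-Homology-and}, so that $\overline{V}_{z}$ is oriented as a one-manifold with boundary, the vanishing of its signed boundary count assembles the three families of ends into $\partial u_{K}-u_{K}\partial-\frac{1}{2}\delta_{1}\otimes\delta_{2}=0$. The only points that are new relative to the classical case are the Novikov bookkeeping — handled by additivity of $\mathcal{E}_{top}$ and the uniform perturbation bounds recalled in Section \ref{sec:Floer-Novikov-Homology-and} — and checking that Kronheimer and Mrowka's orbifold gluing theory supports the same obstruction-bundle picture at $[\theta_{\alpha}]$; the latter is where the real work lies.
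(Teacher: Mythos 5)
Your structural outline is the standard ``count the ends of a cut-down one-dimensional moduli space'' argument and it matches the shape of the paper's proof: examine $\overline{V}_z = \mathcal{M}_z([\beta_0],[\beta_2])\cap\mathcal{V}_x$, note that Uhlenbeck bubbling contributes nothing in codimension one, identify the ends broken at irreducible critical points with the $\partial u_K$ and $u_K\partial$ terms, and isolate the broken trajectories through $[\theta_\alpha]$ as the source of the $\frac{1}{2}\delta_1\otimes\delta_2$ term. The Novikov bookkeeping you describe (additivity of $\mathcal{E}_{top}$ and the uniform perturbation bounds) is handled exactly as in the $\partial^2=0$ argument.

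The gap is in the sentence ``a degree computation identifies the pullback with a nonzero multiple of the generator.'' The paper singles out precisely this degree computation, and nothing else, as the step where the singular setting requires work beyond quoting \cite[Proposition 8]{MR2738582}. In the classical proofs (\cite{MR1910040}, \cite{MR2738582}, \cite{MR1883043}), the relevant degree is read off from the holonomy of a connection $A$ on $\mathbb{R}\times Y$ along $\mathbb{R}\times\{x\}$; here $x$ lies on the knot $K$, where the connection has a prescribed singularity and that holonomy is not a priori defined. The paper's fix is to take $y=(r,\theta)$ in a normal disk to $K$ at $x$, pass to $h_A(\theta)=\lim_{r\to0}\text{hol}_A(\mathbb{R}\times\{y\})\in U(1)$, and observe that $h_A(\theta+2\pi)=e^{-4\pi i\alpha}\,h_A(\theta)$: the naive limit carries non-trivial monodromy in $\theta$, and one must raise $h_A(\theta)$ to the power $q$ (when $\alpha=\frac{2p+1}{2q}$) or $2q+1$ (when $\alpha=\frac{2p}{2q+1}$) to obtain a well-defined holonomy at $x$, and hence a genuine clutching map of degree one on $S^2=D^2\cup_{S^1}D^2$. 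Your section-of-$\mathbb{L}^{\otimes2}$ formulation does not sidestep this, because the degree of the pullback of $\mathbb{L}^{\otimes2}$ to the gluing circle is computed from the same limiting holonomy data, so the $e^{-4\pi i\alpha}$ monodromy reappears and must be removed. Locating ``the real work'' in the orbifold obstruction-bundle gluing at $[\theta_\alpha]$ is also not where the paper places it --- that part is routine once $[\theta_\alpha]$ is known to be isolated and unobstructed --- and by not flagging the holonomy correction you leave the coefficient $\frac{1}{2}$ unjustified.
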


\begin{proof}
Of the references cited above, the closest to our argument is in fact
\cite[Proposition 8]{MR2738582}, since the monopole case also uses
a universal $U(1)$ bundle to define the corresponding $u$-map. Therefore
we obtain the same formula as the one Frøyshov writes for the monopole
case, except for the difference in conventions for the constants in
front of the $\mu$-maps.

In fact, the only place where one needs to be careful with the previous
argument is that any of the proofs quoted above use the holonomy of
a connection $A$ along the path $\mathbb{R}\times\{x\}\subset\mathbb{R}\times Y$.

More precisely, consider a 3-dimensional moduli space $\mathcal{M}_{3,z}([\beta_{0}],[\beta_{2}])$,
and choose a representative $A\in\mathcal{M}_{3,z}([\beta_{0}],[\beta_{1}])$
whose ``centre of mass'' is $0$, i.e, 
\[
\int_{\mathbb{R}\times Y}t|F_{A}|^{2}=0
\]
If $\text{ad}\beta_{0},\text{ad}\beta_{1}$ are the corresponding
(perturbed) flat $SO(3)$ bundles corresponding to $[\beta_{0}],[\beta_{1}]$,
then we can choose a base point $y\in Y$, which is \emph{close} to
$x\in K$, without being equal to it. Using a normal neighborhood
$\nu(K)$ of $K$, we may assume that $y$ belongs to a normal disk
to $K$, centered at $x$, for which $y$ has polar coordinates $(r,\theta)$.
Since we are away from the knot, there is no controversy as to what
we mean by 
\[
h_{A}(r,\theta)=\text{hol}_{A}\left(\mathbb{R}\times\{y\}\right)
\]
That is, the holonomy of $A$ along the path $\mathbb{R}\times\{y\}$,
where $y$ has coordinates $(r,\theta)$. Comparing the frames for
the fibres of the bundles $\text{ad}\beta_{0},\text{ad}\beta_{1}$,
we get an element in $SO(3)$, i.e, $h_{A}(r,\theta)\in SO(3)$. Now,
for fixed $\theta$, as $r$ decreases the decomposition $E=L\oplus L^{-1}$
, $\text{ad}E=\mathbb{R}\oplus L^{\otimes2}$, becomes asymptotically
parallel with respect to $A$. Therefore, we obtain an element 
\[
h_{A}(\theta)=\lim_{r\rightarrow0}\text{hol}_{A}\left(\mathbb{R}\times\{y\}\right)\in U(1)
\]
which is obtained by comparing the frames for the fibres of the $U(1)$
bundles $\widetilde{\text{ad}\beta_{0}},\widetilde{\text{ad}\beta_{1}}$,
over the $U(1)$ line bundle $\mathbb{L}^{\otimes2}\rightarrow\varSigma$
{[}compare with the description of the universal bundle \ref{universal line bundle}{]}.
The existence of this limit follows for example from \cite{MR1152376},
or one can also use the fact that we are working with orbifold connections,
as Kronheimer and Mrowka do in \cite[Section 3.1]{MR3956896}.

However, we still need to analyze what happens as we vary the angle
at which we approach the point $x$. It is not difficult to see that
as we vary the angle by a full revolution, i.e, $\theta\rightarrow\theta+2\pi$,
then the holonomy picks out $h_{A}(\theta)$ picks out the asymptotic
holonomy factor $e^{-4\pi i\alpha}$, in other words 
\[
h_{A}(\theta+2\pi)=e^{-4\pi i\alpha}h_{A}(\theta)
\]
So in the case of rational holonomy, we can take 
\[
\begin{cases}
h_{A}(x)\equiv[h_{A}(\theta)]^{q} & \alpha=\frac{2p+1}{2q}\\
h_{A}(x)\equiv[h_{A}(\theta)]^{2q+1} & \alpha=\frac{2p}{2q+1}
\end{cases}
\]
as our desired holonomy map. For example, the case of $\alpha=\frac{1}{4}=\frac{1}{2q}$
implies that we should square the limiting holonomy maps, which is
exactly what Kronheimer and Mrowka do in \cite[Section 3.1]{MR3956896}.
The difference with their construction is mainly stylistically, since
they pass first to a local cover of a neighborhood of $x$, where
the pull-backs of the connections extend smoothly.

Once we know how to take the holonomy along a point on the knot $x\in K$,
the proof follows in exactly the same way as in \cite[Proposition 8]{MR2738582},
where the coefficient of $\delta_{1}\otimes\delta_{2}$ is the Euler
number of the rank $1$ Hermitian vector bundle over $S^{2}=D^{2}\cup_{S^{1}}D^{2}$
whose ``clutching map'' $S^{1}\rightarrow U(1)$ has degree $1$.
\end{proof}

\textbf{}

Now we interpret the equation \ref{relation u-map} to find out how
the reduced Floer groups should be defined.
\begin{itemize}
\item Case when $[\beta]\in HI_{1}(Y,K,\alpha)$ and $\delta_{1}[\beta]=0$:
then $\partial u_{K}[\beta]-u_{K}\partial[\beta]=0$, which means
that $u_{K}$ descends to a map 
\[
u_{K}:\ker\delta_{1}\subset HI_{1}(Y,K,\alpha)\rightarrow HI_{-1}(Y,K,\alpha)=HI_{3}(Y,K,\alpha)
\]
\end{itemize}
\textbf{}
\begin{itemize}
\item Case when $[\beta_{0}]\in HI_{0}(Y,K,\alpha)$: abusing notation write
a representative of $[\beta_{0}]$ as $[\beta]+\partial[\beta_{1}]$,
where $\partial[\beta]=0$ and $[\beta_{1}]\in HI_{1}(Y,K,\alpha)$.
Then 
\[
u_{K}[\beta_{0}]=u_{K}[\beta]+u_{K}\partial[\beta_{1}]=u_{K}[\beta]+\partial u_{K}[\beta_{1}]-\frac{1}{2}\delta_{1}([\beta_{1}])\delta_{2}
\]
Therefore, we must identify elements which differ by an element on
the ``ray'' $\varLambda\delta_{2}$ , in other words, we get a map
\[
u_{K}:HI_{0}(Y,K,\alpha)\rightarrow\text{coker}(\delta_{2})=HI_{2}(Y,K,\alpha)/(\varLambda\delta_{2})
\]
\end{itemize}

\begin{itemize}
\item On the other summands $HI_{2}(Y,K,\alpha)$ and $HI(Y,K,\alpha)$
, the $u_{K}$ map is actually well defined without additional considerations
so we get maps 
\[
\begin{cases}
u_{K}:HI_{2}(Y,K,\alpha)\rightarrow HI_{0}(Y,K,\alpha)\\
u_{K}:HI_{3}(Y,K,\alpha)\rightarrow HI_{1}(Y,K,\alpha)
\end{cases}
\]
\end{itemize}
The definition the reduced Floer homology groups is now identical
to \cite[Definition 1]{MR1910040}, since as long as we work with
$\alpha$-admissible cobordism it is straightforward to adapt section
$3$ of \cite{MR1910040} (which analyzes the behavior of $\delta_{1},\delta_{2},u_{K}$
under cobordisms) to obtain:
\begin{thm}
Let $K\subset Y$ be a knot and $\alpha\in\mathbb{Q}\cap(0,1/2)$
be such that $\triangle_{K}(e^{-4\pi i\alpha})\neq0$. The \textbf{reduced
Instanton Floer homology groups} $HI_{i}^{red}(Y,K,\alpha)$ 
\begin{align}
HI_{0}^{red}(Y,K,\alpha) & = & HI_{0}(Y,K,\alpha)/(\sum\text{im}(u_{K}^{2l+1}\delta_{2})\label{reduced Floer groups}\\
HI_{1}^{red}(Y,K,\alpha) & = & \cap_{l\geq0}\ker(\delta_{1}u_{K}^{2l})\subset HI_{1}(Y,K,\alpha)\nonumber \\
HI_{2}^{red}(Y,K,\alpha) & = & HI_{2}(Y,K,\alpha)/(\sum\text{im}(u_{K}^{2l}\delta_{2}))\nonumber \\
HI_{3}^{red}(Y,K,\alpha) & = & \cap_{l\geq0}\ker(\delta_{1}u_{K}^{2l+1})\subset HI_{3}(Y,K,\alpha)\nonumber 
\end{align}
are topological invariants of the data $(Y,K,\alpha)$, together with
the choice of cone parameter $\nu$ used to define the groups. 

Moreover, the \textbf{Frøyshov knot invariants }
\begin{equation}
h(Y,K,\alpha)=\chi_{\varLambda}(HI^{red}(Y,K,\alpha))-\chi_{\varLambda}(HI(Y,K,\alpha))\in\mathbb{Z}\label{eq:h-invariant knot}
\end{equation}
where $\chi_{\varLambda}$ denotes the Euler characteristic with respect
to the Novikov field $\varLambda$, are also invariants of $(Y,K,\alpha)$
(and the cone parameter).
\end{thm}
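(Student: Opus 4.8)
The plan is to follow Fr\o yshov's original construction of reduced instanton Floer homology \cite{MR1910040}, which is essentially algebraic once the structural maps are in hand, and then to upgrade the functoriality discussion of \cite[Section 3]{MR1910040} to the present singular setting with Novikov coefficients. The input is the ``Fr\o yshov package'' assembled in this section: a $\mathbb{Z}/4$-graded chain complex $(CI_*(Y,K,\alpha,\mathfrak{p}),\partial)$ over the Novikov field $\varLambda$, together with $\delta_1\colon CI_1\to\varLambda$, $\delta_2\in CI_2$, and the point-holonomy map $u_K\colon CI_*\to CI_{*-2}$, subject to $\delta_1\partial=0$, $\partial\delta_2=0$ and the mapping-cone identity $\partial u_K-u_K\partial=\tfrac12\,\delta_1\otimes\delta_2$ of (\ref{relation u-map}).

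First I would check that the definitions (\ref{reduced Floer groups}) make sense. Since they involve only iterated compositions of $\delta_1$, $u_K$ and $\delta_2$, one verifies, exactly as in \cite[Section 1]{MR1910040}, that each $\delta_1u_K^{2l}$ and $\delta_1u_K^{2l+1}$ descends to homology, that each $u_K^{2l}\delta_2$ and $u_K^{2l+1}\delta_2$ is a well-defined homology class, and that the nested kernels and images stabilize; the relation (\ref{relation u-map}) is precisely what is needed for these descent statements. Finiteness over $\varLambda$ --- which follows from Proposition \ref{"Gromov" compactness} together with the Novikov finiteness condition --- shows that all the groups in (\ref{reduced Floer groups}) are finite-dimensional $\varLambda$-vector spaces, so $\chi_\varLambda$ is a well-defined integer on each of them and $h(Y,K,\alpha)=\chi_\varLambda(HI^{red})-\chi_\varLambda(HI)$ automatically lies in $\mathbb{Z}$.

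For invariance I would show that an $\alpha$-admissible cobordism $(W,\varSigma)$ induces not merely the chain map $m_{(W,\varSigma)}$ already constructed but a morphism of Fr\o yshov packages. The geometric input is that, under $\alpha$-admissibility, the adaptation of Lemma \ref{orbifold cohomology} to the completed cobordism makes the unique reducible $\theta_{W,\alpha}$ isolated and non-degenerate; this lets one define the accompanying cobordism-level maps that count trajectories breaking off $\theta_{W,\alpha}$, and, using the point-holonomy construction from the proof of (\ref{relation u-map}) carried out on $W\setminus\varSigma$, a cobordism $u$-map. The end-of-moduli-space analysis then yields the chain-homotopy analogues of $\delta_1\partial=0$, $\partial\delta_2=0$ and (\ref{relation u-map}) for these maps, i.e.\ the package structure is preserved up to chain homotopy. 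The purely algebraic lemmas of \cite[Section 3]{MR1910040} --- which use only the package identities and formal manipulations of chain homotopies, and are valid over any field, in particular $\varLambda$ --- then show that such a morphism induces maps on the reduced groups and respects composition. Applying this to $W=[0,1]\times Y$, $\varSigma=[0,1]\times K$ --- automatically $\alpha$-admissible because $W\setminus\varSigma$ deformation retracts onto $Y\setminus K$ and $\triangle_K(e^{-4\pi i\alpha})\neq0$ --- with an arbitrary path of orbifold metrics and perturbations compatible with the fixed pair $(\alpha,\nu)$, and using that the constant cylinder induces the identity, one concludes that $HI^{red}_i(Y,K,\alpha)$, and hence $h(Y,K,\alpha)$, depends only on $(Y,K,\alpha)$ and the cone parameter $\nu$.

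The main obstacle I expect is not the algebra, which transfers to the field $\varLambda$ unchanged, but the analytic bookkeeping forced by non-monotonicity: one must reprove the lower energy bound of this section (the estimate following (\ref{perturbed ASD})) for the moduli spaces appearing in the cobordism-level $\delta_1$, $\delta_2$ and $u_K$, including those with $\theta_{W,\alpha}$ as a limiting connection, so that the corresponding sums $\sum_z(\#\check{\mathcal{M}}_z)T^{-\mathcal{E}_{top}(z)}$ genuinely converge in $\varLambda$ and the chain-homotopy identities hold as identities of $\varLambda$-module maps rather than merely formally. A secondary technical point is ensuring that the point-holonomy definition of the $u$-map behaves well near the singular annulus $\varSigma$ on the cobordism, which requires the orbifold-connection regularity used in the proof of (\ref{relation u-map}) to hold uniformly on the completed cobordism $W^*$.
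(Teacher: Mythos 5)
Your proposal matches the paper's route exactly: the paper asserts that the definition is identical to Fr\o yshov's \cite[Definition 1]{MR1910040} and that invariance follows by adapting \cite[Section 3]{MR1910040} to $\alpha$-admissible cobordisms, which is precisely your plan of verifying the descent properties via (\ref{relation u-map}) and then upgrading the cobordism-level machinery. Your additional attention to the Novikov-convergence estimate for the cobordism-level $\delta_1$, $\delta_2$ and $u_K$ maps is a correct and sensible elaboration of a point the paper handles implicitly by reusing the perturbed-energy bound already established for the unreduced cobordism maps.
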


\begin{rem}
The usual $h$-invariants have a factor $\frac{1}{2}$ in front of
the difference in Euler characteristics. We choose not to include
this factor because in our case the Euler characteristics of the groups
may be odd, since these groups need not be $2$-periodic as opposed
to the $4$-periodic instanton Floer homology groups on $Y$, and
we prefer to obtain an integer rather than a half-integer.
\end{rem}

\section{\label{sec:Singular-Orbifold-Furuta-Ohta}Singular Orbifold Furuta-Ohta
and Tori Signature}

In this section we define an analogue of the Furuta-Ohta invariant
$\lambda_{FO}(X)$ \cite{MR1237394} to the case of an embedded torus
$T$ inside $X$ satisfying certain topological conditions.

In other words, we want to define an invariant $\lambda_{FO}(X,T,\alpha)$,
which in the best case scenario can be interpreted as a signed count
of irreducible representations $\pi_{1}(X\backslash T)\rightarrow SU(2)$
(modulo conjugacy) satisfying a certain holonomy condition determined
by the parameter $\alpha$. As usual, perturbations of the flatness
equation will be needed, so the interpretation of $\lambda_{FO}(X,T,\alpha)$
as a count of flat connections is slightly more complicated. In any
case, our construction will be cooked up in such a way that when we
take $X=S^{1}\times Y$ and $T=S^{1}\times K$ then $\lambda_{FO}(X,T,\alpha)$
agrees with $2\lambda_{CH}(Y,K,\alpha)$.

First we want to explain why we consider only torus complements $X\backslash T$,
and not more general surface complements $X\backslash\varSigma$.
Moreover, we will discuss what conditions are needed to make the definition
of $\lambda_{FO}(X,T,\alpha)$ work.

Recall from section \ref{sec:Review-of-the} that $SU(2)$ bundles
$E\rightarrow X$ are characterized by two topological invariants,
the instanton number $k$ and the monopole number $l$. The moduli
space of $\alpha$-ASD connections $\mathcal{M}(X,\varSigma,k,l,\alpha)$
satisfying the asymptotic condition \ref{asymptotic holonomy} has
the expected dimension \cite[Eq 1.6]{MR1241873}
\begin{equation}
\dim\mathcal{M}(X,\varSigma,k,l,\alpha)=8k+4l-3(b_{2}^{+}-b^{1}+1)-(2g-2)\label{eq:dimension moduli}
\end{equation}
while the formula for the topological energy is \cite[Eq. 1.7]{MR1241873}
\begin{equation}
\mathcal{E}(X,\varSigma,k,l,\alpha)=\frac{1}{8\pi^{2}}\int_{\check{X}}\text{tr}(F_{A}\wedge F_{A})=k+2\alpha l-\alpha^{2}\varSigma\cdot\varSigma\label{eq:energy moduli}
\end{equation}
Since $H_{*}(X;\mathbb{Z})\simeq H_{*}(S^{1}\times S^{3};\mathbb{Z})$
these formulas simplify to 
\begin{align}
\dim\mathcal{M}(X,\varSigma,k,l,\alpha)=8k+4l-(2g-2)\label{simplified dimension and energy}\\
\mathcal{E}(X,\varSigma,k,l,\alpha)=k+2\alpha l\nonumber 
\end{align}
Notice that $\alpha$-flat connections (i.e, flat connections satisfying
\ref{asymptotic holonomy}) are equivalent to energy zero $\alpha$-ASD
instantons (i.e, ASD instantons satisfying \ref{asymptotic holonomy}).
In particular, this means that $\alpha$-flat connections only exist
a priori on bundles whose monopole and instanton numbers are related
as 
\[
k+2\alpha l=0
\]
Clearly $k=l=0$ is always a solution of this equation. Since $k,l$
must always be integers, for irrational values of $\alpha$, $k=l=0$
is the only solution. In fact, the next lemma shows that this continue
to hold regardless of the value of $\alpha$.
\begin{lem}
\label{lem: existence of flat connection} Suppose that $E$ is an
$SU(2)$ bundle over $(X,\varSigma)$ with instanton and monopole
numbers $(k,l)$. If $E$ supports an $\alpha$-flat connection then
$(k,l)=(0,0)$ and thus $E$ is the trivial bundle over $X$.
\end{lem}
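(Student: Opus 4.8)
The plan is to exploit the energy formula \ref{eq:energy moduli}, which for our $X$ (with the homology of $S^1\times S^3$ and an embedded torus $T$, so $g=1$ and $T\cdot T=0$) reduces to $\mathcal{E}(X,T,k,l,\alpha)=k+2\alpha l$. An $\alpha$-flat connection is precisely an $\alpha$-ASD connection of topological energy zero, so the existence of such a connection on $E(k,l)$ forces $k+2\alpha l=0$. Now the key point is that both $k$ and $l$ are integers, while $\alpha$ is a \emph{rational} number in $(0,1/2)$; however, rationality alone would permit nonzero solutions (e.g.\ $\alpha=1/4$ gives $k=1,l=-2$). So the energy constraint by itself is not enough, and the real content must come from the monopole number.

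First I would recall the definition of the monopole number, $l=-c_1(L|_\varSigma)[\varSigma]$, together with the hypothesis that the flat connection $A$ reduces along $T$ as $E|_{\nu(T)}=L_\rho\oplus L_\rho^{-1}$, with holonomy around the meridian $\mu_T$ conjugate to $\mathrm{diag}(e^{-2\pi i\alpha},e^{2\pi i\alpha})$. The crucial observation is that, since $A$ is genuinely \emph{flat} on $X\backslash T$, the line bundle $L_\rho$ carries a flat connection on the complement, so its restriction $L_\rho|_{\partial\nu(T)}$ is flat; more to the point, one analyzes the curvature of $L_\rho$ near $T$. The orbifold Chern-Weil computation underlying \ref{eq:energy moduli} expresses $\mathcal{E}$ in terms of $k$, of $\int_{\check X}c_2$-type data, and of the self-intersection term $\alpha^2\,T\cdot T$, which vanishes here since $T$ is an embedded torus of trivial normal bundle (it is homologically like $S^1\times(\text{point})$, and $H_2(X;\mathbb{Z})=0$ forces $[T]$ and hence $T\cdot T$ to vanish). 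So the vanishing of the energy is equivalent to $k+2\alpha l=0$ as stated, and the task becomes: show flatness forces $k=l=0$ independently of the arithmetic of $\alpha$.

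The cleanest route I would take is to use the reducibility of $E$ itself over all of $X$ rather than just near $T$: a flat $SU(2)$ connection on $X\backslash T$ gives a representation $\pi_1(X\backslash T)\to SU(2)$, and since $H_1(X\backslash T;\mathbb{Z})\simeq\mathbb{Z}\oplus\mathbb{Z}$ (from $H_1(T)\twoheadrightarrow H_1(X)$) one can compute the Chern classes of $E$ topologically. Concretely, a flat connection has $c_2(E)=0$ as a rational class on $X\backslash T$, and by the extension/gluing data (the reduction along $T$) the integer $k=c_2(E)[X]$ is determined by $l$; pushing this through, $k$ is forced to be a fixed integer multiple of $l$ depending only on the topology, and combined with $k+2\alpha l=0$ with $\alpha\notin\frac12\mathbb{Z}$ irrational-free this pins down $l=0$, hence $k=0$, hence $E$ is trivial. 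Alternatively — and this is probably the argument actually intended — one notes that if $A$ is flat then $F_A=0$ on $X\backslash T$, so in the integral $\int_{\check X}\mathrm{tr}(F_A\wedge F_A)$ every contribution is supported on the orbifold locus $T$; the orbifold correction is exactly $-\alpha^2\,T\cdot T + (\text{integer in }k) + 2\alpha l$, and since the genuine (non-orbifold) Chern-Weil integrand vanishes pointwise off $T$, one gets that the \emph{full} expression $k+2\alpha l$ equals zero \emph{and} that $k$ and $l$ separately vanish because the only way an honest flat connection can have nontrivial $(k,l)$ is through a nonzero curvature concentration, which flatness excludes.

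The main obstacle I anticipate is making precise the last step: disentangling which part of $k+2\alpha l=0$ is "automatic" from the energy vanishing and which part genuinely requires flatness (as opposed to merely being $\alpha$-ASD of zero energy). The subtlety is that $\mathcal{E}=0$ alone gives only the single linear relation $k=-2\alpha l$, which for $\alpha=p/q$ has the infinite family of solutions $l=mq,\ k=-2mp$; one must argue separately that a \emph{flat} (not just zero-energy ASD) connection cannot realize $m\neq 0$. I expect this is handled by observing that for $m\neq 0$ the bundle $E(k,l)$ is topologically nontrivial over $X$ (it has $c_2\neq 0$), whereas a flat connection exists only on the trivial bundle since $\pi_1$-representations into $SU(2)$ extend to bundle-trivializing data when $H^1(X\backslash T)$ and the monodromy are as constrained here — precisely the kind of argument given for the reducibles in Lemma \ref{orbifold cohomology} and its four-dimensional analogue promised in the text. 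So the proof should run: flatness $\Rightarrow$ $\rho\colon\pi_1(X\backslash T)\to SU(2)$ $\Rightarrow$ compute $(k,l)$ from $\rho$ and the reduction data $\Rightarrow$ $(k,l)=(0,0)$ $\Rightarrow$ $E$ trivial over $X$.
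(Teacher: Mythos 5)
You correctly identify the central obstacle: the energy constraint $k+2\alpha l=0$ alone does not suffice for rational $\alpha$, and some additional input from flatness is needed. You also correctly anticipate that this input should come from analyzing the connection near $T$ where $E$ reduces to $L_\rho\oplus L_\rho^{-1}$. However, your proposed route to closing the gap is not developed into a proof and, as phrased, contains an unjustified claim. The assertion that ``$k$ is forced to be a fixed integer multiple of $l$'' is not argued for, and it is not clear that the Chern-class bookkeeping you sketch (flat $c_2=0$ on $X\backslash T$ plus extension data along $T$) yields that relation. Likewise, the statement that ``a flat connection exists only on the trivial bundle'' conflates a genuinely flat $SU(2)$ bundle over $X$ with a singular connection that is flat only on $X\backslash T$ and has prescribed asymptotic holonomy along $T$; the latter can a priori live on a bundle $E(k,l)$ with $l\neq 0$ unless one proves otherwise.

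The paper's proof pinpoints exactly this missing ingredient: from \cite[Eq.~17]{MR1432428} the monopole number is computed intrinsically as
\[
l \;=\; \lambda + \alpha\,\varSigma\cdot\varSigma,
\qquad
\lambda \;=\; \frac{i}{2\pi}\int_{\varSigma}(-\omega),
\]
where $\omega$ is the abelian curvature two-form obtained from the well-defined restriction of the orbifold connection to $\varSigma$ via the reduction $F_A=\operatorname{diag}(\omega,-\omega)$. Flatness kills $\omega$ pointwise, hence $\lambda=0$; since $\varSigma\cdot\varSigma=0$ in a homology $S^1\times S^3$, one reads off $l=0$ directly, and then $k=-2\alpha l=0$. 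Your intuition that ``the only way an honest flat connection can have nontrivial $(k,l)$ is through a nonzero curvature concentration'' is the right moral, but the concrete formula above is what turns that intuition into a proof; without it, the argument via Chern classes of representations has a genuine gap because it does not account for the monopole number $l$, which is an invariant of the reduction data along $\varSigma$, not of the representation of $\pi_1(X\backslash T)$ alone.
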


\begin{proof}
As discussed before, any $\alpha$-flat connection can only exist
on a bundle $E$ for which 
\[
k+2\alpha l=0
\]
If $\alpha$ is irrational we already explained that $k=l=0$ is automatic.

For $\alpha$ rational, it suffices to show that $l$ vanishes, since
the previous equation will force $k$ to vanish and we will be done.
To understand why $l$ vanishes, we must use the fact that $l$ can
be computed as \cite[Eq 17]{MR1432428}
\begin{equation}
l=\lambda+\alpha\varSigma\cdot\varSigma\label{computation of l}
\end{equation}
Here we use the fact the orbifold connection has a locally well defined
restriction as an abelian connection on $\varSigma$, so the curvature
decomposes as 
\[
F_{A}=\left(\begin{array}{cc}
\omega & 0\\
0 & -\omega
\end{array}\right)
\]
when regarded as a 2-form on $\varSigma$. The quantity $\lambda$
can then be computed as 
\[
\lambda=\frac{i}{2\pi}\int_{\varSigma}(-\omega)
\]
In the case of a flat connection it is clear that $\omega=0$ thus
$\lambda=0$. Since $\varSigma\cdot\varSigma=0$ equation \ref{computation of l}
now implies that $l=0$, as desired.
\end{proof}
This means that for counting $\alpha$-flat connections we can simply
concentrate on the case $k=l=0$, for which the expected dimension
of the moduli space is \ref{simplified dimension and energy}
\[
\dim\mathcal{M}(X,\varSigma,0,0,\alpha)=-2(g-1)
\]
So it is now clear that the expected dimension of the moduli space
of $\alpha$-flat connections is zero dimensional if and only if $g=1$,
i.e, $\varSigma$ must be an embedded torus $T$. In this case the
energy and dimension formulas become
\begin{align}
\dim\mathcal{M}(X,T,k,l,\alpha)=8k+4l\label{simplified dimension and energy torus}\\
\mathcal{E}(X,T,k,l,\alpha)=k+2\alpha l\nonumber 
\end{align}

Our next objective will be to analyze which further hypothesis on
$T$ must be made in order to have a well defined invariant $\lambda_{FO}(X,T,\alpha)$. 

But first, we also analyze what are the possible $\alpha$-ASD instantons
which are reducible on $(X,\varSigma)$.
\begin{lem}
\label{reducible connection k,l=00003D0}Suppose that $E$ is an $SU(2)$
bundle over $(X,\varSigma)$ with instanton and monopole numbers $(k,l)$.
If $E$ supports an $\alpha$-ASD connection which is reducible then
$(k,l)=(0,0)$ and thus $E$ is the trivial bundle over $X$.
\end{lem}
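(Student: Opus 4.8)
The plan is to follow the proof of Lemma \ref{lem: existence of flat connection} very closely, the one extra ingredient being that the line bundle appearing in the reduction has rationally trivial first Chern class, because $X$ is a rational homology $S^{1}\times S^{3}$.

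First I would unpack the hypothesis. If $A$ is an $\alpha$-ASD connection on $E$ that is reducible, then its holonomy lies in a $U(1)\subset SU(2)$, so over $X\backslash\varSigma$ the bundle splits as $E=L\oplus L^{-1}$ with $A=B\oplus B^{-1}$ for a connection $B$ on $L$, compatibly with the reduction \ref{reduction of E} near $\varSigma$; in the orbifold picture of Section \ref{sec:Review-of-the} the pair $(L,B)$ extends to an orbifold line bundle with orbifold connection on $\check{X}$, and the equation $F_{A}^{+}=0$ becomes the abelian equation $F_{B}^{+}=0$.

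The heart of the argument is then to show that $c_{1}(L)$ is rationally trivial. The underlying topological space of the orbifold $\check{X}$ is $X$ itself (the local model $D^{2}/\mathbb{Z}_{\nu}$ is homeomorphic to a disk), and the real orbifold de Rham cohomology of an effective orbifold computes the real singular cohomology of its underlying space, so $\check{H}^{2}(\check{X};\mathbb{R})\cong H^{2}(X;\mathbb{R})=0$ since $H_{*}(X;\mathbb{Z})\simeq H_{*}(S^{1}\times S^{3};\mathbb{Z})$. Hence the closed orbifold form $\frac{i}{2\pi}F_{B}$, which represents $c_{1}(L)$, is exact; in particular $\langle c_{1}(L)^{2},[X]\rangle=0$ and the restriction of $c_{1}(L)$ to the honest torus $\varSigma$ vanishes rationally. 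Since $H^{2}(\varSigma;\mathbb{Z})$ and $H^{4}(X;\mathbb{Z})$ are torsion free, the definitions \ref{instanton number}, \ref{monopole number} of $k$ and $l$, together with $c_{2}(L\oplus L^{-1})=-c_{1}(L)^{2}$, now give $l=0$ and $k=0$; an $SU(2)$ bundle over a four-manifold with $c_{2}=0$ is trivial. Equivalently one may argue through the energy: by Chern--Weil, $8\pi^{2}\mathcal{E}(X,\varSigma,k,l,\alpha)=\int_{\check{X}}\text{tr}(F_{A}\wedge F_{A})=2\int_{\check{X}}F_{B}\wedge F_{B}=-8\pi^{2}\langle c_{1}(L)^{2},[X]\rangle=0$, while for an $\alpha$-ASD connection $\mathcal{E}=\frac{1}{8\pi^{2}}\|F_{A}^{-}\|_{L^{2}}^{2}\geq 0$ with equality iff $F_{A}=0$; thus $A$ is actually flat and Lemma \ref{lem: existence of flat connection} (whose proof uses \ref{computation of l} and \cite[Eq 17]{MR1432428} to deduce $l=\lambda=0$ from flatness) applies directly.

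The only real obstacle I anticipate is orbifold bookkeeping: one must make sure that $c_{1}$ of an orbifold line bundle over $(X,\varSigma)$ genuinely lands in $\check{H}^{2}(\check{X};\mathbb{R})\cong H^{2}(X;\mathbb{R})$ rather than in some larger group carrying fractional classes supported on $\varSigma$, and that the invariants \ref{instanton number}, \ref{monopole number} and the energy formula \ref{eq:energy moduli} are computed by the same orbifold Chern--Weil representatives. Both points are precisely the content of the index and energy formulas of Kronheimer--Mrowka recalled above and of \cite[Eq 17]{MR1432428}, so no genuinely new analysis is required.
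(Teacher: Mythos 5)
Your proof is correct and follows essentially the same line as the paper's: both arguments hinge on $b_{2}(X)=0$, identify the relevant curvature form with a class that must vanish, and conclude that the connection is in fact $\alpha$-flat so that Lemma \ref{lem: existence of flat connection} finishes the job (the paper does this via the harmonic representative of an ASD $2$-form, you via the Satake isomorphism $\check{H}^{2}(\check{X};\mathbb{R})\cong H^{2}(X;\mathbb{R})$ and the energy formula).  One small imprecision worth flagging: the orbifold Chern--Weil form $\frac{i}{2\pi}F_{B}$ represents the \emph{orbifold} first Chern class $c_{1}(L)+\alpha[\varSigma]$ (as the paper states), not $c_{1}(L)$ itself; since $\varSigma$ is null-homologous in the homology $S^{1}\times S^{3}$ this distinction disappears rationally, but it should be noted rather than elided.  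It is also worth observing that the argument can be shortened: since $A$ is globally reducible, $L$ extends to an honest line bundle over all of $X$, so $c_{1}(L)\in H^{2}(X;\mathbb{Z})=0$ already vanishes integrally, giving $k=-c_{1}(L)^{2}[X]=0$ and $l=-c_{1}(L|_{\varSigma})[\varSigma]=0$ with no orbifold bookkeeping at all.
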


\begin{proof}
We follow the remarks after Proposition 5.9 in \cite{MR1241873},
more specifically the remark ``iii) Transversality''. For an $\alpha$-reducible
$ASD$ connection the bundle $E$ must first of all split globally
as $E=L\oplus L^{-1}$, where $L$ is a complex line bundle which
admits an $\alpha$-ASD connection. This in turn can be represented
by a smooth, harmonic, anti-self-dual 2-form $\omega$ whose cohomology
class represents $c_{1}(L)+\alpha[\varSigma]$. Again, because $b_{2}(X)=0$
this means in fact that $\omega$ represents the class $0$, i.e,
it must be an $\alpha$-flat twisted connection. But we already know
from Lemma \ref{lem: existence of flat connection} that this forces
$k$ and $l$ to be both zero so we are done.
\end{proof}
Back to the case where $\varSigma=T$, we need to discuss under which
conditions we can expect to define a count of $\alpha$-representations
$\pi_{1}(X\backslash T)\rightarrow SU(2)$. As in the case of a knot
$K$ inside a homology sphere $Y$, we have to guarantee that the
$\alpha$-reducible representations are isolated from the $\alpha$-irreducible
representations. To analyze the $\alpha$-reducible representations
we need to consider the first homology of the torus complement, i.e,
$H_{1}(X\backslash T;\mathbb{Z})$. Now, $H_{1}(X\backslash T;\mathbb{Z})$
will be sensitive on the embedding of the torus, for example, $T$
could be null-homologous or not. In fact, we are interested in the
case where $T$ is homologically indistinguishable from the product
situation $S^{1}\times K\subset S^{1}\times Y$, so we will make the
following assumption.
\begin{assumption}
$T$ will be an embedded torus inside $X$ such that the natural map
$H_{1}(T;\mathbb{Z})\twoheadrightarrow H_{1}(X;\mathbb{Z})$ is a
surjection.
\end{assumption}

More precisely, if we take our torus $T$ to be the image of an embedding
$\imath_{T}:S^{1}\times S^{1}\hookrightarrow X$, then we will assume
that $\vartheta=(\imath_{T})_{*}(S^{1}\times\{pt\})$ is a generator
of $H_{1}(X;\mathbb{Z})$. A tubular neighborhood of $T$ looks like
$T\times D^{2}$, and if we use polar coordinates $(r,\theta)$ for
the second factor, then 
\[
H_{1}(X\backslash T;\mathbb{Z})\simeq\mathbb{Z}[\vartheta]\oplus\mathbb{Z}[\mu_{T}]
\]
where $\mu_{T}$ is a ``meridian'' for the torus $T$, and can be
represented for example as $(\imath_{T\times D^{2}})_{*}(\{pt\}\times S^{1}\times(\epsilon,0))$,
where $\epsilon$ is sufficiently small.

If $\rho$ is an $\alpha$ flat connection then we want to understand
the Zariski tangent space $\check{H}^{1}(\check{X};\mathfrak{g}_{\rho})$.
Just as in Lemma \ref{orbifold cohomology} we have:
\begin{lem}
\label{orbifold cohomology group torus}Suppose $\alpha\in\mathbb{Q}\cap(0,1/2)$
and that $\rho$ is an $\alpha$-flat connection on the orbifold $\check{X}$.
Then the first (orbifold) cohomology $\check{H}^{1}(\check{X};\mathfrak{g}_{\rho})$
can be identified with $\ker:H^{1}(X\backslash T;\mathfrak{g}_{\rho})\rightarrow H^{1}(\mu_{T};\mathfrak{g}_{\rho})$.
\end{lem}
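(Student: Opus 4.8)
The plan is to mimic exactly the proof of Lemma \ref{orbifold cohomology}, now with the knot $K$ inside $Y$ replaced by the torus $T$ inside $X$, using the Mayer--Vietoris sequence for orbifold cohomology with local coefficients associated to the decomposition $\check{X} = (\check{X}\backslash\check{\nu}_{\epsilon}(T))\cup\check{\nu}(T)$. First I would write down the relevant portion of this sequence, retract the pieces onto $X\backslash T$, $\check{\nu}(T)$, and $T_{\epsilon}=\partial\check{\nu}_{\epsilon}(T)$ respectively, and drop the $\check{}$ decoration wherever the orbifold singularity is absent. Since $\pi_1(T_\epsilon)$ and $\pi_1(\check{\nu}(T))=\pi_1(T)$ are abelian, the restriction of $\mathfrak{g}_\rho$ splits as $\mathbb{R}\oplus L_\rho^{\otimes 2}$ on each, and the holonomy condition \ref{asymptotic holonomy} (with $\alpha$ rational and the cone angle chosen appropriately) forces $L_\rho^{\otimes 2}$ to be a nontrivial flat line bundle on $T_\epsilon$ in the meridian direction; hence $H^\bullet(T_\epsilon; L_\rho^{\otimes 2}) = 0$ by \cite[Lemma 63]{Echeverria[DraftFuruta]}, so only the $\mathbb{R}$-summand survives.

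Next I would compute $\check{H}^1(\check{\nu}(T);\mathfrak{g}_\rho)$. Writing $\check{\nu}(T) = T\times\check{D}^2$ with $\check{D}^2 = D^2/\mathbb{Z}_\nu$, and using that the pullback of $\rho$ to $D^2\backslash\{0\}$ extends smoothly over $D^2$ (rational $\alpha$, sharp cone angle), one identifies $\check{H}^*(\check{D}^2;\mathfrak{g}_\rho)$ with $\mathbb{Z}_\nu$-equivariant cohomology of $D^2$, which equivariantly retracts to the origin; so $H^{1,\nu}(D^2; p^*\mathfrak{g}_\rho)=0$ and $H^{0,\nu}(D^2;p^*\mathfrak{g}_\rho)\cong\mathbb{R}$ (only one $\mathbb{R}$-factor of $\mathbb{R}^3$ is fixed by the action). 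A Künneth argument over $T$ then gives $\check{H}^1(\check{\nu}(T);\mathfrak{g}_\rho)\cong H^1(T;\mathfrak{g}_\rho|_T)$, which by the abelian-reduction above is $H^1(T;\mathbb{R})$, a two-dimensional space spanned by (the duals of) $\vartheta$ and the core meridian-direction circle of $T$. The key point, identical to the knot case but now with the extra $S^1$-factor $\vartheta$, is that these classes extend over $X\backslash T$, so the contribution of $\check{H}^1(\check{\nu}(T);\mathfrak{g}_\rho)$ in Mayer--Vietoris is absorbed by $H^1(X\backslash T;\mathfrak{g}_\rho)$ after passing to the map to $H^1(T_\epsilon;\mathfrak{g}_\rho)$.

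With these inputs the boundary map out of $\check{H}^1(\check{X};\mathfrak{g}_\rho)$ becomes the restriction $\check\omega \mapsto (\check\omega|_{X\backslash T}, \check\omega|_{\check{\nu}(T)})$ followed by the difference of pullbacks to $T_\epsilon$, and exactness shows the composite lands in $H^1(X\backslash T;\mathfrak{g}_\rho)$ with $\langle \check\omega|_{S^1_{\mu_T}}, [\mu_T]\rangle = 0$, i.e.\ in $\ker\big(H^1(X\backslash T;\mathfrak{g}_\rho)\to H^1(\mu_T;\mathfrak{g}_\rho)\big)$. Surjectivity onto this kernel is again a diagram chase: given $\omega$ killing $[\mu_T]$, the element $(\omega, \omega|_{S^1_{\lambda}})$ lies in the kernel of the next map and hence in the image of $\imath^*$. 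Injectivity is the integration-by-parts argument: if $\imath^*\check\omega = 0$ then $\omega = d_B\xi$ on $X\backslash T$, and
\[
\|\check\omega\|_{\check L^2(\check X)}^2 = -\lim_{\epsilon\to 0}\int_{X\backslash\check\nu_\epsilon(T)}\operatorname{tr}(*\omega\wedge d_B\xi) = -\lim_{\epsilon\to 0}\int_{T_\epsilon}\operatorname{tr}(*\omega\wedge\xi) = 0,
\]
using $d_B(*\omega)=0$ (harmonicity) and the vanishing of the boundary term on the orbifold, exactly as in \cite[Proposition 2.10]{MR2696986} and \cite[Proposition 8.3]{MR1241873}. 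I expect the main obstacle to be purely bookkeeping rather than conceptual: one must check carefully that the now two-dimensional space $\check{H}^1(\check\nu(T);\mathfrak{g}_\rho)$ — reflecting $H_1(X\backslash T;\mathbb{Z})\cong\mathbb{Z}\oplus\mathbb{Z}$ — is handled correctly in the Mayer--Vietoris bookkeeping, in particular that the $\vartheta$-direction class genuinely extends over $X\backslash T$ (which uses the Assumption that $H_1(T;\mathbb{Z})\twoheadrightarrow H_1(X;\mathbb{Z})$), so that it does not spuriously contribute to the kernel computation; the analytic injectivity step goes through verbatim since $T_\epsilon$ is still a closed $3$-manifold and the orbifold integration-by-parts is insensitive to $g$.
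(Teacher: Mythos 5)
Your proposal runs the same Mayer--Vietoris decomposition $\check{X}=(\check{X}\setminus\check{\nu}_{\epsilon}(T))\cup\check{\nu}(T)$ that the paper uses, with the same K\"unneth identification $\check{H}^1(\check{\nu}(T);\mathfrak{g}_\rho)\cong H^1(T;\mathfrak{g}_\rho|_T)$ and the same injectivity-by-orbifold-integration-by-parts step, so it is essentially the paper's proof (which simply declares it "completely analogous" to Lemma \ref{orbifold cohomology}). One small correction to your closing worry: the surjectivity assumption $H_1(T;\mathbb{Z})\twoheadrightarrow H_1(X;\mathbb{Z})$ is not actually invoked in this lemma --- exactness of Mayer--Vietoris already accounts for any non-extending $\vartheta$- or $\lambda_T$-direction classes via the $H^0$ terms and connecting maps, so the identification with $\ker\bigl(H^1(X\setminus T;\mathfrak{g}_\rho)\to H^1(\mu_T;\mathfrak{g}_\rho)\bigr)$ holds without it.
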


\begin{proof}
The argument is completely analogous. Namely, after applying a Mayer-Vietoris
decomposition to 
\[
\check{X}=(\check{X}\backslash\check{\nu}_{\epsilon}(T))\cup\check{\nu}(T)
\]
we end up with the analogue of equations \ref{Map Mayer 1} and \ref{Map mayer 2}
\[
\check{H}^{1}(\check{X};\mathfrak{g}_{\rho})\rightarrow^{(\imath_{X\backslash T}^{*},\imath_{\nu(T)}^{*})}H^{1}(X\backslash T;\mathfrak{g}_{\rho})\oplus\left(\mathbb{R}[\lambda_{T}]\oplus\mathbb{R}[\vartheta]\right)\rightarrow^{i_{T_{\epsilon},\nu(K)}^{*}-i_{T_{\epsilon,Y\backslash K}}^{*}}\mathbb{R}[\mu_{T}]\oplus\mathbb{R}[\lambda_{T}]\oplus\mathbb{R}[\vartheta]
\]
The composition being $0$ now says that $\check{\omega}\rightarrow\left\langle \omega\mid_{S_{\mu_{T}}^{1}},[\mu_{T}]\right\rangle $
vanishes, so there is a map $\check{H}^{1}(\check{X};\mathfrak{g}_{\rho})\rightarrow\ker(H^{1}(X\backslash T;\mathfrak{g}_{\rho})\rightarrow H^{1}(\mu;\mathfrak{g}_{\rho}))$.
The surjectivity and injectivity of this map are proven in exactly
the same way as before.
\end{proof}
In particular, for an $\alpha$-reducible representation $\rho$ we
have $\mathfrak{g}_{\rho}=\mathbb{R}\oplus L^{\otimes2}$ and thus
\begin{align*}
 & \check{H}^{1}(\check{X};\mathfrak{g}_{\rho_{\alpha}})\\
\simeq & \ker\left(H^{1}(X\backslash T;\mathbb{R})\oplus H^{1}(X\backslash T;L^{\otimes2})\rightarrow H^{1}(\mu_{T};\mathbb{R})\oplus H^{1}(\mu_{T};L^{\otimes2})\right)\\
\simeq & \ker\left(\mathbb{R}[\vartheta]\oplus\mathbb{R}[\mu_{T}]\oplus H^{1}(X\backslash T;L^{\otimes2})\rightarrow\mathbb{R}[\vartheta]\right)\\
\simeq & \mathbb{R}[\mu_{T}]\oplus H^{1}(X\backslash T;L^{\otimes2})
\end{align*}
So at an $\alpha$-reducible representation $\rho$, $\check{H}^{1}(\check{X};\mathfrak{g}_{\rho_{\alpha}})$
is at least one dimensional, and for $\rho$ to be isolated from the
irreducible representations it suffices to assume that $H^{1}(X\backslash T;L^{\otimes2})$
vanishes. Using \cite[Corollary 65]{Echeverria[DraftFuruta]}, we
can characterize this in terms of the Alexander polynomial of the
torus complement $X\backslash T$.
\begin{cor}
Let $\alpha\in\mathbb{Q}\cap(0,1/2)$ and $T$ be an embedded oriented
torus such that $H_{1}(T;\mathbb{Z})\twoheadrightarrow H_{1}(X;\mathbb{Z})$.
Then $\check{H}^{1}(\check{X};\mathfrak{g}_{\rho})$ is one dimensional
for every $\alpha$-reducible representation $\rho$ if and only if
$H^{1}(X\backslash T;L_{\rho}^{\otimes2})=0$, where $\mathfrak{g}_{\rho}=\mathbb{R}\oplus L_{\rho}^{\otimes2}$.
Equivalently, for every $\alpha$-reducible representation we have
$\triangle_{X\backslash T}(\hat{\rho})\neq0$, where $\hat{\rho}\in\widehat{\pi_{1}(X\backslash T)}$
is the character determined by the local system $H^{1}(X\backslash T;L^{\otimes2})$.
\end{cor}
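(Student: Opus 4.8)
The plan is to deduce the corollary directly from Lemma~\ref{orbifold cohomology group torus} together with the local-system computation already displayed just above it, first proving the equivalences for a single $\alpha$-reducible representation $\rho$ and then conjoining over all of them. First I would apply Lemma~\ref{orbifold cohomology group torus} to identify $\check{H}^{1}(\check{X};\mathfrak{g}_{\rho})$ with $\ker(H^{1}(X\backslash T;\mathfrak{g}_{\rho})\to H^{1}(\mu_{T};\mathfrak{g}_{\rho}))$. Since $\rho$ is $\alpha$-reducible, the adjoint bundle splits globally over $X\backslash T$ as $\mathfrak{g}_{\rho}=\mathbb{R}\oplus L_{\rho}^{\otimes2}$ --- an honest global splitting, not merely an abelian reduction over a subset, and precisely where the reducibility of $\rho$ enters --- so the kernel in question decomposes as the direct sum of the kernels of the two restriction maps $H^{1}(X\backslash T;\mathbb{R})\to H^{1}(\mu_{T};\mathbb{R})$ and $H^{1}(X\backslash T;L_{\rho}^{\otimes2})\to H^{1}(\mu_{T};L_{\rho}^{\otimes2})$.

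For the trivial summand, the hypothesis $H_{1}(T;\mathbb{Z})\twoheadrightarrow H_{1}(X;\mathbb{Z})$ gives $H_{1}(X\backslash T;\mathbb{Z})\cong\mathbb{Z}[\vartheta]\oplus\mathbb{Z}[\mu_{T}]$ and hence $H^{1}(X\backslash T;\mathbb{R})\cong\mathbb{R}^{2}$; restriction to the meridian $\mu_{T}$ is onto $H^{1}(\mu_{T};\mathbb{R})\cong\mathbb{R}$ with a one-dimensional kernel, exactly as in the display preceding the corollary. For the $L_{\rho}^{\otimes2}$ summand, the holonomy condition \ref{asymptotic holonomy} forces $L_{\rho}^{\otimes2}$ to have holonomy $e^{-4\pi i\alpha}$ around $\mu_{T}$, which is never $1$ because $\alpha\in\mathbb{Q}\cap(0,1/2)$ implies $2\alpha\notin\mathbb{Z}$; therefore $H^{\bullet}(\mu_{T};L_{\rho}^{\otimes2})=0$ by \cite[Lemma 63]{Echeverria[DraftFuruta]}, so that all of $H^{1}(X\backslash T;L_{\rho}^{\otimes2})$ lies in the kernel. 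Combining the two contributions yields $\check{H}^{1}(\check{X};\mathfrak{g}_{\rho})\cong\mathbb{R}\oplus H^{1}(X\backslash T;L_{\rho}^{\otimes2})$, which is one-dimensional if and only if $H^{1}(X\backslash T;L_{\rho}^{\otimes2})=0$; conjoining this equivalence over all $\alpha$-reducible $\rho$ gives the first half of the corollary.

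For the Alexander-polynomial reformulation I would invoke \cite[Corollary 65]{Echeverria[DraftFuruta]}: the rank-one local system $L_{\rho}^{\otimes2}$ over $X\backslash T$ is classified by the character $\hat{\rho}\in\widehat{\pi_{1}(X\backslash T)}$ with $\hat{\rho}(\mu_{T})=e^{-4\pi i\alpha}$, and $H^{1}(X\backslash T;L_{\rho}^{\otimes2})=0$ is equivalent to $\triangle_{X\backslash T}(\hat{\rho})\neq0$; substituting this into the equivalence just proved completes the statement. All of the ingredients are either the quoted lemmas or the elementary homology computation already recorded in the text, so there is no genuine obstacle; the one point that repays a moment of care is checking that the splitting $\mathfrak{g}_{\rho}=\mathbb{R}\oplus L_{\rho}^{\otimes2}$ is global and that the surviving $\mathbb{R}$ really arises from the restriction of $H^{1}(X\backslash T;\mathbb{R})$ (rather than from deforming the holonomy parameter $\alpha$), so that the dimension count is exact.
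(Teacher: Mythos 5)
Your proof is correct and follows essentially the same route as the paper: apply Lemma~\ref{orbifold cohomology group torus}, split $\mathfrak{g}_{\rho}=\mathbb{R}\oplus L_{\rho}^{\otimes2}$, observe that $H^{\bullet}(\mu_{T};L_{\rho}^{\otimes2})=0$ by the holonomy hypothesis so the $L_{\rho}^{\otimes2}$ factor lies entirely in the kernel while the $\mathbb{R}$-factor contributes a one-dimensional kernel $\mathbb{R}\subset H^{1}(X\backslash T;\mathbb{R})\cong\mathbb{R}^{2}$, and then invoke \cite[Corollary 65]{Echeverria[DraftFuruta]} for the Alexander-polynomial reformulation. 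The one observation you add—that the surviving $\mathbb{R}$ is the $\vartheta$-direction killed by restriction, rather than the $\alpha$-deformation ($\mu_{T}$-dual) direction—is a helpful clarification and is the correct reading of the computation displayed before the corollary.
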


\begin{rem}
For the examples of embedded tori we will analyze, we find it easier
to verify directly that $H^{1}(X\backslash T;L^{\otimes2})=0$ vanishes,
rather than using the condition on the Alexander polynomial, since
our constructions will arise from some operation on a knot inside
an integer homology sphere. It would be interesting to study an example
of an embedded torus where it is easier to verify the condition on
the Alexander polynomial directly. It would probably need to be more
four dimensional in nature.
\end{rem}

Notice that whenever the previous condition is satisfied then the
obstruction space (i.e, the second cohomology group $\check{\mathcal{H}}^{2}(\check{X};A_{\rho})$
of the deformation complex defined by $\rho$ as a solution of the
$\alpha$-ASD equations) must vanish. This is because the Euler characteristic
of the deformation complex is (minus) the virtual dimension of the
moduli space, which is zero in our situation, so 
\[
\dim\check{\mathcal{H}}^{0}(\check{X};A_{\rho})-\dim\check{\mathcal{H}}^{1}(\check{X};A_{\rho})+\dim\check{\mathcal{H}}^{2}(\check{X};A_{\rho})=0
\]
The first factor is $1$ because that is the dimension of the stabilizer
of $A_{\rho}$ and the one in the middle is also $1$ by assumption
so that forces $\check{\mathcal{H}}^{2}(\check{X};A_{\rho})$ to vanish.

Now we are finally ready to give a definition $\lambda_{FO}(X,T,\alpha)$.

\begin{defn}
Suppose that $\alpha\in\mathbb{Q}\cap(0,1/2)$ and $T$ be an oriented
embedded torus such that $H_{1}(T;\mathbb{Z})\twoheadrightarrow H_{1}(X;\mathbb{Z})$.
Suppose moreover that for every $\alpha$-reducible representation
$\rho$ we have $H^{1}(X\backslash T;L^{\otimes2})=0$, where $\mathfrak{g}_{\rho}=\mathbb{R}\oplus L^{\otimes2}$.
Choose a homology orientation for $(X,T)$, that is, an orientation
of $H^{1}(X;\mathbb{Z})$, which in turn is determined by the orientation
of $T$. Given this homology orientation, there is an orientation
of the moduli spaces $\mathcal{M}(X,T,k,l,\alpha)$ \cite[Section 2.i)]{MR1308489}.
We define the \textbf{singular Furuta-Ohta invariant }$\lambda_{FO}(X,T,\alpha)$
as follows.

Choose the trivial $SU(2)$ bundle $E\rightarrow X$ corresponding
to the instanton and monopole numbers $k=l=0$. Then, after perturbations
if necessary, the irreducible $\alpha$-ASD connections $\mathcal{M}^{*}(X,T,0,0,\alpha)$
will form a 0-dimensional compact moduli space. We define $\lambda_{FO}(X,T,\alpha)\in\mathbb{Z}$
as the signed count of elements inside $\mathcal{M}^{*}(X,T,0,0,\alpha)$.
\end{defn}

\begin{rem}
The perturbations we have in mind for the statement of the previous
theorem are exactly the same as the interior holonomy perturbations
that were needed to define the cobordism maps for the Floer groups
$HI(Y,K,\alpha)$.
\end{rem}

Despite the fact that $\lambda_{FO}(X,T,\alpha)$ is morally defined
as a count of flat connections, it is important to notice that it
is the equation $F_{A}^{+}=0$ which is perturbed, not the flatness
equation $F_{A}=0$. In other words, $\lambda_{FO}(X,T,\alpha)$ is
better interpreted as a degree $0$ Donaldson invariant.

This raises the question of whether this is the only degree $0$ Donaldson
invariant which can be defined for the embedded torus $T$. In fact,
it is possible to define additional invariants $D_{0}(X,T,\alpha,k)$
where $k\in\mathbb{Z}$ is an integer, and provided $\alpha\neq1/4$.
Interestingly enough, \textbf{for $k\neq0$, the invariants $D_{0}(X,T,\alpha,k)$
}\textit{can be defined for any embedded torus, independent of whether
it is null-homologous or not.} To see why this is the case, we need
to go back to the formulas \ref{simplified dimension and energy torus}
for the energy and dimension of the moduli spaces
\begin{align*}
\dim\mathcal{M}(X,T,k,l,\alpha)=8k+4l\\
\mathcal{E}(X,T,k,l,\alpha)=k+2\alpha l
\end{align*}
Notice that $\mathcal{M}(X,T,k,l,\alpha)$ is zero-dimensional whenever
\[
l=-2k
\]
The corresponding energy of this moduli space is 
\[
\mathcal{E}(X,T,k,-2k,\alpha)=k(1-4\alpha)
\]
In particular, when $\alpha=1/4$, the energy of $\mathcal{M}(X,T,k,-2k,1/4)$
is zero, which means it can only consist of $\alpha$-flat connections.
But we already know from Lemma \ref{lem: existence of flat connection}
that this can only happen when $k=0$, which means that 
\[
k\neq0\implies\mathcal{M}(X,T,k,-2k,1/4)=\emptyset
\]
 However, when $\alpha\neq1/4$, the moduli spaces $\mathcal{M}(X,T,k,-2k,\alpha)$
are a priori non-empty, at least provided that $\mathcal{E}(X,T,k,-2k,\alpha)\geq0$.
A more important question is whether they are compact.

Since they are already $0$ dimensional, the only way for $\mathcal{M}(X,T,k,-2k,\alpha)$
to be non-compact is if a sequence of $\alpha$-ASD connections $[A_{i}]$
inside $\mathcal{M}(X,T,k,-2k,\alpha)$ bubbles off and converges
weakly to an $\alpha$-ASD connection $[A_{\infty}]$ on some moduli
space $\mathcal{M}(X,T,k',l',\alpha)$ of \textit{negative} dimension.
In fact, since bubbles drop dimensions by $4$, $\dim\mathcal{M}(X,T,k',l',\alpha)\leq-4$.
Fortunately, by Lemma \ref{reducible connection k,l=00003D0}, $\mathcal{M}(X,T,k',l',\alpha)$
can admit no reducible $\alpha$-ASD connections, so for generic perturbations
there is no risk in assuming that $\mathcal{M}(X,T,k',l',\alpha)$
is empty (this is explained in great detail in \cite[Sections 3 and 5]{MR2192061}). 

Therefore, for generic perturbations $\mathcal{M}(X,T,k,-2k,\alpha)$
is in fact compact, and a count of signed points in $\mathcal{M}(X,T,k,-2k,\alpha)$
will be independent of the perturbation chosen, because a path of
perturbations will generically miss non-empty negative dimensional
moduli spaces. Notice that for $k\neq0$, $\alpha\neq1/4$, $\mathcal{M}(X,T,k,-2k,\alpha)$
has no reducibles to begin with, which in particular means that the
count of signed points inside $\mathcal{M}(X,T,k,-2k,\alpha)$ can
be made regardless of whether $T$ is null-homologous or not.

These observations allows us to define the additional invariants $D_{0}(X,T,k,\alpha)$
we promised earlier.
\begin{defn}
\label{Def deg 0 Donaldson invariants}Suppose that $\alpha\in\mathbb{Q}\cap(0,1/2)$
and $k\in\mathbb{Z}\backslash\{0\}$ is a non-zero integer such that
$k(1-4\alpha)\geq0$. Let $T$ be a oriented embedded torus inside
$X$ (null-homologous or not). After choosing an orientation of $H^{1}(X;\mathbb{R})$,
define $D_{0}(X,T,k,\alpha)$ as the signed count of points inside
the moduli space $\mathcal{M}(X,T,k,-2k,\alpha)=\mathcal{M}^{*}(X,T,k,-2k,\alpha)$.
When $\alpha=1/4$, set $D_{0}(X,T,k,1/4)=0$.
\end{defn}

\begin{rem}
Notice that the choice of homology orientation is no longer determined
in a canonical way by an orientation of $T$, if we allow $T$ to
be null-homologous. 

Also, it is not all clear what is the geometric meaning of the invariants
$D_{0}(X,T,\alpha,k)$. It is possible they may not contain any interesting
topological information about the torus $T$. For example, if we could
solve the issue of the implicit dependence of the invariants on the
cone angle being used, and moreover if we succeeded in defining them
for irrational values of $\alpha$ as well, then one could try to
use a deformation argument to show that for $k\neq0$, $D_{0}(X,T,\alpha,k)$
must vanish, since in this case $D_{0}(X,T,\alpha,k)$ could be compared
to $D_{0}(X,T,1/4,k)$, which we already know vanishes. 

Likewise, we can construct an analogue of the invariants $D_{0}(X,T,\alpha,k)$
for the case of an embedded sphere $S^{2}\hookrightarrow X$. Namely,
the expected dimension of the moduli spaces $\mathcal{M}(X,S^{2},k,-2k,\alpha)$
is now $2$, as can be seen from the formula \ref{simplified dimension and energy}.
As long as $\alpha\neq1/4$ and $k\neq0$, some of these moduli spaces
could be non-empty, and they will be compact and free of reducibles
by a similar argument. Hence, by pairing them with $\mu_{S^{2}}(x)$
for $x\in S^{2}$, we can define a degree-two Donaldson invariant
$D_{2}(X,S^{2},k,\alpha)$. However, the fact that these invariants
cannot be defined when $\alpha=1/4$ (which is the best value $\alpha$
could take from many points of view), suggests to us these invariants
$D_{2}(X,S^{2},k,\alpha)$ will probably end up giving no interesting
topological information. 
\end{rem}

We finish this section by analyzing the action of $H^{1}(X;\mathbb{Z}/2)$
on the moduli spaces $\mathcal{M}(X,T,k,-2k,\alpha)$, as was promised
in the introduction. We follow \cite[Section 4.6]{MR2189939} and
\cite[Section 3]{MR3704245} in order to describe this action.

First of all, $H^{1}(X;\mathbb{Z}_{2})=\hom(\pi_{1}(X);\mathbb{Z}_{2})$
parametrizes isomorphism classes of complex line bundles (with connection)
$\chi$ with holonomy $\{\pm1\}$ (along the loop $\vartheta$ in
our case). Since $\chi$ lifts to an integral homology class, the
bundle $L_{\chi}$ is trivial and thus for any $(k,l)$, the bundles
$E(k,l)$ and $E(k,l)\otimes L_{\chi}$ are isomorphic. Thus the action
of $H^{1}(X;\mathbb{Z}_{2})$ on $\mathcal{M}(X,T,k,-2k,\alpha)$
can be regarded as the one which sends a connection $[A]$ to $[A\otimes\chi]$. 

In general this action may or may not be free. We only care about
the freeness of the action on the irreducible part of the moduli space
$\mathcal{M}^{*}(X,T,k,-2k,\alpha)$ (again, when $k\neq0$, this
coincides with the entire moduli space $\mathcal{M}(X,T,k,-2k,\alpha)$),
which we will show in the next lemma.

Here we only need to analyze the action on the unperturbed moduli
spaces, since the idea is that once we know the action is free in
the unperturbed case, one can find perturbations that are $H^{1}(X;\mathbb{Z}_{2})$
equivariant and still guarantee transversality for the moduli spaces
\cite[section 4.6]{MR2189939}.
\begin{lem}
\label{lem:FREE action}Suppose that  the (unperturbed) moduli
space $\mathcal{M}^{*}(X,T,k,-2k,\alpha)$ is non-empty. Then $H^{1}(X;\mathbb{Z}/2)$
acts freely on $\mathcal{M}^{*}(X,T,k,-2k,\alpha)$.
\end{lem}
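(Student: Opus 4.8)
The plan is to argue by contradiction: suppose $\chi \in H^1(X;\mathbb{Z}/2)$ is non-trivial and fixes the gauge-equivalence class of some irreducible $\alpha$-ASD connection $[A] \in \mathcal{M}^*(X,T,k,-2k,\alpha)$. Then $[A\otimes\chi] = [A]$ means there is a gauge transformation $g$ of $E(k,-2k)$ with $g^*(A\otimes\chi) = A$, i.e. $A$ is reducible in a way compatible with the flat $\mathbb{Z}/2$-bundle $L_\chi$. The standard way to package this (as in \cite[Section 4.6]{MR2189939}, \cite[Section 3]{MR3704245}) is to observe that the fixed point of the $\chi$-action forces the structure group of $A$ to reduce along the subgroup of $SU(2)$ commuting with a square root of $\chi$, so that $A$ splits as $A = B \oplus B'$ for a $U(1)$ (or twisted $U(1)$) connection, contradicting irreducibility \emph{unless} we can rule out such a splitting directly. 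So the first step is to make precise the dichotomy: either $[A]$ is genuinely irreducible and the $\chi$-action is free at $[A]$, or $[A]$ admits a reduction of structure group twisted by $L_\chi$.

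First I would recall that $L_\chi$ is topologically trivial (since $\chi$ lifts to an integral class, as noted in the paragraph preceding the lemma), so $E(k,-2k)\otimes L_\chi \cong E(k,-2k)$; the $\chi$-action is therefore well-defined on the moduli space. Next, I would set up the fixed-point analysis: if $g^*(A\otimes\chi)=A$ then $g^2$ is a gauge transformation of $E(k,-2k)$ that stabilizes $A$; since $A$ is irreducible its stabilizer is $\{\pm 1\} = Z(SU(2))$, so $g^2 = \pm 1$. If $g^2 = 1$ with $g \neq \pm 1$, then $g$ has eigenvalues $\pm 1$ and gives a splitting $E = E_+ \oplus E_-$ preserved by $A$ up to the twist, forcing $A$ to reduce to a connection on a $U(1)$-bundle tensored with the $\mathbb{Z}/2$-flat bundle — that is, $A$ is (twisted-)reducible. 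If $g^2 = -1$, then $g$ generates a $\mathbb{Z}/4$ and the argument is similar but one passes to the associated $SO(3) = PU(2)$ picture. In either case, $A$ is not irreducible in the appropriate twisted sense, and crucially the twisted reduction still produces an $\alpha$-ASD \emph{abelian} connection on a line bundle over $(X,T)$.

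The key step, and the one that finishes the proof, is then to invoke Lemma \ref{reducible connection k,l=00003D0}: any $\alpha$-ASD connection on $(X,T)$ that reduces to an abelian connection (whether twisted by a flat $\mathbb{Z}/2$-bundle or not — tensoring by the topologically trivial $L_\chi$ does not change the Chern–Weil / topological-energy computation, nor the conclusion $\omega$ represents the zero class in $H^2(X;\mathbb{R})$ since $b_2(X)=0$) must live on the trivial bundle, i.e. $(k,l)=(0,0)$. But for $(k,l)=(0,0)$ the reducible is the unique $\theta_\alpha$ and our hypothesis (that $H^1(X\backslash T; L_\rho^{\otimes 2})=0$ for $\alpha$-reducibles, giving $\dim \check H^1 = 1$) ensures $\theta_\alpha$ is \emph{not} in the irreducible part $\mathcal{M}^*$; and for $k\neq 0$, $\mathcal{M}(X,T,k,-2k,\alpha)$ has no reducibles at all, so the twisted-reducible $A$ cannot exist. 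This contradicts $[A] \in \mathcal{M}^*$, so no non-trivial $\chi$ fixes any point, i.e. the action is free.

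The main obstacle I anticipate is the careful bookkeeping in the fixed-point analysis when $\chi$ has holonomy $-1$ along $\vartheta$: one must check that the "twisted reducible" connection $A$ really does descend to a genuine abelian $\alpha$-ASD connection to which Lemma \ref{reducible connection k,l=00003D0} applies, rather than merely an $SO(3)$-reducible that could a priori evade the classification. The clean way to handle this is to work with $L_\chi^{1/2}$ locally (it exists as a flat orbifold/honest line bundle after passing to a double cover dual to $\chi$, or one simply notes $L_\chi$ is trivial so a square root exists globally up to the $\mathbb{Z}/2$-ambiguity), tensor $A$ by it to untwist, and observe the result is a bona fide $\alpha$-ASD connection with the same instanton and monopole numbers — at which point Lemma \ref{reducible connection k,l=00003D0} applies verbatim. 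Once that identification is in place, the rest is immediate from the earlier lemmas, so I would spend most of the writing budget making that untwisting step airtight and citing \cite[Section 4.6]{MR2189939} and \cite[Section 3]{MR3704245} for the analogous classical argument.
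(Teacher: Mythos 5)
You correctly set up the fixed-point problem and correctly identify the obstacle (twisted reducibles), but the fix you propose for that obstacle does not work, and it masks the fact that you never use the hypothesis $H_1(T;\mathbb{Z}/2)\twoheadrightarrow H_1(X;\mathbb{Z}/2)$, which is genuinely needed. Lemma \ref{reducible connection k,l=00003D0} applies to a connection whose \emph{$SU(2)$} structure group reduces, i.e.\ $E=L\oplus L^{-1}$ globally; its proof rests precisely on that global splitting. A twisted reducible only gives a reduction at the $SO(3)$ level, $\mathfrak{g}_E=\lambda\oplus P$ with $\lambda$ a \emph{non-orientable} real line bundle, and the obstruction to upgrading this to an $SU(2)$ reduction is exactly $w_1(\lambda)\neq 0$. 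Tensoring $A$ by a flat square root $L_\chi^{1/2}$ does not cure this: it changes $\det$ (so the result is a $U(2)$, not $SU(2)$, connection on $E(k,-2k)$), it leaves $A^{ad}$ unchanged, and it does nothing to remove the non-orientability of $\lambda$. So the twisted-reducible case is not reduced to Lemma \ref{reducible connection k,l=00003D0} by any line-bundle twist.

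The paper handles this case by a genuinely different, purely topological argument that you should compare to your own: because the connections have the prescribed model $E|_{\nu(T)}=L\oplus L^{-1}$, near $T$ the splitting of $\mathfrak{g}_E$ is canonically $\mathbb{R}\oplus L^{\otimes 2}$, forcing $P\cong\pm L^{\otimes 2}$ near $T$ and hence $\lambda$ trivial near $T$; then the surjectivity $H_1(T;\mathbb{Z}/2)\twoheadrightarrow H_1(X;\mathbb{Z}/2)$ propagates triviality of $\lambda$ to all of $X$, contradicting non-orientability. Your argument, if it worked, would prove freeness for \emph{arbitrary} embedded tori (including null-homologous ones), which is not true and which the paper is careful not to claim. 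Separately, the paper also rules out the possibility that $\mathrm{stab}_{SO(3)}(A^{ad})=V_4$, with a case analysis in $k$ and $\alpha$ that culminates in a $w_2$ computation when $k=0$, $\alpha=1/4$; your $g^2=\pm 1$ dichotomy does not address this case at all (and note also that $g^2=1$ with $g\neq\pm 1$ is impossible for an $SU(2)$ gauge transformation since $\det g=1$, so that branch of your dichotomy is vacuous). You should replace the untwisting step by the boundary-condition-plus-surjectivity argument and add the $V_4$ analysis.
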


\begin{proof}
Let $[A]\in\mathcal{M}^{*}(X,T,k,-2k,\alpha)$ be an irreducible $\alpha$-ASD
connection. The connection $A$ induces a connection $A^{ad}$ on
the adjoint bundle $E^{ad}(k,-2k)$ of $E(k,-2k)$. On the adjoint
bundle it makes sense to consider the gauge group $\mathcal{G}_{SO(3)}(X,T)$
of all $SO(3)$ gauge transformations (as opposed to the $SU(2)$
gauge transformations which are the ones we have been working with).
As in the non-singular case (i.e, when $T$ is not present), it is
still the case that \cite[Section 5.1]{MR2805599}
\[
\mathcal{G}_{SO(3)}(X,T)/(\mathcal{G}(X,T)/\{\pm1\})\simeq H^{1}(X;\mathbb{Z}/2)
\]
Therefore, the action of $H^{1}(X;\mathbb{Z}/2)$ is free on $[A]$
if and only if the stabilizer of $A^{ad}$ with respect to the full
gauge group $\mathcal{G}_{SO}(X,T)$ is trivial. In general, since
$A$ is irreducible with respect to $\mathcal{G}(X,T)$, $\text{stab}_{SO(3)}A^{ad}$
can only be one of three possibilities: $1$, $\mathbb{Z}_{2}$ or
the Klein-4 group $V_{4}$. Therefore we must rule out that $\mathbb{Z}_{2}$
and $V_{4}$ can arises as potential stabilizers.

The case of $V_{4}$ is easy: a connection $A$ with stabilizer $V_{4}$
must be flat \cite[Section 4]{MR3704245}, and thus cannot belong
to $\mathcal{M}(X,T,k,-2k,\alpha)$ for $k\neq0$, since these a priori
do not support any flat connections.

In the case that $k=0$ but $\alpha\neq1/4$, we just need to use
the fact that every $\alpha$-representation $\rho$ such that $\rho^{ad}$
has stabilizer $V_{4}$ also has holonomy $V_{4}$, hence $\rho^{ad}$
will correspond in general to representations of $\pi_{1}(X\backslash T)$
with image into $V_{4}\subset SO(3)$ (\cite[Section 4]{MR3704245},
\cite[Examples 2.9]{MR3880205}). Given that every element of $V_{4}$
has order $2$, the only value of $\alpha$ compatible with $V_{4}$
representations corresponds to $\alpha=1/4$. Notice that $\rho$
will then have image contained in the quaternionic subgroup $Q_{8}=\{\pm1,\pm\mathbf{i},\pm\mathbf{j},\pm\mathbf{k}\}$
when we identify $SU(2)$ with the unit quaternions.

For the case of $k=0$ and $\alpha=1/4$, we need to use the fact
that the existence of Klein-4 representations is a homological phenomenon.
Namely, they exist whenever one can find three nontrivial real line
bundles which are \textit{distinct} \cite[Section 3]{MR3704245} on
the manifold. In our case, the manifold to consider is $X\backslash\text{nbd}(T)$,
where $\text{nbd}(T)$ is an open neighborhood of the torus $T$.
Since $H^{1}(X\backslash\text{nbd}(T);\mathbb{Z}_{2})\simeq\mathbb{Z}_{2}\oplus\mathbb{Z}_{2}$,
one can find three distinct nontrivial real line bundles which determines
a Klein-4 representation. Call these (real) line bundles $\epsilon_{1},\epsilon_{2},\epsilon_{3}=\epsilon_{1}+\epsilon_{2}$,
where $\epsilon_{1}$ generates the first factor of $H^{1}(X\backslash\text{nbd}(T);\mathbb{Z}_{2})$,
while $\epsilon_{2}$ generates the second factor. It is easy to see
that in this case 
\begin{align*}
 & w_{2}(E^{ad}\mid_{X\backslash\text{nbd}(T)})\\
= & w_{2}\left(\epsilon_{1}\oplus\epsilon_{2}\oplus\epsilon_{3}\right)\\
= & w_{1}(\epsilon_{1})w_{1}(\epsilon_{2})+w_{1}(\epsilon_{1})w_{1}(\epsilon_{3})+w_{1}(\epsilon_{2})w_{1}(\epsilon_{3})\\
= & w_{1}(\epsilon_{1})w_{1}(\epsilon_{2})+\left[w_{1}(\epsilon_{1})\right]^{2}+w_{1}(\epsilon_{1})w_{1}(\epsilon_{2})+w_{1}(\epsilon_{2})w_{1}(\epsilon_{1})+\left[w_{1}(\epsilon_{2})\right]^{2}\\
= & w_{1}(\epsilon_{2})w_{1}(\epsilon_{1})\neq0
\end{align*}
 In other words, the Klein 4 representation we found exists on a bundle
with non-trivial $w_{2}$. However, the bundle $E^{ad}(0,0)$ over
$X$ has vanishing $w_{2}$ (since $X$ is an integral homology $S^{1}\times S^{3}$,
or alternatively, because $E(0,0)$ was the trivial bundle to begin
with), which means that its restriction to the torus complement should
have vanishing $w_{2}$ as well by naturality of the Stiefel-Whitney
classes. Therefore, $\mathcal{M}^{*}(X,T,0,0,1/4)$ is also free of
connections with stabilizer $V_{4}$ (with respect to $\mathcal{G}_{SO(3)}(X,T)$).

The case of $\mathbb{Z}_{2}$ stabilizer corresponds to the so-called
twisted reducibles \cite[Section 2 i)]{MR1338483}: these are those
connections which preserve a splitting $\mathfrak{g}_{E}=\lambda\oplus P$
, where now $\lambda$ is a non-orientable real line bundle and $P$
is a non-orientable real two plane bundle with orientation bundle
isomorphic to $\lambda$. Now we can use the fact that our connections
have a prescribed model near the surface $T$: as explained before
Lemma 2.22 in \cite{MR1338483}, if $A$ were a twisted reducible,
then $P$ would have to coincide with $\pm L^{\otimes2}$ in a tubular
neighborhood of $T$, and therefore $\lambda$ must be trivial on
$T$. 

Now, because $H_{1}(T;\mathbb{Z}_{2})$ maps onto $H_{1}(X;\mathbb{Z}_{2})$,
then $\lambda$ will be trivial on all of $X$, which means that it
is orientable, thus we obtain a contradiction. Notice that the paragraph
we refer to from \cite{MR1338483} starts by stating that $A$ must
be a non-flat connection. However, this condition is not used in this
argument, rather it was assumed by Kronheimer and Mrowka because they
were interested in obtaining a generic metrics theorem in the presence
of twisted reducibles, and in general this cannot be achieved whenever
there are flat connections.
\end{proof}
\begin{rem}
Since for $k\neq0$, $\alpha\neq1/4$, the moduli spaces $\mathcal{M}(X,T,k,-2k,\alpha)$
are free of reducible $\alpha$-ASD connections, free of $\alpha$-flat
connections and free of twisted reducible connections, one can also
obtain transversality for these moduli spaces using the generic metrics
theorem, thanks to \cite[Lemma 2.17]{MR1338483}. Hence, one could
avoid using holonomy perturbations for defining the invariants $D_{0}(X,T,k,\alpha)$
for $k\neq0$.
\end{rem}

Before discussing some examples and properties of $\lambda_{FO}(X,T,\alpha,k)$,
we will prove the splitting formula for $\lambda_{FO}(X,T,\alpha,k)$,
which was one of our main motivations for defining the invariant.

\section{\label{sec:The-Splitting-Formula}The Splitting Formula}

Our first step for finding the splitting formula requires understanding
how $\alpha$-admissibility for self-concordances $(W,\varSigma):(Y,K)\rightarrow(Y,K)$
is related to the reducible representations being isolated from the
irreducible representations in the case of $(X,\varSigma)$. The next
lemma says that in fact both notions are equivalent.
\begin{lem}
Suppose that $(W,\varSigma):(Y,K)\rightarrow(Y,K)$ is a self-concordance
of a knot and we choose a parameter $\alpha$ for which $\triangle_{K}(e^{-4\pi i\alpha})\neq0$.
Let $(X,T)$ be the closed 4-manifold obtained by closing up $(W,\varSigma)$.
Then $\lambda_{FO}(X,T,\alpha)$ is well defined if and only if the
cobordism $(W,\varSigma)$ is $\alpha$-admissible. 
\end{lem}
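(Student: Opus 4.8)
The plan is to show that both conditions in the statement are equivalent to the vanishing of one and the same cohomology group, and to prove this by relating cohomology with local coefficients on $X\backslash T$ to that on $W\backslash\varSigma$ via a Mayer--Vietoris argument. First I would unwind the two sides. By Theorem \ref{definition of the singular Furuta Ohta invariant} together with Lemma \ref{orbifold cohomology group torus}, $\lambda_{FO}(X,T,\alpha)$ is well defined exactly when every $\alpha$-reducible representation $\rho\colon\pi_{1}(X\backslash T)\to SU(2)$ is isolated from the irreducibles, i.e.\ when $H^{1}(X\backslash T;L_{\rho}^{\otimes2})=0$ for all such $\rho$. Since $\triangle_{K}(e^{-4\pi i\alpha})\neq0$ is already assumed, Definition \ref{def: alpha admissible} says that $(W,\varSigma)$ is $\alpha$-admissible exactly when $H^{1}(W\backslash\varSigma;L_{\theta_{W,\alpha}}^{\otimes2})=0$. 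So it suffices to prove that $H^{1}(X\backslash T;L_{\rho}^{\otimes2})=0$ for every $\alpha$-reducible $\rho$ if and only if $H^{1}(W\backslash\varSigma;L_{\theta_{W,\alpha}}^{\otimes2})=0$.

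Next I would set up the topology: closing up the self-concordance identifies the two boundary copies of $Y$ inside $W$, so $X\backslash T$ is $W\backslash\varSigma$ with the two ends $(Y\backslash K)_{-}$ and $(Y\backslash K)_{+}$ glued by the identity. Concretely $X\backslash T=U_{1}\cup U_{2}$ with $U_{1}$ (the interior piece) homotopy equivalent to $W\backslash\varSigma$, $U_{2}$ a collar of the gluing locus homotopy equivalent to $Y\backslash K$, and $U_{1}\cap U_{2}\simeq(Y\backslash K)\sqcup(Y\backslash K)$. Fix an $\alpha$-reducible $\rho$. Because $\rho$ is abelian and the meridian of $T$ restricts to a generator of $H_{1}(W\backslash\varSigma;\mathbb{Z})\simeq\mathbb{Z}$, the restriction $L_{\rho}^{\otimes2}\big|_{W\backslash\varSigma}$ has monodromy sending that generator to $e^{-4\pi i\alpha}$, hence $L_{\rho}^{\otimes2}\big|_{W\backslash\varSigma}\cong L_{\theta_{W,\alpha}}^{\otimes2}$ regardless of the value of $\rho$ on the extra generator $\vartheta$ (which is not in the image of $\pi_{1}(W\backslash\varSigma)$); similarly the restriction to each copy of $Y\backslash K$ is $L_{\theta_{Y,\alpha}}^{\otimes2}$.

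Then I would feed into the Mayer--Vietoris sequence (with coefficients in $L_{\rho}^{\otimes2}$) the two facts $H^{0}(Y\backslash K;L_{\theta_{Y,\alpha}}^{\otimes2})=0$ (the monodromy $\mu_{K}\mapsto e^{-4\pi i\alpha}\neq1$ has no nonzero invariants, using $\alpha\in(0,1/2)$) and $H^{1}(Y\backslash K;L_{\theta_{Y,\alpha}}^{\otimes2})=0$ (this is Lemma \ref{Alexander Knot isolated}, using $\triangle_{K}(e^{-4\pi i\alpha})\neq0$). Both of the terms $H^{0}(U_{1}\cap U_{2};L_{\rho}^{\otimes2})$ and $H^{1}(U_{1}\cap U_{2};L_{\rho}^{\otimes2})$ then vanish, so the relevant segment of the sequence collapses to an isomorphism
\[
H^{1}(X\backslash T;L_{\rho}^{\otimes2})\;\cong\;H^{1}\bigl(W\backslash\varSigma;L_{\rho}^{\otimes2}\big|_{W\backslash\varSigma}\bigr)\;=\;H^{1}(W\backslash\varSigma;L_{\theta_{W,\alpha}}^{\otimes2}).
\]
Since the right-hand side is the same for every $\alpha$-reducible $\rho$, this is exactly the equivalence needed, and the lemma follows. (One could equally run the Wang sequence of the infinite cyclic cover $\widetilde{X\backslash T}$, or strengthen the conclusion to $H^{k}(X\backslash T;L_{\rho}^{\otimes2})\cong H^{k}(W\backslash\varSigma;L_{\theta_{W,\alpha}}^{\otimes2})$ in all degrees by also noting $H^{2}(Y\backslash K;L_{\theta_{Y,\alpha}}^{\otimes2})=0$ from $\chi(Y\backslash K)=0$, but that is not needed here.)

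The step I expect to require the most care is the topological bookkeeping at the gluing: verifying that $X\backslash T$ really is the closed-up $W\backslash\varSigma$ with the two annulus-complement ends identified, and — more importantly — that \emph{every} member of the one-parameter family of $\alpha$-reducibles on $\pi_{1}(X\backslash T)$ restricts to the single reducible $\theta_{W,\alpha}$ on $\pi_{1}(W\backslash\varSigma)$, so that the target of the Mayer--Vietoris isomorphism is genuinely independent of $\rho$. Once that is pinned down, the rest is a routine application of Mayer--Vietoris with local coefficients, in the same spirit as the proof of Lemma \ref{orbifold cohomology}.
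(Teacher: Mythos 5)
Your proof is correct and follows essentially the same route as the paper's: both decompose the closed pair as $W\backslash\varSigma$ glued to a collar along two copies of $Y\backslash K$ and run Mayer--Vietoris with local coefficients, using the two vanishings coming from $\triangle_K(e^{-4\pi i\alpha})\neq0$. The only (cosmetic) difference is that you first reduce, via Lemma \ref{orbifold cohomology group torus}, to the $L_\rho^{\otimes2}$ summand on the honest complements $X\backslash T$, $W\backslash\varSigma$, $Y\backslash K$ and get a direct isomorphism, whereas the paper runs the sequence for $\check{H}^{\ast}(\,\cdot\,;\mathfrak{g}_\rho)$ on the orbifolds and reads off the equality of dimensions from the alternating sum; your observation that every $\alpha$-reducible $\rho$ on $X\backslash T$ restricts to the same $\theta_{W,\alpha}$ because $\vartheta$ lies outside the image of $\pi_1(W\backslash\varSigma)$ is exactly the point that makes the target of the isomorphism independent of $\rho$.
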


\begin{proof}
We can think of $(X,T)$ as being obtained from $(W,\varSigma)$ after
attaching a tube $(I\times Y,I\times K)$ to the boundary of $(W,\varSigma)$,
where $I$ is some interval. That is, 
\[
(X,T)=(W,\varSigma)\cup(I\times Y,I\times K)
\]
Then we want to apply Mayer-Vietoris to this decomposition of $(X,T)$,
where we enlarged $I$ a little bit so that the overlap of $(W,\varSigma)$
with $(I\times Y,I\times K)$ is the disjoint union 
\[
(W,\varSigma)\cap(I\times Y,I\times K)=(I_{1}\times Y,I_{1}\times K)\sqcup(I_{2}\times Y,I_{2}\times K)
\]
 where $I_{1},I_{2}$ are two small subintervals. Recall that on the
orbifold $\check{W}$ there is only one $\alpha$-flat reducible $\theta_{W,\alpha}$
, while on $Y$ we have the reducible $\theta_{\alpha}$. For any
$\alpha$-flat reducible $A_{\rho}$ on $\check{X}$, it must restrict
to $\theta_{W,\alpha}$ and $\theta_{\alpha}$ on $\check{W}$ and
$\check{Y}$ respectively, which means that exact sequence for the
(orbifold) cohomology groups reads
\begin{align*}
0\rightarrow & \check{H}^{0}(\check{X};\mathfrak{g}_{\rho})\\
\rightarrow & \check{H}^{0}(\check{W};\mathfrak{g}_{\theta_{W,\alpha}})\oplus\check{H}^{0}(I\times\check{Y};\mathfrak{g}_{\theta_{\alpha}})\\
\rightarrow & \check{H}^{0}(\left(I_{1}\sqcup I_{2}\times\check{Y}\right);\mathfrak{g}_{\theta_{\alpha}})\\
\rightarrow & \check{H}^{1}(\check{X};\mathfrak{g}_{\rho})\\
\rightarrow & \check{H}^{1}(\check{W};\mathfrak{g}_{\theta_{W,\alpha}})\oplus\check{H}^{1}(I\times\check{Y};\mathfrak{g}_{\theta_{\alpha}})\\
\rightarrow & \check{H}^{1}((I_{1}\sqcup I_{2})\times\check{Y});\mathfrak{g}_{\theta_{\alpha}})\\
\rightarrow & \cdots
\end{align*}
Since $\triangle_{K}(e^{-4\pi i\alpha})\neq0$, we have that $\check{H}^{1}(\check{Y};\mathfrak{g}_{\theta_{\alpha}})=0$
so we can simplify the previous exact sequence into 
\begin{align*}
0\rightarrow & \mathbb{R}\\
\rightarrow & \mathbb{R}\oplus\mathbb{R}\\
\rightarrow & \mathbb{R}\oplus\mathbb{R}\\
\rightarrow & \check{H}^{1}(\check{X};\mathfrak{g}_{\rho})\\
\rightarrow & \check{H}^{1}(\check{W};\mathfrak{g}_{\theta_{W,\alpha}})\\
\rightarrow & 0\\
\rightarrow & \cdots
\end{align*}
 From this we can conclude that the alternating sum of the dimensions
of these vector spaces is zero, which means that 
\[
\dim\check{H}^{1}(\check{X};\mathfrak{g}_{\rho})=1+\dim\check{H}^{1}(\check{W};\mathfrak{g}_{\theta_{W,\alpha}})
\]
Thus, if $(W,\varSigma)$ is $\alpha$-admissible (i.e, $\dim\check{H}^{1}(\check{W};\mathfrak{g}_{\theta_{W,\alpha}})=0$)
then $\check{H}^{1}(\check{X};\mathfrak{g}_{\rho})$ vanishes for
all $\alpha$-reducible representations $\rho$, and conversely, if
$\lambda_{FO}(X,T,\alpha)$ can be defined (i.e, $\dim\check{H}^{1}(\check{X};\mathfrak{g}_{\rho})=1$)
then $\dim\check{H}^{1}(\check{W};\mathfrak{g}_{\theta_{W,\alpha}})$
must vanish which is the condition for the cobordism to be $\alpha$-admissible.
\end{proof}
We are finally ready to state the splitting formula. 
\begin{thm}
\label{splitting theorem}(Splitting Theorem) Suppose that $(W,\varSigma):(Y,K)\rightarrow(Y,K)$
is a self-concordance of a knot and we choose a parameter $\alpha\in\mathbb{Q}\cap(0,1/2)$
for which $\triangle_{K}(e^{-4\pi i\alpha})\neq0$. Let $(X,T)$ be
the closed 4-manifold obtained by closing up $(W,\varSigma)$. Suppose
that $\lambda_{FO}(X,T,\alpha)$ is well defined, or equivalently,
that $(W,\varSigma)$ is $\alpha$-admissible.   Then we have the
\textbf{splitting formula 
\begin{equation}
\sum_{k}D_{0}(X,T,\alpha,k)T^{-\mathcal{E}_{top}(X,T,k,-2k,\alpha)}=2\text{Lef}(W\mid HI(Y,K,\alpha))=2\text{Lef}(W\mid HI^{red}(Y,K,\alpha))-2h(Y,K,\alpha)\label{Splitting Formula}
\end{equation}
}
\end{thm}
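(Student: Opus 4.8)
The plan is to realize the left-hand side of \eqref{Splitting Formula} as a count of zero-dimensional moduli spaces on the closed-up manifold $(X,T)$, to neck-stretch along the copy of $(Y,K)$ inside $X$, and to identify the resulting broken configurations with the trace of the cobordism map $HI(W,\varSigma,\alpha)$ on the chain level. This is the standard strategy for Lefschetz-type splitting formulas (as in Anvari's work \cite{Anvari[2019]} for $\lambda_{FO}(X)$, Fr{\o}yshov's argument \cite{MR2738582} for $\lambda_{SW}(X)$, and Lin--Ruberman--Saveliev \cite{MR3811774}), adapted to the singular/orbifold setting with Novikov coefficients. First I would fix, once and for all, a metric on $X$ that contains a long cylindrical neck $[-R,R]\times Y$ (with the orbifold cone structure along $[-R,R]\times K$), together with generic perturbations that restrict to the perturbation $\mathfrak{p}$ used to define $HI(Y,K,\alpha)$ on the neck and to interior holonomy perturbations on the two ``caps''. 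For each instanton/monopole pair $(k,-2k)$ with $k(1-4\alpha)\geq 0$, the moduli space $\mathcal{M}(X,T,k,-2k,\alpha)$ is zero-dimensional, compact, and free of reducibles and twisted reducibles by Lemmas \ref{lem: existence of flat connection}, \ref{reducible connection k,l=00003D0} and \ref{lem:FREE action} (the last giving the free $H^1(X;\mathbb{Z}/2)$-action that is needed to rule out gluing ambiguities, exactly as in \cite[Proposition 5.5]{MR2805599}).

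The core step is the gluing/degeneration analysis. As $R\to\infty$, a sequence of $\alpha$-ASD connections in $\mathcal{M}(X,T,k,-2k,\alpha)$ converges (after passing to a subsequence, and using Proposition~\ref{"Gromov" compactness} for energy control together with the no-bubbling argument from \cite[Proposition 3.22]{MR2860345}) to a pair consisting of a finite-energy $\alpha$-ASD connection on the two half-completed cobordisms $\check{W}^+$ and $\check{W}^-$ asymptotic to critical points $[\beta_1]$ and $[\beta_2]$, joined by a (possibly broken) trajectory on $\mathbb{R}\times Y$ from $[\beta_2]$ to $[\beta_1]$. Because the total expected dimension is zero and the stabilizer of $\theta_\alpha$ is one-dimensional, the index additivity formula \eqref{additivity indices} forces all pieces to be index-zero and, crucially, forbids any factorization through the reducible $\theta_\alpha$ (this is precisely the computation already carried out in Section~\ref{sec:Floer-Novikov-Homology-and} showing factorizations through $\theta_\alpha$ need dimension $\geq 3$). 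Conversely, a standard gluing theorem (grafting a zero-dimensional $\alpha$-ASD connection on $\check W$ with grading-preserving endpoints onto a closed-up configuration, with the $H^1(X;\mathbb{Z}/2)$-freeness guaranteeing that the gluing parameter count is unambiguous) shows that for $R$ large the points of $\mathcal{M}(X,T,k,-2k,\alpha)$ are in signed bijection with pairs $([\beta],[A])$ where $[A]\in\mathcal{M}_z([\beta],\check W,[\beta])$ is zero-dimensional; summing the contributions weighted by $T^{-\mathcal{E}_{top}(z)}$ and keeping track of the topological energy $\mathcal{E}_{top}(X,T,k,-2k,\alpha)=k(1-4\alpha)$ reproduces $\sum_k D_0(X,T,\alpha,k)T^{-\mathcal{E}_{top}(X,T,k,-2k,\alpha)}$ on one side and $\sum_{[\beta]}\sum_z n^{W}_z([\beta],[\beta])T^{-\mathcal{E}_{top}(z)}$, i.e. the chain-level trace $\operatorname{tr}\bigl(m_{(W,\varSigma)}\bigr)$, on the other. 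Since $m_{(W,\varSigma)}$ is a chain map (verified in Section~\ref{sec:Floer-Novikov-Homology-and}) and its induced map on homology is $HI(W,\varSigma,\alpha)$, the Hopf trace formula over the Novikov field $\varLambda$ gives $\operatorname{tr}(m_{(W,\varSigma)})=\operatorname{Lef}(HI(W,\varSigma,\alpha))$. The factor of $2$ arises exactly as in the nonsingular case: closing up introduces a second copy of each gluing contribution (equivalently, it is the Euler class normalization appearing in \cite[Proposition 5.5]{MR2805599}), mirroring the $\lambda_{FO}(X)=\tfrac12\operatorname{Lef}(HI(W))$ formula.

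For the second equality in \eqref{Splitting Formula}, I would invoke the relation between $HI$ and $HI^{red}$: by construction (Section~\ref{sec:Reduced-Version}) there is a long exact sequence / mapping-cone description relating $HI(Y,K,\alpha)$, $HI^{red}(Y,K,\alpha)$, and the ``reducible'' contributions encoded by $\delta_1,\delta_2,u_K$, all functorial under $\alpha$-admissible cobordisms. Taking Lefschetz numbers is additive in short exact sequences of chain complexes, so $\operatorname{Lef}(HI(W,\varSigma,\alpha))=\operatorname{Lef}(HI^{red}(W,\varSigma,\alpha))-h(Y,K,\alpha)$, where the correction term is exactly $\chi_\varLambda(HI^{red})-\chi_\varLambda(HI)=h(Y,K,\alpha)$ because the self-concordance acts as the identity on the one-dimensional reducible contributions; this is the verbatim analogue of Anvari's $\tfrac12\operatorname{Lef}(HI(W))=\tfrac12\operatorname{Lef}(HI_{red}(W))-h(Y)$. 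Multiplying by $2$ yields the stated formula. The main obstacle I anticipate is the gluing step in the presence of the orbifold singularity along $T$ and the non-monotone Novikov bookkeeping: one must show the gluing maps are orientation-preserving with the chosen homology orientation of $(X,T)$, that the energy labels glue additively so the Novikov exponents match, and that the freeness of the $H^1(X;\mathbb{Z}/2)$-action (Lemma~\ref{lem:FREE action}) genuinely removes the sign/multiplicity ambiguity that \cite[Section 5]{MR2805599} warns about — this is where a careful appeal to \cite[Proposition 5.5]{MR2805599} and the non-integral condition on $\phi^*\subset H^1(W^*,S^*,\mathbf{P}^*)$ is essential, and where most of the real work lies.
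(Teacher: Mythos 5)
Your proposal follows essentially the same route as the paper's proof: the first equality is obtained by neck-stretching along $\check{Y}$ and matching the zero-dimensional moduli spaces $\mathcal{M}^{*}(X,T,k,-2k,\alpha)$ with $\bigcup_{[\beta]}\mathcal{M}_{0,E(k)}([\beta],\check{W},[\beta])$ (ruling out breaking through $\theta_{\alpha}$ via $\alpha$-admissibility and picking up the factor of $2$ from the $\mathbb{Z}/2$ choice of closing the bundle over $W^{*}$, as in \cite[Prop.~5.5]{MR2805599}), and the second equality is Anvari's trace-algebra argument adapted to Novikov coefficients. One minor imprecision worth noting: there is no mapping-cone/long-exact-sequence relating $HI$ and $HI^{red}$ in the paper — the reduced groups are built degree-by-degree from kernels of $\delta_{1,n}$ and quotients by images of $\delta_{2,n}$ — so the second equality actually rests on the unipotence relations \eqref{relations cobordisms}, a case split on $\delta_{1}=0$ versus $\delta_{2}=0$, and an induction along the filtration $Z_{3,k}$, which is exactly what you are implicitly invoking by pointing to Anvari.
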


\begin{proof}
(first equality of Theorem \ref{splitting theorem}) The argument
is standard and analogous to the one given in \cite[Section 11]{MR2738582},
\cite[Section 11.1]{MR2465077} and \cite[Section 9]{MR3811774}.
In fact, since we already analyzed the action of $H^{1}(X;\mathbb{Z}/2)$
on the moduli spaces, as we mentioned in the introduction, it can
be regarded as a consequence of Proposition 5.5 in \cite{MR2805599}
and the remarks after it.

Namely, the idea is to compare the moduli spaces $\mathcal{M}^{*}(X,T,k,-2k,\alpha)$
($k\in\mathbb{Z}$) with the 0-dimensional moduli spaces $\mathcal{M}_{0}([\beta],W,[\beta])$,
i.e, those which are asymptotic to $[\beta]$ as $t\rightarrow\pm\infty$.
Notice that because of the failure of monotonicity, specifying the
dimension is not enough (when $\alpha\neq1/4$), i.e, $\mathcal{M}_{0}([\beta],W,[\beta])$
needs to be furthermore indexed by the energy of the moduli space
\[
\mathcal{M}_{0}([\beta],W,[\beta])=\bigcup_{k\in\mathbb{Z}}\mathcal{M}_{0,E(k)}([\beta],W,[\beta])
\]
Now, we can focus on one of the individual moduli spaces $\mathcal{M}^{*}(X,T,k,-2k,\alpha)$
and introduce a parameter $R$ which keeps track of the length of
the cylinder in the usual stretching the neck argument, so that we
are consider the manifold $\check{X}(R)$ where a cylinder of length
$2R$ $[-R,R]\times\check{Y}$ has been introduced along $\check{Y}$.

The gluing argument says that for $R$ sufficiently large, $\mathcal{M}^{*}(X,T,k,-2k,\alpha)$
can be identified with $\bigcup_{[\beta]\in\mathfrak{C}^{*}(Y,K,\alpha)}\mathcal{M}_{0,E(k)}([\beta],W,[\beta])$.
Notice that we do not need to worry about $\mathcal{M}_{0,E(k)}([\theta_{\alpha}],W,[\theta_{\alpha}])$
because of the $\alpha$-admissibility condition. 

However, this correspondence is two to one, since when we are closing
the bundle over $W^{*}$ to produce the bundle over $X$, there are
two ways to do this since the stabilizer of $[\beta]$ is $\mathbb{Z}_{2}=Z(SU(2))$,
given that we are dealing with an irreducible connection \cite[Remark p.893]{MR2860345}.
This explains the factor of $2$ in the statement of the theorem.
\end{proof}
Now we will proof the second part of the Splitting Theorem. The proof
is an adaptation word by word of the one Anvari gives in \cite{Anvari[2019]},
so we will just illustrate one of the cases needed to proof this formula,
the other one can be found in the annotated version \cite{Echeverria[DraftFuruta]}.
\begin{proof}
(second equality of Theorem \ref{splitting theorem}) We start by
recalling the behavior of the maps 
\begin{align*}
\delta_{1,n}= & \delta_{1}u_{K}^{n}:HI_{1+2n}(Y,K,\alpha)\rightarrow\varLambda\\
\delta_{2,n}= & u_{K}^{n}\delta_{2}:\varLambda\rightarrow HI_{2-2n}(Y,K,\alpha)
\end{align*}
 which appear implicitly in our definition of the reduced Floer groups
\ref{reduced Floer groups}. In the case of a self-concordance $(W,\varSigma)$,
the induced cobordism map $m_{\check{W}}:HI_{*}(Y,K,\alpha)\rightarrow HI_{*}(Y,K,\alpha)$
acts on the $\delta_{1,n}$ and $\delta_{2,n}$ as follows \cite[Theorem 7]{MR1910040}:
there are integers $a_{ij},b_{ij}$ such that 
\begin{align}
\delta_{1,n}m_{\check{W}}= & \delta_{1,n}+\sum_{i=0}^{n-1}a_{in}\delta_{1,n}\label{relations cobordisms}\\
m_{\check{W}}\delta_{2,n}= & \delta_{2,n}+\sum_{i=0}^{n-1}b_{in}\delta_{2,n}\nonumber 
\end{align}
Since $m_{\check{W}}$ preserves gradings we can see that $a_{in}=0$
and $b_{in}=0$ whenever $i,n$ have opposite parity. We will also
use the fact (which follows from Lemma \ref{relation u-map}) that
either $\delta_{1}$ or $\delta_{2}$ must vanish. Finally, we also
need the fact that for a commutative diagram of exact sequences of
finite dimensional vector spaces over an arbitrary field $\varLambda$
\begin{align*}
0 & \rightarrow & V_{0} & \rightarrow & V_{1} & \rightarrow & V_{2}\\
 &  & \downarrow & _{\alpha} & \downarrow & _{\beta} & \downarrow_{\gamma}\\
0 & \rightarrow & W_{0} & \rightarrow & W_{1} & \rightarrow & W_{2}
\end{align*}
we have 
\begin{equation}
\text{tr}(\beta)=\text{tr}(\alpha)+\text{tr}(\gamma)\label{additivity trace}
\end{equation}
To show that
\begin{equation}
h(Y,K,\alpha)=\text{Lef}(W\mid HI^{red}(Y,K,\alpha))-\text{Lef}(W\mid HI(Y,K,\alpha))\label{desiderata}
\end{equation}
 we make cases based on the vanishing of $\delta_{1},\delta_{2}$.

\textbf{Case $\delta_{2}=0$: }In this situation the reduced Floer
groups \ref{reduced Floer groups} simplify to 
\[
\begin{cases}
HI_{1}^{red}(Y,K,\alpha)=\cap_{l}\ker(\delta_{1,2l})\\
HI_{3}^{red}(Y,K,\alpha)=\cap_{l}\ker(\delta_{1,2l+1})\\
HI_{0}^{red}(Y,K,\alpha)=HI_{0}(Y,K,\alpha)\\
HI_{2}^{red}(Y,K,\alpha)=HI_{2}(Y,K,\alpha)
\end{cases}
\]
Therefore the difference in Lefschetz numbers simplifies to 
\begin{align*}
 & \text{Lef}(W\mid HI^{red}(Y,K,\alpha))-\text{Lef}(W\mid HI(Y,K,\alpha))\\
= & \text{Tr}(W\mid HI_{1}(Y,K,\alpha)\oplus HI_{3}(Y,K,\alpha))-\text{Tr}(W\mid HI_{1}^{red}(Y,K,\alpha)\oplus HI_{3}^{red}(Y,K,\alpha))\\
= & \text{Tr}(W\mid HI_{1}(Y,K,\alpha))-\text{Tr}(W\mid HI_{1}^{red}(Y,K,\alpha))+\text{Tr}(W\mid HI_{3}(Y,K,\alpha))-\text{Tr}(W\mid HI_{3}^{red}(Y,K,\alpha))
\end{align*}
 In this case $h(Y,K,\alpha)$ \ref{eq:h-invariant knot} simplifies
to
\begin{align*}
 & \chi_{\varLambda}(HI^{red}(Y,K,\alpha))-\chi_{\varLambda}(HI(Y,K,\alpha))\\
= & -\dim_{\varLambda}HI_{1}^{red}(Y,K,\alpha)-\dim_{\varLambda}HI_{3}^{red}(Y,K,\alpha)+\dim_{\varLambda}HI_{1}(Y,K,\alpha)+\dim_{\varLambda}HI_{3}(Y,K,\alpha)\\
= & \dim_{\varLambda}HI_{1}(Y,K,\alpha)-\dim_{\varLambda}HI_{1}^{red}(Y,K,\alpha)+\dim_{\varLambda}HI_{3}(Y,K,\alpha)-\dim_{\varLambda}HI_{3}^{red}(Y,K,\alpha)\\
= & \dim_{\varLambda}(HI_{1}(Y,K,\alpha)/\cap_{l}\ker(\delta_{1,2l}))+\dim_{\varLambda}(HI_{3}(Y,K,\alpha)/\cap_{l}\ker(\delta_{1,2l+1}))
\end{align*}
So it clearly suffices to show that 
\begin{align*}
\dim_{\varLambda}(HI_{1}(Y,K,\alpha)/\cap_{l}\ker(\delta_{1,2l}))=\text{Tr}(W\mid HI_{1}(Y,K,\alpha))-\text{Tr}(W\mid HI_{1}^{red}(Y,K,\alpha))\\
\dim_{\varLambda}(HI_{3}(Y,K,\alpha)/\cap_{l}\ker(\delta_{1,2l+1}))=\text{Tr}(W\mid HI_{3}(Y,K,\alpha))-\text{Tr}(W\mid HI_{3}^{red}(Y,K,\alpha))
\end{align*}
to obtain our result. The argument is completely analogous in both
cases, so let us do verify the second identity. The result will be
obtained from induction on the sequence of subspaces 
\[
Z_{3,k}=\cap_{l=0}^{k}\delta_{1,2l+1}
\]
From \ref{relations cobordisms} we can see that for $k=0$ we have
$\delta_{1}m_{\check{W}}=\delta_{1}$, which in other words means
that there is an exact sequence
\begin{align*}
0 & \rightarrow & Z_{3,0} & \rightarrow & HI_{3} & \rightarrow^{\delta_{1}} & \varLambda\\
 &  & \downarrow & _{m_{3,0}} & \downarrow & _{m_{\check{W}}} & \downarrow_{\text{id}}\\
0 & \rightarrow & Z_{3,0} & \rightarrow & HI_{3} & \rightarrow_{\delta_{1}} & \varLambda
\end{align*}
Here $m_{3,0}$ denotes the restriction of the cobordism map to $Z_{3,0}$.
The additivity of the trace formula \ref{additivity trace} now says
that 
\[
\text{tr}(m_{\check{W}})=\text{tr}(m_{3,0})+\text{tr}(\text{id})=\text{tr}(m_{3,0})+1=\text{tr}(m_{3,0})+\dim_{\varLambda}(HI_{3}(Y,K,\alpha)/Z_{3,0})
\]
provided that $\delta_{1}\neq0$ (which in any case is the only interesting
situation since we already assumed that $\delta_{2}=0$). An induction
argument on $k$ \cite[p. 7]{Anvari[2019]} based on the identities
\ref{relations cobordisms} now says that 
\[
\text{tr}(m_{\check{W}})=\text{tr}(m_{3,k})+\dim(HI_{3}(Y,K,\alpha)/Z_{3,k})
\]
Since the sequence of the $Z_{3,k}$ stabilizer once $k$ is large
enough then we find that
\[
\text{tr}(m_{\check{W}}\mid HI_{3}(Y,K,\alpha))-\text{tr}(m_{\check{W}}^{red}\mid HI_{3}^{red}(Y,K,\alpha))=\dim(HI_{3}(Y,K,\alpha)/Z_{3,k})
\]
Again, the case for $HI_{1}(Y,K,\alpha)$ is completely analogous
so \ref{desiderata} has been verified in this situation. 

Finally, the case $\delta_{1}=0$ is dealt with in a similar way,
but the interested reader can find a proof in \cite{Echeverria[DraftFuruta]}.
\end{proof}
Now we will show that under the previous circumstances $(X,T)$ can
be assigned an $h$-invariant. 
\begin{thm}
\label{thm:h-invariant for the knot} Suppose that $(X,T)$ can be
written as the closed-up version of two different self-concordances
$(W,\varSigma):(Y,K)\rightarrow(Y,K)$ and $(W',\varSigma'):(Y',K')\rightarrow(Y',K')$
which satisfy $\triangle_{K}(e^{-4\pi i\alpha})\neq0$ and $\triangle_{K'}(e^{-4\pi i\alpha})\neq0$
for some $\alpha\in\mathbb{Q}\cap(0,1/2)$. Suppose $\lambda_{FO}(X,T,\alpha)$
can be defined, or equivalently, either of the concordances (and hence
the other) is $\alpha$-admissible. Then 
\[
\text{Lef}(W\mid HI^{red}(Y,K,\alpha))=\text{Lef}(W\mid HI^{red}(Y',K',\alpha))
\]
 and thus  we can use the splitting formula \ref{Splitting Formula}
to define $h(X,T,\alpha)$ as $h(Y,K,\alpha)=h(Y',K',\alpha)$.
\end{thm}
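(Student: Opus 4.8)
The plan is to mirror Frøyshov's argument from \cite[Theorem 8]{MR2738582}, which was developed for the Seiberg--Witten invariant $\lambda_{SW}(X)$ and adapted by Anvari \cite{Anvari[2019]} to $\lambda_{FO}(X)$. The input is the splitting formula \ref{Splitting Formula}: for \emph{any} self-concordance $(W,\varSigma)$ whose closure is $(X,T)$, we have
\[
\sum_{k}D_{0}(X,T,\alpha,k)T^{-\mathcal{E}_{top}(X,T,k,-2k,\alpha)}=2\,\text{Lef}(W\mid HI^{red}(Y,K,\alpha))-2h(Y,K,\alpha).
\]
The left-hand side depends only on $(X,T,\alpha)$ and not on the chosen slicing $(Y,K)$ (nor on the concordance realizing it). So the first step is simply to apply this identity twice, once for $(W,\varSigma)$ giving slice $(Y,K)$ and once for $(W',\varSigma')$ giving slice $(Y',K')$: subtracting, the $D_0$-sums cancel, leaving
\[
2\,\text{Lef}(W\mid HI^{red}(Y,K,\alpha))-2h(Y,K,\alpha)=2\,\text{Lef}(W'\mid HI^{red}(Y',K',\alpha))-2h(Y',K',\alpha).
\]
This already reduces the theorem to proving the equality of Lefschetz numbers $\text{Lef}(W\mid HI^{red}(Y,K,\alpha))=\text{Lef}(W'\mid HI^{red}(Y',K',\alpha))$, since that equality then forces $h(Y,K,\alpha)=h(Y',K',\alpha)$.

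The heart of the argument is therefore to show the two reduced Lefschetz numbers agree. Here I would follow Frøyshov's strategy verbatim. The point is that $(X,T)$, being the closure of both $(W,\varSigma)$ and $(W',\varSigma')$, can be cut along \emph{either} copy of $\check{Y}$ or $\check{Y}'$; stretching the neck along one while the other stays fixed, and using the composition law for $\alpha$-admissible cobordisms established in Section \ref{sec:Floer-Novikov-Homology-and}, one expresses both $\text{Lef}(W\mid HI^{red})$ and $\text{Lef}(W'\mid HI^{red})$ as the trace of one and the same endomorphism of a Floer group associated to a common ``long composite'' cobordism, up to the kind of conjugation-invariance of the trace that Frøyshov isolates in \cite[Lemma 10]{MR2738582}. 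Concretely, writing the composite of $(W,\varSigma)$ with (a reversed, shifted copy of) itself and reorganizing the neck-stretching, one shows the reduced cobordism map induced by $(W,\varSigma)$ and that induced by $(W',\varSigma')$ are conjugate (or more precisely, are intertwined by the maps coming from the two ways of realizing the gluing), so their traces coincide.

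The main obstacle, as flagged in Remark (b) after Theorem \ref{splitting formula Furuta Ohta }, is that Frøyshov's \cite[Lemma 10]{MR2738582} is stated for matrices over a field (ultimately over $\mathbb{Q}$ or $\mathbb{R}$), whereas here the reduced Floer groups $HI^{red}(Y,K,\alpha)$ are finite-dimensional vector spaces over the Novikov field $\varLambda=\varLambda^{\mathbb{Q},\mathbb{R}}$. So the real work is to verify that the relevant linear-algebra lemma — essentially that if $AB$ and $BA$ are both defined then they have the same trace, together with the statement that conjugate endomorphisms have equal trace, applied in the setting where the ``conjugating'' maps need not be invertible but fit into appropriate exact sequences compatible with the $\delta_1,\delta_2,u_K$ structure — goes through over $\varLambda$. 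Since $\varLambda$ is a genuine field and all the modules in sight are finite-dimensional over it, every step of Frøyshov's linear algebra is formal and remains valid; the only care needed is in tracking the $T$-power bookkeeping, i.e.\ confirming that all the maps entering the exact sequences are $\varLambda$-linear (which they are, by construction of the cobordism maps in \ref{cobordism map} and of $\delta_{1,n},\delta_{2,n}$) so that traces are well-defined elements of $\varLambda$ and the additivity formula \ref{additivity trace} applies. Once this Novikov-coefficient version of \cite[Lemma 10]{MR2738582} is in place, the equality of Lefschetz numbers follows exactly as in \cite{MR2738582,Anvari[2019]}, and hence $h(Y,K,\alpha)=h(Y',K',\alpha)$, allowing us to define $h(X,T,\alpha)$ unambiguously as this common value.
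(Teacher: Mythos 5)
Your first paragraph — apply the splitting formula \ref{Splitting Formula} to both slicings, cancel the closed-four-manifold invariants, and reduce the theorem to the equality of reduced Lefschetz numbers — is correct and matches the implicit logic of the paper's proof. The gap is in the second half, and it is concentrated in your description of what needs to be verified.

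You characterize Frøyshov's \cite[Lemma 10]{MR2738582} as ``essentially that if $AB$ and $BA$ are both defined then they have the same trace, together with the statement that conjugate endomorphisms have equal trace,'' and you consequently claim that every step ``is formal and remains valid'' over any field. Both assertions are wrong, and together they miss the one genuine issue the paper has to address. Frøyshov's Lemma 10 says something much stronger and more delicate: if $A,B$ are $r\times r$ matrices and $\operatorname{tr}(A^{n})=\operatorname{tr}(B^{n})$ for all $n$ in a window $m\leq n<2r+m$, then $A$ and $B$ have the same characteristic polynomial. The proof of that lemma factors the characteristic polynomial into linear terms, i.e.\ it requires the coefficient field to be \emph{algebraically closed}; it is not a formal consequence of being a field. (Over $\varLambda^{\mathbb{Q},\mathbb{R}}$, which you work over, this is not automatic.) The paper's proof identifies exactly this as the single non-routine point and resolves it by working over $\varLambda^{\mathbb{C},\mathbb{R}}$ and citing \cite[Lemma A.1]{MR2573826} for its algebraic closure. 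Your proposal never mentions algebraic closure.

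Relatedly, your geometric picture — that $W$ and $W'$ induce conjugate (or intertwined) endomorphisms so their traces coincide outright — is much too optimistic. In general $(Y,K)$ and $(Y',K')$ are not isotopic, the reduced Floer groups need not be isomorphic, and there is no intertwiner in sight. What Frøyshov actually does is pass to cyclic covers of $X$, stretch necks along multiple parallel copies of $Y$ and of $Y'$, and thereby compare $\operatorname{tr}(A^{n})$ with $\operatorname{tr}(B^{n})$ for a range of exponents $n$; it is precisely to conclude from this partial information that one invokes the power-trace lemma. Your proposal would work as stated only in the degenerate situation where $Y$ and $Y'$ are isotopic. To repair it, replace the conjugation argument by the neck-stretching-in-cyclic-covers comparison of power traces, and supply the statement and proof of Lemma 10 over an algebraically closed coefficient field, which for the Novikov setting means invoking the algebraic closedness of $\varLambda^{\mathbb{C},\mathbb{R}}$.
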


\begin{proof}
The proof is indistinguishable from the one Frøyshov gives for monopole
$h$-invariant in \cite[Section 13]{MR2738582}. First Frøyshov proves
Lemma 10 in \cite{MR2738582}, which in our case translates to the
following statement: let $A$, $B$ be $r\times r$ matrices with
coefficients in the universal Novikov field $\varLambda^{\mathbb{C},\mathbb{R}}$
, and let $m$ be a natural number such that $\text{tr}(A^{n})=\text{tr}(B^{n})$
for all natural numbers $n$ satisfying $m\leq n<2r+m$. Then $A$
and $B$ have the characteristic polynomial, and in particular $\text{tr}(A)=\text{tr}(B)$.

Frøyshov states this Lemma for matrices with coefficients over the
complex field $\mathbb{C}$, but a quick inspection of the proof reveals
that the only property used about $\mathbb{C}$ is that it is algebraically
closed, so that the characteristic polynomial of $A$ (or $B$) has
roots, which are the eigenvalues of $A$ (or $B$). That $\varLambda^{\mathbb{C},\mathbb{R}}$
is algebraically closed is proven in \cite[Lemma A.1]{MR2573826}.
Once we know this lemma holds, the argument Frøyshov gives is the
same, just replace every occurrence of $X,W,Y,X_{\infty},X_{j,\infty},W_{j,n}$
,etc in his proof with their orbifold versions $\check{X},\check{W},\check{Y},\check{X}_{\infty},\check{X}_{j,\infty},\check{W}_{j,n}$. 

To give more details, as in \cite[Section 13]{MR2738582} one assumes
that $Y,Y'$ are the inverse images of maps $f:X\rightarrow S^{1}$,
$f':X\rightarrow S^{1}$ in such a way that $f^{-1}(1)=Y$, $f^{\prime-1}(1)=Y'$.
The important features of these maps is that they were homotopic through
some map $F:[0,1]\times X\rightarrow S^{1}$, since $Y,Y'$ represented
the same class in $H_{3}(X;\mathbb{Z})\simeq H^{1}(X;\mathbb{Z})\simeq[X,S^{1}]$.

Now, if we let $X_{T}=X\backslash T$ denote the torus complement
there is no problem in assuming that as maps $f_{T}=f:X_{T}\rightarrow S^{1}$,
$f'_{T}=f':X_{T}\rightarrow S^{1}$, we also have that $f^{-1}(1)=Y\backslash K$,
$f^{-1}(1)=Y'\backslash K'$. Since $T\cap Y=K$ and $T\cap Y'=K'$,
by restricting $F$ to $[0,1]\times X_{T}$, we find out that that
$f_{T},f_{T}'$ are homotopic as well (as elements of $[X_{T},S^{1}]$),
which means that the infinite cyclic covers they determine are in
fact diffeomorphic. After that Frøyshov's argument goes through.

\end{proof}

\section{\label{sec:Some-examples}Some examples and Properties}

\subsection{Product Case}

\ 

As a corollary of the splitting theorem \ref{splitting theorem} we
can verify our basic desiderata for $\lambda_{FO}(X,T,\alpha)$.
\begin{cor}
Suppose that $(Y,K)$ is such that $\triangle_{K}(e^{-4\pi i\alpha})\neq0$
for $\alpha\in\mathbb{Q}\cap(0,1/2)$. Then $\lambda_{FO}(X,T,\alpha)$
can be defined on $(X,T)=(S^{1}\times Y,S^{1}\times K)$.

Moreover, $\lambda_{FO}(X,T,\alpha)=2\lambda_{CLH}(Y,K,\alpha)$
and the additional degree zero Donaldson invariants $D_{0}(X,T,\alpha,k)$
vanish for $k\neq0$.
\end{cor}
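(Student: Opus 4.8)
The plan is to realize $(X,T)=(S^{1}\times Y,S^{1}\times K)$ as the closure of the \emph{product} (identity) self-concordance $(W,\varSigma)=([0,1]\times Y,[0,1]\times K):(Y,K)\rightarrow(Y,K)$, and then feed this into the Splitting Theorem \ref{splitting theorem}. First I would check that $(W,\varSigma)$ is $\alpha$-admissible in the sense of Definition \ref{def: alpha admissible}. The knot conditions $\triangle_{K}(e^{-4\pi i\alpha})\neq0$ hold at both ends by hypothesis, and since $W\backslash\varSigma=[0,1]\times(Y\backslash K)$ deformation retracts onto $Y\backslash K$ while the unique reducible $\theta_{W,\alpha}$ restricts to $\theta_{\alpha}$, we get an isomorphism $H^{1}(W\backslash\varSigma;L_{\theta_{W,\alpha}}^{\otimes2})\cong H^{1}(Y\backslash K;L_{\alpha}^{\otimes2})$, and the right-hand side vanishes by (the proof of) Lemma \ref{Alexander Knot isolated}. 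By the unlabeled Lemma preceding the Splitting Theorem, which identifies $\alpha$-admissibility of $(W,\varSigma)$ with well-definedness of $\lambda_{FO}$ for the closure, this already shows that $\lambda_{FO}(X,T,\alpha)=D_{0}(X,T,\alpha,0)$ can be defined. The remaining hypotheses are clear: $S^{1}\times Y$ is a homology $S^{1}\times S^{3}$ since $Y$ is an integer homology sphere, and $H_{1}(S^{1}\times K;\mathbb{Z})\twoheadrightarrow H_{1}(S^{1}\times Y;\mathbb{Z})$ via the circle factor.

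The second step is to identify the cobordism map $HI(W,\varSigma,\alpha):HI(Y,K,\alpha)\rightarrow HI(Y,K,\alpha)$ induced by the product concordance with the identity. Equipping $[0,1]\times Y$ with a product orbifold metric and a suitably small perturbation, the index-zero moduli spaces $\mathcal{M}_{0}([\beta_{1}],\check{W},[\beta_{2}])$ are empty unless $[\beta_{1}]=[\beta_{2}]$, in which case the only contribution is the translation-invariant connection; the $\alpha$-admissibility ensures the reducible $\theta_{\alpha}$ does not enter (exactly as in the nonsingular case), so $HI(W,\varSigma,\alpha)=\text{id}$. Hence $\text{Lef}(W\mid HI(Y,K,\alpha))$ is the $\mathbb{Z}/2$-graded trace of the identity, i.e.\ the Euler characteristic $\chi_{\varLambda}(HI(Y,K,\alpha))$, which by Theorem \ref{Signed Count} equals $4\lambda_{C}(Y)+\frac{1}{2}\sigma_{K}(e^{-4\pi i\alpha})=\lambda_{CLH}(Y,K,\alpha)$.

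Finally I would extract both conclusions from the Splitting Formula \ref{Splitting Formula}. Its left-hand side is $\sum_{k}D_{0}(X,T,\alpha,k)\,T^{-k(1-4\alpha)}$, using $\mathcal{E}_{top}(X,T,k,-2k,\alpha)=k(1-4\alpha)$; its right-hand side is now the constant $2\lambda_{CLH}(Y,K,\alpha)\in\varLambda$, that is, the pure $T^{0}$ term. When $\alpha\neq1/4$ the exponents $-k(1-4\alpha)$ are pairwise distinct, so comparing coefficients of each power of $T$ forces $D_{0}(X,T,\alpha,k)=0$ for $k\neq0$ and $D_{0}(X,T,\alpha,0)=\lambda_{FO}(X,T,\alpha)=2\lambda_{CLH}(Y,K,\alpha)$; when $\alpha=1/4$ one has $D_{0}(X,T,\frac14,k)=0$ for $k\neq0$ by Definition \ref{Def deg 0 Donaldson invariants}, and the surviving $T^{0}$ term again gives $\lambda_{FO}(X,T,\frac14)=2\lambda_{CLH}(Y,K,\frac14)$. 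The main obstacle is the second step: in the singular Novikov setting one must verify carefully that the product concordance really induces the identity on $HI(Y,K,\alpha)$. The cleanest route is the direct moduli-space count sketched above; alternatively one can combine the composition law for $\alpha$-admissible cobordisms (which shows $HI(W,\varSigma,\alpha)$ is idempotent) with a small-perturbation count to pin it to the identity. Everything else is formal bookkeeping with the Novikov field.
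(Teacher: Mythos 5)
Your proposal is correct and follows essentially the same route as the paper: realize $(S^{1}\times Y,S^{1}\times K)$ as the closure of the product self-concordance $([0,1]\times Y,[0,1]\times K)$, note $\alpha$-admissibility, apply the Splitting Formula with $\text{Lef}(\text{id})=\chi_{\varLambda}(HI(Y,K,\alpha))=\lambda_{CLH}(Y,K,\alpha)$, and compare coefficients of $T$-powers to extract both conclusions. The paper treats the facts that the product concordance is $\alpha$-admissible and induces the identity as immediate, while you spell out the short justifications (the deformation retraction of $W\backslash\varSigma$ onto $Y\backslash K$, and the translation-invariant-solutions count); these are useful expansions but do not change the argument.
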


\begin{proof}
In this case the self-concordance is $(W,\varSigma)=([0,1]\times Y,[0,1]\times K)$
which will clearly be $\alpha$-admissible. 

According to the splitting formula \ref{Splitting Formula} we have
for $(X,T)=(S^{1}\times Y,S^{1}\times K)$
\[
\sum_{k}D_{0}(X,T,\alpha,k)T^{-\mathcal{E}(X,T,k,-2k,\alpha)}=2\text{Lef}(Id\mid HI(Y,K,\alpha))=2\chi_{\varLambda}(HI(Y,K,\alpha))=2\lambda_{CH}(Y,K,\alpha)
\]
from which we will conclude that 
\[
\begin{cases}
\lambda_{FO}(S^{1}\times Y,S^{1}\times K,\alpha)=2\lambda_{CH}(Y,K,\alpha)\\
D_{0}(X,T,\alpha,k)=0 & k\neq0
\end{cases}
\]
\end{proof}
\begin{rem}
In particular, notice that one cannot use the invariants $D_{0}(X,T,\alpha,k)$
(for $k\neq0$) to obtain new invariants for knots, as one might have
suspected all along. 
\end{rem}

\subsection{Flip symmetry }

\ 

Now we briefly discuss the proof of Theorem \ref{Flip symmetry} from
the introduction, i.e, the flip symmetry $\mathcal{F}$ obtained by
changing the holonomy parameter from $\alpha$ to $\frac{1}{2}-\alpha$.
This is defined similar to the action of $H^{1}(X;\mathbb{Z}/2)$,
in the sense that there is a natural map $\mathcal{F}:\mathcal{M}(X,T,k,l,\alpha)\rightarrow\mathcal{M}(X,T,k+l,-l,\frac{1}{2}-\alpha)$
obtained by tensoring the bundle $E(k,l)$ with a line bundle $\chi$
whose holonomy on the small circles linking $T$ is $-1$. The new
values for $k$ and $l$ after the flip symmetry is performed were
computed in \cite[Section 2, iv)]{MR1241873}. 

Moreover, in \cite[Appendix 1, ii)]{MR1308489} they discuss the effect
of $\mathcal{F}$ on the orientation of the moduli spaces. In particular,
$\mathcal{F}$ preserves or reverses orientation according to the
parity of $\frac{1}{4}T\cdot T-(g-1)$, which in our case vanishes,
which means that $\mathcal{F}$ is orientation preserving. 

Recalling that $D_{0}(X,T,k,\alpha)$ was computed using $\mathcal{M}(X,T,k,-2k,\alpha)$,
this means that $D_{0}(X,T,k,\alpha)=D_{0}\left(X,T,-k,\frac{1}{2}-\alpha\right)$
as was claimed in Theorem \ref{Flip symmetry}. 

The effect on the Floer homologies $\mathcal{F}:HI(Y,K,\alpha)\rightarrow HI(Y,K,\frac{1}{2}-\alpha)$
can be analyzed in a similar way. The only thing to worry about is
about the effect of $\mathcal{F}$ on the gradings of a critical point
before and after the flip has been performed. Clearly $\mathcal{F}([\theta_{\alpha}])=\mathcal{F}([\theta_{1/2-\alpha}])$
and more generally $\mathcal{F}(\mathcal{M}([\beta],[\theta_{\alpha}])=\mathcal{M}([\mathcal{F}\beta],[\theta_{1/2-\alpha}])$
which from the formula for the absolute grading \ref{Flip symmetry}
will imply that $\text{gr}([\mathcal{F}\beta])=\text{gr}([\beta])$,
in other words, $\mathcal{F}$ will be grading preserving. In particular,
the Euler characteristic is preserved under the flip operation, which
could also be checked directly from the formula for $\lambda_{CLH}(Y,K,\alpha)$.

The effect of $\mathcal{F}:HI^{red}(Y,K,\alpha)\rightarrow HI^{red}(Y,K,\frac{1}{2}-\alpha)$
can be analyzed in a similar way. The only thing to be aware of is
that under $\mathcal{F}$ the $u$-map $\mu_{K}(x)$ changes sign,
i.e, $\mathcal{F}(\mu_{K}(x))=-\mu_{K}(x)$, as is discussed in \cite[Section 4.2]{MR1432428}.
However, changing the sign of the $u$ map still is compatible with
the definition of the reduced Floer groups \ref{reduced Floer groups}
so the Euler characteristic of $HI^{red}(Y,K,\alpha)$ is preserved
under flip symmetries. From the formula for the Frøyshov knot invariants
\ref{eq:h-invariant knot} it is immediate that $h(Y,K,\alpha)=h(Y,K,\frac{1}{2}-\alpha)$,
which was the first statement of Theorem \ref{Flip symmetry}.

\subsection{Duality }

Just as for the non-singular versions, the Floer groups $HI(Y,K,\alpha)$
are related to those of $HI(-Y,-K,\alpha)$, where $(-Y,-K)$ denotes
the pair $(Y,K)$ with the opposite orientation on both factors. When
there is no knot present, the way to understand the Floer homology
$HI(-Y)$ in terms of the Floer homology $HI(Y)$ is standard, what
we need to do discuss is that happens in the presence of the knot
$K$ as well as the effect on the local systems $\varGamma_{[\beta]}$.
We will follow the discussion in \cite[Sections 22.5 and  32.1]{MR2388043},
\cite[Section 7.4]{MR3394316}. 

The Chern-Simons functionals $CS_{\check{Y}}$ and $CS_{-\check{Y}}$
on the orbifolds $\check{Y}$ and $-\check{Y}$ are related as 
\[
CS_{\check{Y}}=-CS_{-\check{Y}}
\]
Therefore the critical points of the functionals can be identified
in a natural way with each other. The perturbation on $-Y$ can be
taken to be $-\mathfrak{p}$, while the vector field $\text{grad}CS_{-\check{Y}}$
is the negative of $\text{grad}CS_{\check{Y}}$. In other words, if
$\gamma(t)$ is a trajectory on $\check{Y}$ of $CS_{\check{Y}}$
then $\gamma(-t)$ is a trajectory on $-\check{Y}$ of $CS_{-\check{Y}}$.
The coefficient system on $-\check{Y}$ can be described as a coefficient
system on $\mathcal{B}(\check{Y},\alpha)=\mathcal{B}(Y,K,\alpha)$,
by saying that the fiber at each point is still $\varLambda^{\mathbb{Q},\mathbb{R}}$,
but now along paths $z$ from $[\beta_{0}]$ to $[\beta_{1}]$ one
multiplies by $T^{+\mathcal{E}_{top}(z)}$ instead of $T^{-\mathcal{E}_{top}(z)}$. 

To obtain the grading formula, consider the cylinders $\mathbb{R}\times\check{Y}$
and $\mathbb{R}\times-\check{Y}$ . Let $[\beta]\in\mathfrak{C}(Y,K,\alpha)$
be a critical point and $[\bar{\beta}]$ the corresponding class in
$\mathfrak{C}(-Y,-K,\alpha)$. Recall that \ref{absolute grading}
\[
\text{gr}([\beta])=-1-\dim\mathcal{M}([\theta_{\alpha}],[\beta])\mod4=\dim\mathcal{M}([\beta],[\theta_{\alpha}])\mod4
\]

A flow line on the moduli space $\mathcal{M}([\bar{\beta}],[\bar{\theta}_{\alpha}])$
can be identified with a flow line of the moduli space $\mathcal{M}([\theta_{\alpha}],[\beta])$,
since the time direction has been reversed, which means 
\[
\text{gr}([\bar{\beta}])=\mathcal{M}([\bar{\beta}],[\bar{\theta}_{\alpha}])\mod4=\dim\mathcal{M}([\theta_{\alpha}],[\beta])\mod4=-1-\text{gr}([\beta])
\]

Therefore we have found (\cite[Proposition 4.3]{MR1703606}):
\begin{thm}
For each $i\in\mathbb{Z}/4\mathbb{Z}$, there is an isomorphism 
\begin{equation}
HI_{i}(-Y,-K,\alpha)\simeq HI_{-i-1}(Y,K,\alpha)\label{duality isomorphism}
\end{equation}
 
\end{thm}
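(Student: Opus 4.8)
The plan is to construct an explicit chain isomorphism between the Floer-Novikov complex $C_*(-Y,-K,\alpha;\varGamma,-\mathfrak{p})$ and a suitable "dual" description of $C_*(Y,K,\alpha;\varGamma,\mathfrak{p})$, and then read off the effect on homology. First I would recall the three ingredients already assembled in the excerpt: (i) the identification $CS_{\check Y}=-CS_{-\check Y}$, so the perturbed functionals $CS_{\mathfrak p}$ on $\check Y$ and $CS_{-\mathfrak p}$ on $-\check Y$ have the \emph{same} critical set $\mathfrak C$, with $[\beta]\leftrightarrow[\bar\beta]$; (ii) the time-reversal bijection $\gamma(t)\mapsto\gamma(-t)$ between gradient trajectories on $\check Y$ and those on $-\check Y$, which identifies $\check{\mathcal M}_z([\beta_0],[\beta_1])$ on $\check Y$ with $\check{\mathcal M}_{\bar z}([\bar\beta_1],[\bar\beta_0])$ on $-\check Y$; and (iii) the grading computation already carried out just before the theorem statement, namely $\mathrm{gr}_{-Y}([\bar\beta])=-1-\mathrm{gr}_Y([\beta])\bmod 4$, which comes from $\mathcal M([\bar\beta],[\bar\theta_\alpha])\cong\mathcal M([\theta_\alpha],[\beta])$.

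Next I would set up the linear-algebra comparison. Define $C^i=C_i(Y,K,\alpha;\varGamma,\mathfrak p)$ as a $\varLambda$-vector space with basis the grading-$i$ critical points (after fixing orientation lines $o([\beta])$ as in the excerpt). Time reversal sends the differential $\partial_Y$, whose matrix entry from $[\beta_0]$ to $[\beta_1]$ is $\sum_z n_z([\beta_0],[\beta_1])T^{-\mathcal E_{top}(z)}$, to an operator on $-Y$ whose entry from $[\bar\beta_1]$ to $[\bar\beta_0]$ is $\sum_{\bar z} n_{\bar z}([\bar\beta_1],[\bar\beta_0])T^{+\mathcal E_{top}(\bar z)}$ — and this is exactly the transpose of $\partial_Y$ once one also performs the sign substitution $T^{-(\cdot)}\mapsto T^{+(\cdot)}$ built into the local system on $-\check Y$ that the excerpt already specifies. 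Concretely: the complex $(C_*(-Y,-K,\alpha;\varGamma,-\mathfrak p),\partial_{-Y})$ is isomorphic to the dual complex $(\mathrm{Hom}_\varLambda(C_*(Y,K,\alpha;\varGamma,\mathfrak p),\varLambda),\partial_Y^{\vee})$, with the grading flipped by $i\mapsto -i-1$ as dictated by (iii). Taking homology and using that over a field (here the Novikov field $\varLambda$) the homology of the dual cochain complex is the dual of the homology, $H_i(\mathrm{Hom}(C_*,\varLambda))\cong\mathrm{Hom}(H_i(C_*),\varLambda)\cong H_i(C_*)$ as abstract $\varLambda$-vector spaces (finite-dimensionality having been established earlier), yields $HI_i(-Y,-K,\alpha)\cong HI_{-i-1}(Y,K,\alpha)$.

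The main obstacle I expect is bookkeeping of \emph{orientations and signs}, not the topology: one must check that the isomorphism $\epsilon[\check A]\colon o[\beta_0]\to o[\beta_1]$ used to define $\partial_Y$ transports under time reversal to the correct orientation-comparison isomorphism for $-Y$, so that the counts $n_z$ and $n_{\bar z}$ genuinely agree up to the predictable sign. The excerpt tells us the conventions are those of \cite{MR3394316,MR1883043,MR2805599,MR2860345} and match Collin--Steer \cite{MR1703606}, and Collin--Steer's Proposition 4.3 (cited in the theorem) is precisely this duality statement, so I would invoke their orientation computation rather than redo it. A secondary point to verify is that the reducible $[\theta_\alpha]$ plays symmetric roles: since it is isolated and unobstructed on both $\check Y$ and $-\check Y$ (by Poincaré duality $\check H^2=\check H^1=0$, already noted), the absolute $\mathbb Z/4$ grading with $[\theta_\alpha]$ in degree $0$ is preserved under time reversal up to the shift $i\mapsto -i-1$, which is exactly what formula \ref{absolute grading} gives. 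No compactness issue arises beyond what is already in Proposition \ref{"Gromov" compactness}, since time reversal does not change energies in absolute value and the local system on $-\check Y$ is designed to absorb the sign.
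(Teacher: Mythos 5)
Your proposal matches the paper's own argument step for step: identify critical sets via $CS_{\check Y}=-CS_{-\check Y}$ and the perturbation $-\mathfrak{p}$, time-reverse gradient trajectories, account for the local system by $T^{-\mathcal{E}_{top}}\mapsto T^{+\mathcal{E}_{top}}$, and read off $\text{gr}([\bar\beta])=-1-\text{gr}([\beta])$ from $\mathcal{M}([\bar\beta],[\bar\theta_\alpha])\cong\mathcal{M}([\theta_\alpha],[\beta])$ together with the absolute grading formula, then delegate the orientation bookkeeping to Collin--Steer. The only difference is that you make the final linear-algebra step explicit (identifying the $-Y$ complex with the regraded $\varLambda$-dual of the $Y$ complex and using finite dimensionality over the field $\varLambda$); the paper leaves this implicit, so this is a presentational refinement rather than a different route.
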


\subsection{Some examples of tori inside mapping tori}

\ 

These examples can be considered as the orbifold version of \cite{MR2033479}.
Before writing a general statement, let's consider a toy model. Suppose
that we have a knot $K'$ inside an integer homology sphere $Y'$
and we choose holonomy $\alpha'=\frac{1}{15}$ along the $K'$. 

Moreover, assume that after taking the $3$-fold branched cover along
$K'$ we obtain a 3 manifold $Y$ which is still an integer homology
sphere . Notice that $Y$ comes with a natural $\mathbb{Z}_{3}$
action $\tau$, whose fixed point set is a knot $K$. Now choose holonomy
$\alpha=\frac{1}{5}$ along the knot $K$. Then the mapping torus
\[
X_{\tau}=([0,1]\times Y)/(\{0\}\times Y\sim^{\tau}\{1\}\times Y)
\]
 of $(Y,\tau)$ will be a homology $S^{1}\times S^{3}$ with a natural
torus $T_{\tau}$ obtained as the mapping torus of $K\subset Y$.
In this situation the analogue of \cite[Proposition 3.1]{MR2033479}
will tell us that there is a two to one correspondence between $\alpha$-representations
on $(X_{\tau},T_{\tau})$ and $\alpha$-representations on $(Y,K)$
which are $\tau$-equivariant. 

Clearly, every $\alpha'$- representation of $(Y',K')$ will pullback
to an $\alpha$-representation on $(Y,K)$ which is $\tau$ -equivariant.
So the only question is whether this exhausts all the possibilities
for being a $\tau$-equivariant representation. In fact, it does not!
Suppose we had chosen holonomy $\tilde{\alpha}'=\frac{6}{15}$ along
$K'$. Then the pull-back of an $\tilde{\alpha}'$ representation
of $(Y',K')$ will have holonomy $6/5$ along $K$ upstairs. Recall
that we are using the normalization for the holonomy to be between
$0$ and $1/2$, and since 
\[
\frac{6}{5}=\frac{1}{5}+2\cdot\frac{1}{2}
\]
this means that after performing two half-twists, holonomy $6/5$
is equivalent to holonomy $1/5$. 

As Langte Ma pointed out to the author, there are still more holonomy
values allowed. Consider for example $\tilde{\alpha}'=\frac{11}{15}$
. Since 
\[
\frac{11}{5}=\frac{1}{5}+4\cdot\frac{1}{2}
\]
after four half-twists, holonomy $11/5$ is equivalent to holonomy
$1/5$. Moreover, since $\frac{1}{2}<\frac{11}{15}<1$, due to the
normalization conventions we can take this one to be equivalent to
holonomy $\frac{11}{15}-\frac{1}{2}=\frac{7}{30}$, in other words,
\[
\frac{11}{5}\sim\frac{7}{30}
\]
Holonomy $\frac{7}{30}$ pulls up to holonomy $\frac{7}{10}=\frac{1}{5}+\frac{1}{2}$
, which differs from $\frac{1}{5}$ by one half-twists. Moreover 
\[
\sigma_{K'}(e^{-4\pi i\frac{11}{15}})=\sigma_{K'}(e^{-4\pi i(\frac{7}{30}+\frac{1}{2})})=\sigma_{K'}(e^{-4\pi i\frac{7}{30}})
\]
so the value of the knot signature is well defined regardless of the
identification we use.

This exhaust all the possibilities, since 
\[
\frac{1}{5}+6\cdot\frac{1}{2}=\frac{16}{5}
\]
which would be induced by holonomy $\frac{16}{15}$ downstairs, which
is bigger than one, hence can be ignored.

In general all the allowable values of holonomy on $(Y',K')$ which
give rise to holonomy $\alpha$ upstairs will be of the form 
\[
\frac{\alpha}{p}+\frac{n}{p}
\]
where $n$ is an any integer such that 
\[
0<\frac{\alpha}{p}+\frac{n}{p}<1
\]
which clearly means that there are only finitely many values $n$
can take. In fact, since $n$ must be an integer and $\alpha<\frac{1}{2}$
the only possibilities are 
\[
n=0,1,\cdots,\left\lfloor p-\alpha\right\rfloor =p-1
\]
where $\left\lfloor \right\rfloor $ denotes the floor function. In
our toy model $p=3$ and $\alpha=1/5$ so $n=0,1,2$ are the only
possibilities, which means that $\alpha'_{0}=\frac{1}{15}$, $\alpha'_{1}=\frac{6}{15}=\frac{3}{5}$,
$\alpha_{2}'=\frac{11}{15}$ are the only holonomy values that have
the desired properties.

Our claim is that in this case 
\begin{align*}
 & \lambda_{FO}(X_{\tau},T_{\tau},\alpha)\\
= & 2\lambda_{CLH}(Y',K',\alpha'_{0})+2\lambda_{CLH}(Y',K',\alpha'_{1})+2\lambda_{CLH}(Y',K',\alpha'_{2})\\
= & 24\lambda_{C}(Y')+\sigma_{K'}(e^{-4\pi i\alpha_{0}^{\prime}})+\sigma_{K'}(e^{-4\pi i\alpha_{1}^{\prime}})+\sigma_{K}(e^{-4\pi i\alpha_{2}'})
\end{align*}
Besides the correspondence between the different representation spaces,
the other important thing we need to check is how to compare the orientations
between the different moduli spaces, and how to identify the representations
in case perturbations are needed. We will address each of these issues
in stages, but first we state the main result. 
\begin{thm}
\label{Mapping tori calculation}Let $(Y',K')$ be a pair of an oriented
knot inside an integer homology sphere and take $\alpha'=\frac{r}{pq}$
, where $p,q,r$ are all odd integers, relatively prime and such that
$0<\alpha'<\frac{1}{2}$. Suppose moreover that $0<\frac{r}{q}<\frac{1}{2}$
and assume that the $p$-fold branched cover along $K'$ is an integer
homology sphere $Y$. Let $K$ denote the fixed point set of the $\mathbb{Z}_{p}$
action $\tau$ on $Y$, which will be another oriented knot. For $\alpha=\frac{r}{q}=p\alpha'$
consider the $p$ holonomy values 
\[
\alpha_{0}=\alpha',\alpha_{1}'=\alpha'+\frac{1}{p},\cdots,\alpha_{p-1}'=\alpha'+\frac{p-1}{p}
\]

Assume furthermore that for each $j=0,\cdots,p$, $\triangle_{K'}(e^{-4\pi i\alpha'_{j}})\neq0$
and that the reducible representation $\theta_{\alpha}$ is $\tau$
non-degenerate, i.e, $\check{H}^{1,\tau}(\check{Y};\mathfrak{g}_{\theta_{\alpha}})=0$.
Denote $(X_{\tau},T_{\tau})$ the $\tau$-mapping torus of $(Y,K)$.
Then $\lambda_{FO}(X_{\tau},T_{\tau},\alpha)$ is well defined and
moreover 
\[
\lambda_{FO}\left(X_{\tau},T_{\tau},\alpha\right)=\sum_{j=0}^{p-1}2\lambda_{CLH}\left(Y',K',\alpha'_{j}\right)=8p\lambda_{C}(Y')+\sum_{j=0}^{p-1}\sigma_{K'}(e^{-4\pi i\alpha'_{j}})
\]
\end{thm}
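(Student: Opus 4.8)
The plan is to reduce the computation of $\lambda_{FO}(X_\tau, T_\tau, \alpha)$ to a count of $\tau$-equivariant $\alpha$-flat connections upstairs, and then to identify equivariant representations on $(Y,K)$ with representations on $(Y',K')$ at the shifted holonomy values $\alpha'_j$. First I would invoke the splitting philosophy: since $X_\tau$ is the mapping torus of the finite-order diffeomorphism $\tau$, it is the closed-up version of the self-concordance $(W,\Sigma) = ([0,1]\times Y, [0,1]\times K)$ equipped with the gluing $\tau$. The $\tau$-non-degeneracy hypothesis $\check H^{1,\tau}(\check Y;\mathfrak g_{\theta_\alpha})=0$, together with $\triangle_{K'}(e^{-4\pi i\alpha'_j})\ne 0$ for all $j$ (which guarantees $\triangle_K(e^{-4\pi i\alpha})\ne 0$ upstairs by the branched-cover relation on Alexander polynomials), ensures that $(W,\Sigma)$ with the twisted gluing is $\alpha$-admissible, so $\lambda_{FO}(X_\tau, T_\tau, \alpha)$ is well defined by Theorem \ref{splitting formula Furuta Ohta }. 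By the splitting formula (restricted to $k=0$, since the mapping-torus reducibles force $l=0$ and hence $k=0$ by Lemma \ref{lem: existence of flat connection}), $\lambda_{FO}(X_\tau, T_\tau, \alpha) = D_0(X_\tau, T_\tau, \alpha, 0)$ equals the signed count of irreducible $\alpha$-ASD connections on the trivial bundle over $X_\tau$, which after stretching the neck become $\tau$-equivariant irreducible critical points of $CS$ on $(Y,K,\alpha)$, each counted with a sign and with the familiar factor of $2$ coming from the two ways of closing up the bundle over an irreducible (stabilizer $\pm\mathrm{Id}$).

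Next I would establish the correspondence between $\tau$-equivariant $\alpha$-representations $\rho: \pi_1(Y\setminus K)\to SU(2)$ and $\alpha'_j$-representations of $\pi_1(Y'\setminus K')$, adapting \cite[Proposition 3.1]{MR2033479}. A $\tau$-equivariant representation upstairs descends to a representation of the orbifold fundamental group of $(Y', K')$ where the meridian $\mu_{K'}$ maps to a $p$-th root of $\rho(\mu_K)$; since $\rho(\mu_K)$ has eigenvalues $e^{\mp 2\pi i \alpha}$ with $\alpha = p\alpha'$, the possible $p$-th roots are exactly the conjugacy classes with holonomy parameter $\alpha'_j = \alpha' + j/p$ for $j = 0, 1, \dots, p-1$ (this is precisely the arithmetic worked out in the toy model: these are all values in $(0,1)$ reducing to $\alpha$ after $j$ half-twists, modulo the $\alpha\mapsto \tfrac12-\alpha$ normalization which does not change the signature). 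Conversely every $\alpha'_j$-representation of $(Y',K')$ pulls back to a $\tau$-equivariant $\alpha$-representation upstairs, and the pullback is injective on conjugacy classes. This gives a bijection $\bigsqcup_{j=0}^{p-1} \mathcal R_{\alpha'_j}(Y'\setminus K', SU(2))^* \;\cong\; \mathcal R_\alpha(Y\setminus K, SU(2))^{*,\tau}$, and after $\tau$-equivariant perturbations (supported away from $K$, so that Herald's theorem still applies at each level) the same bijection holds at the level of perturbed critical sets.

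The remaining ingredient is the comparison of signs: I would show that under the pullback correspondence the orientation of the zero-dimensional moduli space $\mathcal M^*(X_\tau, T_\tau, 0,0,\alpha)$ matches, level by level, the orientation used to define $\lambda_{CLH}(Y',K',\alpha'_j)$. This is done by relating the determinant line of the ASD deformation operator on $\mathbb R\times Y$ (asymptotic to $\theta_\alpha$ and $\beta$) to the $\tau$-equivariant index on the branched cover, using the conventions of \cite[Section 4.3]{MR3394316} and \cite[Appendix 1, ii)]{MR1308489} for the behavior of orientations under finite covers and the flip symmetry; the parity obstruction $\tfrac14 T\cdot T - (g-1)$ vanishes here as noted in the flip-symmetry discussion, so no sign ambiguity is introduced. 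Once signs are matched, summing Herald's formula (Theorem \ref{Signed Count}) over $j$ yields
\[
\lambda_{FO}(X_\tau, T_\tau, \alpha) = \sum_{j=0}^{p-1} 2\lambda_{CLH}(Y', K', \alpha'_j) = 8p\,\lambda_C(Y') + \sum_{j=0}^{p-1}\sigma_{K'}(e^{-4\pi i\alpha'_j}),
\]
using $\sum_{j} 2\cdot 4\lambda_C(Y') = 8p\lambda_C(Y')$. The main obstacle I expect is the careful bookkeeping of orientations and of the normalization $\alpha'_j \mapsto \tfrac12 - \alpha'_j$ when $\alpha'_j > \tfrac12$: one must check that the signed count is genuinely independent of which representative in $(0,\tfrac12)$ is chosen, which is where the flip-symmetry invariance (Theorem \ref{Flip symmetry}) and the evenness of the relevant Euler-characteristic parity do the work. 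The representation-variety bijection itself is essentially formal once the branched-cover setup is in place; it is the gauge-theoretic sign comparison that requires the most care.
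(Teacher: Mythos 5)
Your overall plan (reduce to a count of $\tau$-equivariant $\alpha$-flat connections upstairs, then match them to $\alpha'_j$-representations downstairs) agrees with the paper's strategy, and your treatment of the representation-variety correspondence via the split exact sequence, the equivariant perturbations, and the orientation comparison is structurally the same as the paper's. But your route to well-definedness is a genuine gap, and it is worth spelling out why.

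You propose to invoke Theorem~\ref{splitting formula Furuta Ohta }, viewing $X_{\tau}$ as the closure of the cylinder $[0,1]\times Y$ with the $\tau$-twisted gluing, and to deduce $\alpha$-admissibility from ``the branched-cover relation on Alexander polynomials.'' There are two problems. First, $\alpha$-admissibility of the self-concordance requires $\triangle_{K}(e^{-4\pi i\alpha})\neq0$ for the knot $K$ \emph{upstairs}, which is the condition $\check{H}^{1}(\check{Y};\mathfrak{g}_{\theta_{\alpha}})=0$. The theorem, however, only assumes the strictly weaker $\tau$-equivariant vanishing $\check{H}^{1,\tau}(\check{Y};\mathfrak{g}_{\theta_{\alpha}})=0$, and you do not actually prove that the latter (together with $\triangle_{K'}(e^{-4\pi i\alpha'_{j}})\neq0$) implies the former. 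If $\triangle_{K}(e^{-4\pi i\alpha})=0$ while $\check{H}^{1,\tau}=0$, the Floer group $HI(Y,K,\alpha)$ is not even defined, so the Lefschetz formula you want to cite has no meaning, yet $\lambda_{FO}(X_{\tau},T_{\tau},\alpha)$ \emph{is} still well-defined. Second, and more conceptually, the paper does not use the splitting theorem for the mapping-torus computation at all. It proves well-definedness directly from the Leray--Serre spectral sequence of the fibration $X_{\tau}\setminus T_{\tau}\rightarrow S^{1}$, which exhibits $H^{1}(X_{\tau}\setminus T_{\tau};L_{\rho}^{\otimes2})$ as $H^{0}(S^{1};H^{1}(Y\setminus K;L_{\theta_{\alpha}}^{\otimes2}))$, i.e.\ the $\tau^{*}$-invariant part; this is exactly why only $\check{H}^{1,\tau}=0$ is needed. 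Your ``neck-stretching'' picture is also misleading here: because $\tau$ has finite order, the moduli space on $X_{\tau}$ is related to the $\tau$-equivariant moduli space on $Y$ via the finite covering $S^{1}\times Y\rightarrow X_{\tau}$ and equivariant gauge theory (as in the Ruberman--Saveliev mapping-torus calculations), not via sending a cylinder length to infinity.

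To repair the argument, abandon the detour through Theorem~\ref{splitting formula Furuta Ohta } and instead verify directly that every $\alpha$-reducible $\rho$ on $(X_{\tau},T_{\tau})$ has $H^{1}(X_{\tau}\setminus T_{\tau};L_{\rho}^{\otimes2})=0$ using the spectral sequence, obtaining well-definedness and non-degeneracy simultaneously. The factor of $2$ comes from the freeness of the $H^{1}(X_{\tau};\mathbb{Z}/2)$ action (Lemma~\ref{lem:FREE action}) combined with the two ways of extending the bundle over an irreducible with stabilizer $\{\pm1\}$, not from a Lefschetz-number computation. After that, your representation-variety bijection and your appeal to equivariant perturbations and orientation conventions are the right ingredients.
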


\begin{rem}
i) As mentioned in the acknowledgements section, the first version
of this paper had a mistake in the previous formula since we did not
count all the possible representations. We would like to thank Langte
Ma for pointing out this mistake.

ii) There are certainly infinitely many examples that satisfy our
assumptions. For example, the Brieskorn spheres $\varSigma(p,a,b)$
may be realized as the $p$-fold branched covering of $S^{3}$ along
a torus knot $T(a,b)$ \cite{MR1734525}. A way to guarantee $\theta_{\alpha}$
is $\tau$ isolated as an $\alpha$ representation of $\pi_{1}(\varSigma(p,a,b)\backslash K)$
is for it to be isolated in the ordinary sense, i.e, $\check{H}^{1}(\check{\varSigma}(p,a,b);\mathfrak{g}_{\theta_{\alpha}})=0$.
Since the Alexander polynomial of both $K'$ and $K$ have at most
a finite number of roots on the unit circle, there are infinitely
many numbers of the form $\frac{r}{pq}$ which will serve our purposes.

iii) A statement involving one of the integers $p,q$ being even would
be slightly more complicated, since the center of $SU(2)$ is $\mathbb{Z}_{2}$
and there can be a lifting issue when one tries to lift an action
of $\mathbb{Z}_{p}$ for $p$ even on a 3-manifold to an $SU(2)$
bundle \cite{MR1072911}. 

iv) Notice that for $\frac{1}{2}<\alpha_{j}'=\alpha'+\frac{j}{p}<1$
we have $\sigma_{K'}(e^{-4\pi i(\alpha_{j}'-\frac{1}{2})})=\sigma_{K'}(e^{-4\pi i\alpha_{j}'})$,
so whether or not we want to ``renormalize'' $\alpha'_{j}$ so that
it belongs to the interval $(0,1/2)$ does not affect the formula. 

\end{rem}

To keep the proof manageable we will break it into several pieces:
first we will discuss the identification at the level of the critical
sets, then we will address how the Zariski tangent spaces are related,
third we will discuss how to relate the critical sets after perturbations
have been introduced into the picture, and finally we will discuss
how to relate the orientations of the moduli spaces. Our steps should
be regarded as the orbifold version of the corresponding statements
in \cite{MR2033479}. 

First we need to review some brief facts about the orbifold fundamental
group, as well as some aspects about equivariant gauge theory. Our
main sources are \cite{MR1072911,MR1734525,MR1876288,MR2033479}. 

At this point it is a matter of preference whether one wants to think
we are working over $Y\backslash K$ or the orbifold $\check{Y}$
for analyzing the action, so we will change perspectives whenever
it is more convenient. 

Over the manifold $Y\backslash K$ we have an action $\mathbb{Z}_{p}$
and a principal $SU(2)$ bundle $P\rightarrow Y\backslash K$. Let
$\tau$ denote the generator of $\mathbb{Z}_{p}$. When one is trying
to understand the action of a group on some principal bundle, one
needs to make the assumption that 
\begin{equation}
\tau^{*}(P)\simeq P\label{condition bundles}
\end{equation}
Usually on a closed-4 manifold, this means verifying that the characteristic
numbers of the bundle are preserved ($c_{2}(P)$ in the $SU(2)$ case
for example). On any $3$-manifold the $SU(2)$ bundles are necessarily
trivial, so one may think that \ref{condition bundles} is automatically
guaranteed. However, we now need to take into account that we are
working with connections with a prescribed singularity along the knot,
so we need to check that the action of $\tau$ preserves the model
connection. Conversely, from the orbifold perspective, this is the
same as checking that the isotropy data of the bundle is preserved
\cite{MR1703606}. 

Recall that using a tubular neighborhood for the knot $K$, the model
connection \ref{model connection} in a trivialization could be understood
given by the matrix valued 1-form 
\[
i\left(\begin{array}{cc}
\alpha & 0\\
0 & -\alpha
\end{array}\right)d\theta
\]
where $(r,\theta)$ are polar coordinates. We are choosing the action
$\mathbb{Z}_{p}$ in such a way that it becomes an isometry and orientation
preserving, and from the local model of a branched cover it is not
difficult to see that $\mathbb{Z}_{p}$ acts on the coordinates as
\[
\tau\cdot(r,\theta)=(r,\theta+2\pi/p)
\]
Clearly this will preserve $d\theta$ and thus the local model of
the connection. In other words, we have verified the singular (orbifold)
analogue of the condition \ref{condition bundles}. If $\mathcal{G}(Y,K,\alpha)$
is the usual gauge group and $\mathscr{G}(Y,K,\alpha)$ the group
of bundle automorphisms of $P$ covering an element of $\mathbb{Z}_{p}$,
then we have an exact sequence 
\begin{equation}
1\rightarrow\mathcal{G}(Y,K,\alpha)\rightarrow\mathscr{G}(Y,K,\alpha)\rightarrow\mathbb{Z}_{p}\rightarrow1\label{exact}
\end{equation}
There is an action of $\mathscr{G}(Y,K,\alpha)$ by pullbacks on the
space of connections $\mathcal{C}(Y,K,\alpha)$. Let $\tilde{\tau}:P\rightarrow P$
denote a lift of $\tau$. By \ref{exact}, for any two lifts $\tilde{\tau}_{1},\tilde{\tau}_{2}$
of $\tau$, there is a gauge transformation $g\in\mathcal{G}(Y,K,\alpha)$
such that 
\[
\tilde{\tau}_{2}=\tilde{\tau}_{1}\cdot g
\]
Thus there is a well defined action $\tau^{*}$ on $\mathcal{B}^{*}(Y,K,\alpha)$.
We will denote the fixed point set of $\tau^{*}$ by $\mathcal{B}^{\tau}(Y,K,\alpha)$.
Let $[B]\in\mathcal{B}^{\tau}(Y,K,\alpha)$. Then we can find a representative
$B$ and a lift $\tilde{\tau}\in\mathscr{G}(Y,K,\alpha)$ such that
\[
\tilde{\tau}^{*}B=B
\]
If there were another such $\tilde{\tau}'$ then $\tilde{\tau}'\circ\tilde{\tau}^{-1}$
would be an element of $\mathcal{G}(Y,K,\alpha)$ fixing $B$, hence
it would belong to the stabilizer of $B$, which since we assume was
irreducible must be $\mathbb{Z}/2$. In other words 
\[
\tilde{\tau}'=\pm\tilde{\tau}
\]
which means $\tilde{\tau}$ is well defined up to a sign. Moreover,
$(\tilde{\tau})^{p}$ is an element of $\mathcal{G}(Y,K,\alpha)$
fixing $B$, so by the same token 
\[
(\tilde{\tau})^{p}=\pm1
\]
Therefore we can write 
\begin{equation}
\mathcal{B}^{\tau}(Y,K,\alpha)=\sqcup_{[\tilde{\tau}]}\mathcal{B}^{\tilde{\tau}}(Y,K,\alpha)\label{decomposition equivariant}
\end{equation}
where the disjoint union is over the lifts such that $\tilde{\tau}^{p}=\pm1$
and the equivalence relation is $\tilde{\tau}_{1}\sim\tilde{\tau}_{2}$
if and only if $\tilde{\tau}_{2}=\pm g\cdot\tilde{\tau}_{1}\cdot g^{-1}$
for some gauge transformation $g\in\mathcal{G}(Y,K,\alpha)$.

Each $\mathcal{B}^{\tilde{\tau}}(Y,K,\alpha)$ can be described as
follows: for a fixed lift $\tilde{\tau}$, let $\mathcal{C}^{\tilde{\tau}}(Y,K,\alpha)$
denote the irreducible connections $B$ such that $\tilde{\tau}^{*}B=B$.
Define $\mathcal{G}^{\tilde{\tau}}(Y,K,\alpha)=\{g\in\mathcal{G}(Y,K,\alpha)\mid g\tilde{\tau}=\pm\tilde{\tau}g\}$.
Then $\mathcal{B}^{\tilde{\tau}}(Y,K,\alpha)=\mathcal{C}^{\tilde{\tau}}(Y,K,\alpha)/\mathcal{G}^{\tilde{\tau}}(Y,K,\alpha)$.

We are after the analogue of \cite[Proposition 2.1]{MR2033479}, which
will be useful for the discussion of the spectral flow calculations.
Namely, any lift $\tilde{\tau}:P\rightarrow P$ can be written in
the base-fiber coordinates as 
\[
\tilde{\tau}(y,f)=(\tau(y),\sigma(y)f)
\]
where $\sigma:Y\backslash K\rightarrow SU(2)$. Notice that when we
regard it as an orbifold, then the automorphism $\sigma$ must be
$S^{1}$ valued along $K$, because the automorphisms of $P$, i.e
$\mathcal{G}(Y,K,\alpha)$, consists of maps $Y\rightarrow SU(2)$
which restrict to $S^{1}\subset SU(2)$ along $K$. 

Therefore, we will say that the lift $\tilde{\tau}$ is \textbf{constant
}if there exists $u\in S^{1}\subset SU(2)$ such that $\sigma(y)=u$
for all $y\in\check{Y}$. Notice for $y\in\text{Fix}(\tau)$ we must
have that 
\[
\tilde{\tau}^{p}(y,f)=(y,\sigma^{p}(y)f)=(y,\pm f)
\]
Since $\sigma\mid_{K}$ is circle valued, for fixed $y\in K$, $\sigma(y)$
must be one of the $p$-th roots of $\pm1$. Clearly this is a discrete
set, and given that we can assume that the gauge transformations are
continuous, this automatically says that $\sigma\mid_{K}$ is constant. 

Let $u_{j}=\pm\left(\begin{array}{cc}
e^{-2\pi ij/p} & 0\\
0 & e^{2\pi ij/p}
\end{array}\right)$ and consider the constant lift 
\[
\tilde{\tau}_{u}(y,f)=(\tau(y),uf)
\]
The orbifold bundles $P^{\text{ad}}/\tilde{\tau}$ and $P^{\text{ad}}/\tilde{\tau}_{u}$
have the same holonomy along $K'$, which is $2\alpha'+\frac{2j}{p}$
(modulo some twists to normalize it so that it belongs to the interval
$(0,1/2)$ again), therefore they are isomorphic. Take any isomorphism
and pull it back to an (equivariant) gauge transformation $g^{ad}\in\mathcal{G}_{SO(3)}(Y,K,\alpha)$.
Then as $SO(3)$ orbifold bundles we have that 
\[
\mathcal{B}_{ad}^{\tilde{\tau}}(Y,K,\alpha)=\mathcal{B}_{ad}^{u}(Y,K,\alpha)
\]
Since $H^{1}(Y;\mathbb{Z}/2)=0$ there is no obstruction to lifting
$g^{ad}$ to a gauge transformation $g\in\mathcal{G}(Y,K,\alpha)$,
in fact there are two choices for such a lift. Since $\mathcal{G}^{\tilde{\tau}}(Y,K,\alpha)$
incorporates the ambiguity of the lift already, we have as well that
\[
\mathcal{B}^{\tilde{\tau}}(Y,K,\alpha)=\mathcal{B}^{u}(Y,K,\alpha)
\]
 We will start now discussing the relation between equivariant $\alpha$-
representations on $(Y,K,\alpha)$ and the different $\alpha'_{j}$
-representations on $(Y',K',\alpha')$.
\begin{lem}
Assume the hypothesis of Theorem \ref{Mapping tori calculation}.
Then there is a bijective correspondence between $\bigcup_{j=0}^{p-1}\mathcal{R}(Y',K',\alpha'_{j}$)
and $\mathcal{R}^{\tau}(Y,K,\alpha)$. Here 
\[
\alpha_{j}=\alpha'+\frac{j}{p}=\frac{\alpha}{p}+\frac{j}{p}\sim\begin{cases}
\frac{\alpha+j}{p} & \text{if }\frac{\alpha+j}{p}\in(0,1/2)\\
\frac{\alpha+j}{p}-\frac{1}{2} & \text{if }\frac{\alpha+j}{p}\in(1/2.1)
\end{cases}
\]
Likewise, there is a two to one correspondence between $\mathcal{R}^{*}(X_{\tau},T_{\tau},\alpha)$
and $\mathcal{R}^{\tau}(Y,K,\alpha)$.
\end{lem}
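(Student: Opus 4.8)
The plan is to prove the two stated bijections separately, and the key input for both is the ``constant lift'' normalization established just before the lemma, which reduces equivariant representations on $(Y,K,\alpha)$ to a finite list of branched-cover data indexed by $j = 0, \dots, p-1$.

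First I would establish the correspondence $\bigcup_{j=0}^{p-1}\mathcal{R}(Y',K',\alpha'_j) \leftrightarrow \mathcal{R}^{\tau}(Y,K,\alpha)$. Given an irreducible $\alpha'_j$-representation $\rho' : \pi_1(Y' \backslash K') \to SU(2)$, I would pull it back along the branched covering map $Y \backslash K \to Y' \backslash K'$ (equivalently, restrict along the inclusion $\pi_1(Y \backslash K) \hookrightarrow \pi_1(Y' \backslash K')$ of the index-$p$ subgroup coming from the $p$-fold branched cover) to obtain a representation of $\pi_1(Y \backslash K)$. The meridian of $K$ maps to the $p$-th power of the meridian of $K'$, so the holonomy becomes $p\alpha'_j = \alpha + j \equiv \alpha \pmod{1}$ up to half-twists, which is exactly the holonomy condition on $(Y,K,\alpha)$; and since $\tau$ permutes the deck transformations trivially on the image, the pulled-back representation is fixed by $\tau^*$, i.e. lies in $\mathcal{R}^{\tau}(Y,K,\alpha)$. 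Conversely, given $[\beta] \in \mathcal{R}^{\tau}(Y,K,\alpha)$, the decomposition $\mathcal{B}^{\tau}(Y,K,\alpha) = \sqcup_{[\tilde\tau]} \mathcal{B}^{\tilde\tau}(Y,K,\alpha)$ together with the constant-lift identification $\mathcal{B}^{\tilde\tau}(Y,K,\alpha) = \mathcal{B}^{u_j}(Y,K,\alpha)$ shows that $[\beta]$ determines a lift $\tilde\tau_{u_j}$ for a unique $j \in \{0,\dots,p-1\}$ (this is where the assumption that $p,q,r$ are odd and relatively prime is used: each allowable holonomy value $2\alpha' + 2j/p$ is distinct modulo the half-twist normalization, so the index $j$ is unambiguous). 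A $\tilde\tau_{u_j}$-invariant connection descends to the quotient orbifold $\check{Y}/\mathbb{Z}_p = \check{Y}'$ with holonomy $\alpha'_j$ along $K'$, yielding an element of $\mathcal{R}(Y',K',\alpha'_j)$; these two constructions are mutually inverse. I would also need to check that irreducibility is preserved both ways — downstairs-to-upstairs because a reducible upstairs invariant connection would descend to a reducible downstairs, contradicting $\check H^{1,\tau}(\check Y; \mathfrak{g}_{\theta_\alpha}) = 0$ only at the reducible locus, so more carefully: an irreducible $\rho'$ could a priori pull back to a reducible; this is ruled out because the image of $\rho'$ is non-abelian and the subgroup has finite index, hence its image is still non-abelian.

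Second, for the two-to-one correspondence $\mathcal{R}^*(X_\tau, T_\tau, \alpha) \leftrightarrow \mathcal{R}^{\tau}(Y,K,\alpha)$: an $\alpha$-flat connection on $(X_\tau, T_\tau)$ restricts to an $\alpha$-flat connection on each slice $\{t\} \times (Y,K)$, and the monodromy of the mapping torus forces this slice connection to be $\tau^*$-invariant; conversely a $\tau$-invariant $\alpha$-flat connection on $(Y,K)$ extends over the mapping torus. The factor of two arises exactly as in the splitting formula (Theorem \ref{splitting theorem}): when reconstructing the connection on $X_\tau$ from the invariant slice datum one must choose a lift of $\tau$, and the two lifts $\pm\tilde\tau$ differ by the center $\mathbb{Z}/2 = Z(SU(2))$ of the stabilizer of an irreducible, giving two non-gauge-equivalent connections on $X_\tau$ over the same $[\beta] \in \mathcal{R}^{\tau}(Y,K,\alpha)$. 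This is the orbifold analogue of \cite[Proposition 3.1]{MR2033479}, and I would cite that proposition for the structure of the argument, supplying only the modifications needed to track the meridian holonomy along $T_\tau$ and $K$.

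The main obstacle I anticipate is the bookkeeping in the converse direction of the first bijection — namely showing that every $\tau$-invariant $\alpha$-representation actually arises by pullback from \emph{some} $\alpha'_j$, and that the enumeration $j = 0, \dots, p-1$ is complete and non-redundant. This is precisely the point that the first version of the paper got wrong (as acknowledged), so care is required: one must verify that the holonomy values $\alpha' + j/p$ for $j = 0, \dots, p-1$, after renormalizing into $(0,1/2)$ via half-twists, are pairwise inequivalent as holonomy parameters (using the oddness and coprimality of $p, q, r$), and that no further values are allowed because $\alpha' + p/p > 1$. Once the constant-lift normalization is in hand the argument is essentially combinatorial, but stating it cleanly — and confirming that the isomorphism class of the quotient orbifold bundle $P^{\mathrm{ad}}/\tilde\tau_{u_j}$ is determined by $j$ alone — is the delicate step.
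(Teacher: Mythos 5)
Your overall architecture — pulling back along the branched covering, keeping track of how the holonomy is multiplied by $p$ and shifted by half-twists, and using the constant-lift decomposition to pin down the index $j$ — is consistent in spirit with the paper, though the paper packages the first bijection more algebraically via a split exact sequence $1\to\check\pi_1(Y,K,q)\to\check\pi_1(Y',K',pq)\to\mathbb{Z}_p\to1$ and the identification of $\check\pi_1(Y',K',pq)$ with the semidirect product $\check\pi_1(Y,K,q)\rtimes\mathbb{Z}_p$, rather than via the $\mathcal{B}^{\tilde\tau}$-decomposition you invoke. But there is a genuine gap in your irreducibility argument.

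You claim that an irreducible $\rho'$ pulls back to an irreducible $\rho$ on $\pi_1(Y\backslash K)$ ``because the image of $\rho'$ is non-abelian and the subgroup has finite index, hence its image is still non-abelian.'' That inference is false in general: a finite-index subgroup of a group with non-abelian image can easily have abelian image (think of the index-$2$ cyclic subgroup of a dihedral group acting by its defining $2$-dimensional representation). This is precisely the point that needs real work, and the paper supplies it: writing $\rho'(h,t^m) = u^m\rho(h)$ with $u=\rho'(e,t)$, it shows that \emph{if} $\rho$ were abelian then $\tau^*\rho$ is determined by the induced action $\tau_*$ on $H_1(Y\backslash K;\mathbb{Z})$; since $\tau$ has odd order, $\tau_*=\mathrm{id}$, so $u$ commutes with the image of $\rho$; and since $\check\pi_1(Y',K',pq)$ is generated by $\check\pi_1(Y,K,q)$ and $t$, this forces $\rho'$ to vanish on all commutators and hence be abelian, contradiction. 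Notice that this is exactly where the oddness of $p$ enters (you used the oddness and coprimality only to make the list of $\alpha'_j$ non-redundant, which is a secondary point); without it one cannot conclude $\tau_*=\mathrm{id}$ on $H_1$, and the argument collapses. Replacing your sentence with this semidirect-product/commutator argument (and citing the same fact about $\tau_*$) closes the gap. Your account of the two-to-one correspondence is essentially the paper's: the ambiguity is the sign $\pm\tilde\tau$, corresponding to the free $H^1(X_\tau;\mathbb{Z}/2)$-action, and that part is fine.
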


\begin{proof}
 From covering space theory, the covering 
\[
Y\backslash K\rightarrow Y'\backslash K'
\]
induces a homotopy exact sequence 
\begin{equation}
1\rightarrow\pi_{1}(Y\backslash K)\rightarrow\pi_{1}(Y'\backslash K')\rightarrow\mathbb{Z}_{p}\rightarrow1\label{split exact sequence}
\end{equation}
Recall that the orbifold fundamental group can be defined in terms
of the knot complement as 
\begin{equation}
\check{\pi}_{1}\left(Y,K,q\right)=\pi_{1}(Y\backslash K)/\left\langle \mu_{K}^{q}\right\rangle \label{orbifold group}
\end{equation}
 Moreover, $\tau$ induces an action $\tau_{*}$ on $\pi_{1}(Y\backslash K)$
which we denote as $\tau\cdot h$ for $h\in\pi_{1}(Y\backslash K)$.
The induced action on $\mathcal{R}(Y\backslash K)$ is given by \cite[Section 2]{MR1734525}
\[
\tau^{*}(\rho)(h)=\rho(\tau\cdot h)
\]
This action descends to an action on the orbifold group and we denote
the fixed point as $\check{\pi}_{1}^{\tau}(Y,K,r/q)$. From the exact
sequence we obtain a \textit{split} exact sequence  
\begin{equation}
1\rightarrow\check{\pi}_{1}(Y,K,q)\rightarrow\check{\pi}_{1}(Y',K',pq)\rightarrow\mathbb{Z}_{p}\rightarrow1\label{short exact 2}
\end{equation}
Notice that every $\alpha'_{j}$ -representation of $(Y',K')$ (for
any $j=0,\cdots,p-1$) can be regarded as an element of $\check{\pi}_{1}(Y',K',pq)$.

What we need to check is that the pull-back of any $\alpha'_{l}$
\textit{irreducible} representation of $(Y',K')$ continues to be
an $\alpha$- irreducible representation of $(Y,K)$, and conversely,
an $\tau$-equivariant $\alpha$- irreducible representation of $(Y,K)$
pushes down to an $\alpha_{l}'$-irreducible representation of $(Y',K')$,
for some $\alpha_{l}'$. The second statement will follow from the
first one, so let's focus on the former. Since the sequence \ref{short exact 2}
splits, we can write $\check{\pi}_{1}(Y',K',pq)$ as a semi-direct
product of $\check{\pi}_{1}(Y,K,q)$ and $\mathbb{Z}_{p}$. As discussed
in \cite[Section 7.4]{MR2340988}, the representations of a semi-direct
product, like $\check{\pi}_{1}(Y',K',pq)\simeq\check{\pi}_{1}(Y,K,q)\rtimes\mathbb{Z}_{p}$
are determined in terms of the representations of $\check{\pi}_{1}(Y,K,q)$
and $\mathbb{Z}_{p}$. 

Let $\rho':\check{\pi}_{1}(Y,K,q)\rightarrow SU(2)$ be an $\alpha_{l}'$
representation and denote by $\rho$ the induced $\alpha$ representation
on $\check{\pi}_{1}(Y,K,q)$.  If $t$ is a generator of $\mathbb{Z}_{p}$
(with unit $1$) and $e$ is the identity of $\pi_{1}(Y\backslash K)$,
then $\rho(h)=\rho'(h,1)$ for $h\in\pi_{1}(Y\backslash K)$ and $u=\rho'(t)=\rho'(e,t)\in SU(2)$
determine the induced representations from the semidirect product
\ref{short exact 2}. Notice that 
\begin{align*}
 & \rho'((h,t^{m}))\\
= & \rho'((e,t^{m})\cdot(h,1))\\
= & \rho'((e,t^{m}))\rho'(h,1)\\
= & u^{m}\rho(h)
\end{align*}

The fact that $\rho$ must be $\tau$ equivariant implies that \cite[Proposition 7.7]{Lin-Ruberman-Saveliev[2018]}
\[
\tau^{*}\rho=u\rho u^{-1}
\]
Now, if we assume that $\rho$ is abelian, then $\tau^{*}\rho$ is
completely specified by how $\tau$ acts on $H_{1}(Y\backslash K;\mathbb{Z})$.
Since $\tau$ is a diffeomorphism of odd order, it is not difficult
to check that $\tau_{*}=\text{id}$ on $H_{1}(Y\backslash K;\mathbb{Z})$.
Hence $u$ and $\rho$ commute. Therefore we have 
\begin{align*}
 & \rho'((h_{1},t^{m_{1}})\cdot(h_{2},t^{m_{2}}))\\
= & u^{m_{1}}\rho(h_{1})u^{m_{2}}\rho(h_{2})\\
= & u^{m_{2}}\rho(h_{2})u^{m_{1}}\rho(h_{1})\\
= & \rho'((h_{2},t^{m_{2}})\cdot(h_{1},t^{m_{1}}))
\end{align*}
 This means that $\rho'$ vanishes on commutators hence it must be
an abelian representation, giving rise to a contradiction. 

Therefore, irreducibility is preserved under pull back of connections
and push-down of connections. Once we know this, that $\bigcup_{j=0}^{p-1}\mathcal{R}^{*}(Y',K',\alpha'$)
and $\mathcal{R}^{*,\tau}(Y,K,\alpha)$ are in bijection is immediate. 

The two to one correspondence between $\mathcal{R}^{*}(X_{\tau},T_{\tau},\alpha)$
and $\mathcal{R}^{*,\tau}(Y,K,\alpha)$ is similar and follows the
proof of \cite[Proposition 2.1]{MR2033479}. First of all, we have
a splitting exact sequence 
\[
1\rightarrow\check{\pi}_{1}(Y,K,q)\rightarrow\check{\pi}_{1}(X_{\tau},T_{\tau},q)\rightarrow\mathbb{Z}\rightarrow0
\]
The same argument we just gave applies to show that the pullback of
an irreducible $\alpha$-representation $\rho_{\check{X}}$ of $(X_{\tau},T_{\tau})$
gives rise to an irreducible $\rho_{\check{Y}}$ $\alpha$-representation
of $(Y,K)$ (or one can also think about this in terms of unique continuation
coming from restricting the corresponding $\alpha$ -flat connection
on $(X_{\tau},T_{\tau})$ to a slice $(Y,K)$). Notice, that the pull
back representation is equivariant, so we can write as before 
\[
\tau^{*}\rho_{\check{Y}}=u\rho_{\check{Y}}u^{-1}
\]
where $u=\rho_{\check{X}}(e,t)$ and $\rho_{\check{Y}}=\rho_{\check{X}}(h,1)$.
The correspondence is two to one because replacing $u$ by $-u$ gives
rise to a new representation of $(X_{\tau},T_{\tau})$ which induces
the same representation $\rho_{\check{Y}}$. This is in fact coming
from the action of $H^{1}(X_{\tau};\mathbb{Z}/2)$, and the fact that
the new representation is a new one is granted by the freeness of
this action, which is a true because of Lemma \ref{lem:FREE action}.
  That every equivariant representation on $(Y,K)$ induces one
on $(X_{\tau},T_{\tau})$ is also clear (\cite[Theorem 6.1]{MR2189939}
and \cite[Proposition 3.1]{MR2033479}).
\end{proof}

Now we will discuss the non-degeneracy condition. First we will show
that under the assumptions of Theorem \ref{Mapping tori calculation}
that $\lambda_{FO}(X_{\tau},T_{\tau},\alpha)$ is well defined. Likewise,
we will show that the Zariski tangent spaces of the $\alpha$-representations
on $(X_{\tau},T_{\tau})$ can be identified with the equivariant Zariski
tangent spaces of the $\alpha$-representations of $(Y,K,\alpha)$. 
\begin{lem}
Suppose that the conditions of Theorem \ref{Mapping tori calculation}
hold. Then $\lambda_{FO}(X_{\tau},T_{\tau},\alpha)$ is well defined.
Moreover, the unperturbed moduli space $\mathcal{M}^{*}(X,T,0,0,\alpha)$
use to compute $\lambda_{FO}(X_{\tau},T_{\tau},\alpha)$ is non-degenerate
if and only if $\mathcal{R}^{*,\tau}(Y,K,\alpha)$ is non-degenerate. 
\end{lem}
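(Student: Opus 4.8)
The plan is to relate the deformation theory of $\alpha$-flat connections on the mapping torus $\check{X}_\tau$ to the equivariant deformation theory on $\check{Y}$, using the previously-established bijection $\mathcal{R}^{*,\tau}(Y,K,\alpha) \leftrightarrow \mathcal{R}^*(X_\tau, T_\tau, \alpha)$ (two-to-one) together with a Mayer--Vietoris / spectral-sequence argument analogous to Lemma \ref{orbifold cohomology group torus}. Concretely, given an $\alpha$-flat connection $\rho_{\check{X}}$ on $\check{X}_\tau$, restricting to a slice $(Y,K)$ gives an equivariant $\alpha$-flat $\rho_{\check{Y}}$, and I want to show $\check{H}^1(\check{X}_\tau; \mathfrak{g}_{\rho_{\check{X}}})$ is identified with the $\tau$-invariant part $\check{H}^{1,\tau}(\check{Y}; \mathfrak{g}_{\rho_{\check{Y}}})$. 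The mapping torus structure $X_\tau = ([0,1]\times Y)/(\{0\}\times Y \sim^\tau \{1\}\times Y)$ gives a Wang-type exact sequence: writing $\mathfrak{g}$ for the relevant orbifold local system,
\begin{align*}
\cdots \rightarrow \check{H}^0(\check{Y};\mathfrak{g}) \xrightarrow{\tau^*-\mathrm{id}} \check{H}^0(\check{Y};\mathfrak{g}) \rightarrow \check{H}^1(\check{X}_\tau;\mathfrak{g}) \rightarrow \check{H}^1(\check{Y};\mathfrak{g}) \xrightarrow{\tau^*-\mathrm{id}} \check{H}^1(\check{Y};\mathfrak{g}) \rightarrow \cdots
\end{align*}
where $\tau^*$ is the induced (self-)map on the orbifold cohomology of the slice. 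From this the kernel of $\tau^*-\mathrm{id}$ on $\check{H}^1(\check{Y};\mathfrak{g})$ is precisely $\check{H}^{1,\tau}(\check{Y};\mathfrak{g})$, and one reads off that $\check{H}^1(\check{X}_\tau;\mathfrak{g}_{\rho_{\check{X}}}) = 0$ iff $\check{H}^{1,\tau}(\check{Y};\mathfrak{g}_{\rho_{\check{Y}}}) = 0$, modulo handling the $\check{H}^0$ cokernel contribution (which is the usual $\mathbb{R}$ coming from the holonomy direction and does not affect isolatedness after quotienting).

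First I would address well-definedness of $\lambda_{FO}(X_\tau, T_\tau, \alpha)$: by the criterion in Theorem \ref{definition of the singular Furuta Ohta invariant}, this requires $\check{H}^1(\check{X}_\tau; \mathfrak{g}_{\rho})$ to vanish for every $\alpha$-reducible $\rho$ (equivalently $H^1(X_\tau \backslash T_\tau; L_\rho^{\otimes 2}) = 0$). The $\alpha$-reducibles on $\check{X}_\tau$ correspond to the reducible $\theta_\alpha$ on $\check{Y}$, which is $\tau$-equivariant. Running the Wang sequence with $\mathfrak{g}_{\theta_\alpha}$ and invoking the hypothesis $\check{H}^{1,\tau}(\check{Y}; \mathfrak{g}_{\theta_\alpha}) = 0$ (together with $\triangle_{K'}(e^{-4\pi i\alpha'_j}) \neq 0$ for all $j$, which via the pushdown/pullback bijection on reducibles controls the full equivariant cohomology), gives the needed vanishing. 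I should be slightly careful here that the hypothesis is stated for $\theta_\alpha$ and I should confirm it transfers correctly through the $p$-fold branched cover — but this is exactly the content of the already-proved correspondence lemma applied to reducible characters, where the $p$ holonomy values $\alpha'_0, \ldots, \alpha'_{p-1}$ enumerate the relevant characters downstairs.

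Second, for the non-degeneracy equivalence of the (unperturbed) moduli space, I would apply the same Wang-sequence identification but now at each irreducible $\alpha$-flat $\rho_{\check{X}} \in \mathcal{M}^*(X_\tau, T_\tau, 0,0,\alpha)$ with corresponding equivariant $\rho_{\check{Y}}$. Non-degeneracy of $\rho_{\check{X}}$ means $\check{H}^1(\check{X}_\tau; \mathfrak{g}_{\rho_{\check{X}}})$ (modulo the stabilizer/obstruction bookkeeping, which for irreducibles is clean since the stabilizer is $\mathbb{Z}/2$) equals the expected value $0$; by the exact sequence this holds iff $\tau^*-\mathrm{id}$ is injective on $\check{H}^1(\check{Y}; \mathfrak{g}_{\rho_{\check{Y}}})$, i.e. iff $\check{H}^{1,\tau}(\check{Y}; \mathfrak{g}_{\rho_{\check{Y}}}) = 0$, which is non-degeneracy of $\rho_{\check{Y}}$ as an element of $\mathcal{R}^{*,\tau}(Y,K,\alpha)$. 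Running over all $\rho$ and using that the correspondence is a bijection on irreducible loci gives the stated equivalence. The main obstacle I anticipate is making the Wang exact sequence rigorous in the orbifold setting — specifically, justifying that the Mayer--Vietoris / mapping-torus spectral sequence for cohomology with local coefficients behaves as expected when the cone singularity along $T_\tau$ is present, and that the map $\tau^*$ on $\check{H}^*(\check{Y};\mathfrak{g}_\rho)$ is genuinely the one induced by the finite-order diffeomorphism (so that its fixed subspace is the equivariant cohomology $\check{H}^{*,\tau}$). This is essentially the orbifold version of the arguments in \cite{MR2033479}, and I would cite the local-model computation near $K$ (where $\tau\cdot(r,\theta) = (r, \theta + 2\pi/p)$ preserves the model connection, as established above) to reduce the orbifold subtlety to the smooth equivariant case away from an arbitrarily small neighborhood of the knot, exactly as in the proof of Lemma \ref{orbifold cohomology group torus}.
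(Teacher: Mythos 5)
Your proposal matches the paper's argument: the Wang exact sequence for the mapping torus is equivalent information to the Leray--Serre spectral sequence for the fibration $X\backslash T\rightarrow S^{1}$ with fiber $Y\backslash K$ that the paper uses, and the reducible/irreducible case split (identifying $H^{0}(S^{1},H^{1}(Y\backslash K;\cdot))$ with $H^{1,\tau}$ and showing the $H^{1}(S^{1},H^{0}(\cdot))$ term vanishes in each case) proceeds identically. The paper sidesteps the orbifold subtlety you flag by running the spectral sequence on the complement cohomology $H^{*}(Y\backslash K;\cdot)$ and translating to $\check{H}^{*}$ via Lemma \ref{orbifold cohomology group torus} only at the end, rather than working with an orbifold Wang sequence directly.
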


\begin{proof}
Our proof is essentially the same as the one in \cite[Proposition 3.3]{MR2033479}.
Let $\rho_{\check{X}}$ denote an $\alpha$-representation on $(X_{\tau},T_{\tau})$.
By restriction to a slice it induces an $\alpha$ representation $\rho_{\check{Y}}$
on $(Y,K)$. To study the non-degeneracy of the representation we
use Lemma \ref{orbifold cohomology group torus}. Therefore, we want
to compute $H^{1}(X\backslash T;\mathfrak{g}_{\rho_{\check{X}}})$. 

From the fibration $X\backslash T\rightarrow S^{1}$ with fiber $Y\backslash K$
we have by Leray-Serre a spectral sequence whose $E_{2}^{pq}$ page
is 
\[
H^{p}(S^{1},H^{q}(Y\backslash K;\mathfrak{g}_{\rho_{\check{Y}}}))
\]
This spectral sequence collapses for all $p\geq2$ so 
\begin{equation}
H^{1}(X\backslash T;\mathfrak{g}_{\rho_{\check{X}}})=H^{0}(S^{1},H^{1}(Y\backslash K;\mathfrak{g}_{\rho_{\check{Y}}}))\oplus H^{1}(S^{1},H^{0}(Y\backslash K;\mathfrak{g}_{\rho_{\check{Y}}}))\label{decomposition Leray Serre}
\end{equation}

Now we analyze this decomposition in two cases:

$\bullet$ Case when $\rho_{\check{X}}$ is a reducible $\alpha$-flat
connection: in this situation $\mathfrak{g}_{\rho_{\check{X}}}\simeq\mathbb{R}\oplus L_{\rho_{\check{X}}}^{\otimes2}$
and we just need to analyze the $L_{\rho_{\check{X}}}^{\otimes2}$
part of the decomposition \ref{decomposition Leray Serre}. Since
$\rho_{\check{X}}$ induces the reducible connection $\rho_{\theta_{\alpha}}$
on $Y\backslash K$ \ref{decomposition Leray Serre} becomes 
\[
H^{1}(X\backslash T;L_{\rho_{\check{X}}}^{\otimes2})=H^{0}(S^{1},H^{1}(Y\backslash K;L_{\theta_{\alpha}}^{\otimes2}))\oplus H^{1}(S^{1},H^{0}(Y\backslash K;L_{\theta_{\alpha}}^{\otimes2}))
\]

Observe that $H^{0}(Y\backslash K;L_{\theta_{\alpha}}^{\otimes2})$
vanishes since $\dim H^{0}(Y\backslash K;\mathbb{R}\oplus L_{\theta_{\alpha}}^{\otimes2})=\dim\text{stab}\theta_{\alpha}=1$
and we already know that $\dim H^{0}(Y\backslash K;\mathbb{R})=1$.
Therefore $H^{1}(S^{1},H^{0}(Y\backslash K;L_{\theta_{\alpha}}^{\otimes2}))$
vanishes. 

To compute $H^{0}(S^{1},H^{1}(Y\backslash K;L_{\theta_{\alpha}}^{\otimes2}))$
notice that the generator of $\pi_{1}(S^{1})$ acts on $H^{1}(Y\backslash K;L_{\theta_{\alpha}}^{\otimes2})$
as $\tau^{*}:H^{1}(Y\backslash K;L_{\theta_{\alpha}}^{\otimes2})\rightarrow H^{1}(Y\backslash K;L_{\theta_{\alpha}}^{\otimes2})$,
thus $H^{0}(S^{1},H^{1}(Y\backslash K;L_{\theta_{\alpha}}^{\otimes2}))$
is the fixed point set of $\tau^{*}$, which is the equivariant cohomology
$H^{1,\tau}(Y\backslash K;L_{\theta_{\alpha}}^{\otimes2})$. This
term must vanish by the assumption on Theorem \ref{Mapping tori calculation}
that $\check{H}^{1,\tau}(\check{Y};\mathfrak{g}_{\theta_{\alpha}})=0$. 

In conclusion, $H^{1}(X\backslash T;L_{\rho_{\check{X}}}^{\otimes2})$
vanishes for all $\alpha$-reducible representations $\rho_{\check{X}}$
which is the condition needed for $\lambda_{FO}(X_{\tau},T_{\tau},\alpha)$
to be well defined.

$\bullet$ Case when $\rho_{\check{X}}$ is an irreducible $\alpha$-flat
connection: since the restriction $\rho_{\check{Y}}$ is an irreducible
$\alpha$-flat connection then $H^{0}(Y\backslash K;\mathfrak{g}_{\rho_{\check{Y}}})$
will vanish which means that the second term in \ref{decomposition Leray Serre}
will vanish as well. 

As in the previous case, $H^{0}(S^{1},H^{1}(Y\backslash K;\mathfrak{g}_{\rho_{\check{Y}}}))$
can be identified with $H^{1,\tau}(Y\backslash K;\mathfrak{g}_{\rho_{\check{Y}}}))$.
Also, that the meridian $\mu_{T}$ restricts in a natural way to the
meridian of $\mu_{K}$, and any local system restricted to either
$\mu_{T}$ or $\mu_{K}$ always reduces (since the loops have abelian
fundamental group so any flat connection becomes reducible). Because
of the holonomy condition, it cannot be the trivial local system $\simeq\mathbb{R}^{3}$
in either case which means that 
\[
\mathbb{R}\simeq H^{1}(\mu_{T};\mathfrak{g}_{\rho})\simeq H^{1}(\mu_{K};\mathfrak{g}_{\check{\rho}_{Y}})\simeq H^{1,\tau}(\mu_{K};\mathfrak{g}_{\check{\rho}_{Y}})
\]
where the last isomorphism follows from the fact that $\tau$ restricts
to the identity on the knot. Therefore, $\ker:H^{1}(X\backslash T;\mathfrak{g}_{\rho})\rightarrow H^{1}(\mu_{T};\mathfrak{g}_{\rho})$
will vanish if and only if $\ker:H^{1,\tau}(Y\backslash K;\mathfrak{g}_{\rho_{\check{Y}}})\rightarrow H^{1}(\mu_{K};\mathfrak{g}_{\check{\rho}_{Y}})$
vanishes, which means that $\mathcal{M}^{*}(X,T,0,0,\alpha)$ is non-degenerate
if and only if $\mathcal{R}^{*,\tau}(Y,K,\alpha)$ is non-degenerate. 
\end{proof}
The case where perturbations are needed follow essentially the arguments
in \cite{MR2033479}. Namely, for computing $\lambda_{FO}(X_{\tau},T_{\tau},\alpha)$
as well as the different $\lambda_{CLH}(Y',K',\alpha')$ we can always
use (finitely many) holonomy perturbations whose stays away from the
singularity (i.e, the torus or knot). This was already discussed before
the proof of Theorem \ref{Signed Count}, where we used this condition
to identify the Euler characteristic of our Floer groups with the
Casson Lin Herald invariant. Since the support of the holonomies do
not meet the singularity, we can still find equivariant perturbations
$\mathfrak{p}^{\tau}$ to achieve transversality for $\mathcal{R}^{*,\tau}(Y,K,\alpha,\mathfrak{p}^{\tau})$
as in \cite[Section 5.1]{MR2033479}. Since the action is free away
from the knot $K$, these equivariant holonomy perturbations can be
pushed down to the quotient $(Y',K')$ in such a way that they guarantee
transversality for the different moduli spaces $\bigcup_{j=0}^{l-1}\mathcal{R}(Y',K',\alpha',\mathfrak{p}^{\tau})$,
as was done in \cite[Section 3.8]{MR1876288}. 

That the perturbations needed to achieve transversality for $\mathcal{R}^{*,\tau}(Y,K,\alpha)$
and $\mathcal{R}^{*}(X_{\tau},T_{\tau},\alpha)$ can be chosen in
a consistent matter corresponds precisely to \cite[Section 5.3]{MR2033479}.
Moreover, that the correspondence between the perturbed versions of
$\mathcal{R}^{*,\tau}(Y,K,\alpha)$ and $\mathcal{R}^{*}(X_{\tau},T_{\tau},\alpha)$
continues to be $2$ to $1$ is proven in exactly the same way as
\cite[Proposition 5.3]{MR2033479}.

Finally, that the orientations of the moduli spaces for $\bigcup_{j=0}^{l-1}\mathcal{R}(Y',K',\alpha',\mathfrak{p}^{\tau})$,
$\mathcal{R}^{*,\tau}(Y,K,\alpha)$ and $\mathcal{R}^{*}(X_{\tau},T_{\tau},\alpha)$
can be chosen in a consistent way is a consequence of the analysis
in \cite[Section 3.5]{MR2033479}, where we just need to use the existence
of a constant lift $\tilde{\tau}$ , which we already know exist from
our previous discussion when we defined the spaces $\mathcal{B}^{\tilde{\tau}}(Y,K,\alpha)$.

\subsection{Some tori inside circle bundles over homology $S^{1}\times S^{2}$}

\ 

Now we will work out an orbifold version of the examples discussed
in \cite[Section 8]{MR2189939}. There Ruberman and Saveliev analyzed
a family of homologies $S^{1}\times S^{3}$ for which $\lambda_{FO}(X)$
can be computed, and in fact vanishes identically. The four manifolds
$X$ they considered arise as circle bundles $\pi:X\rightarrow Y_{0}$
over a 3 manifold with the integral homology of $S^{1}\times S^{2}$
(i.e, a homology handle).

The manifolds $Y_{0}$ can be obtained from doing $0$ surgery on
a knot $K$ in an integral homology sphere $Y$, and in order to guarantee
that $\lambda_{FO}(X)$ is well defined, it is assumed that $\triangle_{K}(t)\equiv1$
and moreover that the Euler class $e\in H^{2}(Y;\mathbb{Z})=\mathbb{Z}$
of the bundle satisfies $e=1$. In order to be able to compute $\lambda_{FO}(X)$,
they furthermore assume that $\pi_{2}(Y)=0$ so that the homotopy
exact sequence of the $S^{1}$ bundle $\pi:X\rightarrow Y_{0}$ allows
them to consider $\pi_{1}X$ as a central extension of $\pi_{1}Y_{0}$
by the integers 
\[
1\rightarrow\mathbb{Z}\rightarrow\pi_{1}X\rightarrow^{\pi_{*}}\pi_{1}Y_{0}\rightarrow1
\]
From here one can identify $\mathcal{M}^{*}(X,SO(3))$ with $\mathcal{R}^{*}(Y,SO(3))$
as well as the corresponding Zariski tangent spaces. 

The natural tori inside $X$ that can be used for trying to compute
$\lambda_{FO}(X,T,\alpha)$ are related to the \textbf{Longitudinal
Floer Homology} Kronheimer and Mrowka define in \cite[Section 4.4]{MR2860345}.
First of all, observe that $Y_{0}$ contains a natural knot $K_{0}$,
which is the core of the solid torus used in the surgery. Furthermore,
$K_{0}$ represents a primitive element in the first homology of $Y_{0}$,
so in particular $H_{1}(K_{0};\mathbb{Z})$ generates $H_{1}(Y_{0};\mathbb{Z})$.

Therefore, when we look at the inverse image of $K_{0}$ under the
map $\pi:X\rightarrow Y_{0}$ it is clear that we obtain an embedded
torus $T$ satisfying the condition that $H_{1}(T;\mathbb{Z})$ surjects
onto $H_{1}(X;\mathbb{Z})$. 
\begin{example}
Consider the unknot $K=\circ\subset S_{3}$. After doing $0$ -surgery
on $K$ we obtain $Y_{0}=S^{1}\times S^{2}$. A natural $S^{1}$ bundle
over $Y_{0}$ satisfying the our requirements is $\pi:S^{1}\times S^{3}\rightarrow S^{1}\times S^{2}$
where we are using the Hopf fibration $S^{3}\rightarrow S^{2}$ on
the second factor and the trivial projection on the first factor.
Notice that in this case $K_{0}$ can be identified with $S^{1}\times\{pt\}$
and therefore the natural torus $T$ is $T=S^{1}\times S^{1}$, where
the second factor is the standard circle as well (an unknot). It is
clear in this case that for any $\alpha$ we have $\lambda_{FO}(X,T,\alpha)=0$,
since the fundamental group of $X\backslash T$ is abelian.
\end{example}

In fact, our expectation is that the previous example is the norm
in the following sense:
\begin{conjecture}
Let $\pi:X\rightarrow Y_{0}$ as before and consider the torus $T=\pi^{-1}(K_{0})$.
Then whenever $\lambda_{FO}(X,T,\alpha)$ can be defined it will vanish.
\end{conjecture}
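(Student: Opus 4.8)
The plan is to reduce the computation of $\lambda_{FO}(X,T,\alpha)$ to a count of $\alpha$-representations of $\pi_{1}(X\backslash T)$, and then to use the circle-bundle structure together with the Alexander-polynomial hypothesis on $K$ to show that every such irreducible $\alpha$-representation descends to one on a $3$-manifold where none can exist. First I would set up the homotopy-theoretic picture. Since $T=\pi^{-1}(K_{0})$ and $\pi:X\to Y_{0}$ is a principal $S^{1}$-bundle, restricting $\pi$ to the complement gives a circle bundle $X\backslash T\to Y_{0}\backslash K_{0}$. Using $\pi_{2}(Y)=0$ (which one should add to the hypotheses, or derive from $\pi_{2}(Y_{0})=0$ as Ruberman--Saveliev do) the homotopy exact sequence of this $S^{1}$-bundle yields a central extension
\begin{equation}
1\rightarrow\mathbb{Z}\rightarrow\pi_{1}(X\backslash T)\rightarrow^{\pi_{*}}\pi_{1}(Y_{0}\backslash K_{0})\rightarrow1,
\end{equation}
and moreover $Y_{0}\backslash K_{0}$ is diffeomorphic to $Y\backslash K$ (removing the core of the surgery solid torus from $Y_{0}$ undoes the surgery). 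So $\pi_{1}(X\backslash T)$ is a central extension of $\pi_{1}(Y\backslash K)$ by $\mathbb{Z}$, the central $\mathbb{Z}$ being generated by the circle fibre, which is precisely the meridian $\mu_{T}$ of the torus.

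Next I would analyze irreducible $\alpha$-representations $\rho:\pi_{1}(X\backslash T)\to SU(2)$. The central $\mathbb{Z}=\langle\mu_{T}\rangle$ must go to the centre of the image of $\rho$; for $\rho$ irreducible this forces $\rho(\mu_{T})\in Z(SU(2))=\{\pm 1\}$. But the $\alpha$-holonomy condition \ref{asymptotic holonomy} says $\rho(\mu_{T})$ is conjugate to $\mathrm{diag}(e^{-2\pi i\alpha},e^{2\pi i\alpha})$, and for $\alpha\in\mathbb{Q}\cap(0,1/2)$ this matrix is central only if $\alpha\in\{0,1/2\}$, which is excluded. Hence \emph{there are no irreducible $\alpha$-representations of $\pi_{1}(X\backslash T)$ at all}, so the unperturbed moduli space $\mathcal{M}^{*}(X,T,0,0,\alpha)$ is empty. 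The only thing one then has to check is that perturbations do not create spurious solutions: since the whole moduli space $\mathcal{M}^{*}(X,T,0,0,\alpha)$ is cut out with expected dimension zero and is empty for the $\alpha$-flat equation, and the reducible locus is isolated and unobstructed exactly under the standing hypothesis that $\lambda_{FO}(X,T,\alpha)$ is defined (here this reduces to $\triangle_{K}\equiv 1$ and the Euler-class condition, as in \cite[Section 8]{MR2189939}), a generic small perturbation of $F_{A}^{+}=0$ has empty irreducible solution set as well by the usual compactness-plus-transversality argument (a path of perturbations generically misses the empty, and a fortiori negative-dimensional, strata). Therefore $\lambda_{FO}(X,T,\alpha)=0$ whenever it is defined.

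The main obstacle, and the place where some care is genuinely needed, is the centrality step: I have implicitly used that for an irreducible $SU(2)$ representation any central subgroup of $\pi_{1}$ maps into $\{\pm 1\}$, which is immediate, but I must make sure the circle fibre really is central in $\pi_{1}(X\backslash T)$ and really is homotopic to $\mu_{T}$. The first is exactly the content of the central extension displayed above and requires $\pi_{2}(Y_{0}\backslash K_{0})=0$; I would verify this from $\pi_{2}(Y)=0$ together with the fact that $Y\backslash K$ is aspherical-relative to its end (or simply add $\pi_{2}(Y)=0$ to the hypotheses, matching \cite{MR2189939}). The second is a local statement: the fibre over a point of $K_{0}$ is isotopic in the tubular neighbourhood $T\times D^{2}$ to $\{pt\}\times\partial D^{2}$, which is the meridian by definition. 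Once these two points are pinned down, the argument is purely algebraic and the perturbation bookkeeping is identical to that already carried out for $D_{0}(X,T,k,\alpha)$ in Section \ref{sec:Singular-Orbifold-Furuta-Ohta}. One should also remark that the conclusion is consistent with the splitting formula and with Ma's identity $\lambda_{FO}(X,T,\alpha)-8\lambda_{FO}(X)=\sigma_{2\alpha}(X,T)$, since $\lambda_{FO}(X)=0$ for these $X$ by \cite[Section 8]{MR2189939} and the relevant tori signature should vanish too; this gives an independent sanity check on the hypotheses under which the conjecture is expected to hold.
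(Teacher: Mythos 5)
Your proposal hinges on identifying the circle fibre of $\pi\colon X\backslash T\to Y_{0}\backslash K_{0}$ with the meridian $\mu_{T}$ of the torus, and that identification is incorrect in general. In the local trivialization $\pi^{-1}(\nu(K_{0}))\cong K_{0}\times D^{2}\times S^{1}$ of the tubular neighbourhood of $T$, a fibre over a point of $\nu(K_{0})\backslash K_{0}$ is the last $S^{1}$-factor (a circle projecting to a single point of $Y_{0}$), whereas $\mu_{T}$ is the linking circle $\{pt\}\times\partial D^{2}_{\epsilon}\times\{pt\}$. Under $\pi_{*}$ the fibre maps to the identity, but $\mu_{T}$ maps to $\mu_{K_{0}}$, the meridian of $K_{0}$ in $Y_{0}$; and since $Y_{0}$ is obtained from $Y$ by $0$-surgery on $K$, one has $\mu_{K_{0}}=\lambda_{K}$, the longitude of $K$, under the diffeomorphism $Y_{0}\backslash K_{0}\cong Y\backslash K$. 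The longitude is a nontrivial element of $\pi_{1}(Y\backslash K)$ for every nontrivial knot. So the fibre $\gamma$ and $\mu_{T}$ are distinct in $\pi_{1}(X\backslash T)$: choosing a trivialization $X\backslash T\cong(Y_{0}\backslash K_{0})\times S^{1}$ (one exists because $H^{2}(Y\backslash K;\mathbb{Z})=0$), the fibre is the $S^{1}$-generator while $\mu_{T}$ differs from it by a lift of $\lambda_{K}$. The centrality argument therefore only forces $\rho(\gamma)=\pm1$, and $\rho(\mu_{T})=\pm\rho(\tilde{\lambda}_{K})$ is not constrained to be central. Your step goes through precisely when $\lambda_{K}$ is already trivial, i.e.\ essentially when $K$ is the unknot, which is the Hopf-fibration example you had in mind; but for knots with $\triangle_{K}\equiv1$ and $K$ nontrivial (the cases of interest) there is no contradiction, and irreducible $\alpha$-representations of $\pi_{1}(X\backslash T)$ are obtained from irreducible representations of $\pi_{1}(Y\backslash K)$ with prescribed noncentral eigenvalues along $\lambda_{K}$, together with a choice of sign on the central fibre.

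Beyond the specific gap, the shape of your argument is suspect: it would show that $\mathcal{R}^{*}_{\alpha}(X\backslash T)$ is \emph{empty}, which is stronger than anything expected here. The Ruberman--Saveliev computation of $\lambda_{FO}(X)$ for these circle bundles \cite{MR2189939} does not show that the irreducible representation variety of $\pi_{1}(X)$ is empty; it identifies it with $\mathcal{R}^{*}(Y_{0},SO(3))$, typically nonempty, and the signed count is $\triangle_{K}''(1)=0$ by cancellation. Accordingly the paper does not offer a proof of this statement — it is posed as a conjecture — and the route it suggests is to relate $\lambda_{FO}(X,T,\alpha)$ to the Euler characteristic of an $\alpha$-version of Kronheimer--Mrowka's Longitudinal Floer homology of $(Y_{0},K_{0})$, reducing the problem to Conjecture~\ref{conjecture Euler characteristic} that $\chi_{\varLambda}(HIL(Y_{0},K_{0},\alpha))=2\triangle_{K}''(1)=0$, a vanishing-by-cancellation argument rather than an emptiness argument.
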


To give insight into why we are making this conjecture, we need to
explain a bit more how $\lambda_{FO}(X)$ was shown to vanish by Ruberman
and Saveliev and how this computation would be modified in the case
of $\lambda_{FO}(X,T,\alpha)$. 

As we mentioned before, Ruberman and Saveliev identified $\mathcal{M}^{*}(X,SO(3))$
with $\mathcal{R}^{*}(Y,SO(3))$ in an orientation preserving way.
The action of $H^{1}(X;\mathbb{Z}_{2})$ continues to be free in this
situation, so in particular $\lambda_{FO}(X)$ can be computed as
one half the signed count of elements in $\mathcal{R}^{*}(Y,SO(3))$
(after perturbations if needed). This count ends up being the same
as $\triangle_{K}''(1)$, which is zero in this case. 

The interesting feature of this calculation is that $\triangle_{K}''(1)$
is the Euler characteristic of the Instanton Floer homology Floer
defined for a homology $S^{1}\times S^{2}$. Therefore, we expect
that $\lambda_{FO}(X,T,\alpha)$ is related to the Euler characteristic
of an orbifold version of Instanton Floer homology on homology handles.
But this is precisely the Longitudinal Floer homology $HIL(Y_{0},K_{0})$
Kronheimer and Mrowka defined in \cite{MR2860345}! Here the connections
are allowed to have a singularity along the knot $K_{0}$, where they
used holonomy $\alpha=1/4$ in order to avoid the compactness issues
due to non-monotonicity. 

If one is willing to use local coefficients, there is no difficulty
in obtaining a version $HIL(Y_{0},K_{0},\alpha)$ which is always
well defined, since there are no reducible $\alpha$-flat connections
to worry about in this situation (because we are now using the non-trivial
$SO(3)$ bundle over $Y_{0}$). They conjectured that $\chi(HIL(Y_{0},K_{0}))$
should be $2\triangle_{K}''(1)$ in general, which in our situation
would imply that $\chi(HIL(Y_{0},K_{0}))$ vanishes. 

For the other values of $\alpha$ there is a brief discussion in \cite[Section 4.3]{MR2189924},
where it is promised that a definition of what we are defining as
$HIL(Y_{0},K_{0},\alpha)$ would be constructed eventually. In any
case, based on the Property 3 Collin states in the survey we conjecture
the following:
\begin{conjecture}
\label{conjecture Euler characteristic}For $\alpha\in\mathbb{Q}\cap(0,1/2)$
the Euler characteristic of $HIL(Y_{0},K_{0},\alpha)$ over the Novikov
ring $\varLambda$ equals $2\triangle_{K}^{\prime\prime}(1)$. In
particular, for the tori we are considering arising as $\pi^{-1}(K_{0})$
we would have that $\lambda_{FO}(X,T,\alpha)$ vanishes in all of
these cases. 
\end{conjecture}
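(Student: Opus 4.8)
The plan is to transplant Ruberman and Saveliev's computation of $\lambda_{FO}(X)$ \cite[Section 8]{MR2189939} to the singular setting and to route the final answer through the Splitting Theorem \ref{splitting theorem}. Write $\pi\colon X\to Y_{0}$ for the circle bundle, $K_{0}\subset Y_{0}$ for the core of the surgery torus, and $T=\pi^{-1}(K_{0})$, so that $X\backslash T=\pi^{-1}(Y_{0}\backslash K_{0})$ is the restricted circle bundle over the knot complement $Y_{0}\backslash K_{0}$, which is diffeomorphic to $Y\backslash K$. First I would use $\pi_{2}(Y)=0$ to see $\pi_{2}(Y_{0}\backslash K_{0})=0$, so that the homotopy sequence of the circle bundle gives a central extension $1\to\mathbb{Z}\to\pi_{1}(X\backslash T)\to\pi_{1}(Y_{0}\backslash K_{0})\to 1$, with the central $\mathbb{Z}$ generated by the fibre $f$ (which is distinct from the torus meridian $\mu_{T}$, since $\pi_{*}(\mu_{T})=\mu_{K_{0}}\neq 1$). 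An irreducible $\alpha$-flat connection on $(X,T)$ corresponds to an irreducible $SU(2)$ representation $\tilde{\rho}$ of $\pi_{1}(X\backslash T)$ with $\tilde{\rho}(\mu_{T})$ in the $\alpha$-conjugacy class; irreducibility forces $\tilde{\rho}(f)\in\{\pm 1\}$, so $\tilde{\rho}$ descends to an $SO(3)$ representation of $\pi_{1}(Y_{0}\backslash K_{0})$ whose holonomy along $\mu_{K_{0}}$ is again the $\alpha$-class. Since $\mu_{K_{0}}$ is the longitude $\lambda_{K}$ of $K$, which is null-homologous in the complement, there are no reducible $\alpha$-flat connections on $(Y_{0},K_{0})$ at all, and $HIL(Y_{0},K_{0},\alpha)$ is defined for every $\alpha$.

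The key steps, in order, would be: (i) upgrade the bijection above to an identification of moduli spaces $\mathcal{M}^{*}(X,T,0,0,\alpha)\cong\mathcal{M}^{*}(Y_{0},K_{0},SO(3),\alpha)$ together with an identification of Zariski tangent spaces, by running the Leray--Serre / Mayer--Vietoris bookkeeping of \cite[Section 8]{MR2189939} while keeping track of the singular model along $T$ (this is entirely parallel to the computations already carried out in this paper for the mapping-torus examples); (ii) check, following the sign conventions of Kronheimer--Mrowka for singular moduli spaces, that the identification in (i) is orientation preserving, exactly as in the non-singular case; (iii) invoke Lemma \ref{lem:FREE action}, which we have already established guarantees that $H^{1}(X;\mathbb{Z}/2)$ acts freely on $\mathcal{M}^{*}(X,T,0,0,\alpha)$, so that the neck-stretching/gluing argument underlying the Splitting Theorem applies without change; and (iv) conclude, just as in the proof of Theorem \ref{splitting theorem}, that
\[
\lambda_{FO}(X,T,\alpha)=2\,\chi_{\varLambda}\bigl(HIL(Y_{0},K_{0},\alpha)\bigr),
\]
the factor $2$ arising, as there, from the two ways of closing up the bundle over the cylindrical end because an irreducible connection has stabilizer $Z(SU(2))$.

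With this reduction in place, the vanishing of $\lambda_{FO}(X,T,\alpha)$ becomes equivalent to $\chi_{\varLambda}(HIL(Y_{0},K_{0},\alpha))=0$, and since $\triangle_{K}\equiv 1$ this in turn follows from the surgery-type identity $\chi_{\varLambda}(HIL(Y_{0},K_{0},\alpha))=2\triangle_{K}^{\prime\prime}(1)$ predicted in Conjecture \ref{conjecture Euler characteristic}. For $\alpha=1/4$ one would hope to prove this identity from an unoriented skein/surgery exact triangle comparing $HIL$ of the $0$-surgery with the instanton Floer homology of the $\pm 1$-surgeries, as Kronheimer and Mrowka anticipate, so that $\chi_{\varLambda}(HIL(Y_{0},K_{0},1/4))$ is computed by Casson's surgery formula; for general $\alpha$ one needs a holonomy-parametrized version of that triangle, or, alternatively, a direct Herald-type argument expressing the signed count of longitudinal $\alpha$-representations of $\pi_{1}(Y\backslash K)$ as an equivariant signature of the infinite cyclic cover, which degenerates to $0$ when the Alexander polynomial is trivial.

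The main obstacle is exactly this last input: proving the Euler-characteristic/surgery formula for $HIL(Y_{0},K_{0},\alpha)$ at arbitrary $\alpha$, which is the open first half of Conjecture \ref{conjecture Euler characteristic} and is not a routine adaptation of existing arguments --- the non-monotone regime forces one to track topological energies and to work over the Novikov field $\varLambda$ at every stage of the exact triangle, and one must separately dispose of the a priori dependence of $HIL(Y_{0},K_{0},\alpha)$ on the cone angle $\nu$ (as for all the Floer groups in this paper, though Ma's results \cite{Ma[torus]} strongly suggest this can be handled). Everything upstream of that --- the reduction $\lambda_{FO}(X,T,\alpha)=2\,\chi_{\varLambda}(HIL(Y_{0},K_{0},\alpha))$ --- should be a faithful orbifold translation of \cite[Section 8]{MR2189939} combined with the Splitting Theorem, and I would expect it to go through with only cosmetic changes.
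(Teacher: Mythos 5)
This statement is presented in the paper as a \emph{conjecture}, not a theorem, and the paper offers only a heuristic motivation for it (the Ruberman--Saveliev computation in the non-singular case, the existence of Kronheimer--Mrowka's longitudinal Floer homology, and a survey remark of Collin) rather than a proof. Your proposal does not prove the conjecture either --- and you are explicit and correct about exactly where the gap lies: the Euler-characteristic/surgery-formula input $\chi_{\varLambda}(HIL(Y_{0},K_{0},\alpha))=2\triangle_{K}''(1)$ is precisely what remains open, and you rightly flag that the non-monotone bookkeeping over $\varLambda$ and the a priori cone-angle dependence make it non-routine. So as a review: your sketch of the reduction $\lambda_{FO}(X,T,\alpha)=2\,\chi_{\varLambda}\bigl(HIL(Y_{0},K_{0},\alpha)\bigr)$ is a faithful and somewhat more detailed version of the informal discussion preceding the conjecture in the paper, and your honest delineation of what is established versus what is open matches the paper's own stance.

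One small caution on the framing of step (iv): Theorem \ref{splitting formula Furuta Ohta } (the Splitting Theorem) as stated in the paper applies to pairs $(X,T)$ arising by closing up an $\alpha$-admissible self-concordance $(W,\varSigma)\colon(Y,K)\to(Y,K)$. A circle bundle over $Y_{0}$ with Euler number $1$ is not of this form (it is not a mapping torus), so the Splitting Theorem cannot be literally invoked. What you actually need is the \emph{same neck-stretching and gluing argument} run along a slice of the circle bundle, with the two-to-one count coming from Lemma \ref{lem:FREE action} exactly as in the proof of the Splitting Theorem --- but this is a parallel argument, not an instance of the stated theorem. You signal this implicitly by calling it an "orbifold translation" combined with the Splitting Theorem, but it is worth being explicit that the theorem itself does not cover this case and the gluing argument would need to be re-established with $Y_{0}$ (a homology $S^{1}\times S^{2}$) rather than an integer homology sphere $Y$ as the neck hypersurface, and with $HIL$ rather than $HI$ as the Floer theory on the two sides. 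This is believable but not free.
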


\begin{rem}
 Besides the fact that conjecture \ref{conjecture Euler characteristic}
might be used for computing $\lambda_{FO}(X,T,\alpha)$ for this family
of examples, it could be of independent interest to compute the Euler
characteristic of $HIL(Y_{0},K_{0},\alpha)$. In fact, we plan to
verify this in the future, adapting the ideas of Floer as described
in \cite{MR1362829}.
\end{rem}

\textbf{}

\textbf{}

\textbf{}

\textbf{}

\uline{}

\textbf{}

\bibliographystyle{plain}
\bibliography{amsj,/Users/mariano/Dropbox/virginia/works/ARXIV/references}

\begin{quote}
Department of Mathematics, Rutgers University.

\textit{\small{}E-mail address}{\small{}:} \textsf{\footnotesize{}mariano.echeverria@rutgers.edu}{\footnotesize\par}
\end{quote}

\end{document}